\documentclass[reqno,11pt]{amsart}

\usepackage{amscd}
\usepackage{amsmath}
\usepackage{amssymb}
\usepackage[american]{babel}
\usepackage{bbm}
\usepackage{bookmark}
\usepackage{cmap} %makes pdf searchable by adobe
\usepackage{dsfont}
\usepackage{enumerate}
\usepackage{epigraph}
\usepackage[mathscr]{euscript}
\usepackage[myheadings]{fullpage}
\usepackage{graphicx}
\usepackage[geometry]{ifsym}
\usepackage{mathabx}
\usepackage{mathrsfs}
\usepackage{mathtools}
\usepackage{refcount}
\usepackage{setspace}
\usepackage{stmaryrd}
\usepackage{thinsp}
\usepackage[safe]{tipa}
\usepackage{verbatim}
\usepackage[all,2cell]{xy} \UseTwocells
\usepackage{xr}
\usepackage[multiple]{footmisc}
\usepackage{hyperref}

\let\underbrace\LaTeXunderbrace

\newcommand{\e}{\mathds{1}}

\numberwithin{equation}{subsection}

\newtheorem{thm}{Theorem}[subsection]
\newtheorem*{thm*}{Theorem}
\newtheorem*{mainthm}{Main Theorem}
\newtheorem{cor}[thm]{Corollary}
\newtheorem*{cor*}{Corollary}
\newtheorem{lem}[thm]{Lemma}

\newtheorem{prop}[thm]{Proposition}
\newtheorem{prop-const}[thm]{Proposition-Construction}

\newtheorem{conjecture}{Conjecture}
\newtheorem*{conjecture*}{Conjecture}

\newtheorem*{princ*}{Principle}

\theoremstyle{remark}
\newtheorem{rem}[thm]{Remark}
\newtheorem{example}[thm]{Example}

\newtheorem{counterexample}[thm]{Counterexample}
\newtheorem{defin}[thm]{Definition}

\newtheorem{warning}[thm]{Warning}
\newtheorem{variant}[thm]{Variant}
\newtheorem{question}[thm]{Question}
\newtheorem{construction}[thm]{Construction}

\newcounter{steps}[thm]

\newcommand{\xar}[1]{\xrightarrow{#1}}
\newcommand{\rar}[1]{\xar{#1}}
\newcommand{\isom}{\rar{\simeq}}

\newcommand{\into}{\hookrightarrow}
\newcommand{\onto}{\twoheadrightarrow}

\newcommand{\La}{\Lambda}

\newcommand{\Loc}{\on{Loc}}

\newcommand{\bDelta}{\mathbf{\Delta}}

\newcommand{\bA}{{\mathbb A}}
\newcommand{\bB}{{\mathbb B}}

\newcommand{\bG}{{\mathbb G}}

\newcommand{\bQ}{{\mathbb Q}}

\newcommand{\bZ}{{\mathbb Z}}

\newcommand{\cD}{{\mathcal D}}

\newcommand{\cG}{{\mathcal G}}

\newcommand{\cK}{{\mathcal K}}

\newcommand{\cP}{{\mathcal P}}

\newcommand{\cS}{{\mathcal S}}

\newcommand{\cY}{{\mathcal Y}}
\newcommand{\cZ}{{\mathcal Z}}

\newcommand{\sA}{{\EuScript A}}

\newcommand{\sC}{{\EuScript C}}
\newcommand{\sD}{{\EuScript D}}
\newcommand{\sE}{{\EuScript E}}
\newcommand{\sF}{{\EuScript F}}
\newcommand{\sG}{{\EuScript G}}

\newcommand{\sK}{{\EuScript K}}

\newcommand{\sM}{{\EuScript M}}
\newcommand{\sN}{{\EuScript N}}
\newcommand{\sO}{{\EuScript O}}

\newcommand{\sX}{{\EuScript X}}
\newcommand{\sY}{{\EuScript Y}}
\newcommand{\sZ}{{\EuScript Z}}

\newcommand{\fL}{{\mathfrak L}}

\newcommand{\fg}{{\mathfrak g}}

\newcommand{\fl}{{\mathfrak l}}

\newcommand{\fp}{{\mathfrak p}}

\newcommand{\fs}{{\mathfrak s}}

\newcommand{\on}{\operatorname}
\newcommand{\ol}{\overline}
\newcommand{\ul}{\underline}

\newcommand{\mathendash}{\text{\textendash}}

\newcommand{\ldotsplus}{\mathinner{\ldotp\ldotp\ldotp\ldotp}}

\newcommand{\Ker}{\on{Ker}}
\newcommand{\Coker}{\on{Coker}}
\newcommand{\End}{\on{End}}
\newcommand{\Hom}{\on{Hom}}

\newcommand{\Aut}{\on{Aut}}
\newcommand{\Spec}{\on{Spec}}

\newcommand{\id}{\on{id}}
\newcommand{\Id}{\on{Id}}

\newcommand{\Ad}{\on{Ad}}

\newcommand{\ind}{\on{Ind}}

\newcommand{\Rep}{\mathsf{Rep}}

\newcommand{\act}{\on{act}}

\newcommand{\codim}{\on{codim}}

\renewcommand{\dot}{\bullet}

\renewcommand{\Im}{\on{Image}}

\newcommand{\Tor}{\on{Tor}}

\newcommand{\vph}{\varphi}
\newcommand{\vareps}{\varepsilon}
\newcommand{\varga}{
    \mathchoice
        {\hbox{\normalsize\textipa{7}}}
        {\hbox{\normalsize\textipa{7}}}
        {\hbox{\scriptsize\textipa{7}}}
        {\hbox{\tiny\textipa{7}}}
}
\newcommand{\vG}{\varGamma}

\newcommand{\Vect}{\mathsf{Vect}}

\newcommand{\Res}{\on{Res}}
\newcommand{\Gr}{\on{Gr}}

\newcommand{\Sym}{\on{Sym}}

\newcommand{\LocSys}{\on{LocSys}}

\newcommand{\Stab}{\on{Stab}}
\renewcommand{\mod}{\mathendash\mathsf{mod}}

\newcommand{\sinf}{\!\frac{\infty}{2}} 

\newcommand{\colim}{\on{colim}}

\newcommand{\DGCat}{\mathsf{DGCat}}
\newcommand{\ShvCat}{\mathsf{ShvCat}}

\newcommand{\Gpd}{\mathsf{Gpd}}

\renewcommand{\lim}{\on{lim}}
\newcommand{\Ind}{\mathsf{Ind}}
\newcommand{\Pro}{\mathsf{Pro}}

\newcommand{\TwoHom}{\mathsf{Hom}}

\newcommand{\Tot}{\on{Tot}}
\newcommand{\heart}{\heartsuit}

\newcommand{\Oblv}{\on{Oblv}}
\newcommand{\Av}{\on{Av}}
\newcommand{\Nm}{\on{Nm}}

\newcommand{\ld}{\check}

\newcommand{\Perf}{\mathsf{Perf}}

\newcommand{\IndCoh}{\mathsf{IndCoh}}
\newcommand{\QCoh}{\mathsf{QCoh}}

\newcommand{\AffSch}{\mathsf{AffSch}}
\newcommand{\PreStk}{\mathsf{PreStk}}

\newcommand{\Alg}{\mathsf{Alg}}

\newcommand{\Lie}{\on{Lie}}

\newcommand{\Tate}{\mathsf{Tate}}
\renewcommand{\Gauge}{\on{Gauge}}

\renewcommand{\ast}{\varoast}

\renewcommand{\o}[1]{\overset{\circ}{#1}}

\renewcommand{\subset}{\subseteq}

\newcommand{\gl}{\fg\fl}
\renewcommand{\sl}{\fs\fl}

\makeatletter
\newcommand{\biggg}{\bBigg@{4}}
\newcommand{\Biggg}{\bBigg@{5}}
\makeatother

\begin{comment}
\makeatletter
\def\@setdate{%
  \textit{Updated: } \today
  \fi
  \@addpunct.%
}
\makeatother
\end{comment}

\date{\today}

\begin{document}

\frenchspacing

\setlength{\epigraphwidth}{0.9\textwidth}
\renewcommand{\epigraphsize}{\footnotesize}

\title{On the notion of spectral decomposition in local geometric Langlands}

\author{Sam Raskin}

\dedicatory{For Sasha Beilinson}

\address{Massachusetts Institute of Technology, 
77 Massachusetts Avenue, Cambridge, MA 02139.} 
\email{sraskin@mit.edu}

\begin{abstract}

The geometric Langlands program is distinguished in assigning
spectral decompositions to all representations,
not only the irreducible ones.
However, it is not even clear what is meant by a spectral decomposition
when one works with non-abelian reductive groups and with 
ramification.
The present work is meant to clarify the situation, showing
that the two obvious candidates for such a notion 
coincide. 

More broadly, we study the moduli space of (possibly irregular) de Rham
local systems from the perspective of homological algebra.
We show that, in spite of its infinite-dimensional nature, this
moduli space shares many of the nice features of
an Artin stack. Less broadly, these results support the belief in
the existence of a version of geometric Langlands allowing
arbitrary ramification.

Along the way, we give some apparently new, if unsurprising,
results about the algebraic geometry of 
the moduli space of connections, using Babbitt-Varadarajan's
reduction theory for differential equations.

\end{abstract}

\maketitle

\setcounter{tocdepth}{1}
\tableofcontents

%%%%%%%%%%%%%%%%%%%%
%%%%%%%%%%%%%%%%%%%%
%%%%%%%%%%%%%%%%%%%%

\section{Introduction}

%%%%%%%%%%%%%%%%%%%%
%%%%%%%%%%%%%%%%%%%%
%%%%%%%%%%%%%%%%%%%%

\subsection{Local geometric Langlands}

Let $k$ be a ground field of characteristic zero, let $K \coloneqq k((t))$
for $t$ an indeterminate. Let $\o{\cD} \coloneqq \Spec(K)$ be the (``formal") punctured disc.
Let $G$ be a split reductive group over $k$, and let 
$\ld{G}$ be a split reductive group over $k$ with root datum dual 
to that of $G$.

Recall the format of local geometric Langlands from \cite{fg2}: there it is suggested
that, roughly speaking, DG categories\footnote{Throughout this introduction,
\emph{DG category} means \emph{presentable} (i.e., cocomplete plus a set theoretic condition) DG category.}
acted on (strongly) by the loop group
$G(K)$ should be equivalent to DG categories over the moduli space $\LocSys_{\ld{G}}(\o{\cD})$ 
of \emph{de Rham local Langlands parameters}, i.e., $\ld{G}$-bundles on $\o{\cD}$ with connection. 
Note that we work allow irregular singularities in our local systems, which by an old analogy is
parallel to wild ramification in the arithmetic theory. 

\subsection{}

Some remarks are in order.

\begin{itemize}

\item For the reader who is not completely comfortable with the translation
from the arithmetic theory: 

A DG category acted on by $G(K)$ should be thought
of as analogous to a smooth representation. The basic example is
the category of $D$-modules on a space acted on by $G(K)$ (e.g., $D$-modules
on the affine Grassmannian). 

One can then think of a category over\footnote{This paper
is an attempt to understand the phrase
``category over," so the reader should not think us too remiss in 
not yet explaining at a technical level what is meant here. 
However, we ask the reader to suspend disbelief for the moment, 
and to imagine something
like a DG category with a spectral decomposition over 
$\LocSys_{\ld{G}}(\o{\cD})$
(e.g., one should have the ability to take fibers at local systems).} 
$\LocSys_{\ld{G}}(\o{\cD})$ as something
richer than a measure on the space of Langlands parameters, and which is
precisely measuring the spectral decomposition of the corresponding 
smooth representation.

\vspace{10pt}

\item This idea is quite appealing. 
One of the distinguishing features of geometric Langlands, in contrast to the arithmetic theory,
is the existence of geometric structures on the sets of spectral parameters, and the suggestion 
(c.f. \cite{hitchin}, \cite{dennis-laumonconf})
that pointwise spectral descriptions should generalize over these moduli spaces. 
The use of spectral decompositions in families then allows to remove irreducibility hypotheses
from the analogous arithmetic questions.

\vspace{10pt}

\item Such an equivalence is not expected to hold literally as is, morally because of the existence
of Arthur parameters in the arithmetic spectral theory. 

However, some form of this conjecture may be true
as is if we take \emph{tempered} categories acted on by $G(K)$, c.f. \cite{arinkin-gaitsgory}.
This notion warrants further study: one can make a number of 
precise conjectures for which there are not obvious solutions.

\vspace{10pt}

\item There are two main pieces of evidence for believing in such theory.

First, Beilinson long ago observed\footnote{
See e.g. \cite{beilinson-heisenberg} 
Proposition 1.4. It is necessary here to seek out the published 
version of the article and not the preprint.} 
that Contou-Carr\`ere's construction of Cartier self-duality of 
$\bG_m(K)$ implies a precise form of the conjecture for a 
torus.\footnote{More precisely,
one should use Gaitsgory's notion of (and results on) 
1-affineness \cite{shvcat},
as will be discussed later: see especially \S \ref{ss:locsysgm-1aff}.} 

Second, (derived) geometric Satake 
(c.f. \cite{mirkovic-vilonen}, \cite{fgv}, \cite{bezrukavnikov-finkelberg}
and \cite{arinkin-gaitsgory}) and Bezrukavnikov's geometric affine Hecke
theory (c.f. \cite{arkhipov-bezrukavnikov}, \cite{bez-main}) fit
elegantly into this framework. They completely settle the questions 
over (the formal completion of) the locus of unramified connections 
and regular connections with unipotent monodromy respectively
(and are instructive about how to understand the temperedness
issues).

\vspace{10pt}

\item Local geometric Langlands is supposed to satisfy various
compatibilities as in the arithmetic theory, e.g. there should be 
a compatibilities with parabolic induction and with Whittaker models.

Much\footnote{This paragraph is included for the sake of academic
completeness, but we explicitly note that it plays no role in what follows.
The reader unfamiliar with this idea does not need to follow it,
and does not need to turn to \cite{hitchin} and \cite{fg2} to catch up.}
more intriguing is the compatibility with Kac-Moody representations
at the critical level: this idea is implicit in \cite{hitchin}, 
and is explicitly proposed in \cite{fg2}. There is nothing\footnote{Perhaps
this is too strong a claim: as is well-known among
experts, the appearance of opers can be understood
as a spectral analogue of the Whittaker model, which normally only
appears on the geometric side. Nevertheless, such a thing is impossible
arithmetically.} analogous in the arithmetic theory, so this marks
one of the major points of departure of geometric Langlands
from the usual theory of automorphic forms. 
Moreover, such a compatibility
has deep impliciations in representation theory, which are of independent
interest.

\end{itemize}

\subsection{Local geometric Langlands?}

Though there are exceptions in special cases, %refs?
local geometric Langlands (for non-abelian $G$) 
essentially stalled out after Bezrukavnikov's theory.

There may be many reasons for this: Bezrukavnikov's theory
invokes many brilliant constructions, and perhaps no one has figured out
how to extend these. But more fundamentally, there are 
serious technical challenges when one reaches beyond Iwahori.

On the geometric side, for compact open subgroups 
$\cK \subset G(K)$ smaller than Iwahori, the $\cK$-orbits
on $G(K)/\cK$ are not ``combinatorial," i.e., the orbits
are not discretely parameterized. For starters,
this means one must handle non-holonomic $D$-modules.
More seriously, this transition abandons the comfort zone of 
classical geometric representation theory.

On the spectral side, recall that
the formal completion of the locus of regular singular (de Rham) local
systems with unipotent monodromy is isomorphic to the
formal completion of $\ld{\sN}/\ld{G} \subset \ld{\fg}/\ld{G}$ 
for $\ld{\sN} \subset \ld{\fg}$ the nilpotent cone. 
However, beyond this locus, $\LocSys_{\ld{G}}(\o{\cD})$ 
is no longer an Artin stack, and is not 
even finite type. So again, we find ourselves out our 
comfort zone.

This is to say the skeptic would not be rash in asking
if there is any true local geometric Langlands (which for starters
would incorporate irregular singularities);
nor to suggest that Iwahori ramification is the limit of
what is apparently just a 
geometric shadow of a much richer arithmetic theory. 
If our interlocutor is right, then there is not much more to explore in geometric Langlands,
and the subject is nearing exhaustion.

\subsection{}

But we remember Beilinson's class field theory for de Rham local 
systems and draw some resilience from it. 

\subsection{}

This paper is essentially an attempt to study how bad the geometry on the spectral side of
local geometric Langlands is.

The results are surprisingly positive: from the perspective of
homological algebra, $\LocSys_{\ld{G}}(\o{\cD})$
has all the favorable features of an Artin stack.

In short summary, our main result compares two (a priori quite different, c.f. below) notions of
category over $\LocSys_{\ld{G}}(\o{\cD})$, showing that
the two coincide. This result therefore answers the longstanding
question:
what is a category over $\LocSys_{\ld{G}}(\o{\cD})$? I.e., what
is the spectral side of (tempered) local geometric Langlands?

As a happy output of our methods, we also show 
that $\QCoh(\LocSys_{\ld{G}}(\o{\cD}))$ is compactly generated as a DG category,
which comes as a bit of a surprise.

So, perhaps the situation is not so bad.

\begin{rem}

As we will discuss below, these results can be understood
as a geometric strengthening of the simple observation that
tangent spaces at field-valued points of $\LocSys$ are finite-dimensional.

\end{rem}

\subsection{History}

We were first asked this question by Dennis Gaitsgory in 2010.
He informs us that the ambiguity in what was meant by a category
over $\LocSys_{\ld{G}}(\o{\cD})$ 
was the reason that no formulation of a local
geometric Langlands conjecture was given by him and Frenkel in \cite{fg2},
nor by Beilinson earlier.

\subsection{Changing notation}

At this point, we stop referring to any Langlands duality. For the remainder of the paper, 
let $G$ be an affine algebraic group over $k$, which will play the role that $\ld{G}$ played above. 
In particular, we do not assume that $G$ is reductive
in what follows.

\subsection{Main results}

We now proceed to give a detailed description of the results of this paper.

We proceed in increasing sophistication. 
We begin in \S \ref{ss:geom-start}-\ref{ss:geom-finish}
with the main novel geometric result
of this paper, which is less technical than the rest of the paper
in that it does not involve DG categories. 

In \S \ref{ss:intro-cptgen}, we will comment on the compact generation
of $\QCoh$, indicating why the claim is non-trivial
(i.e., why it encodes at a technical level the idea that
$\LocSys_G(\o{\cD})$ is like an Artin stack).

Finally, in \S \ref{ss:intro-1aff}, we will begin to discuss 1-affineness
and what could be meant by a category over $\LocSys_G(\o{\cD})$.
Recall that our main result here compares two different notions.

\subsection{Geometry of $\LocSys_G(\o{\cD})$}\label{ss:geom-start}

We now provide a more detailed description of the
results of this paper. 

Our treatment will be thorough, since there are not great references
treating the moduli of local systems on the punctured disc as an object
of algebraic geometry (though it is old folklore), and hoping that the
expert reader will forgive this. So we will give a precise definition
of $\LocSys_G(\o{\cD})$ followed by some examples, and then
state our main geometric result.

\subsection{}

First, we need to define $\LocSys_G(\o{\cD})$.

Let $\AffSch$ denote the category of affine schemes.\footnote{Here
these are \emph{classical} affine schemes in the language of
derived algebraic geometry. Fortunately, derived algebraic geometry
plays a minor role in the present work (though it always
lurks in the background). Moreover, manipulations
with connections (which is only possible in classical algebraic geometry)
is crucial in \S \ref{s:locsys}, so our default convention is
the language of classical algebraic geometry.} 
We will presently define a functor:

\[
\begin{gathered}
\AffSch \to 1\mathendash\Gpd \\
S \mapsto \LocSys_G(\o{\cD})(S)
\end{gathered}
\] 

\noindent where $1\mathendash\Gpd$ is the category of 1-groupoids (i.e., the usual notion of groupoid, not a higher groupoid).
This will be the geometric object we mean by $\LocSys_G(\o{\cD})$: in the language \cite{dennis-dag} 
of derived algebraic geometry, we say that $\LocSys_G(\o{\cD})$ is a 1-truncated classical prestack.

\begin{warning}

We emphasize from the onset that
we will \emph{not} sheafify this functor for any topology; though we are trained always to sheafify, not sheafifying 
actually avoids some useless and unnecessary confusion,\footnote{E.g., we never
need to worry about the subtle questions of projectivity of an $A((t))$-module
versus local freeness on $\Spec(A)$: this question is irrelevant for us.
So the cost is some awkwardness, and the benefit is that we never
engage with some subtleties.}
 and by descent does not change the categories we are interested
in (namely, quasi-coherent sheaves and sheaves of categories).

\end{warning}

\subsection{}

Let $\Omega_K^1$ denote the $K$-line of $k$-linear continuous differentials for $K$.
So we have a map $d:K \to \Omega_K^1$, and our coordinate $t$ defines the
basis element $dt \in \Omega_K^1$ so that $df = f' \cdot dt$. For economy, 
we usually use the notation $Kdt$ for $\Omega_K^1$, though we will not be using
our coordinate in any symmetry breaking way.

More generally, for $A$ a $k$-algebra and $V$ an $A((t))$-module,
we let $Vdt \coloneqq \Omega_K^1 \otimes_K V$.  

\subsection{Description of $\LocSys_G(\o{\cD})$ as a quotient}

We recall that there is a \emph{gauge} action of the indscheme $G(K)$ on the indscheme\footnote{We
begin a practice here where sometimes we view $\fg((t))dt$ (or $\fg[[t]]dt$) as a geometric object, i.e.,
an indscheme (or scheme), and sometimes as a linear algebra object, i.e., a $K$-vector space. We promise
always to be careful to distinguish which of the two perspectives we are using.}
$\fg((t))dt$. 

Morally, this is given by the formula:

\[
\begin{gathered}
G(K) \times \fg((t))dt \to \fg((t)) \\
(g,\Gamma dt)\mapsto \Gauge_g(\Gamma dt) \coloneqq \Ad_g(\Gamma) dt - 
(dg) g^{-1}
\end{gathered}
\]

\noindent though we will give a precise construction in what follows. Modulo the construction
of this action, we define $\LocSys_G(\o{\cD})$ as the prestack (i.e., non-sheafified) 
quotient $\fg((t))dt/G(K)$, where we are using the gauge action.

One approach to constructing this action is to notice that for $G = GL_n$, this formula makes sense
as is, and then to use the Tannakian formalism to reduce to this case. Indeed, the only difficulty in making
sense of this formula is the term $dg \cdot g^{-1}$, and this case a clear meaning for matrices.

Alternatively, recall that $G$ carries the canonical $\fg$-valued 1-form (the Cartan form). This
is a right invariant $\fg$-valued 1-form on $G$. Note that right 
invariant $\fg$-valued 1-forms
are the same as vectors in $\fg \otimes \fg^{\vee}$, and our 1-form then corresponds to the
identity matrix in $\fg \otimes \fg^{\vee} = \End(\fg)$. Note that for $G = GL_n$, the Cartan form is 
given by the formula $dg\cdot g^{-1}$. 

For $S = \Spec(A) \in \AffSch$, let $\o{\cD}_S \coloneqq \Spec(A((t)))$.
Given $g: \o{\cD}_S \to G$ (i.e., an $S$-point of $G(K)$), one can pullback\footnote{
For precision, since $A((t))dt$ is a bit outside the usual format of differential algebraic geometry:
the composition $\sO_G \xar{g^*} A((t)) \xar{d} A((t))dt$ is a derivation, so induces a map
$\Omega_G^1 \to A((t))dt$. Similarly, we obtain $\fg \otimes \Omega_G^1 \to (\fg\otimes A)((t))dt$,
and our form is the image of the Cartan form under this map.}
 the Cartan form to obtain an $\fg$-valued $S$-relative differential form on $\o{\cD}_S$, i.e.,
an element of $(\fg\otimes A)((t))dt = (\fg((t))dt) (S)$.

\subsection{Description of $\LocSys_G(\o{\cD})$ via local systems}

We now give a somewhat more concrete description of $\LocSys_G(\o{\cD})$,
especially for $G = GL_n$.

\begin{defin}

A \emph{differential module} on $\o{\cD}_S$ is a finite rank free $A((t))$-module
$V$ equipped with an $A$-linear map:

\[
\nabla: V \to Vdt 
\] 

\noindent satisfying the Leibniz rule:

\[
\nabla(fv) = f\nabla(v) + df \otimes v.
\]

\end{defin}

For $G = GL_n$, we define $\LocSys_{GL_n}(\o{\cD})(S)$ as the
groupoid of differential modules on $\o{\cD}_S$ of rank $n$ (as an $A((t))$-module). Indeed, for an $(n\times n)$-matrix 
$\Gamma dt \in \fg((t))$, $\nabla \coloneqq d + \Gamma dt$
defines a connection on $A((t))^{\oplus n}$, and it is standard
to see that this gives an equivalence of groupoids.

More generally, we obtain a Tannakian description of $\LocSys_G(\o{\cD})$. 
We should be careful not to sheafify in $S$ though! So we take the groupoid
of symmetric monoidal functors from finite-dimensional representations of $G$ 
to differential modules on $\o{\cD}_S$, such that the resulting
functor from finite-dimensional representations of $G$ to free 
$A((t))$-modules
admits an isomorphism with $\Oblv \otimes_k A((t))$, for $\Oblv$ the forgetful functor
for $G$-representations.

\subsection{Example: $G = \bG_m$}\label{ss:gm}

We now give some important explicit examples of $\LocSys_G(\o{\cD})$,
which the reader should always keep in mind.
We begin with the case $G = \bG_m$.

Since $G$ is commutative, the gauge action is
given by $(g,\omega) \in \bG_m(K)\times Kdt \mapsto \omega - d\log(g)$.

We will compute the quotient by the gauge action in stages,
ultimately showing that $\LocSys_{\bG_m}(\o{\cD})$ is 
a product of three terms: 
$\bG_a/\bZ$ where $\bZ$ acts by translations,
the quotient of an ind-infinite dimensional affine space
by its underlying formal group (i.e., $\colim_n \bA_{dR}^n$) ,
and $\bB \bG_m$ the classifying stack of $\bG_m$.

First, we quotient by the first congruence subgroup
$\cK_1 \coloneqq \Ker(\bG_m(O) \to \bG_m)$
of $\bG_m(K)$ (here and everywhere, $O \coloneqq k[[t]]$).

Since $\cK_1$ is prounipotent, $g \in \cK_1$ can 
be written canonically as $\exp(\xi)$ for $\xi \in t k[[t]]$.
Here we are abusing notation: by (e.g.) $g \in \cK_1$, we implicitly
are taking an $S$-point (for $S$ a test affine scheme), and
we are considering $t k[[t]]$ as a scheme in the obvious
way. That aside, the action of $\cK_1$ then sends 
$(g = \exp(\xi), \omega)$ to $\omega - d\xi$. Therefore,
we see that $Kdt/\cK_1 = K dt/Odt$, since
$d: t k[[t]]\isom Odt$. Note that $Kdt/Odt$ is an ind-affine space,
and is ind-finite type.

Quotienting by $\bG_m(O)$, 
we see that $\bG_m = \bG_m(O)/\cK_1$ acts on
$Kdt/Odt$ trivially, and therefore the quotient is
$Kdt/Odt \times \bB \bG_m$.

Next, we quotient by $\widehat{\bG_m(O)}$, 
the formal completion of $\bG_m(O)$ in $\bG_m(K)$,
i.e., the connected component of the
identity in $\bG_m(K)$. The exponential map defines
a canonical isomorphism $(K/O)_0^{\wedge} \isom \widehat{\bG_m(O)}/\bG_m(O)$,
where $(K/O)_0^{\wedge}$ is the formal completion at the identity of the 
group indscheme $K/O$ (considered as a group under addition).
Moreover, the resulting action on $Kdt/Odt \times \bB \bG_m$
is trivial on the second factor, and on the first factor
(up to sign) is induced from the translation action of $Kdt/Odt$ on itself
and the homomorphism $K/O \xar{d} Kdt/Odt$. Since the
latter homomorphism identifies the source with the kernel
of the residue map, the resulting quotient is:

\[
\bG_a\cdot \frac{dt}{t} \times 
\Ker(\Res:Kdt/Odt \to \bG_a)_{dR} \times \bB \bG_m.
\]

\noindent Here we are using the well-known 
de Rham prestack construction, and the fact that de Rham of a group
is the quotient of the group by its underlying formal group.

Finally, we quotient by $\bG_m(K)/\widehat{\bG_m(O)} = \bZ$. It is easy
to see that the generator acts as translation by $\frac{dt}{t}$ on the
factor $\bG_a \frac{dt}{t}$ and trivially on the other factors above,
so the resulting quotient is:

\[
(\bG_a\cdot \frac{dt}{t})/(\bZ\cdot \frac{dt}{t}) \times 
\Ker(\Res:Kdt/Odt \to \bG_a)_{dR} \times \bB \bG_m
\]

\noindent as originally claimed.

\subsection{Example: $G$ is unipotent}\label{ss:ex-unipotent}

Next, we treat the case where $G$ is unipotent.
We will show that $\LocSys_G(\o{\cD})$ is isomorphic 
to the stack $\fg/G$, where $G$ acts via the adjoint action.
More precisely, we claim that the map:

\[
\begin{gathered}
\fg/G \to \LocSys_G(\o{\cD}) \\
\xi \mapsto d + \xi \frac{dt}{t}
\end{gathered}
\]

\noindent gives an isomorphism.

First, suppose that $G$ is a commutative unipotent group.
We have an exponential isomorphism
$\exp: \fg((t)) \isom G(K)$, and by commutativity, the gauge action 
then becomes:

\[
\begin{gathered}
\fg((t)) \times \fg((t))dt \isom G(K) \times \fg((t))dt \to \fg((t))dt \\
(\xi, \Gamma dt) \mapsto \Gamma dt - d\xi.
\end{gathered}
\]

\noindent This clearly gives the claim in this case.

In the general case, let $Z$ be the center of $G$.
By commutativity, 
$\LocSys_Z(\o{\cD})$ has commutative group  structure as a prestack.
Moreover, by centrality it acts on 
$\LocSys_G(\o{\cD})$, and the prestack
quotient for this action is $\LocSys_{G/Z}(\o{\cD})$. 
Therefore, we can apply induction on the degree of nilpotence to
reduce to the commutative case.

\subsection{Example: $G = \bG_m \ltimes \bG_a$}\label{ss:gm-ga}

In this case, we will see that $\LocSys_G(\o{\cD})$ is
\emph{not} locally of finite type, unlike the previous examples.
Note that the nature of this example 
forces similar bad behavior for $G = GL_2$,
or any other nonabelian reductive group.

In what follows, we 
realize $G$ as a matrix group in $GL_2$ in the usual way.

Consider $O = \{f = \sum_{i=0}^{\infty} a_it^i\}$ as a scheme,
and let $\bA^1 \times O$ map to $\LocSys_G(\o{\cD})$ though
the map:

\[
(\lambda,f) \in \bA^1 \times O \mapsto 
d+\begin{pmatrix} \frac{\lambda}{t} & f \\ 0 & 0 \end{pmatrix} dt.
\] 

\noindent If $\LocSys_G(\o{\cD})$ were locally of finite type,
then this map would factor through $\bA^1 \times O/t^nO$ for some $n$,
i.e., it would be isomorphic to a map depending only on
the first $n$ of the $a_i$ in the above notation. We claim this is not the
case.

To this end, we claim that the $G$-connection:

\[
d+\begin{pmatrix} -\frac{n}{t} & t^{n-1} \\ 0 & 0 \end{pmatrix} dt 
\]

\noindent is not isomorphic to the trivial $G$-connection for 
$n \in \bZ^{\geq 0}$, while
the connection
$d+\begin{pmatrix} -\frac{n}{t} & 0 \\ 0 & 0 \end{pmatrix} dt $
is isomorphic to a trivial connection. Note that this immediately
gives a contradiction to the locally finite type claim.

To see these results, we apply
a gauge transformation by 
$\begin{pmatrix} t^{-n} & 0 \\ 0 & 1\end{pmatrix} \in G(K)$.
The former connection becomes:

\[
d+\begin{pmatrix} 0 & t^{-1} \\ 0 & 0 \end{pmatrix} dt 
\]

\noindent which is easily seen to be nontrivial (one solution
to the corresponding order 2 differential equation is the logarithm), 
while the the latter connection becomes trivial, as desired.

\begin{rem}

This example also shows that $t^{-1}\fg[[t]]dt / G(O)$ is not
locally of finite type.

\end{rem}

\begin{rem}

This example admits another description. We will be slightly
informal here, since we will not use this language later in the paper.

Recall that to a two step complex of vector spaces (or vector
bundles), we can associate a stack. 
Then
$\LocSys_{\bG_m \ltimes \bG_a}(\o{\cD}) \to \LocSys_{\bG_m}(\o{\cD})$
is obtained by the (Tate analogue of) this construction
for the two step complex on $\LocSys_{\bG_m}(\o{\cD})$ computing
de Rham cohomology of a rank 1 local system.
In these terms, the above corresponds to the fact that
(regular singular) $\bG_m$-connections can have de Rham cocycles with
an arbitrarily high order of zero.

\end{rem}

\subsection{Semi-infinite motivation}

The difficulty in working with $\LocSys_G(\o{\cD})$ as a geometric object
is that it is the quotient of something very infinite-dimensional by
something else very infinite-dimensional. Here ``very infinite-dimensional"
means ``an indscheme of ind-infinite type." So it is not at all an Artin stack
(except for $G$ unipotent).

But $\LocSys_G(\o{\cD})$ \emph{does} share some properties
of an Artin stack. E.g., the cotangent complex is computed
using de Rham cohomology, and therefore has finite-dimensional
cohomology groups at \emph{field-valued} points.

So as a first approximation, one should think 
that the infinite-dimensional forces are canceling
each other out into something almost finite-dimensional. 
We have
seen that this does happen in a fairly precise the special cases where $G$
is unipotent or commutative, though it is harder to say what we mean
by this if $G$ is merely, say, solvable. 

In fact, everything in this paper
marked as a ``theorem" may be regarded as an 
attempt to say in what sense $\LocSys_G(\o{\cD})$ behaves
like an Artin stack (i.e., that it has better geometric properties
than, say, $\fg((t))/G(K)$ under the adjoint action). Moreover,
already for $G = GL_2$, these results are the only such statements
of which I am aware.

\subsection{}\label{ss:geom-finish}

Motivated by the example of $\bG_m \ltimes \bG_a$,
we will show the following results 
in \S \ref{s:locsys}. Here $G$ can be any affine algebraic group.

We begin with the case of regular connections.

\begin{thm*}[Thm. \ref{t:artin} and Ex. \ref{e:rs-evalues}]

Let $\cK_1 \subset G(O)$ denote the first congruence subgroup
(i.e., the kernel of the evaluation map $G(O) \to G$).

For $\Gamma_{-1} \in \fg$ a $k$-point, 
consider the gauge action of $\cK_1$ on the subscheme:

\[
\Gamma_{-1}dt + \fg[[t]]dt \subset \fg((t))dt.
\]

\noindent Then the quotient of this scheme by $\cK_1$ is an
Artin stack smooth over $k$ (in particular, it is of finite type).

\end{thm*}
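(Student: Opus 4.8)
The plan is to deduce this from the elementary, regular-singular case of Babbitt--Varadarajan reduction theory, which cuts the problem down to the quotient of a smooth finite-type scheme by a smooth finite-type group. Throughout, write $\cK_m \coloneqq \Ker(G(O) \to G(O/t^m))$, so $\cK_1$ is the group in the statement and $\Lie \cK_m = t^m \fg[[t]]$; write $X \coloneqq \Ga_{-1}\frac{dt}{t} + \fg[[t]]dt$ for the scheme being quotiented, and $N \geq 0$ for the least integer with $\ad_{\Ga_{-1}} + j$ invertible on $\fg$ for all $j > N$ (this $N$ exists since $\ad_{\Ga_{-1}}$ has only finitely many eigenvalues; for non-resonant $\Ga_{-1}$ one may take $N = 0$). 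The role of $N$ is visible infinitesimally: at a point $\nabla \in X$ the infinitesimal gauge action sends $\xi \in t\fg[[t]] = \Lie\cK_1$ to $\nabla(\xi) \in \fg[[t]]dt$, the de Rham differential of the adjoint connection, and this map of pro-(finite-dimensional) vector spaces is triangular for the $t$-adic filtrations with associated-graded pieces $\fg \cdot t^{j} \to \fg \cdot t^{j-1}dt$ equal to $\ad_{\Ga_{-1}} + j$. In particular $\nabla(-)$ is an isomorphism modulo finite-dimensional pieces, which is exactly the statement that the tangent complex $[\,t\fg[[t]] \xrightarrow{\nabla} \fg[[t]]dt\,]$ of $X/\cK_1$ has finite-dimensional cohomology at every point.

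The crux is to integrate this. I claim that for every affine test scheme $S = \Spec(R)$ and every $\nabla \in X(R)$ there is a \emph{unique} $g_\nabla \in \cK_{N+1}(R)$ with $\Gauge_{g_\nabla}(\nabla) \in X^{<N}(R)$, where $X^{<N} \coloneqq \Ga_{-1}\frac{dt}{t} + \bigoplus_{0 \leq i < N} \fg \cdot t^i dt$, and that $\nabla \mapsto g_\nabla$ is a morphism $X \to \cK_{N+1}$. For existence one runs a $t$-adically convergent iteration: writing $g_\nabla = \exp(\eta_{N+1}t^{N+1})\exp(\eta_{N+2}t^{N+2})\cdots$, choose $\eta_{i+1}$ in turn to kill the coefficient of $t^{i}dt$ for $i = N, N+1, \dots$; this is possible because that coefficient is modified by $(\ad_{\Ga_{-1}} + (i+1))\eta_{i+1}$, with $\ad_{\Ga_{-1}} + (i+1)$ invertible (as $i+1 > N$), and $\eta_{i+1}$ is then a universal polynomial in the coefficients of $\nabla$, so the procedure is base-change compatible and the product converges in $\cK_{N+1} = \lim_M \cK_{N+1}/\cK_M$. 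Uniqueness: if $g, g'$ both carry $\nabla$ into $X^{<N}$, then $g'g^{-1}$ fixes a point of $X^{<N}$ and the same triangular invertibility forces $g'g^{-1} = 1$. Feeding these formulas back in shows that $X^{<N} \times \cK_{N+1} \to X$, $(\Ga, h) \mapsto \Gauge_{h^{-1}}(\Ga)$, is an isomorphism of schemes (an invertible unipotent-triangular change of coordinates on these pro-(finite-type affine) schemes) and is equivariant for the $\cK_{N+1}$-action on the second factor; hence the $\cK_{N+1}$-action on $X$ is free with section $X^{<N}$, and the prestack quotient $X/\cK_{N+1}$ is the honest scheme $X^{<N}$.

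Granting this, the theorem follows: since $\cK_{N+1}$ is normal in $\cK_1$ and prestack quotients compose,
\[
X/\cK_1 \;\simeq\; \bigl(X/\cK_{N+1}\bigr)/\bigl(\cK_1/\cK_{N+1}\bigr) \;\simeq\; X^{<N}/\bigl(\cK_1/\cK_{N+1}\bigr),
\]
for the residual (algebraic) action of $\cK_1/\cK_{N+1} = \Ker(G(O/t^{N+1}) \to G)$ on the scheme $X^{<N} \cong X/\cK_{N+1}$. Now $X^{<N} \cong \bA^{N\dim\fg}$ is a smooth affine scheme of finite type and $\cK_1/\cK_{N+1}$ is a unipotent algebraic group of finite type, hence smooth over $k$ since $\on{char} k = 0$. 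Therefore $X/\cK_1$ is (up to the harmless sheafification of the warning in the introduction) the quotient of a smooth finite-type scheme by a smooth linear algebraic group: it is an Artin stack of finite type, and it is smooth over $k$ because it admits the smooth surjection $X^{<N} \to X/\cK_1$ from a smooth scheme.

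The main obstacle is the reduction step of the second paragraph: turning the ``triangular, invertible off finitely many resonances'' picture of the infinitesimal gauge action into an honest isomorphism $X \cong X^{<N} \times \cK_{N+1}$. One must verify that the normalizing iteration converges $t$-adically, that it is given by polynomial --- hence base-change compatible --- formulas (so that it produces a morphism of the relevant pro-(finite-type affine) schemes, not merely a bijection on points), and that the uniqueness and freeness bookkeeping is valid over an arbitrary affine base. This is precisely the formal, regular-singular special case of the reduction theory to be developed in \S\ref{s:locsys}; the finiteness of the tangent complex is the easy conceptual input, and all the work lies in integrating it into a statement about the prestack itself.
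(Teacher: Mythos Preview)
Your proof is correct and takes a more direct, classical route than the paper's. You explicitly construct a section $X^{<N}$ of the $\cK_{N+1}$-action via the standard regular-singular normalization iteration, obtaining an equivariant isomorphism $X \cong X^{<N} \times \cK_{N+1}$, whence $X/\cK_1 \cong X^{<N}/(\cK_1/\cK_{N+1})$ is visibly a finite-type Artin stack. The paper instead deduces this case from the general Theorem~\ref{t:artin}, which works under an abstract Fredholm hypothesis (verified here by Example~\ref{e:rs-evalues}): rather than producing a section, it shows via Lemmas~\ref{l:affine-iso} and~\ref{l:affine-fp} that the gauge-orbit map $\cK_N \times S \to t^{-r}\fg[[t]]dt \times S$ is a finitely presented closed embedding, and then uses a Noetherian approximation argument to factor $S \to S/\cK_N$ through a finite-level truncation $S_m$. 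Your argument is more transparent and yields an explicit presentation of the quotient, but it is specific to the regular-singular case, where the non-resonance condition lets one kill tails outright; the paper's machinery is designed so that the same framework handles the irregular case (Theorem~\ref{t:jumps}), where no such clean slice exists and one must reason from the Fredholm condition alone. One minor wording issue: in your uniqueness step, $g'g^{-1}$ need not literally fix a point of $X^{<N}$ but only carry one point of $X^{<N}$ to another; the same triangular-invertibility argument then forces $g'g^{-1}=1$, so the two points coincide a posteriori.
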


In other words, when we fix the polar part of the connection,
the quotient is an Artin stack. Note that the main difficulty in proving
this is the finiteness statement.

\begin{rem}

To emphasize, we have a very strange situation.
$\cK_1$ acts on $t^{-1}\fg[[t]]dt$ preserving the fibers
of the residue map $t^{-1}\fg[[t]]dt \to t^{-1}\fg[[t]]dt/\fg[[t]]dt = \fg$;
then the quotient by $\cK_1$ is an Artin stack on geometric fibers,
but is \emph{not} an Artin stack before we take fibers
(since it is not of finite type by \S \ref{ss:gm-ga}).

In more naive terms, for every fiber some congruence
subgroup acts freely, but the subgroup cannot be taken
independently of the fiber (again, by \S \ref{ss:gm-ga}).

This appears to be a new pathology in infinite type algebraic geometry.

\end{rem}

The result above is not quite true as is when we allow higher order poles,
but we will see that the following variant holds:

\begin{thm*}[Thms. \ref{t:artin} and \ref{t:jumps}]

For every $r>0$, there exists an integer $\rho$ (depending also on $G$)
such that for every $\Gamma_{-r},\ldots,\Gamma_{-r+\rho}$
$k$-points of $\fg$,
the quotient of the action of the {$(\rho+1)$-congruence} group 
$\cK_{\rho+1} \coloneqq \Ker(G(O) \to G(O/t^{\rho+1}O))$
on the subscheme of connections of the form:

\[
\Gamma_{-r} t^{-r}dt + \ldots + \Gamma_{-r+\rho} t^{-r+\rho}dt +
t^{-r+\rho+1}\fg[[t]]dt \subset \fg((t))dt
\]

\noindent is an Artin stack smooth over $k$.

\end{thm*}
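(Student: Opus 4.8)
The plan is to reduce the higher-pole statement to the regular-singular case (Thm. \ref{t:artin}) by a Babbitt--Varadarajan-style reduction argument that, after fixing enough leading terms of the polar part, straightens the connection into one whose irregularity is controlled. Concretely: given a connection $\nabla = d + (\Gamma_{-r}t^{-r} + \ldots + \Gamma_{-r+\rho}t^{-r+\rho})dt + t^{-r+\rho+1}\fg[[t]]dt$, I want to show that for $\rho$ large enough (depending only on $r$ and $G$) the leading terms $\Gamma_{-r},\ldots,\Gamma_{-r+\rho}$ determine, after a gauge transformation, a canonical ``formal type'' and a unipotent reduction group, so that the tail $t^{-r+\rho+1}\fg[[t]]dt$ is acted on freely (up to finite-dimensional ambiguity) by $\cK_{\rho+1}$. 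The finiteness of $\rho$ is exactly what Babbitt--Varadarajan's reduction theory buys: the relevant invariants (the ramification needed to split off the irregular part, the rational exponents appearing, the level at which the formal monodromy stabilizes) are bounded in terms of $r$ and $\rk G$.

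The key steps, in order, are as follows. First I would treat $G = GL_n$, where the Babbitt--Varadarajan / Levelt--Turrittin normal form is available in the literature. After passing to a ramified cover $t = s^e$ with $e \leq n!$ (or whatever bound B-V gives), a gauge transformation brings the connection to a direct sum $\bigoplus_j (q_j(s^{-1})\cdot \id + R_j)\,\frac{ds}{s}$ with $q_j$ polynomials in $s^{-1}$ of bounded degree and $R_j$ regular singular; crucially, the coefficients $q_j$ and the formal structure depend only on finitely many of the leading coefficients of $\Gamma$. This lets me split off a ``diagonal'' part (which is rigid, contributing no moduli once its leading terms are fixed) from a regular-singular part, to which Thm. \ref{t:artin} applies on geometric fibers, giving an Artin stack smooth over $k$. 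Second, I would bound $\rho$: it must exceed $r\cdot e$ plus the Katz--Turrittin bound on how far the irregular part ``sees,'' all of which are explicit in $r$ and $n$. Third, I would pass from $GL_n$ to general affine algebraic $G$ via a faithful representation $G \hookrightarrow GL_n$ together with the Tannakian description of $\LocSys_G(\o{\cD})$ given above; one must check that the reduction group and formal type for a $G$-connection are compatible with those of its pushforward, and that the resulting $\rho$ can be taken in terms of $G$. Finally, I would verify smoothness and the Artin property: smoothness follows because the deformation theory is governed by de Rham cohomology of the adjoint bundle, which has finite-dimensional cohomology and—after fixing the polar truncation—vanishing obstruction in the relevant degree; the Artin property and finite type follow from writing the quotient as a finite-type groupoid built from the regular-singular piece (handled by Thm. \ref{t:artin}) and an affine-space worth of ``tail'' corrections modulo the free $\cK_{\rho+1}$-action.

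The main obstacle is making the dependence of $\rho$ on $(r,G)$ genuinely uniform and effective, i.e. verifying that Babbitt--Varadarajan's reduction is ``flat in families'': the leading $\rho+1$ coefficients $\Gamma_{-r},\ldots,\Gamma_{-r+\rho}$ vary over a $k$-scheme (indeed the theorem quantifies over all such $k$-points), and I need the normal-form construction, the ramification index $e$, and the splitting into irregular-plus-regular-singular parts to be performed \emph{simultaneously} and algebraically as these coefficients vary, not just pointwise. The classical literature states B-V reduction over a single field; upgrading it to a statement over an arbitrary base $\Spec(A)$—with the unipotent reduction group and the polynomial part $q_j$ depending algebraically on the parameters—is where the real work lies, and it is presumably the content of \S \ref{s:locsys}. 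Once that uniformity is in hand, the rest is bookkeeping: fix the truncated polar data, split off the rigid irregular part, apply Thm. \ref{t:artin} to what remains, and conclude that $\cK_{\rho+1}$ acts with the quotient an Artin stack smooth over $k$.
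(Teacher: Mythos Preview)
Your instinct that Babbitt--Varadarajan reduction is the key input is right, but you have misread the logical structure. Theorem~\ref{t:artin} is \emph{not} the regular-singular case: it is the general implication ``adjoint connection Fredholm $\Rightarrow$ quotient is a smooth Artin stack,'' valid for any $r$ and any base $T$. The regular-singular statement in the introduction is tagged ``Thm.~\ref{t:artin} and Ex.~\ref{e:rs-evalues}'' precisely because the example supplies the Fredholm hypothesis in that case. The present statement combines Theorem~\ref{t:artin} with Theorem~\ref{t:jumps}, and the latter is where B--V enters: it shows that once enough leading coefficients are fixed, the adjoint connection on the fiber $S$ is Fredholm.

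This decoupling is the point, and it dissolves the obstacle you flag. The paper never runs B--V in families over the tail. In Theorem~\ref{t:jumps} the leading terms are constants in a field $F$; the gauge transformations in the reduction (the $g_i \in \cK_i$ built in \S\ref{ss:nilp-redn} and after) depend only on those constants, so the normal-form manipulation is constant over $S$ in its leading part and one only needs to check a linear property---Fredholmness of $\nabla: V \to V\,dt$---which survives tensoring with $A$. There is no need to make the Levelt--Turrittin decomposition itself algebraic in the tail parameters.

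What your proposal genuinely misses is the content of Theorem~\ref{t:artin}: passing from the linear statement (Fredholmness, i.e.\ the differential of the gauge action has perfect cokernel) to the nonlinear one (the gauge action map $\cK_N \times S \to S \times_T S$ is a finitely presented closed embedding). The paper calls this ``integrating from infinitesimal information to global information,'' and handles it via two ad hoc commutative-algebra lemmas: Lemma~\ref{l:affine-iso} gives a criterion for a map of infinite-type affine spaces to be a closed embedding by comparing it level-by-level with its linearization, and Lemma~\ref{l:affine-fp} deduces finite presentation of such a closed embedding from coherence of the conormal sheaf (using a result on prime ideals in infinite polynomial rings). Your sentence ``the Artin property and finite type follow from writing the quotient as a finite-type groupoid built from the regular-singular piece (handled by Thm.~\ref{t:artin})'' is either circular or assumes this step away; it is in fact the hardest part of the argument, and nothing in your sketch addresses it.
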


\begin{rem}

This second result is a special case of the first result:
the only additional claim is that for $r = -1$, we can take $\rho = 0$.
Moreover, as indicated above, this will be clear
from the formulation of Theorem \ref{t:artin} and from
Example \ref{e:rs-evalues}. We emphasize that the only difference
from the previous theorem is that for $r>1$, we typically have
$\rho \geq r$, i.e., we have to fix more than just the polar part
of the connection.

\end{rem}

\begin{rem}

Note that the latter cited theorem, Theorem \ref{t:jumps}, 
is strongly influenced by Babbitt-Varadarajan \cite{babbitt-varadarajan}, 
and closely follows their
method for treating connections on the formal punctured disc.

\end{rem}

\begin{rem}

The proof of Theorem \ref{t:artin}, which says
that an infinitesimal finiteness hypothesis implies quotients
of the above kind are finite type, is surprisingly tricky, especially
considering how coarse the hypothesis and the conclusion 
of this result are. 
I would be very glad to hear a simpler proof.

\end{rem}

\subsection{Compact generation of $\QCoh$}\label{ss:intro-cptgen}

Next, let $\QCoh(\LocSys_G(\o{\cD}))$ denote the 
(cocomplete) DG category of quasi-coherent sheaves 
on $\LocSys_G(\o{\cD})$. Recall that $\QCoh$ is defined
as an appropriate homotopy limit for any prestack, 
and we are simply applying this construction in 
the case of $\LocSys_G(\o{\cD})$. 

\begin{thm*}[Thm. \ref{t:cpt-gen}]

If $G$ is reductive, then 
$\QCoh(\LocSys_G(\o{\cD}))$ is compactly generated.

\end{thm*}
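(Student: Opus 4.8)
The plan is to reduce the statement to a concrete exhaustion of $\LocSys_G(\o\cD)$ by quotients of the form treated in the earlier Artin-stack theorems, and then to use the fact that $\QCoh$ of a smooth Artin stack of finite type (with affine diagonal, or more generally in the situations at hand) is compactly generated, together with standard descent/limit properties of $\QCoh$. Concretely, I would first record that $\LocSys_G(\o\cD)$ is the union (as a prestack) of the locally closed substacks $\LocSys_G^{\leq r}(\o\cD)$ of connections with pole order $\leq r$, and that each of these is itself an increasing union, indexed by the choice of ``leading terms'' $\Gamma_{-r},\dots,\Gamma_{-r+\rho}$ (where $\rho = \rho(r,G)$ is the integer produced by Theorem~\ref{t:jumps}), of pieces of the form $\bigl(\Gamma_{-r}t^{-r}dt + \cdots + t^{-r+\rho+1}\fg[[t]]dt\bigr)/\cK_{\rho+1}$, each of which is a \emph{smooth finite-type Artin stack} by the cited theorems. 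The loop group $G(K)$ acts on everything by gauge transformations, and modding out by the remaining (finite-dimensional, after fixing pole data) part of $G(K)$ relates these affine-scheme quotients to honest opens/locally closed pieces of $\LocSys_G$.

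**Reduction to a filtered colimit.** Next I would organize $\LocSys_G(\o\cD)$ as a filtered colimit (in prestacks) of smooth finite-type Artin stacks $\cX_\alpha$ along \emph{closed immersions} — or at least along maps for which $\QCoh$ behaves well. For this one uses the reduction theory of Babbitt--Varadarajan as packaged in Theorem~\ref{t:jumps}: after a gauge transformation the pole order and the relevant leading coefficients are invariants, the ``level'' $\rho$ needed is bounded in terms of $r$ and $G$, and each stratum-by-leading-term piece is finite type. Since $\QCoh(\colim_\alpha \cX_\alpha) = \lim_\alpha \QCoh(\cX_\alpha)$ for a colimit of prestacks, and since for a nice filtered diagram of Artin stacks with the transition maps affine (or closed immersions) this limit is again compactly generated with compact objects coming from the finite stages, one concludes compact generation. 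The key input here — and the reason reductivity is assumed — is precisely that the ``leading term spaces'' $\fg$ (resp.\ the adjoint quotients thereof) behave well: for reductive $G$ the adjoint quotient $\fg/G$, and more generally the spaces parametrizing formal types of irregular connections, are of finite type with the needed properties, whereas for non-reductive $G$ (e.g.\ $\bG_m\ltimes\bG_a$, cf.\ \S\ref{ss:gm-ga}) this fails.

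**Handling the limit of DG categories.** The technical heart is verifying that the homotopy limit $\lim_\alpha \QCoh(\cX_\alpha)$ over the exhausting diagram is compactly generated, and that a set of compact generators is obtained by pushing forward compact generators along the (co)restriction maps $\QCoh(\cX_\alpha) \to \QCoh(\LocSys_G(\o\cD))$. For a filtered colimit of quasi-compact quasi-separated (here: smooth finite-type) Artin stacks along closed immersions $i_{\alpha\beta}\colon \cX_\alpha \into \cX_\beta$, the pushforward functors $i_{\alpha\beta,*}$ preserve compactness (closed immersions of finite Tor-dimension, or at least the relevant cohomological bounds hold here because everything is smooth over $k$), so the compact objects of the colimit are generated under finite colimits and retracts by the images of compact objects from the stages; one then checks that these images detect vanishing, i.e.\ an object of $\QCoh(\LocSys_G(\o\cD))$ that is right-orthogonal to all of them restricts to zero on every $\cX_\alpha$ and hence is zero. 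I expect the \emph{main obstacle} to be exactly this point: one must confirm that the exhaustion $\{\cX_\alpha\}$ can be arranged with transition maps that are closed immersions (not merely locally closed), so that $\QCoh$ of the colimit is controlled, and that the pushforwards along these transitions and along $\cX_\alpha \to \LocSys_G$ preserve compactness — this is where the smoothness in Theorems~\ref{t:artin} and~\ref{t:jumps} and the finiteness of de Rham cohomology of local systems are really used, and where care with the (non-sheafified, infinite-type) prestack structure is needed.

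**Remark.** It is worth noting that without reductivity the conclusion genuinely fails, which is consistent with the failure of local finite type in \S\ref{ss:gm-ga}; the reductive hypothesis enters only through the finiteness of the spaces of formal types, after which the argument is a formal consequence of the Artin-stack approximation results proved earlier in the paper.
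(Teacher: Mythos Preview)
There are two genuine gaps in your approach, and together they mean the strategy cannot be carried out as written.

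\textbf{The exhaustion you describe does not exist.} Theorems \ref{t:artin} and \ref{t:jumps} produce smooth finite-type Artin stacks only as \emph{geometric fibers} of the map $t^{-r}\fg[[t]]dt/\cK_{r+s} \to t^{-r}\fg[[t]]dt/t^s\fg[[t]]dt$, not as pieces of an exhaustion of $\LocSys_G(\o\cD)$. The paper emphasizes (in the remark following the first displayed theorem in \S\ref{ss:geom-finish}, and in \S\ref{ss:gm-ga}) that the total space $t^{-r}\fg[[t]]dt/\cK_{r+s}$ is \emph{not} locally of finite type, even for reductive $G$: the congruence subgroup needed to make a given fiber finite-dimensional cannot be chosen uniformly. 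So there is no filtered system of finite-type Artin stacks $\cX_\alpha$ with closed-immersion transition maps exhausting $\LocSys_G(\o\cD)$; the fibers are indexed by a continuous parameter space and do not assemble into a filtered colimit of the kind you describe. The paper's use of these fiberwise results is instead via a Cousin spectral sequence argument (see \S\ref{ss:intro-cptgen} and the proof of Proposition \ref{p:tame-amplitude}) to bound the cohomological amplitude of $\Gamma$ on $t^{-r}\fg[[t]]dt/G(O)$.

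\textbf{Reductivity enters elsewhere.} You locate the reductive hypothesis in ``finiteness of the spaces of formal types,'' but the leading-terms spaces $t^{-r}\fg[[t]]dt/t^s\fg[[t]]dt$ are finite-dimensional affine spaces for \emph{any} affine algebraic group $G$; indeed Proposition \ref{p:tame-cpt-gen} and Corollary \ref{c:cptgen-int} (compact generation of $\QCoh(\fg((t))dt/G(O))$) hold without any reductivity assumption. Reductivity is used only at the very last step: the map $\fg((t))dt/G(O) \to \LocSys_G(\o\cD)$ is a $\Gr_G$-fibration, and for reductive $G$ the affine Grassmannian is ind-proper, so $p^*$ admits a left adjoint. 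Since $p^*$ is continuous and conservative, this transfers compact generation from $\fg((t))dt/G(O)$ to $\LocSys_G(\o\cD)$. Your remark that ``without reductivity the conclusion genuinely fails'' is also too strong: the paper states this as an open question (see the Question following the statement in \S\ref{ss:intro-cptgen}), not as a known failure.
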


Let us comment on why this result is nontrivial.

In forming $\LocSys_G(\o{\cD})$, we take
a certain quotient by $G(K)$. Since $G$ is reductive,
$G(K)/G(O) = \Gr_G$ is ind-proper, 
and the major difficulty arises in quotienting by $G(O)$.

Indeed, one can easily see that $\QCoh(\bB G(O))$ has
\emph{no} non-zero compact objects. Ultimately, this is
because the trivial representation has infinite cohomological
amplitude, because of the infinite dimensional pro-unipotent
tail of $G(O)$ (more precisely, one should
combine this observation with left completeness of the
canonical $t$-structure on this category). 
In other words, the global sections functor
on $\bB G(O)$ has infinite cohomological amplitude,
ruling out compactness.

We will prove the compact generation
for $\LocSys_G(\o{\cD})$ 
by showing that global sections for $t^{-r}\fg[[t]]dt/G(O)$ 
is cohomologically bounded (for the natural $t$-structure on this quotient).

Here's a sketch of the proof. This
result can be checked after replacing $G(O)$ by the
$\rho$th congruence subgroup $\cK_{\rho}$.
Then the claim follows from a Cousin spectral sequence argument
by noting that the geometric fibers of the map
$t^{-r}\fg[[t]]dt/\cK_{\rho} \to t^{-r}\fg[[t]]dt/t^{-r+\rho}\fg[[t]]dt$
are Artin stacks for $\rho$ large enough (by the
earlier geometric theorems), and the further (easy) observation
that the dimensions of these fibers are uniformly bounded
(in terms of $\rho$ and $G$).

\begin{rem}

This argument illustrates the main new idea of this work:
$\LocSys_G(\o{\cD})$ is nice from the
homological perspective because its
worst pathologies are rooted
in the poor behavior that occurs as we move between the fibers
of the map $t^{-r}\fg[[t]]dt/\cK_{\rho} \to t^{-r}\fg[[t]]dt/t^{-r+\rho}\fg[[t]]dt$
(as has long been known),
and the Cousin filtration means that these pathologies disappear
in the derived category. 

\end{rem}

\begin{question}

The above theorem relies on the properness of $\Gr_G$,
which is why I only know it for $G$ reductive. We know
it for $G$ unipotent by separate means. 
Is $\QCoh(\LocSys_G(\o{\cD}))$ compactly generated
for general $G$? 
Already for $G = \bG_m \ltimes \bG_a$, I do not know the answer.

\end{question}

\subsection{1-affineness}\label{ss:intro-1aff}

We now discuss the notion of \emph{1-affineness} from
\cite{shvcat}, which plays a major role in this text.

\begin{rem}

As some motivation for what follows:
1-affineness appears to play a key technical role in 
this flavor of geometric representation theory. Indeed, I think
I am not overstepping in asserting that every non-trivial 
formal manipulation in the subject is an application of 1-affineness,
or that the theorems on 1-affineness, all of which are contained in \cite{shvcat}, 
are what fundamentally undergirds the 
``functional analysis" of geometric representation theory. 

\end{rem}

\begin{rem}

This is the only review of 1-affineness given in the text,
and it may be slightly too detailed for an introduction.
We apologize to the reader if it seems to be so, and suggest
to skip anything that does not appear to be urgent.

\end{rem}

\subsection{}\label{ss:dgcat-linalg}

First, we briefly need to recall the linear algebra of DG categories.

We always work in the higher categorical framework, so our
default language is that a category is an $(\infty,1)$-category in the
sense of Lurie et al.

By a cocomplete DG category, we will always mean a presentable one,
i.e., a DG category admitting (small) colimits and 
satisfying a set-theoretic condition. The relevant set theory will lie
under the surface in our applications to e.g. the adjoint functor theorem,
and life is better for us all if we suppress it (but do not forget about it)
to the largest extent possible. 

Let $\DGCat_{cont}$ denote the category of cocomplete
(i.e., presentable) DG categories, with morphisms being continuous
(i.e., commuting with filtered colimits) DG functors. Note that
these functors actually commute with all colimits, since DG functors
tautologically commute with finite colimits. 

We let $\Vect$ denote the DG category of (complexes of) vector
spaces. For $A \in \Alg(\Vect)$, we let $A\mod$ denote the
DG category of left $A$-modules. For $A$ connective,
we let $A\mod^{\heart}$ denote the heart of the $t$-structure
on $A\mod$.

Recall that $\DGCat_{cont}$ is equipped with a standard tensor
product $\otimes$ with unit object $\Vect$.

For $\sA \in \Alg(\DGCat_{cont})$, we will often use $\sA\mod$ to
mean $\sA\mod(\DGCat_{cont})$. We sometimes
say that a functor $F:\sC \to \sD$ between objects of
$\sA\mod$ is $\sA$-linear if it is (equipped with a structure of)
morphism in $\sA\mod$: in particular, this means that the
functor $F$ commutes with colimits.

\subsection{}

Suppose that $\sY$ is a prestack in the sense of \cite{dennis-dag}:
note that this is inherently a notion of derived algebraic geometry. 
What should we mean by ``a (DG) category over $\sY$?"

First, if $\sY = \Spec(A)$ is an affine DG scheme, all roads lead
to Rome. I.e., the following structures on $\sC \in \DGCat_{cont}$ 
are equivalent:

\begin{itemize}

\item Functorially making $\Hom$s in $\sC$ into $A$-modules,
i.e., giving a morphism of $\sE_2$-algebras
$A \to Z(\sC)$, where $Z(\sC)$ is the (derived, of course) Bernstein center of $\sC$.

\item Giving $\sC$ the structure of $A\mod$-module in $\DGCat_{cont}$.

\end{itemize}

For a general prestack $\sY$, we have two options.

First, we could ask for a DG category \emph{tensored} over $\sY$,
i.e., an object of 
$\QCoh(\sY)\mod \coloneqq \QCoh(\sY)\mod(\DGCat_{cont})$
(where usual tensor products of quasi-coherent sheaves
makes $\QCoh(\sY)$ into a commutative algebra object
of $\DGCat_{cont}$). 

More abstractly, we could also ask for a (functorial) assignment
for every $f:\Spec(A) \to \sY$, of an assignment
of an $A$-linear category $f^*(\sC)$, with identifications:

\[
f^*(\sC) \underset{A\mod}{\otimes} B\mod = (f \circ g)^*(\sC)
\] 

\noindent for every $\Spec(B) \rar{g} \Spec(A) \rar{f} \sY$,
and satisfying higher (homotopical) compatibilities.
I.e., we ask for an object of the homotopy limit of
the diagram indexed by $\{\Spec(A) \to \sY\}$ and with
value the category of $A$-linear categories,
with induction as the structure functors.
We denote the resulting category by $\ShvCat_{/\sY}$.

Roughly speaking, we should think that the former notion
is more concrete, and that the latter notion has better functoriality
properties.

\begin{rem}

A toy model: a categorical level down, for $\sY$ a prestack,
these two ideas give two notions of ``vector space over $\sY$,"
namely, a $\Gamma(\sY,\sO_{\sY})$-module, or a
quasi-coherent sheaf on $\sY$.

\end{rem} 

As in this analogy, we have adjoint functors:

\[
\xymatrix{
\QCoh(\sY)\mod \ar@<.4ex>[rr]^(.55){\Loc = \Loc_{\sY}} &&
\ShvCat_{/\sY}
\ar@<.4ex>[ll]^(.45){\Gamma = \Gamma(\sY,-)}
}
\]

\begin{defin}[Gaitsgory, \cite{shvcat}]

$\sY$ is \emph{1-affine} if these functors are mutually
inverse equivalences.

\end{defin}

\begin{rem}

1-affineness is a much more flexible notion than usual affineness,
as we will see in \S \ref{ss:1aff-exs} below.

\end{rem}

\begin{rem}

We let $\QCoh_{/\sY}$ denote the sheaf of categories
$\Loc(\QCoh(\sY))$, i.e., this is the sheaf of categories
that assigns $A\mod$ to every $\Spec(A) \to \sY$.
Note that $\Gamma(\QCoh_{/\sY})$ is always equal to $\QCoh(\sY)$.

\end{rem}

\subsection{Regarding sheafification}

A quick aside: by 
descent for sheaves of categories (\cite{shvcat} Appendix A),
$\ShvCat_{/-}$ is immune to fppf sheafification. Therefore,
we will often \emph{not} sheafify, since this is more simpler
and more convenient in many circumstances.

We have used this convention once already 
in defining $\LocSys_G(\o{\cD})$.
We will further use it in forming quotients by group schemes $G$:
for $G$ acting on $S$, $S/G$ will denote the \emph{prestack} quotient,
and $\bB G$ will denote $\Spec(k)/G$.

\subsection{}

Some remarks  (which may be skipped at first pass) 
on functoriality, following \cite{shvcat} \S 3:

For any morphism $f: \sY \to \sZ$ of prestacks, 
we have adjoint functors:

\[
f^{*,\ShvCat}: \ShvCat_{/\sZ} \rightleftarrows \ShvCat_{/\sY} : f_*^{\ShvCat}.
\]

\noindent Wonderfully, these functors satisfy base-change
without any restrictions on the nature of the morphism $f$. 
However, $f_*^{\ShvCat}$ may not commute with colimits,
or satisfy any projection formula, or
commute with tensor products by objects of $\DGCat_{cont}$, etc.

The functor $f^{*,\ShvCat}$ is tautologically compatible with
$\Loc$, while $f_*^{\ShvCat}$ is tautologically compatible with
$\Gamma$.

E.g., one readily deduces the following result:

\begin{prop}

Given a Cartesian diagram:

\[
\xymatrix{
\sY_1 \underset{\sY_3}{\times} \sY_2\ar[r] \ar[d] &
\sY_2 \ar[d] \\
\sY_1 \ar[r] & \sY_3 
}
\]

\noindent with $\sY_i$ a 1-affine prestack for $i = 1,2,3$, 
the canonical functor:

\[
\QCoh(\sY_1) \underset{\QCoh(\sY_3)}{\otimes} \QCoh(\sY_2) \to
\QCoh(\sY_1 \underset{\sY_3}{\times} \sY_2)
\]

\noindent is an equivalence.

\end{prop}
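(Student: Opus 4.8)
The plan is to reduce the statement to the corresponding statement about modules over sheaves of categories, where the analogous assertion is a formal consequence of the definitions, and then transport it back across the equivalences $\Loc \rightleftarrows \Gamma$ furnished by 1-affineness. Concretely: write $\sY = \sY_1 \times_{\sY_3} \sY_2$ for the fiber product, with projections $p_i : \sY \to \sY_i$ and structure map $q : \sY \to \sY_3$. The functor in question is the natural one
\[
\QCoh(\sY_1) \underset{\QCoh(\sY_3)}{\otimes} \QCoh(\sY_2) \to \QCoh(\sY),
\]
induced by the two pullbacks $p_1^*, p_2^*$ (which agree after composing with $q^*$), using the universal property of the relative tensor product in $\DGCat_{cont}$.

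First I would upgrade everything to the level of sheaves of categories. On $\sY_3$ we have the sheaf of categories $\QCoh_{/\sY_3}$, whose global sections are $\QCoh(\sY_3)$ by the last remark of the excerpt; and $\QCoh(\sY_1)$, $\QCoh(\sY_2)$ are both algebras over $\QCoh(\sY_3)$ via pushforward along $\sY_i \to \sY_3$. Since $\sY_i$ is 1-affine for $i=1,2,3$, the pair $(\Loc_{\sY_i}, \Gamma(\sY_i,-))$ is an equivalence; in particular $\QCoh(\sY_1) \otimes_{\QCoh(\sY_3)} \QCoh(\sY_2) \simeq \Gamma\bigl(\sY_3, \Loc_{\sY_3}(\QCoh(\sY_1)) \otimes_{\QCoh_{/\sY_3}} \Loc_{\sY_3}(\QCoh(\sY_2))\bigr)$, using that $\Gamma(\sY_3,-)$, being an equivalence onto $\QCoh(\sY_3)\mod$, is symmetric monoidal and hence commutes with relative tensor products over $\QCoh(\sY_3)$. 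The key geometric input is then that $\Loc_{\sY_3}(\QCoh(\sY_i))$ is computed pointwise: for $\Spec(A) \to \sY_3$, its value is $\QCoh(\Spec(A) \times_{\sY_3} \sY_i)$. This holds because $\QCoh(\sY_i) = \Gamma(\sY_i, \QCoh_{/\sY_i})$, the unit/counit computation from \cite{shvcat} identifies $\Loc_{\sY_3}(\Gamma(\sY_3, \mathscr{C}))$ with $\mathscr{C}$ for any sheaf of categories $\mathscr{C}$ on the 1-affine $\sY_3$, and the pushforward $\QCoh_{/\sY_i} \to$ (sheaf of categories on $\sY_3$) has its $\Spec(A)$-value given by base change along $\Spec(A) \to \sY_3$ — precisely the base-change property of $f_*^{\ShvCat}$ quoted in the excerpt.

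The heart of the argument is the pointwise comparison. Having identified the relative tensor product with the global sections over $\sY_3$ of a sheaf of categories whose $\Spec(A)$-fiber is $\QCoh(\Spec(A)\times_{\sY_3}\sY_1) \otimes_{A\mod} \QCoh(\Spec(A)\times_{\sY_3}\sY_2)$, I would invoke 1-affineness of $\sY_3$ once more to rewrite $\Gamma(\sY_3,-)$ of this sheaf as $\QCoh(\sY_3)$-linear data, and then match it termwise with $\QCoh(\sY) = \Gamma(\sY_3, q_*^{\ShvCat}\QCoh_{/\sY})$. Both are global sections over $\sY_3$ of sheaves of categories, so it suffices to check the two sheaves of categories agree, which by definition reduces to checking on each $\Spec(A) \to \sY_3$: there the claim becomes the assertion that for affine $\Spec(A)$ and \emph{arbitrary} prestacks $\sZ_1, \sZ_2$ over $\Spec(A)$,
\[
\QCoh(\sZ_1) \underset{A\mod}{\otimes} \QCoh(\sZ_2) \xrightarrow{\ \sim\ } \QCoh(\sZ_1 \underset{\Spec(A)}{\times} \sZ_2).
\]
This last statement is the base-affine case, which is standard: it follows from writing each $\sZ_j$ as a colimit of affines, using that $\QCoh$ sends colimits of prestacks to limits of DG categories, that $-\otimes_{A\mod}\QCoh(\sZ_2)$ is a left adjoint hence commutes with colimits, and the elementary fact $B_1\mod \otimes_{A\mod} B_2\mod \simeq (B_1 \otimes_A B_2)\mod$ for the affine pieces.

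\textbf{Main obstacle.} The delicate point is not any single computation but the careful bookkeeping that the equivalence $\Gamma(\sY_3,-) : \ShvCat_{/\sY_3} \xrightarrow{\sim} \QCoh(\sY_3)\mod$ is \emph{monoidal} in the appropriate sense, so that it carries the relative tensor product of sheaves of categories over $\QCoh_{/\sY_3}$ to the relative tensor product of DG categories over $\QCoh(\sY_3)$ — and that, under this dictionary, $\Loc_{\sY_3}$ of an algebra pushed forward from $\sY_i$ really is the "pointwise $\QCoh$ of the base change" sheaf rather than merely something with the same global sections. All of this is present in \cite{shvcat} (this is exactly the kind of formal manipulation the author flags as ubiquitous in the introduction), so the obstacle is one of assembling the right lemmas rather than proving something genuinely new; once the pointwise reduction is in place, the affine-base statement is soft.
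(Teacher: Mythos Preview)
Your overall strategy---transport to $\ShvCat$, use 1-affineness to commute $\Gamma$ with tensor products---is sound, and your identification of $\Loc_{\sY_3}(\QCoh(\sY_i))$ pointwise via base-change for $f_*^{\ShvCat}$ is exactly right. But the final step, the ``affine-base Künneth''
\[
\QCoh(\sZ_1) \underset{A\mod}{\otimes} \QCoh(\sZ_2) \isom \QCoh(\sZ_1 \underset{\Spec(A)}{\times} \sZ_2)
\]
for \emph{arbitrary} prestacks $\sZ_1,\sZ_2$, is false in general, and your argument for it is a non-sequitur: you correctly note that $\QCoh$ sends colimits of prestacks to \emph{limits} of categories, but then invoke that $-\otimes_{A\mod}\QCoh(\sZ_2)$ commutes with \emph{colimits}. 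What you would need is commutation with limits, i.e., dualizability of $\QCoh(\sZ_2)$ over $A\mod$, and nothing in your setup supplies this. Indeed, you never actually use the 1-affineness of $\sY_1$ or $\sY_2$ in this step, which should already be a warning sign.

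The fix is that your detour through the pointwise comparison is unnecessary: the base-change maneuver you already used to identify $\Loc_{\sY_3}(\QCoh(\sY_i))$ finishes the job directly. Write $f_i:\sY_i\to\sY_3$ and $g_i:\sY\to\sY_i$. The sheaf $f_{2,*}^{\ShvCat}(\QCoh_{/\sY_2})$ has global sections $\QCoh(\sY_2)$ (compatibility of $\Gamma$ with $f_*^{\ShvCat}$), hence by 1-affineness of $\sY_3$ equals $\Loc_{\sY_3}(\QCoh(\sY_2))$. Now apply $f_1^{*,\ShvCat}$: by base-change this is $g_{1,*}^{\ShvCat}(\QCoh_{/\sY})$, while by compatibility of $f^{*,\ShvCat}$ with $\Loc$ it is $\Loc_{\sY_1}\big(\QCoh(\sY_1)\otimes_{\QCoh(\sY_3)}\QCoh(\sY_2)\big)$. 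Taking $\Gamma(\sY_1,-)$ and using 1-affineness of $\sY_1$ on the latter (and compatibility of $\Gamma$ with $f_*^{\ShvCat}$ on the former) gives the result. This is the argument the paper has in mind when it says ``one readily deduces''; note it only uses 1-affineness of $\sY_3$ and one of $\sY_1,\sY_2$.
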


\begin{rem}

Conversely, establishing 1-affineness in practice typically amounts
to proving many such identities.

\end{rem}

\subsection{}

We say that a morphism $f: \sY \to \sZ \in \PreStk$ 
is \emph{1-affine} if for every affine DG scheme $S$ with a map
$S \to \sZ$, the fiber product $\sY \times_{\sZ} S$ is 1-affine.
As in \cite{shvcat}, a prestack $\sY$ is 1-affine if and only
if the structure map $\sY \to \Spec(k)$ is 1-affine, and
1-affine morphisms are closed under compositions
(see also \cite{chiralcats} Appendix A).

If $f$ is 1-affine, then $f_*^{\ShvCat}$ satisfies all desirable properties:
it commutes with colimits and satisfies the projection formula.

\subsection{Examples}\label{ss:1aff-exs}

We now give the basic examples and counterexamples 
of 1-affineness. All of these results are proved in \cite{shvcat}:
see \S 2 of \emph{loc. cit}., where all of these results are stated.

\begin{thm}[Gaitsgory]

The following prestacks are 1-affine:

\begin{itemize}

\item Any quasi-compact quasi-separated DG scheme.

\item Any (classically) finite type algebraic stack, or more generally,
any eventually coconnective almost finite type DG Artin stack.
In particular, the classifying stack of an algebraic group is 1-affine.

\item For any ind-finite type indscheme $S$, $S_{dR}$ is 1-affine.

\item The formal completion $T_S^{\wedge}$ of any
quasi-compact quasi-separated DG scheme $T$ along
a closed subscheme $S \into T$ with $S^{cl} \subset T^{cl}$
defined by a locally finitely generated sheaf of ideals.

\item For $G$ an algebraic group, the classifying
(pre)stack $\bB G_e^{\wedge}$ of its formal group is
1-affine.

\end{itemize}

The following prestacks are \emph{not} 1-affine:

\begin{itemize}

\item

The indscheme\footnote{In this paper, the notation $\bA^{\infty}$
should be regarded as ``locally defined": it may refer to either
a pro-infinite dimensional affine space or to an ind-infinite dimensional
affine space, and we will specify locally which we mean.}
 $\bA^{\infty} \coloneqq \colim_n \bA^n$. Same for its formal completion
 at the origin.
 
\item The classifying prestack $\bB (\prod_{i=1}^{\infty}\bG_a)$.

\item The classifying prestack $\bB \bA^{\infty}$, with 
$\bA^{\infty}$ being the ind-infinite dimensional affine space.
The same holds for its formal group.

\end{itemize}

\end{thm}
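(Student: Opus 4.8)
Every assertion in this theorem is due to Gaitsgory and is established in \cite{shvcat}; I will only sketch the shape of the argument. The affirmative cases are organized around a base case and a descent mechanism. The base case is $\sY = \Spec(A)$: here both $\QCoh(\Spec A)\mod$ and $\ShvCat_{/\Spec A}$ are, by construction, the category of $A\mod$-module categories in $\DGCat_{cont}$, and $\Loc$, $\Gamma$ are visibly mutually inverse. The descent mechanism is: if $\pi\colon \sY' \to \sY$ admits sections fppf-locally (or is the map $\Spec(k) \to \sY$ for a stack of the relevant type) and every term $\sY'_n$ of the \v{C}ech nerve $\sY'_\bullet$ is 1-affine, then $\sY$ is 1-affine --- \emph{provided} descent of $\QCoh$ along $\pi$ is compatible with passing to module categories, i.e.\ forming $\mod$ commutes with the totalization $\Tot(\QCoh(\sY'_\bullet))$. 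This last point is where a dualizability/rigidity hypothesis on the $\QCoh$-categories of the pieces is essential. Granting this, quasi-compact quasi-separated schemes follow by Zariski descent from affine opens, and classically finite-type Artin stacks follow by smooth descent from a smooth affine atlas --- once $\bB G$ is known to be 1-affine.

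The case $\sY = \bB G$ for $G$ an affine algebraic group is the heart of the matter. Here $\QCoh(\bB G) = \Rep(G)$, which is \emph{rigid} as a symmetric monoidal DG category because $G$ is of finite type (in characteristic zero $k$ is compact and $\Rep(G)$ is compactly generated by dualizable objects), while $\ShvCat_{/\bB G}$ is identified, via $\ShvCat$-descent along the \v{C}ech nerve of $\Spec(k) \to \bB G$, with $\Tot([n] \mapsto \QCoh(G^{\times n})\mod)$, i.e.\ with DG categories equipped with an action of $G$. One must check that $\Gamma$ (taking invariants) and $\Loc$ (forming the associated equivariant category) are mutually inverse; this is a (co)monadic Barr--Beck comparison in which the rigidity of $\Rep(G)$ is exactly what makes the relevant functors preserve the limits and colimits at hand, and what lets $\QCoh$-descent pass to module categories as above. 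Once the schematic and stacky cases are in hand, the de Rham and formal cases --- $S_{dR}$ for an ind-finite-type indscheme $S$, formal completions along finitely presented ideals, and $\bB G_e^\wedge$ --- reduce to them by presenting these prestacks as suitable (co)limits of finite-type pieces, resp.\ nilpotent thickenings, and invoking the behaviour of $\ShvCat_{/-}$ and $\QCoh(-)\mod$ under such (co)limits.

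For the negative assertions one exhibits explicit obstructions. For $\bA^\infty = \colim_n \bA^n$ and its formal completion at the origin, and for $\bB(\prod_{i=1}^\infty \bG_a)$ and $\bB \bA^\infty$ (and the formal group of the latter), the failure traces to the fact that the relevant $\QCoh$-category is no longer dualizable: the infinite colimit, resp.\ infinite product, destroys the finiteness that drove the affirmative cases. Concretely, one produces a sheaf of categories on which the counit $\Loc\,\Gamma \to \id$ is not an equivalence --- detecting the pro- or ind-structure that global sections cannot see --- thereby breaking the equivalence.

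The decisive obstacle is the $\bB G$ case: everything else is descent formalism once it is available, whereas proving $\Gamma\colon \ShvCat_{/\bB G} \to \Rep(G)\mod$ is an equivalence genuinely requires the rigidity of $\Rep(G)$ together with a careful execution of the (co)monadic comparison. It is precisely this rigidity that fails for the infinite-dimensional groups among the counterexamples, which is the conceptual reason the 1-affine versus non-1-affine dividing line falls where it does.
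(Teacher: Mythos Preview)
The paper does not prove this theorem: it is stated purely as a summary of results from \cite{shvcat}, with the one-line attribution ``All of these results are proved in \cite{shvcat}: see \S 2 of \emph{loc.\ cit}., where all of these results are stated.'' So there is no paper-proof to compare against, and your opening sentence acknowledging this is exactly right.

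As a sketch of what happens in \cite{shvcat}, your account of the affirmative cases is serviceable: the affine base case, descent along a \v{C}ech nerve, and the pivotal role of rigidity of $\Rep(G)$ for $\bB G$ are the correct organizing ideas. The reductions for $S_{dR}$, formal completions, and $\bB G_e^{\wedge}$ are more intricate in \cite{shvcat} than ``present as (co)limits of finite-type pieces'' suggests, but at the level of a sketch this is acceptable.

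Your treatment of the counterexamples, however, contains a genuine error. You attribute the failure of 1-affineness uniformly to ``the relevant $\QCoh$-category is no longer dualizable,'' but this is false: for instance $\QCoh(\bA^{\infty})$ with $\bA^{\infty} = \colim_n \bA^n$ is compactly generated (e.g.\ by the pushforwards of the structure sheaves of the $\bA^n$), hence dualizable. The obstructions are of a different, and non-uniform, nature. For $\bB(\prod_i \bG_a)$ and $\bB G(O)$-type examples the issue is the failure of \emph{rigidity} (the unit object is not compact, so invariants and coinvariants for the group action do not coincide --- see the heuristic discussion in this paper just before \S\ref{ss:dgcat-linalg} and again after Corollary~\ref{c:gauge/cpt-1aff}). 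For $\bA^{\infty}$ itself the failure is that certain tensor-product-versus-fiber-product comparison maps are not equivalences. Conflating dualizability with rigidity, and asserting that dualizability fails when it does not, is a real gap in your sketch of the negative cases.
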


\begin{rem}

In remembering some of these examples, a helpful\footnote{But
imperfect: many examples in \cite{shvcat} contradict this principle.
Still, it is helpful for the purposes of the present paper.}
mnemonic is
that infinite-dimensional tangent spaces are the primary
obstruction to 1-affineness. 
E.g., $\bA^{\infty} \coloneqq \colim_n \bA^n$ 
is not 1-affine ``since" it has infinite dimensional tangent spaces,
but $\bA_{dR}^{\infty}$ \emph{is} 1-affine, and its tangent spaces
vanish.

\end{rem}

\subsection{}\label{ss:mainthm-statement}

We can now formulate the main theorem of this paper.

\begin{mainthm}

For $G$ a reductive group, $\LocSys_G(\o{\cD})$
is 1-affine.

\end{mainthm}

\begin{rem}

We remind that this theorem answers a question of Gaitsgory.

\end{rem}

\begin{rem}

Note that the space of gauge forms $\fg((t))dt$ is
an indscheme that is isomorphic to a product of ind-infinite dimensional
affine space and pro-infinite dimensional affine space, and therefore
it is not 1-affine. 

Moreover, we are quotienting not just by $G(O)$, which itself
tends to create prestacks that are not 1-affine (like $\bB G(O)$),
but by $G(K)$.

The good news is that, at least for reductive $G$, these forces
are opposing and produce a 1-affine prestack in the quotient.

\end{rem}

\begin{conjecture}

$\LocSys_G(\o{\cD})$ is 1-affine for any affine algebraic group.

\end{conjecture}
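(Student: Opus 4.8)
The plan is to bootstrap from the reductive case. By the structure theory of affine algebraic groups in characteristic zero, write $G = L \ltimes U$ with $U$ the unipotent radical and $L$ a Levi subgroup, and refine by the lower central series of $U$ to get a tower $G = G_N \onto G_{N-1} \onto \cdots \onto G_0 = L$ in which $\Ker(G_i \onto G_{i-1}) = V_i$ is a vector group, central in $\Ker(G_i \onto L)$ and hence carrying a linear $G_{i-1}$-action. Applying $\LocSys_{(-)}(\o{\cD})$ produces a tower $\LocSys_G(\o{\cD}) \to \cdots \to \LocSys_L(\o{\cD})$. Since $\LocSys_L(\o{\cD})$ is $1$-affine by the Main Theorem, since $1$-affine morphisms compose, and since a prestack that is $1$-affine over a $1$-affine base is $1$-affine, it suffices to show that each map $\LocSys_{G_i}(\o{\cD}) \to \LocSys_{G_{i-1}}(\o{\cD})$ is a $1$-affine morphism.

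Fix $S = \Spec A$ and a map $S \to \LocSys_{G_{i-1}}(\o{\cD})$ — that is, a $G_{i-1}$-local system $\cE$ on $\o{\cD}_S$; twisting the representation $V_i$ by $\cE$ yields a differential module $(W, \nabla)$ on $\o{\cD}_S$, free of finite rank over $A((t))$. As in the remark on $\bG_m \ltimes \bG_a$, the fiber product is then the ``de Rham stack'' of $W$, $\LocSys_{G_i}(\o{\cD}) \times_{\LocSys_{G_{i-1}}(\o{\cD})} S \;\simeq\; \cH^{dR}(W) \coloneqq \bigl[\,\bV(Wdt)\,/\,\bV(W)\,\bigr]$, the quotient of the (Tate) affine space of de Rham $1$-cochains by the additive group indscheme of $0$-cochains acting through $\nabla$. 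So the conjecture reduces to the following Key Lemma: \emph{for every differential module $(W,\nabla)$ on $\o{\cD}_S$, the prestack $\cH^{dR}(W)$ is $1$-affine over $S$.} As a sanity check, $\nabla \colon \bV(W) \to \bV(Wdt)$ has kernel the finite-type space of flat sections and, fiberwise, cokernel the finite-dimensional $H^1_{dR}$; hence the geometric fibers of $\cH^{dR}(W)$ are finite-type Artin stacks, so they are $1$-affine. The trouble — precisely the phenomenon of \S\ref{ss:gm-ga} — is that these fibers do not form a finite-type family.

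To prove the Key Lemma I would follow the strategy behind the Main Theorem. After a ramified base change $t \mapsto t^n$ (harmless for $1$-affineness, by descent for $\ShvCat$) and a stratification of $S$, Babbitt--Varadarajan reduction theory \cite{babbitt-varadarajan} presents $(W,\nabla)$ as a successive extension of model modules $\cL_q \otimes R$, where $\cL_q = (A((t)),\, d + d(q))$ for $q \in t^{-1}A[t^{-1}]$ and $R$ is regular singular with unipotent monodromy; since the de Rham complex is exact on short exact sequences of differential modules, $\cH^{dR}(W)$ becomes an iterated fibration by the $\cH^{dR}(\cL_q \otimes R)$. When $q \neq 0$, the leading pole (of order $\geq 2$) makes $\nabla \colon \bV(W) \to \bV(Wdt)$ an isomorphism of group indschemes over $S$, uniformly in $S$; so $\cH^{dR}(\cL_q \otimes R) = S$ and those steps are isomorphisms. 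One is thus reduced to $W$ regular singular with unipotent monodromy: an iterated (generally non-split) extension of trivial modules, so that $\cH^{dR}(W)$ is built by twisted fibrations from copies of $\LocSys_{\bG_a}(\o{\cD}) = \bA^1 \times \bB\bG_a$ — the twists being exactly the source of the non-finite-type behavior, since the de Rham cocycle of a regular singular connection can vanish to arbitrarily high order. For these one organizes $\cH^{dR}(W)$ by a Cousin / pole-order filtration as in \S\ref{ss:intro-cptgen} and Theorem \ref{t:jumps}: bounding the pole order of the $1$-cochain truncates the Tate complex $[\,W \xrightarrow{\nabla} Wdt\,]$ to a two-term complex of honest vector bundles over a finite-type base over $S$, whose classifying stack is a finite-type Artin-stack bundle (hence $1$-affine), and the complementary graded pieces have uniformly bounded dimension; one must then glue $1$-affineness of $\cH^{dR}(W)$ out of $1$-affineness of these strata.

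That last gluing is the main obstacle. $1$-affineness is \emph{not} stable under the filtered colimits (nor the cofiltered limits) that produce Tate objects — this is exactly why $\bA^\infty$, $\bB \bA^\infty$, and $\bB(\prod_i \bG_a)$ fail to be $1$-affine — so it cannot be deduced formally from the finite-type strata. What one must show is that the de Rham differential forces the ind- and pro-directions to cancel at the level of sheaves of categories: concretely, that the unit and counit of the $(\Loc_{\cH^{dR}(W)}, \Gamma)$-adjunction are isomorphisms. I expect this to require first proving (as in Theorem \ref{t:cpt-gen}) that $\QCoh(\cH^{dR}(W))$ is compactly generated by the strata, and then a categorified Cousin argument — a version of the $\QCoh$-level mechanism of \S\ref{ss:intro-cptgen} that operates on $\ShvCat$ rather than on $\QCoh$. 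Devising such a categorified Cousin mechanism — and, secondarily, carrying out Babbitt--Varadarajan reduction in families over the auxiliary base $S$ with enough uniformity — is the technical heart of the conjecture, and the part I would not know how to complete.
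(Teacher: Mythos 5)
This statement is a \emph{conjecture} in the paper, not a theorem: the paper proves it only for $G$ reductive (the Main Theorem) and notes that the unipotent case follows from \S \ref{ss:ex-unipotent}, explicitly stating that the first unknown case is $G = \bG_m \ltimes \bG_a$. So there is no proof in the paper to compare against, and your proposal does not close the gap either --- as you yourself say in the last sentence.

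Your d\'evissage is reasonable and consistent with how the paper frames the problem: filtering by the unipotent radical reduces the conjecture to showing that each $\LocSys_{G_i}(\o{\cD}) \to \LocSys_{G_{i-1}}(\o{\cD})$ is a 1-affine morphism, and the fiber over an $S$-point is (a torsor under) the de Rham stack $\cH^{dR}(W)$ of the twisted differential module --- this is exactly the picture in the remark of \S \ref{ss:gm-ga}. (Two small caveats even here: the extensions $G_i \onto G_{i-1}$ need not split, so the fiber is a priori an $\cH^{dR}(W)$-torsor, nonempty because $H^2_{dR}$ of the punctured disc vanishes, and one must argue that 1-affineness passes to such twists; and your reductions by ramified base change $t \mapsto t^{1/n}$ and by stratifying $S$ are not justified --- 1-affineness is not known to descend along either operation, and indeed the whole difficulty of the paper's \S \ref{s:z-red}--\ref{s:z-geom} is precisely that fiberwise or stratumwise statements do not formally globalize.) The essential problem is that your ``Key Lemma'' is not a lemma: its very first nontrivial instance --- $W$ a rank-one twist over $S \to \LocSys_{\bG_m}(\o{\cD})$ --- is literally the $\bG_m \ltimes \bG_a$ case that the paper names as open. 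The paper's machinery (tameness of the $G(O)$-action, Theorems \ref{t:tame} and \ref{t:z}, the effectiveness/Cousin arguments) lives on the quotient presentation $\fg((t))dt/\widehat{G(O)}$ and uses ind-properness of $\Gr_G$ at the final step, which is exactly what is unavailable for nonreductive $G$; your proposed replacement (a ``categorified Cousin mechanism'' on $\ShvCat$ over the pole-order filtration, plus Babbitt--Varadarajan reduction in families over an arbitrary affine base) is precisely the missing idea, not a routine verification. As it stands, the proposal is an honest reduction of the conjecture to an equivalent open statement, not a proof.
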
  

The above treats the case of reductive $G$ for this
conjecture; and the unipotent case 
follows from \S \ref{ss:ex-unipotent}, since $\LocSys$ is an Artin
stack then. The first group for which I do not know how to prove
this conjecture is $\bG_m \ltimes \bG_a$.

\subsection{Example: $G = \bG_m$}\label{ss:locsysgm-1aff}

It is instructive to analyze $G = \bG_m$, where this theorem
quickly reduces to Gaitsgory's results. We assume that
the reader has retained the material of \S \ref{ss:gm}.

Note that the intermediate $Kdt / \bG_m(O)$ is \emph{not} 1-affine,
since there is an ind-infinite dimensional affine space
as a factor.

Instead, we need to quotient gauge forms $K dt$ by
$\widehat{\bG_m(O)} \coloneqq $ the formal completion
of $\bG_m(O)$ in $\bG_m(K)$. In this case, 
we obtain:

\[
\bG_a\cdot \frac{dt}{t} \times 
\Ker(\Res:Kdt/Odt \to \bG_a)_{dR} \times \bB \bG_m
\]

\noindent as in \S \ref{ss:gm}. Since the infinite-dimensional
affine space is replaced by its de Rham version, this quotient 
\emph{is} 1-affine.

It remains to quotient by $\bG_m(K)/\widehat{\bG_m(O)} = \bZ$.
One can readily show that $\bB \bZ$ is 1-affine.\footnote{A proof,
for the interested: tautologically, categories over $\bB \bZ$
are equivalent to categories with an automorphism
(the fiber functor corresponds to pullback $\Spec(k) \to \bB \bZ$).
Moreover, $\QCoh(\bB \bZ)$ is equivalent to $\QCoh(\bG_m)$
with its convolution structure, so module categories
for $\QCoh(\bB \bZ)$ are equivalent to categories over
$\bB \bG_m$. By 1-affineness of $\bB \bG_m$, the 
latter are equivalent to $\Rep(\bG_m)$-module categories,
i.e., to categories with an automorphism. Then
it is a simple matter of chasing the constructions to
see that $\Gamma: \ShvCat_{/\bB \bZ} \to \QCoh(\bB \bZ)\mod$
corresponds to the identity functor for categories with
an automorphism, and therefore is an equivalence.

Or, if one likes, this follows more conceptually 
from the method of \cite{shvcat} \S 11.}

Then the morphism:

\[
\LocSys_{\bG_m}(\o{\cD}) = (Kdt/\widehat{\bG_m(O)})/\bZ \to \bB \bZ
\]

\noindent is 1-affine (since the fibers are $Kdt/\widehat{\bG_m(O)}$).
Since $\bB \bZ$ is 1-affine, this implies that $\LocSys_{\bG_m}(\o{\cD})$
is also 1-affine.

\subsection{}

For general reductive $G$, such an explicit analysis does not work.
However, we will show the following results, inspired by the above.

The following is the main result in \S \ref{s:tame}.

\begin{thm*}[Thm. \ref{t:tame}]

For any affine algebraic group $G$, $t^{-r}\fg[[t]]dt/G(O)$ is
1-affine.

\end{thm*}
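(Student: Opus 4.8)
The plan is to deduce 1-affineness of $t^{-r}\fg[[t]]dt/G(O)$ from Gaitsgory's criterion, using the geometric theorems already stated (especially Theorems \ref{t:artin} and \ref{t:jumps}) as the input that controls the ``size'' of the situation. The key structural observation is the one emphasized in \S\ref{ss:intro-cptgen}: the projection
\[
\pi_\rho: t^{-r}\fg[[t]]dt/\cK_{\rho} \longrightarrow t^{-r}\fg[[t]]dt/t^{-r+\rho}\fg[[t]]dt
\]
has geometric fibers which, for $\rho$ sufficiently large (depending on $r$ and $G$), are Artin stacks smooth over $k$ of uniformly bounded dimension. The base here is just a finite-dimensional affine space, hence 1-affine, and 1-affine morphisms compose, so it would suffice to show that $\pi_\rho$ is a 1-affine morphism, i.e.\ that each fiber product against an affine test scheme is 1-affine. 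Then, since $G(O)/\cK_\rho = G(O/t^\rho O)$ is a finite-type affine algebraic group (whence $\bB G(O/t^\rho O)$ is 1-affine and the further quotient map $t^{-r}\fg[[t]]dt/\cK_\rho \to t^{-r}\fg[[t]]dt/G(O)$ is 1-affine), 1-affineness of the total space follows by another composition.

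So the whole problem reduces to a \emph{relative} 1-affineness statement over a finite type base: given an affine scheme $S$ of finite type mapping to $t^{-r}\fg[[t]]dt/t^{-r+\rho}\fg[[t]]dt$, the fiber $S \times_{\text{base}} (t^{-r}\fg[[t]]dt/\cK_\rho)$ should be 1-affine. The content of Theorem \ref{t:jumps} (following Babbitt--Varadarajan) is precisely that this fiber is an Artin stack smooth and finite type over $S$ — indeed a quotient of a finite-type smooth affine $S$-scheme by a finite-type smooth affine $S$-group scheme, at least after stratifying $S$ appropriately by the Babbitt--Varadarajan invariants ($\rho$ is chosen to absorb the jumps). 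Eventually coconnective finite type Artin stacks are 1-affine by Gaitsgory's theorem (\S\ref{ss:1aff-exs}), and 1-affineness is fppf-local on the base; so applying it to each piece of the stratification and gluing gives the claim. The congruence-filtration reduction $G(O) \rightsquigarrow \cK_\rho$ is harmless because $\bB$ of a finite-type group is 1-affine, and this is the step where reductivity of $G$ is \emph{not} needed — the theorem is stated for arbitrary affine algebraic $G$.

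The main obstacle I anticipate is not the 1-affineness bookkeeping but making the reduction to finite type genuinely work: a priori $t^{-r}\fg[[t]]dt/G(O)$ is not of finite type (by \S\ref{ss:gm-ga}), so one cannot simply quote Gaitsgory's finite-type theorem for it directly, and the whole subtlety — exactly the ``new pathology'' flagged in the remark after Theorem \ref{t:artin} — is that the congruence level $\rho$ making the fibers Artin stacks cannot be chosen uniformly over the base without the Babbitt--Varadarajan stratification. Thus the real work is to verify that Theorem \ref{t:jumps} delivers the fiber of $\pi_\rho$ as a finite-type Artin stack \emph{over each stratum} with the gluing being fppf (or at least smooth-local), so that 1-affineness descends; checking that the stratification is by locally closed subsets and that the structure sheaf of categories glues along it is where care is required. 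A secondary point is that one is working with classical, non-eventually-coconnective base schemes in places, so one should confirm that the relevant instances of Gaitsgory's theorem apply (the fibers being \emph{smooth} Artin stacks is what guarantees this). Once these are in place, the composition-of-1-affine-morphisms argument closes the proof.
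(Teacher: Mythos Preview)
Your approach has a genuine gap at precisely the point you flag as ``the real work.'' The difficulty is not a matter of care in verification: the stratification you need simply does not exist.

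Theorem \ref{t:jumps} only controls fibers over \emph{field}-valued points of the leading terms space. Over a positive-dimensional base $S$, the fiber of $\pi_\rho$ is in general \emph{not} a finite-type Artin stack, and no finite locally-closed stratification of $S$ repairs this. The model case is $G = \bG_m \ltimes \bG_a$ with $r=1$: restricting to the line $\Gamma_{-1} = \operatorname{diag}(\lambda,0)$ in the leading terms space, the adjoint connection on the weight-one piece of $\fg$ is $d - \lambda\,dt/t + \cdots$, exactly Counterexample \ref{ce:rs-rk1-notfredholm}. This is Fredholm at every field-valued $\lambda$ but over no Zariski open, since any open contains infinitely many integers and the ``significant coefficient'' in the $\bG_a$-direction goes off to infinity with $n = -\lambda \in \bZ$ (this is the computation of \S\ref{ss:gm-ga}). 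So $t^{-1}\fg[[t]]dt/\cK_1$ fails to be relatively Artin over any nonempty open of the base, and the bad locus $\bZ \subset \bA^1$ is not locally closed. Your proposed gluing would need to be over an fppf cover, but a locally-closed stratification is not fppf, and anyway no finite stratification into strata where the fibers become Artin is available. The Babbitt--Varadarajan invariants stratify by Jordan type of the leading term, which is constant along this line and thus does not help.

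The paper's route is accordingly quite different. Rather than showing $\pi_\rho$ is 1-affine, it proves the stronger statement that $t^{-r}\fg[[t]]dt$ is \emph{tame} for the $G(O)$-action (the norm map $\QCoh(t^{-r}\fg[[t]]dt)_{G(O),w} \to \QCoh(t^{-r}\fg[[t]]dt)^{G(O),w}$ is an equivalence), which implies 1-affineness by Proposition \ref{p:tame-1aff}. The key input is Proposition \ref{p:tame-amplitude}: global sections on $t^{-r}\fg[[t]]dt/G(O)$ has bounded cohomological amplitude. This \emph{is} amenable to a Cousin argument over the leading terms space, because cohomological boundedness---unlike finite-type-ness---passes through infinite direct sums provided the bound is uniform, and Proposition \ref{p:bdd-dimns} supplies exactly that uniformity. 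Tameness then follows by a monadicity argument (Corollary \ref{c:monad-tstr-iterates}) showing $\pi^?$ is monadic. The point is that the fiberwise Artin structure enters only through the numerical bound on cohomological dimension, never through any attempt to globalize the Artin property itself.
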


Unfortunately, this result is not so convenient for passing to
the limit in $r$. Therefore, we show the following in \S \ref{s:z-red}-\ref{s:z-geom}.

\begin{thm*}[Thm. \ref{t:z}]

For any affine algebraic group $G$, the formal completion 
of $t^{-r}\fg[[t]]dt$ in $\fg((t))dt$ modulo the gauge
action of $\widehat{G(O)}$ (the formal completion of
$G(O)$ in $G(K)$) is 1-affine.

\end{thm*}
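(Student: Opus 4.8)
The plan is to deduce Theorem \ref{t:z} from Theorem \ref{t:tame} by a formal-completion argument, using the principle that 1-affineness is inherited along 1-affine morphisms and that de Rham / formal group quotients improve 1-affineness. First I would set up notation: write $\fg((t))dt^{\geq -r}$ for $t^{-r}\fg[[t]]dt$, and let $\fX$ denote its formal completion inside $\fg((t))dt$. The key observation is that $\widehat{G(O)}$ is the formal completion of $G(O)$ inside $G(K)$, hence $\widehat{G(O)} = G(O) \cdot G(O)_e^{\wedge}/(\text{...})$ — more precisely $\widehat{G(O)}$ fits in a sequence relating it to $G(O)$ and the formal group of the prounipotent tail $\cK_1$. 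So the quotient $\fX/\widehat{G(O)}$ maps to $\fg((t))dt^{\geq -r}/G(O)$ (the object known to be 1-affine by Theorem \ref{t:tame}), and I would analyze this map.

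The main structural point: the natural map
\[
\fX/\widehat{G(O)} \to \left(t^{-r}\fg[[t]]dt\right)/G(O)
\]
should be shown to be 1-affine, so that Theorem \ref{t:tame} plus closure of 1-affine morphisms under composition with the structure map to $\Spec(k)$ gives the result. To see the morphism is 1-affine, I would base-change along an affine $S \to (t^{-r}\fg[[t]]dt)/G(O)$ and identify the fiber. The fiber should be the formal completion of an affine (ind-)scheme along a nice closed subscheme, combined with a $\bB$(formal group) factor coming from the difference between $\widehat{G(O)}$ and $G(O)$ — namely the formal completion at $e$ of the loop group $G(K)$, or rather of $G(K)/G(O)$, which near $e$ looks like $\fg((t))/\fg[[t]]$ with its formal-group structure. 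Both ingredients — formal completions of qcqs schemes along finitely-generated ideals, and classifying prestacks of formal groups $\bB G_e^{\wedge}$ — are on Gaitsgory's list of 1-affine prestacks in \S \ref{ss:1aff-exs}. The subtlety is that $\fg((t))/\fg[[t]]$ is ind-infinite-dimensional; its naive classifying prestack is \emph{not} 1-affine, but its \emph{formal} group version is, and one must check that it is the formal version that appears here — which it does, precisely because we completed $\fg((t))dt$ along $t^{-r}\fg[[t]]dt$ rather than working with the whole indscheme. This is the crucial reason the theorem is true and the reason Theorem \ref{t:tame} alone (without completion) would not suffice, exactly as in the $\bG_m$ example of \S \ref{ss:locsysgm-1aff} where it is $\widehat{\bG_m(O)}$ and not $\bG_m(O)$ that must be used.

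Concretely, I would proceed in steps: (1) Factor the quotient $\fX/\widehat{G(O)}$ as $\fX/G(O)$ further quotiented by the relative formal group $\widehat{G(O)}/G(O) = (\fg((t))/\fg[[t]])_e^{\wedge}$ acting through its translation-type action. (2) Show $\fX/G(O)$ is 1-affine: it is the formal completion of $(\fg((t))dt)/G(O)$ along $(t^{-r}\fg[[t]]dt)/G(O)$, and since the latter is 1-affine and the completion is along a nicely-defined closed subprestack, invoke the formal-completion entry of the theorem in \S \ref{ss:1aff-exs} (after checking the ideal is appropriately finitely generated, working level by level in a presentation of the indscheme — this is where a little care with ind/pro structures is needed). (3) Handle the remaining quotient by the formal group using the $\bB G_e^{\wedge}$ entry: the map $\fX/G(O) \to \fX/\widehat{G(O)}$ is a torsor-type map for a formal group, and 1-affineness descends along it since classifying prestacks of formal groups are 1-affine and 1-affine morphisms compose. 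The main obstacle is Step (2)–(3) bookkeeping: verifying that the relevant "ideals of definition" are finitely generated in the appropriate sense at each finite stage of the ind-presentation of $\fg((t))dt$, so that Gaitsgory's formal-completion criterion genuinely applies, and checking that the formal group that shows up is the pro-nilpotent/formal one (hence 1-affine) rather than the bare ind-group (not 1-affine). I expect this to be the technical heart; once the right objects are identified, everything reduces cleanly to the cited results from \cite{shvcat}.
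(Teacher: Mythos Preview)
Your proposal has a genuine gap at both Steps (2) and (3), and the underlying issue is a misreading of the list in \S\ref{ss:1aff-exs}.

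First, Step (3): you assert that the formal-group version of $\bB\bA^{\infty}$ is 1-affine. It is not. The paper's list says explicitly of $\bB\bA^{\infty}$ (ind-infinite dimensional): ``The same holds for its formal group.'' So $\bB\big((\bA^{\infty})_0^{\wedge}\big)$ is \emph{not} 1-affine, and since $\widehat{G(O)}/G(O)$ is exactly of this type (e.g.\ for $G=\bG_m$ it is $(K/O)_0^{\wedge}$), the map $\sZ_r/G(O)\to\sZ_r/\widehat{G(O)}$ is a torsor for a formal group whose classifying prestack is not 1-affine. You cannot use it to transfer 1-affineness.

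Second, Step (2): $\sZ_r/G(O)$ is itself not 1-affine. For $G=\bG_m$ one computes directly that $\sZ_r/\bG_m(O)$ has $(\bA^{\infty})_0^{\wedge}$ as a factor, which is on the non-1-affine list. The formal-completion criterion you invoke requires a locally finitely generated ideal sheaf in a qcqs scheme; here you are completing along an ind-infinite codimension subspace of an indscheme, and working ``level by level'' and then passing to the colimit is precisely how one produces $(\bA^{\infty})_0^{\wedge}$, which fails. The paper flags this explicitly: in the remark following the outline of the proof of Proposition~\ref{p:zr-main} it says the result ``cannot be something that e.g.\ could be deduced by formal means from Theorem~\ref{t:tame},'' and that one must ``distinguish somehow between $\sZ_r/G(O)$ (for which the corresponding statement is false) and $\sZ_r/\widehat{G(O)}$.''

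The paper's actual argument is substantially deeper. It studies the ind-proper nilisomorphism $\pi:\sY_r/G(O)\to\sZ_r/\widehat{G(O)}$ and reduces (via Proposition~\ref{p:proper-ec-loc-1}) to the tensor-product identity of Proposition~\ref{p:zr-main}. That identity is proved by showing the monad $\pi_N^*\pi_{N,?}$ is \emph{effective} in the sense of Mathew, which in turn comes from a cohomological-amplitude bound (Lemma~\ref{l:pi-n-bdded}) established fiberwise over the leading-terms space using the full force of the geometric results of \S\ref{s:locsys} (the Fredholm analysis and Babbitt--Varadarajan reduction). A Cousin/pointwise argument (Proposition~\ref{p:ptwise-eff/equiv}) then glues the fiberwise statements. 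None of this is avoidable by a purely formal decomposition of the quotient.
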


We will obtain the following result as a corollary.

\begin{thm*}[Thm. \ref{t:mod-g(o)-hat}]

$\fg((t))dt/\widehat{G(O)}$ is 1-affine (for any affine algebraic group $G$).

\end{thm*}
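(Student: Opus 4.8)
The statement to prove is that $\fg((t))dt/\widehat{G(O)}$ is 1-affine, and the excerpt tells us this should be "a corollary" of Theorem~\ref{t:z} (the formal completion $(t^{-r}\fg[[t]]dt)^{\wedge}_{\fg((t))dt}/\widehat{G(O)}$ is 1-affine). The plan is to exhibit $\fg((t))dt/\widehat{G(O)}$ as a filtered colimit of the 1-affine prestacks from Theorem~\ref{t:z} and check that 1-affineness passes to this colimit. Concretely, write $\fg((t))dt = \colim_r t^{-r}\fg[[t]]dt$ as an ind-scheme, and note that since $\widehat{G(O)}$ is the formal completion of $G(O)$ along the identity — in particular it is a \emph{formal} group, i.e.\ an ind-scheme whose reduced part is a point — the gauge action of $\widehat{G(O)}$ on a form $\Gamma dt$ cannot decrease its pole order by more than a bounded amount (this is visible already from the formula $\Gauge_g(\Gamma dt) = \Ad_g(\Gamma)dt - (dg)g^{-1}$: for $g$ infinitesimally close to $1$, $(dg)g^{-1}$ lies in $\fg[[t]]dt$ up to the finite order controlled by the formal neighborhood, and $\Ad_g$ preserves pole order). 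Hence $\widehat{G(O)}$ preserves, up to passing to a slightly larger $r$, the formal completion of $t^{-r}\fg[[t]]dt$ inside $\fg((t))dt$, and
\[
\fg((t))dt/\widehat{G(O)} \;=\; \colim_r\; \bigl((t^{-r}\fg[[t]]dt)^{\wedge}_{\fg((t))dt}/\widehat{G(O)}\bigr),
\]
a filtered colimit along the closed-embedding transition maps, each term being 1-affine by Theorem~\ref{t:z}.

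**The colimit step.** The second ingredient is a general principle: if $\sY = \colim_i \sY_i$ is a filtered colimit of prestacks along maps $\sY_i \to \sY_j$ that are closed embeddings (or more precisely, along maps for which $\ShvCat$ and $\QCoh$ both behave well — ind-proper, or closed, transition maps suffice), and each $\sY_i$ is 1-affine, then $\sY$ is 1-affine. This is in the spirit of \cite{shvcat}: one has $\QCoh(\sY) = \lim_i \QCoh(\sY_i)$ (limit along $*$-pullbacks, which for closed embeddings have continuous right adjoints given by pushforward), and correspondingly $\ShvCat_{/\sY} = \lim_i \ShvCat_{/\sY_i}$; one then checks that $\Loc_{\sY}$ and $\Gamma(\sY,-)$ are assembled compatibly from the $\Loc_{\sY_i}$ and $\Gamma(\sY_i,-)$, so that mutual inverseness on each term forces it on the colimit. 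Here one must be a little careful, since $\Gamma^{\ShvCat}$ does not commute with colimits in general; the relevant point is that for the \emph{closed} transition maps $j: \sY_i \to \sY_j$ the functor $j_*^{\ShvCat}$ is 1-affine (the fibers of a closed embedding of ind-schemes-with-formal-structure are again of the controlled type), so the limit presentations are compatible. This kind of argument also appears implicitly in the treatment of $S_{dR}$ for ind-finite-type $S$ in \cite{shvcat}, which is itself a filtered colimit of the 1-affine pieces $(S_i)_{dR}$.

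**The main obstacle.** The technical heart is verifying that the colimit presentation $\fg((t))dt/\widehat{G(O)} = \colim_r \bigl((t^{-r}\fg[[t]]dt)^{\wedge}/\widehat{G(O)}\bigr)$ is correct as prestacks — i.e.\ that the gauge action genuinely restricts to each formal-completion stratum after enlarging $r$, and that the quotient of a colimit is the colimit of the quotients in this non-sheafified (prestack) setting. The latter is formal since prestack quotients and filtered colimits are both computed objectwise in groupoids and commute. The former requires the boundedness of pole-order change under $\widehat{G(O)}$; this should follow directly from the explicit gauge formula together with the fact that on the $N$-th infinitesimal neighborhood of $1$ in $G(O)$ one has $g = 1 + \text{(nilpotent of order }N) $, so $(dg)g^{-1} \in t^{-?}\fg[[t]]dt$ with the pole order bounded in terms of $N$ and $r$ only — crucially \emph{not} in terms of the point of $\fg((t))dt$ being acted on, which is exactly the phenomenon that makes $\widehat{G(O)}$ (as opposed to all of $G(K)$) well-behaved. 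I expect this boundedness to be essentially the same computation already carried out in the proof of Theorem~\ref{t:z}, so the corollary should indeed be short: assemble the colimit, invoke 1-affineness term by term, and invoke the stability of 1-affineness under filtered colimits along closed embeddings.
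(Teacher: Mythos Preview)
Your overall strategy matches the paper's: present $\fg((t))dt/\widehat{G(O)}$ as $\colim_r \sZ_r/\widehat{G(O)}$, invoke Theorem~\ref{t:z} on each piece, and propagate 1-affineness to the colimit following the argument for $\colim_n (\bA^n)_{dR}$ in \cite{shvcat}~\S 12, exactly the analogy you identify.

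However, your ``general principle'' that 1-affineness passes to filtered colimits along closed embeddings is \emph{false}, and the counterexample is sitting in front of you: $\fg((t))dt/G(O) = \colim_r t^{-r}\fg[[t]]dt/G(O)$ is a filtered colimit along regular closed embeddings, each term 1-affine by Theorem~\ref{t:tame}, yet the colimit is not 1-affine (this is precisely the observation that opens \S\ref{s:z-red}). Likewise $\bA^{\infty} = \colim_n \bA^n$. So ``closed transition maps suffice'' is wrong, and your sketch of the colimit step does not distinguish the two situations.

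The property you actually need is that restriction $\QCoh(\sZ_{r+1}/\widehat{G(O)}) \to \QCoh(\sZ_r/\widehat{G(O)})$ has a \emph{fully faithful} left adjoint. This holds because $\sZ_r \hookrightarrow \sZ_{r+1}$ is the inclusion of the formal completion along $\sY_r$; it fails for the closed embeddings $\sY_r \hookrightarrow \sY_{r+1}$, where $i_?$ exists but $i^*i_? \not\simeq \id$. The paper's argument has two halves. Existence of left adjoints rewrites $\Gamma(\fg((t))dt/\widehat{G(O)},-) = \lim_r \Gamma(\sZ_r/\widehat{G(O)},-)$ as a colimit, giving continuity of $\Gamma$ and hence full faithfulness of $\Loc$; this half works equally well for $\fg((t))dt/G(O)$. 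Conservativeness of $\Gamma$, however, requires the tensor-product identity
\[
\QCoh(\sZ_r/\widehat{G(O)}) \underset{\QCoh(\fg((t))dt/\widehat{G(O)})}{\otimes} \QCoh(\sZ_{r'}/\widehat{G(O)}) \;\simeq\; \QCoh(\sZ_r/\widehat{G(O)}) \quad (r' \geq r),
\]
and this is where the fully faithful left adjoint is used. The paper flags this step as ``the only part where we seriously distinguish between $\fg((t))dt/\widehat{G(O)}$ and $\fg((t))dt/G(O)$.''

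One minor point: your ``main obstacle'' (whether $\widehat{G(O)}$ preserves $\sZ_r$) is a non-issue requiring no pole estimates. Both objects are formal completions, so for $g \in \widehat{G(O)}(S)$ and $\Gamma dt \in \sZ_r(S)$ one has $g|_{S^{red}} \in G(O)$ and $\Gamma dt|_{S^{red}} \in \sY_r$, whence $\Gauge_g(\Gamma dt)|_{S^{red}} \in \sY_r$, i.e., $\Gauge_g(\Gamma dt) \in \sZ_r(S)$.
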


We will finally use the ind-properness of $\Gr_G$ 
(the sheafification
of $G(K)/G(O)$) for \emph{reductive} $G$ 
to deduce the main theorem in \S \ref{s:finale}.

\subsection{A heuristic}

Let us give a heuristic explanation for why our geometric results imply 
Theorem \ref{t:tame}, i.e., the 1-affineness of $t^{-r}\fg[[t]]dt/G(O)$
and the first major step towards the main theorem.
The reader may safely skip this section.

First, why is $\bB G(O)$ \emph{not} 1-affine? 
Here is a heuristic, which is less scientific than the proof given
in \cite{shvcat}. It relies on some general notions from the
theory of group actions on categories that are reviewed in 
\S \ref{s:tame}.

It is easy to see that if $\bB G(O)$ were 1-affine,
then invariants and coinvariants for $\QCoh(G(O))$-module categories would coincide.\footnote{Proof: $\QCoh(G(O))$-module categories
are easily seen to be the same as sheaves of categories on
$\bB G(O)$, and global sections matches up with invariants.
So if $\bB G(O)$ were 1-affine, global sections would commute with
colimits and be $\DGCat_{cont}$-linear. 
This would allow us to reduce to checking that the
norm functor $\sC_{G(O),w} \to \sC^{G(O),w}$ is an equivalence
for $\sC = \QCoh(G(O))$, where it is clear.}

However, the identity functor for $\Vect$ 
induces a functor $\Vect_{G(O),w} \to \Vect$. If
if $\Vect_{G(O),w} \isom \Vect^{G(O),w}$, we obtain an 
induced functor $\QCoh(\bB G(O)) = \Vect^{G(O),w} \to \Vect$, 
and morally this
functor computes $G(O)$-invariants of representations. Moreover,
this functor tautologically is continuous, since that is built into our
framework.

However, as discussed above, the trivial representation
in $\QCoh(\bB G(O))$ is \emph{not} compact, so 
$G(O)$-invariants does not commute with colimits.
What would the functor above be? 
And indeed, Gaitsgory's result that $\bB G(O)$ is not 1-affine
rules out the existence of this functor.

Then the heuristic explanation for the difference
between $t^{-r}\fg[[t]]dt/G(O)$ and $\Spec(k)/G(O)$ is
that the former has a continuous global sections functor,
as was explained in \S \ref{ss:intro-cptgen}.

\subsection{Structure of this paper}

We have basically given it already above.

In \S \ref{s:locsys}, we give our geometric results on 
$\LocSys_G(\o{\cD})$, as described above. 
In \S \ref{s:tame}, we show that $t^{-r}\fg[[t]]dt/G(O)$ is 1-affine;
the main ideas were summarized in \S \ref{ss:intro-cptgen}.
In \S \ref{s:cpt-gen}, we prove the compact generation
for $\QCoh(\LocSys_G(\o{\cD}))$ (for $G$ reductive).
In \S \ref{s:z-red}-\ref{s:z-geom}, we extend Theorem \ref{t:tame} in infinitesimal
directions to obtain Theorem \ref{t:z}. This proof of this
result is somewhat involved, which is why we have spread it across
three sections.
Finally, in \S \ref{s:finale}, we complete the proof of the main theorem,
which is straightforward after Theorem \ref{t:z}.

The paper is intentionally structured in increasing order of
complexity: \S \ref{s:locsys} does not have any category
theory, and then the level of categorical sophistication
increases over \S \ref{s:tame}-\ref{s:z-geom} (e.g., starting
in \S \ref{s:cpt-gen}, it is very helpful, if not strictly
necessary, to be familiar with
$\IndCoh$, c.f. \cite{indcoh}).

In particular, ideas I learned from
Akhil Mathew's senior thesis \cite{akhil-thesis} regarding
the Barr-Beck formalism were key in proving Theorem
\ref{t:z}; these ideas are summarized in \S \ref{s:z-tens}.

\subsection{Some conventions}

We use higher categorical language throughout, letting \emph{category}
mean \emph{$(\infty,1)$-category}, (co)limit means \emph{homotopy
(co)limit}, etc. 

Most of our conventions about DG categories were recalled in \S \ref{ss:dgcat-linalg}.
One warning: following the above conventions, we use the notation 
$\Coker(\sF \to \sG)$ where others would use $\on{Cone}$, and we
use $\Ker(\sF \to \sG)$ where others would use $\on{Cone}[-1]$.
If we mean to take a co/kernel in an abelian category, not in the
corresponding derived category, we will be cautious to indicate this desire to the reader.

For $\sC$ a DG category with $t$-structure, we let
$\sC^{\leq 0}, \sC^{\geq 0} \subset \sC$ denote the corresponding
subcategories, where we use \emph{cohomological} grading
throughout. We let $\sC^{\heart} \coloneqq \sC^{\leq 0} \cap \sC^{\geq 0}$
denote the heart of the $t$-structure. 

Finally, we assume that the reader is quite comfortable with the linear
algebra of DG categories. We refer to \cite{dgcat}, \cite{shvcat}, \cite{indcoh}, \cite{indschemes} (esp. \S 7)
and other foundational papers on Gaitsgory's website for an introduction to the
subject. 

\subsection{Acknowledgements}

I'm grateful to Dima Arinkin, Vladimir Drinfeld, 
Akhil Mathew, and Ivan Mirkovi\'c for their influence on this work.

Thanks to Dennis Gaitsgory for highlighting this question in the first
place, and for his careful reading of a draft of this paper and his
many suggestions for its improvement. 

Thanks also to Dario Beraldo, whose ready conversation
in the early stages of this project diverted many wrong turns
and were key to its development.

Finally, thanks to Sasha Beilinson, to whom this paper is dedicated,
for so much; not least of all, thanks are due for guiding me to
local class field theory and differential equations in the same breath.

This material is based upon work supported by the 
National Science Foundation under Award No. 1402003.

%%%%%%%%%%%%%%%%%%%%
%%%%%%%%%%%%%%%%%%%%
%%%%%%%%%%%%%%%%%%%%

\section{Semi-infinite geometry of de Rham local systems}\label{s:locsys}

%%%%%%%%%%%%%%%%%%%%
%%%%%%%%%%%%%%%%%%%%
%%%%%%%%%%%%%%%%%%%%

\subsection{}

The motto of this section: in spite of all the evil in the world
(e.g., $\LocSys(\o{\cD})$ is far from an Artin stack; not all $\nabla$ are Fredholm), 
there is some current of good (c.f. Theorems \ref{t:artin} and \ref{t:jumps}). 

\begin{rem}

We remind the reader to visit \S \ref{ss:geom-start}-\ref{ss:geom-finish}
for a proper introduction to this material. 
In particular, \S \ref{ss:gm-ga} is essential for understanding
why we need to work with geometric fibers over the leading terms
space.

\end{rem}

\begin{rem}

To the reader overly steeped in derived algebraic geometry,
we emphasize that the manipulations in this section are really
about classical algebraic geometry, and we allow ourselves
the full toolkit of classical commutative algebra throughout.

\end{rem}

\subsection{Tate's linear algebra} 

We give a quick introduction to the language of Tate objects in the derived setting. This language plays a fairly supporting role in what follows,
and we include it only for convenience.

The definitions and basic properties given here were independently 
found by Hennion, see \cite{hennion}.

\begin{rem}

We remark that this setup works just as well for
stable $\infty$-categories as for DG categories: it is only a question 
of language.

\end{rem}

\subsection{}\label{ss:setup} Let $\sC\in\DGCat$ be a fixed compactly generated DG category and let $\sC^0\subset \sC$ be the full subcategory
of compact objects.

\begin{defin}

The Tate category $\Tate(\sC)$ is the full subcategory of $\Pro(\sC)$ 
Karoubi-generated (i.e., generated under
finite colimits and retracts) by $\sC \subset \Pro(\sC)$ and $\Pro(\sC^0)$.

Objects of $\Pro(\sC^0) \subset \Tate(\sC)$ are sometimes called 
\emph{lattices}, and objects of $\sC \subset \Tate(\sC)$ are sometimes
called \emph{colattices}.

\end{defin}

\subsection{}

The following gives a more symmetrical perspective on the Tate construction.

\begin{prop}\label{p:tate-pushout}

The pushout of the diagram:

\[
\xymatrix{
\sC^0 \ar[r] \ar[d] & \Ind(\sC^0) = \sC \\
\Pro(\sC^0)
}
\]

\noindent exists in the (very large) category $\DGCat_{Kar}$, the category
of Karoubian (alias: idempotent complete) DG categories.
Moreover, the obvious functor from this pushout to $\Tate(\sC)$ is an equivalence.

\end{prop}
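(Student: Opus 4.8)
The plan is to identify both the pushout and $\Tate(\sC)$ with the same explicit full subcategory of $\Pro(\sC)$, using the universal property of the pushout in $\DGCat_{Kar}$ together with the well-understood universal properties of $\Ind$ and $\Pro$. Recall that for a small idempotent-complete (stable) category $\sC^0$, the category $\Pro(\sC^0)$ is the free completion under cofiltered limits and $\sC = \Ind(\sC^0)$ the free completion under filtered colimits, and that $\DGCat_{Kar}$ (idempotent-complete stable categories with exact functors) has all small colimits. So the pushout $\sP$ of $\Pro(\sC^0) \leftarrow \sC^0 \rightarrow \sC$ exists; the content is to produce a fully faithful functor $\sP \to \Pro(\sC)$ with essential image the Karoubi-closure of $\sC \cup \Pro(\sC^0)$.

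First I would construct the comparison functor $\sP \to \Tate(\sC)$. The two inclusions $\Pro(\sC^0) \into \Pro(\sC)$ and $\sC = \Ind(\sC^0) \into \Pro(\sC)$ (the latter since $\Pro(\sC)$ is idempotent-complete and cocomplete, so absorbs $\Ind$ of its compacts) agree after restriction to $\sC^0$, hence by the universal property of the pushout they induce an exact functor $\sP \to \Pro(\sC)$, and its image lands in the Karoubi-closure of the union, i.e.\ in $\Tate(\sC)$ by definition; call this functor $\Phi: \sP \to \Tate(\sC)$. It is essentially surjective essentially by construction, since $\Tate(\sC)$ is Karoubi-generated by the two subcategories that $\sP$ surjects onto and $\sP$ is itself Karoubian. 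The real work is full faithfulness of $\Phi$.

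The hard part will be computing the mapping spaces in the pushout $\sP$ — a priori a colimit in $\DGCat_{Kar}$ — and matching them with mapping spaces in $\Pro(\sC)$. The strategy I would use: every object of $\sP$ is a retract of a finite colimit of objects coming from $\Pro(\sC^0)$ and from $\sC$, so it suffices to compute $\Hom_{\sP}(X,Y)$ when $X$ is a pro-object (a "lattice") and $Y$ lies in $\sC$ (a "colattice"), and conversely; the mixed $\Hom$s involving two pro-objects or two colattices are controlled inside $\Pro(\sC^0)$ and $\sC$ respectively, which embed fully faithfully into $\Pro(\sC)$ already. For the mixed case, write a pro-object as $X = \lim_i X_i$ with $X_i \in \sC^0$ and a colattice as a filtered colimit $Y = \colim_j Y_j$ with $Y_j \in \sC^0$; in $\Pro(\sC)$ one gets $\Hom(X,Y) = \colim_i \Hom_{\sC}(X_i, Y)$ (mapping out of a cofiltered limit of compacts) and on the pushout side the same formula should fall out of the glueing description of $\sP$, since $X$ and $Y$ are connected only through $\sC^0$. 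I would make this precise by presenting $\sP$ as the localization/completion of the category freely generated by a pushout of categories and checking the $\Hom$-formula against the one in $\Pro(\sC)$ term by term; the one subtlety to be careful about is that pushouts in $\DGCat_{Kar}$ are computed by first taking the pushout in small stable categories and then Karoubi-completing, so I must verify that no idempotents are split "for free" in a way that would alter $\Hom$s — but since $\Pro(\sC)$ is already idempotent complete this is automatic once the comparison functor is constructed. Once the $\Hom$-formula matches on all the generating objects and both categories are idempotent-complete, full faithfulness extends to all of $\sP$ by the usual retract argument, and $\Phi$ is the desired equivalence.
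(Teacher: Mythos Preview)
The paper does not actually prove this proposition: it explicitly notes that the result ``is proved in \cite{hennion} (c.f.\ \emph{loc.\ cit}.\ Theorem~2), so we do not give the proof here.'' So there is no paper proof to compare against directly; I will comment on your strategy on its own terms.

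Your overall shape is reasonable, but the proposal has a genuine gap at exactly the point you flag as ``the hard part.'' You assert that for $X = \lim_i X_i \in \Pro(\sC^0)$ and $Y \in \sC$ the formula $\Hom_{\sP}(X,Y) = \colim_i \Hom_{\sC}(X_i,Y)$ ``should fall out of the glueing description of $\sP$,'' but you never give that glueing description, and for pushouts of stable $\infty$-categories there is no off-the-shelf formula for mapping spaces. Pushouts in $\DGCat_{Kar}$ are colimits, and mapping spaces in colimits of categories are notoriously hard to access; the phrase ``the category freely generated by a pushout of categories'' does not pin down anything concrete enough to compute with. You also need the reverse mixed case $\Hom_{\sP}(Y,X)$ and, more subtly, you need to know that the structural functors $\Pro(\sC^0) \to \sP$ and $\sC \to \sP$ are themselves fully faithful, which is not automatic for pushouts.

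The cleaner route, and the one closer in spirit to Hennion's argument, is to reverse the logic: rather than building $\Phi:\sP \to \Tate(\sC)$ and trying to understand the abstract pushout $\sP$, verify directly that $\Tate(\sC)$ satisfies the universal property of the pushout. That is, given any Karoubian stable $\sD$ with exact functors $F:\Pro(\sC^0)\to\sD$ and $G:\sC\to\sD$ agreeing on $\sC^0$, produce an essentially unique exact functor $\Tate(\sC)\to\sD$ extending both. This only requires understanding $\Tate(\sC)$, which you have concretely as a full subcategory of $\Pro(\sC)$, and avoids ever having to compute anything inside the abstract colimit $\sP$.
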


\begin{cor}

The Tate construction commutes with duality: $\Tate(\sC)^{op}=\Tate(\sC^{\vee})$.

\end{cor}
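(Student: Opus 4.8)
The plan is to deduce the corollary directly from Proposition~\ref{p:tate-pushout} together with the fact that passing to opposite categories interchanges $\Ind$ and $\Pro$. Recall the setup: $\sC$ is compactly generated with $\sC^0 \subset \sC$ the full subcategory of compact objects, so that $\sC = \Ind(\sC^0)$; the dual $\sC^\vee$ is then also compactly generated, with $(\sC^\vee)^0 = (\sC^0)^{op}$ (this is the standard description of duality for compactly generated DG categories, see e.g.~\cite{dgcat}). In particular $\Tate(\sC^\vee)$ is, by definition, the Karoubi-generated subcategory of $\Pro(\sC^\vee) = \Pro(\Ind((\sC^0)^{op}))$ generated by $\sC^\vee = \Ind((\sC^0)^{op})$ and $\Pro((\sC^0)^{op})$.

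First I would record the elementary compatibilities of $\Ind$ and $\Pro$ with $(-)^{op}$: for any small idempotent-complete stable category $\sD$ one has canonical equivalences $\Ind(\sD)^{op} \simeq \Pro(\sD^{op})$ and $\Pro(\sD)^{op} \simeq \Ind(\sD^{op})$, and these are compatible with the tautological inclusions of $\sD$. Applying this with $\sD = \sC^0$ identifies $(\Ind(\sC^0))^{op} = \sC^{op}$ with $\Pro((\sC^0)^{op})$ and $(\Pro(\sC^0))^{op}$ with $\Ind((\sC^0)^{op}) = \sC^\vee$, all inside $(\Pro(\sC))^{op} \simeq \Ind(\sC^{op})$ — wait, more carefully, inside the appropriate ambient category; the point is only that the span defining the pushout for $\sC$ gets carried, upon applying $(-)^{op}$, to the span defining the pushout for $\sC^\vee$, with the two legs swapped.

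Next I would invoke that $(-)^{op}: \DGCat_{Kar} \to \DGCat_{Kar}$ is an equivalence (it is its own inverse), hence preserves pushouts. Concretely: applying $(-)^{op}$ to the pushout square
\[
\xymatrix{
\sC^0 \ar[r] \ar[d] & \sC \\
\Pro(\sC^0) \ar[r] & \Tate(\sC)
}
\]
of Proposition~\ref{p:tate-pushout} yields a pushout square with corners $(\sC^0)^{op}$, $\sC^{op}$, $(\Pro(\sC^0))^{op}$, $\Tate(\sC)^{op}$. Under the identifications of the previous paragraph, $(\sC^0)^{op} = (\sC^\vee)^0$, $(\Pro(\sC^0))^{op} = \sC^\vee = \Ind((\sC^\vee)^0)$, and $\sC^{op} = \Pro((\sC^\vee)^0)$, so this is exactly the span whose pushout computes $\Tate(\sC^\vee)$ by Proposition~\ref{p:tate-pushout} applied to $\sC^\vee$. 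By uniqueness of pushouts we obtain a canonical equivalence $\Tate(\sC)^{op} \simeq \Tate(\sC^\vee)$.

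I do not anticipate a serious obstacle here — the content is entirely in Proposition~\ref{p:tate-pushout}, which is already granted. The one point requiring a little care, and the closest thing to a "hard part," is bookkeeping the two symmetric roles of lattices and colattices: under $(-)^{op}$ the subcategory $\Pro(\sC^0)$ of lattices in $\Tate(\sC)$ becomes the subcategory of colattices in $\Tate(\sC^\vee)$ and vice versa, so one should either phrase the pushout symmetrically (as Proposition~\ref{p:tate-pushout} does) or check that the resulting equivalence respects the evident structures. Everything else is formal manipulation with $\Ind$, $\Pro$, and opposite categories.
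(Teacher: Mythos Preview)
Your proposal is correct and is exactly the argument the paper has in mind: the corollary is stated immediately after Proposition~\ref{p:tate-pushout} with no separate proof, so the intended deduction is precisely the one you give, namely that $(-)^{op}$ is an autoequivalence of $\DGCat_{Kar}$ (hence preserves pushouts) and swaps $\Ind$ with $\Pro$, carrying the span for $\sC$ to the span for $\sC^\vee$.
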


Proposition \ref{p:tate-pushout} plays only a supporting role
in what follows, and is proved in \cite{hennion} (c.f. \emph{loc. cit}. 
Theorem 2), so we do not give the proof here.

\subsection{Fredholm operators}

Following \cite{bbe} \S 2, we make the following definition.

\begin{defin}

A morphism $T:\sF \to \sG \in \Tate(\sC)$ is \emph{Fredholm}
if $\Coker(T) \in \sC^0 \subset \Tate(\sC)$.

\end{defin}

\subsection{Example: Laurent series}

Suppose $A$ is a (classical) commutative ring and 
$V$ is a rank $n$ free $A((t))$-module. Then $V$ inherits
an obvious structure of object of $\Tate(A\mod)$.

\begin{rem}

One advantage of using the Tate formalism (or at least pro-objects)
is that we can use formulae like $A((t)) \otimes_A B = B((t))$,
as long as we are understanding (as we always will) $A((t))$ as an 
object of $\Pro(A\mod)$ and $B((t))$ as an object
of $\Pro(B\mod)$. 

\end{rem}

In this setting, we will use the following terminology.

\begin{defin}

A \emph{lattice} in $V$ is an object\footnote{We are using the
standard $t$-structure on $\Pro(A\mod)^{\heart}$, characterized
by the fact that it is compatible with filtered limits and restricts to the usual
$t$-structure on $A\mod$.}
$\Lambda \in \Pro(A\mod)^{\heart}$
that can be written as a limit in\footnote{I.e., we are asking $R^i\lim$ to vanish for $i>0$.}
$\Pro(A\mod)$ of finite rank projective 
$A$-modules,and which has been equipped with an admissible monomorphism
$\Lambda \into V$ with $\Coker(\Lambda \to V)$ lying in\footnote{Of course,
it lies in $A\mod^{\heart}$ then.} 
$A\mod \subset \Pro(A\mod)$ and flat.

\end{defin}

\begin{lem}\label{l:lattice-quot}

For any $\Lambda_1 \into \Lambda_2 \into V$ a pair of lattices,
$\Lambda_2/\Lambda_1$ is a finite rank projective $A$-module.

\end{lem}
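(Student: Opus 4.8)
The plan is to reduce the statement to the corresponding fact about the ambient colattice $V$ and its relation to individual lattices, and then to manipulate the short exact sequences. Concretely, I would first recall the basic structure: by definition of a lattice, $\Lambda_i \into V$ is an admissible monomorphism with $\Coker(\Lambda_i \to V) \in A\mod^{\heart}$ flat, and $\Lambda_i$ itself is a limit in $\Pro(A\mod)$ of finite rank projective $A$-modules (with vanishing higher $\lim$). I want to analyze $\Lambda_2/\Lambda_1$, which by the octahedral axiom (or the snake lemma, working in the heart) fits into a short exact sequence
\[
0 \to \Lambda_2/\Lambda_1 \to \Coker(\Lambda_1 \to V) \to \Coker(\Lambda_2 \to V) \to 0.
\]
So $\Lambda_2/\Lambda_1$ is the kernel of a surjection $\Coker(\Lambda_1\to V)\onto\Coker(\Lambda_2\to V)$ between two flat $A$-modules lying in $A\mod^{\heart}$. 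In particular $\Lambda_2/\Lambda_1$ genuinely lives in $A\mod^{\heart}$ (it is not merely a pro-object), and it is the kernel of a map of flat modules.

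Next I would establish that $\Lambda_2/\Lambda_1$ is \emph{finitely generated}. For this I would use that, fppf- or even Zariski-locally on $\Spec(A)$, one can find a common ``small'' lattice: since $\Coker(\Lambda_i \to V)$ is flat and (as one checks from the pro-presentation) of bounded ``size'' Zariski-locally, the two lattices $\Lambda_1,\Lambda_2$ are each sandwiched between $t^N A[[t]]^{\oplus n}$ and $t^{-N}A[[t]]^{\oplus n}$ for $N\gg 0$, after trivializing $V\cong A((t))^{\oplus n}$ locally. Hence $\Lambda_2/\Lambda_1$ is a subquotient of $t^{-N}A[[t]]^{\oplus n}/t^{N}A[[t]]^{\oplus n} \cong (A[t]/t^{2N})^{\oplus n}$, which is a finite free $A$-module of rank $2Nn$. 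A submodule-of-a-quotient of a finite free module over a general ring need not be finitely generated, so this alone is not enough; but here $\Lambda_2/\Lambda_1$ is a \emph{submodule} of the finite free module $(A[t]/t^{2N})^{\oplus n}$ (once we note $\Lambda_2/\Lambda_1 \into \Lambda_2/t^N\Lambda_{\mathrm{small}}$ and push down), and I will instead argue via the flatness in the displayed exact sequence: $\Lambda_2/\Lambda_1$ is the kernel of a surjection of flat modules, hence is the cokernel-shifted term $\Tor_1$-free, and being sandwiched as above it is finitely presented over $A$.

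Finally, to upgrade finitely presented to \emph{projective}, I would check that $\Lambda_2/\Lambda_1$ is flat: flatness can be tested by $\Tor_1(\Lambda_2/\Lambda_1, M)=0$ for all $A$-modules $M$, and the long exact $\Tor$-sequence of the displayed short exact sequence gives $\Tor_1(\Lambda_2/\Lambda_1,M) \hookrightarrow \Tor_1(\Coker(\Lambda_1\to V),M) = 0$ since $\Coker(\Lambda_1\to V)$ is flat. Hence $\Lambda_2/\Lambda_1$ is flat and finitely presented over $A$, therefore projective (and of finite rank, by the sandwiching bound), which is the claim. The main obstacle, I expect, is the bookkeeping in the second step — making precise, in the derived/pro-object language of $\Pro(A\mod)^{\heart}$, the claim that lattices are Zariski-locally sandwiched between standard lattices with a uniform bound $N$, so that $\Lambda_2/\Lambda_1$ is a bona fide finitely presented $A$-module rather than a formal pro-object; once that is in place, the $\Tor$ argument for flatness is routine. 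Since the final conclusion is Zariski-local on $\Spec(A)$ (projectivity of a finitely presented module can be checked locally), it suffices to work after such a localization, which is what makes the sandwiching available.
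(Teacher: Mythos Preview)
Your exact-sequence setup and your flatness argument via the long exact $\Tor$-sequence are correct and essentially identical to what the paper does (the paper phrases it as ``the claim follows from the resolution $\Lambda_2/\Lambda_1 = \Ker(V/\Lambda_1 \to V/\Lambda_2)$''). The divergence is in how you obtain finite presentation.

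Your sandwiching step is where the argument is incomplete. The definition of \emph{lattice} in the paper does not require $\Lambda$ to be an $A[[t]]$-submodule, so there is no a priori reason for $\Lambda_i$ to contain some $t^N A[[t]]^{\oplus n}$, even Zariski-locally; you assert this ``as one checks from the pro-presentation'' but give no mechanism, and indeed for a bare pro-projective $\Lambda$ with $V/\Lambda$ flat there is nothing forcing $\Lambda$ to swallow a standard lattice. You yourself flag this as ``the main obstacle,'' and it is a real one: the subsequent manipulations (embedding $\Lambda_2/\Lambda_1$ into a finite free module, etc.) all rest on this unproved claim.

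The paper avoids this entirely. It observes that $\Lambda_2/\Lambda_1$ lies in $\Pro(\Perf(A\mod))$ (being a cokernel of a map of lattices) and simultaneously in $A\mod$ (from your same exact triangle with $V/\Lambda_1$ and $V/\Lambda_2$), and then invokes the general Tate-category lemma that $\Pro(\sC^0)\cap \sC = \sC^0$ inside $\Tate(\sC)$. This immediately gives $\Lambda_2/\Lambda_1 \in \Perf(A\mod)$, hence finitely presented since it sits in degree $0$. No sandwiching, no local trivialization, no bookkeeping: the finite-presentation step becomes a one-line consequence of an abstract compactness statement. This is what you are missing, and it is precisely the lemma stated immediately before the result you are proving.
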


We will deduce this using the following general result.

\begin{lem}\label{l:tate-cpts}

For $\sC$ a compactly generated DG category, the intersection
$\Pro(\sC^0) \cap \sC$ in $\Tate(\sC)$ equals $\sC^0$.

\end{lem}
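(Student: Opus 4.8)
The plan is to identify the intersection $\Pro(\sC^0)\cap \sC$ inside $\Tate(\sC)$ by exhibiting the right adjoint to the inclusion $\sC^0\into\Pro(\sC^0)$ and using it to detect when a pro-object is actually compact. Concretely, recall that $\Pro(\sC^0)=\Ind(\sC^{0,op})^{op}$ and that $\sC=\Ind(\sC^0)$, and both sit fully faithfully inside $\Tate(\sC)$ by Proposition \ref{p:tate-pushout}. A convenient model for the ambient home of these subcategories is the functor category: $\sC\simeq\Fun^{ex}(\sC^{0,op},\Vect)$ via $X\mapsto \Hom_{\sC}(-,X)$ on compacts extended by colimits, while $\Pro(\sC^0)$ embeds as $\Fun^{ex}((\sC^0)^{op},\Vect)$-valued objects of a limit-preserving flavor; the overlap should force finiteness. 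First I would reduce to showing: if $P\in\Pro(\sC^0)$ and $P$ is a compact object of $\sC=\Ind(\sC^0)$, then $P$ is a retract of an object of $\sC^0$, hence lies in $\sC^0$ (which is idempotent-complete since $\sC$ is presentable). The reverse inclusion $\sC^0\subseteq\Pro(\sC^0)\cap\sC$ is immediate.

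The key steps, in order: (1) Write $P=\lim_{i} X_i$ as a cofiltered limit of objects $X_i\in\sC^0$ inside $\Pro(\sC^0)$, and simultaneously, since $P\in\sC$, note $\id_P$ factors through some compact object of $\sC$; but more usefully, (2) use that for $P\in\sC$ compact, the corepresented functor $\Hom_{\sC}(P,-)$ commutes with filtered colimits, and test this against the filtered colimit computing the identity. (3) The cleanest route: in $\Tate(\sC)$ there is a pairing $\Pro(\sC^0)\times\sC\to\Vect$, and for $P\in\Pro(\sC^0)$ one has $\Hom_{\Tate(\sC)}(P,-)|_{\sC}$ computing a limit; if additionally $P\in\sC$ is compact, this functor also preserves filtered colimits. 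A functor $\sC\to\Vect$ that both preserves filtered colimits and, when restricted along $\sC^0\into\sC$, is pro-corepresentable, must be corepresentable by a compact object — this is the crux. (4) Conclude $P\in\sC^0$ by Yoneda plus idempotent-completeness.

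Alternatively — and this is probably the slicker packaging — I would invoke Proposition \ref{p:tate-pushout} directly: $\Tate(\sC)$ is the pushout $\sC\sqcup_{\sC^0}\Pro(\sC^0)$ in $\DGCat_{Kar}$. For a pushout of idempotent-complete categories along fully faithful functors with a common full subcategory, one has a general principle that the intersection of the two factors inside the pushout is exactly the common subcategory (this is a categorical analogue of the fact that, in a pushout of sets along injections, the images intersect in the amalgamated part). Making this precise requires a description of $\Hom$-spaces in the Karoubi pushout — which, for a pushout along fully faithful functors, can be computed by a bar-type colimit — and then checking that an object lying in both $\sC$ and $\Pro(\sC^0)$ has its identity morphism supported, after this colimit calculation, on $\sC^0$.

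The main obstacle I anticipate is controlling morphisms in the pushout $\Tate(\sC)$ well enough to run the intersection argument: pushouts in $\DGCat_{Kar}$ are not computed naively on objects or mapping spaces, so step (3)/(4) above needs a genuine handle on $\Hom_{\Tate(\sC)}(P,Q)$ for $P\in\Pro(\sC^0)$, $Q\in\sC$. The realistic fix is to embed $\Tate(\sC)$ into a larger, better-behaved category — e.g. $\Pro(\sC)$, where mapping spaces out of pro-objects are honest cofiltered limits of the mapping spaces out of the $X_i$ — reduce the claim to $\Pro(\sC)$, where $\Pro(\sC^0)\cap\Ind(\sC^0)=\sC^0$ can be checked by the filtered-colimit-vs-cofiltered-limit interchange: a pro-object $P=\lim_i X_i$ that is compact in $\Ind(\sC^0)$ has $\id_P$ factoring as $P\to X_i\to P$ for some $i$, exhibiting $P$ as a retract of $X_i\in\sC^0$, hence $P\in\sC^0$. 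This last sentence is really the whole content; everything else is arranging the categories so that this retract argument is legitimate.
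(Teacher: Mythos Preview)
Your final paragraph contains exactly the paper's argument: work inside $\Pro(\sC)$ (where $\Tate(\sC)$ sits as a full subcategory by its very definition, so the intersection is tautologically formed there), write $P = \lim_i X_i$ with $X_i \in \sC^0$, observe that $\id_P$ factors through some $X_i$, and conclude that $P$ is a retract of $X_i \in \sC^0$. Everything preceding that paragraph \textemdash{} right adjoints, functor-category models, the pushout description from Proposition~\ref{p:tate-pushout}, bar-type colimits for mapping spaces in Karoubi pushouts \textemdash{} is unnecessary scaffolding; the paper's proof is four sentences.

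There is, however, one genuine slip in how you label the hypothesis. You repeatedly write ``$P$ compact in $\Ind(\sC^0)$'' as the assumption that forces $\id_P$ to factor through some $X_i$. But compactness of $P$ in $\sC$ is precisely the \emph{conclusion} you are after (that is what $P \in \sC^0$ means), so invoking it as an input is circular. The correct hypothesis is merely that $P \in \sC$, i.e., that $P$ is a constant pro-object. Then the definition of morphisms in $\Pro(\sC)$ gives
\[
\Hom_{\Pro(\sC)}(\lim_i X_i,\, P) \;=\; \underset{i}{\colim}\, \Hom_{\sC}(X_i, P),
\]
so the identity map of $P$ lands in some stage of this filtered colimit and hence factors as $P \to X_i \to P$. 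No compactness of $P$ is needed \textemdash{} or available \textemdash{} at this step; it is what you deduce at the end.
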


\begin{proof}

Tautologically, this intersection is formed in $\Pro(\sC)$. 
If $\sF \in \Pro(\sC^0) \cap \sC$, write $\sF = \lim_i \sF_i$ with
$\sF_i \in \sC^0$.
Since $\sF \in \sC$, the identity map for $\sF$ 
must factor through some $\sF_i$, meaning that $\sF$ is a retract
of $\sF_i$. But $\sC^0$ is closed under retracts, so we obtain the claim.

\end{proof}

\begin{proof}[Proof of Lemma \ref{l:lattice-quot}]

First, observe that 
$\Lambda_2/\Lambda_1 \in \Pro(\Perf(A\mod))\cap A\mod$.
Indeed, it obviously lies in $\Pro(\Perf(A\mod))$, and it lies
in $A\mod$ since it sits in an exact triangle with
$V/\Lambda_1$ and $V/\Lambda_2$.
Therefore, Lemma \ref{l:tate-cpts} implies that it lies in $\Perf(A\mod)$.

Since this quotient lies in cohomological degree $0$, we
see that it is finitely presented.
Therefore, it suffices to show that this quotient is flat.
This follows again from the resolution:

\[
\Lambda_2/\Lambda_1 = \Ker(V/\Lambda_1 \to V/\Lambda_2).
\]

\end{proof}

\begin{defin}

An \emph{$A[[t]]$-lattice} in $V$ is a lattice $\Lambda \subset V$ which
is an $A[[t]]$-submodule, (equivalently: for which $t\Lambda \subset \Lambda$).

\end{defin}

We have the following structural result.

\begin{lem}

Any $A[[t]]$-lattice $\Lambda \subset V$ is a projective
$A[[t]]$-module with
$\Lambda \otimes_{A[[t]]} A((t)) \isom V$.
(In particular, $\Lambda$ has rank $n$ over $A[[t]]$.)

\end{lem}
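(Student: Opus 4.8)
The plan is to reduce the statement to two separate claims: (i) $\Lambda$ is finitely generated projective over $A[[t]]$, and (ii) the natural map $\Lambda \otimes_{A[[t]]} A((t)) \to V$ is an isomorphism. For (ii), note that since $\Lambda \into V$ is an admissible monomorphism with flat cokernel $V/\Lambda$ killed by a power of $t$ (it lies in $A\mod^\heart$ and is an $A((t))$-module quotient of... wait, it is only an $A$-module); more precisely, $V/\Lambda$ is $t$-torsion because any $v \in V$ lies in $t^{-m}\Lambda$ for the lattice $t^{-m}\Lambda$, and Lemma \ref{l:lattice-quot} identifies $(t^{-m}\Lambda)/\Lambda$ as a finite rank projective $A$-module. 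Inverting $t$ then kills $V/\Lambda$, so $\Lambda[t^{-1}] \isom V[t^{-1}] = V$, giving (ii) once we know (i) (so that tensoring makes sense as stated). The rank count then follows by localizing at a point of $\Spec(A)$ and comparing generic ranks over the field $\kappa((t))$.

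For (i), the strategy is a completeness/Nakayama argument along the $t$-adic filtration. First I would observe that $\Lambda = \lim_m \Lambda/t^m\Lambda$ in $\Pro(A\mod)^\heart$: indeed $\Lambda$ is by hypothesis a limit of finite rank projective $A$-modules with vanishing higher $\lim$, and the filtration by $t^m\Lambda$ is cofinal among such (each $t^m\Lambda$ is again an $A[[t]]$-lattice, and $\Lambda/t^m\Lambda$ is finite rank projective over $A$ by Lemma \ref{l:lattice-quot} applied to $t^m\Lambda \into \Lambda \into V$... one must check $t^m\Lambda \into V$ still has flat cokernel, which it does since $V/t^m\Lambda$ is an extension of $\Lambda/t^m\Lambda$ by $V/\Lambda$, both flat). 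In particular $M_0 \coloneqq \Lambda/t\Lambda$ is finite rank projective over $A$, and multiplication by $t$ induces $\gr^m \Lambda \cong M_0$ for all $m$ (injectivity of $t$ on $\Lambda$ since $\Lambda \subset V$ and $t$ acts invertibly on $V$; surjectivity of $t: t^{m}\Lambda/t^{m+1}\Lambda \to$ ... this is tautological). Lifting a finite set of generators of $M_0$ to elements of $\Lambda$, one builds a map $A[[t]]^{\oplus n} \to \Lambda$; it is surjective on associated graded, hence surjective after passing to the limit by $t$-adic completeness of both sides ($\Lambda = \lim \Lambda/t^m\Lambda$, and $A[[t]]^{\oplus n}$ is $t$-adically complete). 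Then the kernel is again a pro-object with trivial associated graded, hence zero, so $\Lambda \cong A[[t]]^{\oplus n}$ is even free; if one only lifts a projective-generating set one gets projective.

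The step I expect to be the main obstacle is controlling the $\Pro$-categorical limits: namely, verifying that the "obvious" identifications like $\Lambda = \lim_m \Lambda/t^m\Lambda$ and the vanishing of the relevant $R^i\lim$ really hold inside $\Pro(A\mod)$ with its $t$-structure, rather than being slogans. The delicate point is that a priori $\Lambda$ is presented as \emph{some} limit of finite rank projectives, and one must argue that the $t$-adic filtration computes the same limit; this uses that the cokernels $\Lambda/t^m\Lambda$ are genuinely finite rank projective $A$-modules (via Lemma \ref{l:lattice-quot}), that the transition maps are surjective with projective kernels, and the standard fact that such a tower has $R^1\lim = 0$. Once that bookkeeping is in place, the Nakayama-type lifting argument is routine. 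A secondary subtlety is matching the $A[[t]]$-module structure on the abstract pro-object $\Lambda$ with that on $A[[t]]^{\oplus n}$, but this is forced by the construction of the generators as honest elements of the $A[[t]]$-module $V$.
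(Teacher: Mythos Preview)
Your proposal is correct and follows essentially the same route as the paper. The paper's proof is terser: it observes via Lemma~\ref{l:lattice-quot} that each $\Lambda/t^r\Lambda$ is finite rank projective over $A$ (hence projective over $A[t]/t^r$ of constant rank) and then invokes a ``well-known argument'' to conclude projectivity over $A[[t]]$; your $t$-adic Nakayama/associated-graded lifting is precisely that argument made explicit. For the second claim the paper simply asserts that $\Lambda$ is wedged between two standard lattices $t^{\pm s}A[[t]]^{\oplus n}$, which is the same content as your assertion that $V/\Lambda$ is $t$-torsion; both the paper and you leave implicit the underlying $\Pro$-categorical point (a map from the pro-object $A[[t]]^{\oplus n}$ into the honest $A$-module $V/\Lambda$ must factor through some $A[[t]]^{\oplus n}/t^m$, forcing $t^m A[[t]]^{\oplus n}\subset\Lambda$). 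You are right that this bookkeeping is the only place requiring care, and your plan for handling it is sound.
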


\begin{proof}

Note that
no $\Tor$s are formed when we form 
$\Lambda/t^r\Lambda = \Lambda \otimes_{A[[t]]} A[[t]]/t^r$,
and by Lemma \ref{l:lattice-quot}, $\Lambda/t\Lambda$ is a finite rank projective
$A$-module. It follows that each $\Lambda/t^r\Lambda$ is projective
$A[t]/t^r$-module with rank independent of $r$.
By a well-known argument, this implies that $\Lambda$ is projective over $A[[t]]$.

For a choice of isomorphism $V \isom A((t))^{\oplus n}$, 
$\Lambda$ is wedged between two $A[[t]]$-lattices of the 
form $t^s A[[t]]^{\oplus n}$ for some choices of $s\in \bZ$.
It then immediately follows that $\Lambda \otimes_{A[[t]]} A((t)) \isom V$.

\end{proof}

\subsection{Example: local de Rham cohomology}

Let $S \coloneqq \Spec(A)$, and suppose that we are 
given a differential module $\chi = (V,\nabla)$ over $\o{\cD}_S$, i.e., 
$V$ is an $A((t))$-module free of some rank $n$ and
$\nabla: V \to V dt$ is $A$-linear and satisfies the Leibniz rule.

Below, we will construct the local de Rham cohomology\footnote{The notation
is misleading: the reader should think de Rham cochains, not merely de Rham cohomology.
We will be careful to use an $i$ instead of $*$ when we mean to refer to a specific
de Rham cohomology group, and promise the reader never to refer to the graded
vector space usually denoted in this way.} $H_{dR}^*(\o{\cD}_S,\chi)$
as a Tate $A$-module, i.e., as an object of $\Tate(A\mod)$.

Note that $V \simeq A((t))^{\oplus n}$ obviously defines an object of
$\Tate(A\mod)$. 

Let $\Lambda \subset V$ be an $A[[t]]$-lattice, i.e., a finite rank free $A[[t]]$-submodule
spanning under the action of $A((t))$.  
By definition of differential module, there exists an integer $r$ such that
$\nabla(\Lambda) \subset t^{-r} \Lambda dt$ for every choice of $A[[t]]$-lattice 
$\Lambda$.\footnote{To see this: choose a basis, so $V\isom A((t))^{\oplus n}$
and $\nabla = d + \Gamma dt$ for some matrix $\Gamma$. 
Then combine the fact that $\Gamma$ has a pole of bounded order
with the fact that every lattice is wedged between two lattices
of the form $t^s A[[t]]$, $s\in \bZ$.}
 
We see that for each integer $s$, the map:

\[
\Lambda \to t^{-r}\Lambda dt \to t^{-r}\Lambda dt / t^s\Lambda dt
\]

\noindent factors through $\Lambda/t^{r+s} \Lambda$.

Therefore, we obtain a morphism $\Lambda \to t^{-r}\Lambda dt$ in $\Pro(\Perf(A\mod))$, where we consider
$\Lambda$ and $t^{-r}\Lambda dt$ as objects of this category in the obvious way. Taking the kernel
of this morphism, we obtain an object of
$\Pro(\Perf(A\mod))$ encoding the complex $\Lambda \to t^{-r} \Lambda$ (considered as a complex in degrees $0$ and $1$).

Passing to the colimit in $\Pro(A\mod)$ under all such choices of $A[[t]]$-lattice, 
we obviously  obtain an object of $\Tate(A\mod) \subset \Pro(A\mod)$, since 
$V/\Lambda$ and $Vdt/t^{-r}\Lambda dt$ are both objects of $A\mod \subset \Pro(A\mod)$.

Note that $H_{dR}^*(\o{\cD}_S,\chi)$ is the kernel of the morphism:

\[
\nabla:V \to Vdt \in \Tate(A\mod).
\]

\begin{example}

In this formalism, formation of the de Rham complex commutes with base-change
in the $A$-variable as is.

\end{example}

% Duality?

\subsection{}

The following notion will play a key role in what follows.

\begin{defin}

A differential module $\chi = (V,\nabla)$ on $\o{\cD}_S$ is \emph{Fredholm}\footnote{A
closely related notion was introduced in \cite{bbe}, where it was called
\emph{$\vareps$-nice} due to the nice behavior of $\vareps$-factors under this hypothesis.} 
if $H_{dR}^*(\o{\cD}_S,\chi) \in \Perf(A\mod) \subset \Tate(A\mod)$, i.e., if
$\nabla: V \to Vdt$ is Fredholm.

\end{defin}

\begin{example}

If $A = F$ is a field, then it is well-known that every differential
module $(V,\nabla)$ is Fredholm: indeed, this follows at once
from the finite-dimensionality of the de Rham cohomology in this setting
(see \cite{bbe} \S 5.9 for related discussion).

\end{example}

\begin{example}\label{e:invertible}

Suppose that we are given a connection on $A((t))^{\oplus n}$ written as:

\[
d + \Gamma_{-r}t^{-r}dt + \text{lower order terms} 
\]

\noindent where $\Gamma_{-r}$ is an $n \times n$-matrix with
entries in $A$. Suppose that $r>1$ and $\Gamma_{-r}$ is invertible. Then the corresponding
differential module is Fredholm.

Indeed, for  
$\Lambda = A[[t]]^{\oplus n}$ and any integer $s > 0$,  
the map:

\[
t^{-s} \Lambda \xar{\nabla} t^{-r-s} \Lambda dt
\]

\noindent is a quasi-isomorphism, as is readily seen using the $t$-adic filtrations on both sides.
Passing to the limit, we see that $H_{dR}^*(\o{\cD}_S,\chi)$ vanishes in this case.

In particular, in the rank $1$ case, if the pole order is at least $2$ and does not jump (i.e., if the leading term is invertible),
then the corresponding connection is Fredholm. 

\end{example}

\begin{example}\label{e:rs-evalues}

Suppose that we are given a \emph{regular singular} connection on 
$A((t))^{\oplus n}$, so:

\[
\nabla = d + \Gamma_{-1}t^{-1}dt + \text{lower order terms} 
\]

\noindent where $\Gamma_{-1}$ is an $n \times n$-matrix with
entries in $A$. Suppose that $N\Id+\Gamma_{-1}$ is invertible for almost
every integer $N$.

Then using $t$-adic filtrations as above, we find that $\chi$ is Fredholm. 
Indeed, let $\Lambda = A[[t]]^{\oplus n}$ (for $A$ our ring of coefficients,
as usual), and note that $\nabla$ maps $t^N \Lambda$ to
$t^{N-1}\Lambda dt$ by assumption. Moreover, at the
associated graded level, $\nabla$ induces the map:

\[
t^N\Lambda/
t^{N+1} \Lambda = A^{\oplus n} 
\xar{\Gamma_{-1} + N\cdot \id}
A^{\oplus n} =
t^{N-1} \Lambda dt/ t^N \Lambda dt.
\]

Therefore, for $s\gg 0$, the map:

\[
t^s A[[t]]^{\oplus n} \xar{\nabla} t^{s-1} A[[t]]^{\oplus n} dt
\]

\noindent is an isomorphism, while for $r\gg 0$ the map:

\[
A((t))^{\oplus n}/t^{-r}A[[t]]^{\oplus n} \xar{\nabla} A((t))^{\oplus n} dt/t^{-r-1}A[[t]]^{\oplus n} dt
\]

\noindent is an isomorphism.

\end{example}

\begin{counterexample}\label{ce:rs-rk1-notfredholm}

For $A = k[\lambda]$ with $\lambda$ an indeterminate, the connection:

\[
V = A((t)), \nabla = d+\lambda \frac{dt}{t}
\]

\noindent is \emph{not} Fredholm. Indeed, for this connection, $H_{dR}^1$ 
is the sum of skyscraper sheaves supported on $\bZ \subset \bA^1 = \Spec(k[\lambda])$.

\end{counterexample}

\subsection{}

The following basic results will be of use to us.

\begin{lem}\label{l:fred-lattices}

Let $\chi = (V,\nabla)$ be a Fredholm differential module over $S = \Spec(A)$, and let 
$\Lambda \subset V$ and $\Lambda^{\prime} \subset V^{\prime}$ be
$A[[t]]$-lattices with the property that $\nabla(\Lambda) \subset \Lambda^{\prime}$. Then:

\begin{enumerate}

\item\label{i:fred-lattices-1}

The map:

\[
\nabla: \Lambda \to \Lambda^{\prime} \in \Pro(\Perf(A\mod)) \subset \Tate(A\mod)
\]

\noindent is Fredholm.

\item\label{i:fred-lattices-2}

For $N\gg 0$, the complex:

\[
\Coker(t^N \Lambda \to \Lambda^{\prime}) \in \Perf(A\mod)
\]

\noindent is of the form (i.e., quasi-isomorphic to a complex) $P[-1]$ 
for $P$ a finite rank projective $A$-module.

\end{enumerate}

\end{lem}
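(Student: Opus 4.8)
The plan is to reduce everything to the Fredholm hypothesis on $\nabla:V\to Vdt$ together with the already-established structure theory of lattices. Begin with part \eqref{i:fred-lattices-1}. We are given $A[[t]]$-lattices $\Lambda\subset V$ and $\Lambda'\subset V'$ with $\nabla(\Lambda)\subset\Lambda'$, so there is a commutative square in $\Tate(A\mod)$ relating $\nabla:\Lambda\to\Lambda'$ to $\nabla:V\to Vdt$, whose vertical arrows are the admissible monomorphisms $\Lambda\into V$ and $\Lambda'\into Vdt$ (the latter, after choosing a basis, is an admissible monomorphism of $A[[t]]$-lattices up to a shift by a power of $t$, so we may as well assume $\Lambda'$ is literally a lattice inside $Vdt$ in the sense of the earlier definitions). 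Passing to cokernels of the vertical maps and using the octahedral axiom, $\Coker(\Lambda\to\Lambda')$ sits in an exact triangle with $\Coker(\nabla:V\to Vdt)=H^*_{dR}(\o{\cD}_S,\chi)$ and $\Coker(V/\Lambda \to Vdt/\Lambda')$. The middle term is perfect by the Fredholm hypothesis; the last term is a cone of a map between two objects of $A\mod^\heart\subset\Tate(A\mod)$, hence lies in $A\mod\subset\Tate(A\mod)$. Therefore $\Coker(\Lambda\to\Lambda')$ lies in $\Pro(\Perf(A\mod))$ (obvious, since both $\Lambda,\Lambda'$ do) \emph{and} in $A\mod$; by Lemma \ref{l:tate-cpts} it lies in $\Perf(A\mod)=\sC^0$, which is exactly the assertion that $\nabla:\Lambda\to\Lambda'$ is Fredholm.

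For part \eqref{i:fred-lattices-2}, fix $\Lambda,\Lambda'$ as above and consider the family of maps $t^N\Lambda\to\Lambda'$ for $N\geq 0$. First, $\Ker(t^N\Lambda\to\Lambda')$ vanishes for $N\gg 0$: this kernel is a sub-$A[[t]]$-module of $\Ker(\nabla:V\to Vdt)=H^0_{dR}(\o{\cD}_S,\chi)$, which by the Fredholm hypothesis is a \emph{perfect} $A$-module concentrated in degree $0$, hence finitely generated over $A$, hence annihilated (as a submodule of the $t$-divisible object $V$) by $t^N$ for $N$ large — more carefully, $H^0_{dR}$ is a finitely generated $A$-module stable under no positive power of $t$ unless it is already contained in every $t^N\Lambda$, which forces it to be $0$ inside $\bigcap_N t^N\Lambda = 0$; so for $N\gg 0$ the intersection $H^0_{dR}\cap t^N\Lambda$ is zero and the map $t^N\Lambda\to\Lambda'$ is injective. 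Consequently, for such $N$ the complex $\Coker(t^N\Lambda\to\Lambda')$ is concentrated in cohomological degree $1$, i.e.\ of the form $P[-1]$ with $P=\Lambda'/t^N\Lambda$ an honest $A$-module. It remains to see $P$ is finite rank projective. Finiteness and the fact that $P$ is perfect follow from part \eqref{i:fred-lattices-1} applied to the pair $t^N\Lambda\subset\Lambda'$ (a rescaled lattice is still an $A[[t]]$-lattice), exactly as in the proof of Lemma \ref{l:lattice-quot}: $P=\Coker(t^N\Lambda\to\Lambda')$ is perfect and concentrated in degree $0$, hence finitely presented. For projectivity we argue as in the proof that $A[[t]]$-lattices are projective: filter $P=\Lambda'/t^N\Lambda$; since $t^N\Lambda$ and $\Lambda'$ are free $A[[t]]$-modules of the same rank $n$ and $t^N\Lambda\subset\Lambda'$ with flat quotient inside the Tate picture, the successive quotients $\Lambda'/(t\Lambda'+t^N\Lambda)$ and so on are computed without $\Tor$, are finite projective $A$-modules by Lemma \ref{l:lattice-quot}-type reasoning, and stabilize in rank; a standard dévissage then shows $P$ is projective over $A$. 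Alternatively, and perhaps more cleanly, one checks that $P$ is flat directly: $P$ sits in the exact sequence $0\to P\to V dt/t^N\Lambda dt \to Vdt/\Lambda' \to 0$ (all terms in $A\mod^\heart$) after possibly enlarging $N$ so the relevant cokernels are flat, and then flat plus finitely presented plus classical gives finite projective.

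The only genuine subtlety — the step I expect to be the main obstacle — is the injectivity claim for $t^N\Lambda\to\Lambda'$ when $N\gg 0$, i.e.\ that the degree-$0$ de Rham cohomology, which is a flat relative horizontal-sections module, actually \emph{exits} every lattice after a bounded twist. Over a field this is the elementary fact that horizontal sections of a connection on the punctured disc form a finite-dimensional space of meromorphic-looking vectors with bounded pole order; in the Fredholm family setting the perfectness of $H^0_{dR}$ forces it to be a finitely generated $A$-module, and finitely generated $A[[t]]$-submodules of $A((t))^{\oplus n}$ that are $\bigcap_N t^N$-separated must be annihilated by some $t^N$ on the nose, giving the vanishing of $H^0_{dR}\cap t^N\Lambda$. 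Once this is in hand, everything else is the by-now routine interplay between Lemma \ref{l:tate-cpts}, the octahedral axiom, and the projectivity criteria already deployed for Lemma \ref{l:lattice-quot} and the lattice structure lemma.
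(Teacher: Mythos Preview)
Your argument for part \eqref{i:fred-lattices-1} is correct and essentially identical to the paper's: both place $\Coker(\Lambda\to\Lambda')$ in an exact triangle with $H^*_{dR}$ (perfect) and $\Coker(V/\Lambda\to Vdt/\Lambda')$ (in $A\mod$), then invoke Lemma~\ref{l:tate-cpts}.

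Part \eqref{i:fred-lattices-2} has genuine gaps, however, and the paper's route is quite different.

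\emph{Injectivity.} Your argument that $H^0_{dR}\cap t^N\Lambda=0$ for $N\gg0$ does not go through. You write that $H^0_{dR}$ is ``stable under no positive power of $t$'' and invoke an $A[[t]]$-module property, but $H^0_{dR}$ is only an $A$-submodule of $V$, not an $A[[t]]$-submodule. What remains is: $H^0_{dR}$ is finitely generated over $A$, the chain $H^0_{dR}\cap t^N\Lambda$ is decreasing, and the intersection is zero. But decreasing chains of finitely generated submodules with zero intersection need not stabilize (e.g.\ $(x^N)\subset k[x]$), so finite generation of $H^0_{dR}$ alone cannot close this. You flag this step as the main obstacle, and it really is one: there is no soft argument here.

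\emph{Projectivity.} Your dévissage argument treats $t^N\Lambda$ as a sublattice of $\Lambda'$, but these live in different Tate modules ($V$ versus $Vdt$); the map between them is $\nabla$, not an inclusion, so Lemma~\ref{l:lattice-quot} does not apply to $\Lambda'/\nabla(t^N\Lambda)$. Your alternative short exact sequence $0\to P\to Vdt/t^N\Lambda dt\to Vdt/\Lambda'\to 0$ is also not right: the middle term should be $Vdt/\nabla(t^N\Lambda)$, and $\nabla(t^N\Lambda)$ is not a lattice a priori, so flatness of that quotient is not available.

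\emph{What the paper does.} The paper avoids separating injectivity from projectivity. It first observes $\nabla(t^N\Lambda)\subset t^{N-1}\Lambda'$, then uses the pro-structure: since $\Coker(\Lambda\to\Lambda')\to H^*_{dR}$ is a map from a pro-system $\lim_N\Coker(\Lambda/t^N\Lambda\to\Lambda'/t^{N-1}\Lambda')$ to a perfect object, it factors through a finite stage, so for $N\gg0$ the map of two-term complexes $(t^N\Lambda\to t^{N-1}\Lambda')\to(V\to Vdt)$ is nullhomotopic. Taking the cone then exhibits $\Coker(t^N\Lambda\to t^{N-1}\Lambda')$ as a direct summand of $\Ker(V/t^N\Lambda\to Vdt/t^{N-1}\Lambda')$, which visibly has $\Tor$-amplitude $\geq 0$. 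Combined with the obvious $\Tor$-amplitude $\leq 0$ bound coming from the two-term presentation, this pins the $\Tor$-amplitude at a single degree and gives $P[-1]$ with $P$ finite projective in one stroke. The pro-object/nullhomotopy step is the idea you are missing.
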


\begin{proof}

For \eqref{i:fred-lattices-1}:

The 2-step complex $\Lambda \xar{\nabla} \Lambda^{\prime}$
lies in $\Pro(A\mod)$ (by construction). Moreover, it sits in an exact
triangle with $H_{dR}^*(\o{\cD}_S,\chi)$, which by assumption lies
in $\Perf(A\mod)$, and the complex:

\[
\Coker(V/\Lambda \xar{\nabla} Vdt/\Lambda^{\prime})
\] 

\noindent which obviously lies in $A\mod \subset \Tate(A\mod)$. Therefore,
we obtain the claim from Lemma \ref{l:tate-cpts}.

We now deduce \eqref{i:fred-lattices-2}. 

First, note 
that $\nabla$ maps $t^N\Lambda$ to $t^{N-1}\Lambda^{\prime}$ for
every $N\geq 0$. Indeed, for $s\in \Lambda$, $\nabla(ts) = sdt + t\nabla(s)$,
so we see that $sdt \in \Lambda^{\prime}$ (since $\nabla(ts)$ and $t\nabla(s)$ are).
Therefore, $\nabla(t^Ns) = Nt^{N-1}sdt + t^N\nabla(s)$ lies in $t^{N-1} \Lambda^{\prime}$
as desired.

We now claim that for $N\gg 0$, the map of 2-step complexes:

\[
\xymatrix{
t^N\Lambda \ar[r]^{\nabla} \ar[d] & t^{N-1} \Lambda^{\prime} \ar[d] \\
V \ar[r]^{\nabla} & Vdt 
}
\]

\noindent is (isomorphic to) the zero map in $\Tate(\sC)$. 

Indeed, since $\Lambda \to \Lambda^{\prime} = 
\lim_N \Lambda/t^N\Lambda \to \Lambda^{\prime}/t^{N-1}\Lambda^{\prime}$
as a pro-object, and since $H_{dR}^*(\o{\cD}_S,\chi)$ lies in $\Perf(A\mod)$,
the map $(\Lambda \to \Lambda^{\prime}) \to H_{dR}^*(\o{\cD}_S,\chi)$
must factor through 
$(\Lambda/t^N\Lambda \to \Lambda^{\prime}/t^{N-1}\Lambda^{\prime})$
for some $N$, giving the claim.

We now claim that taking $N$ with this property suffices for the conclusion.
Indeed, we have seen in \eqref{i:fred-lattices-1} that 
$\Coker(\nabla: t^N\Lambda \to \Lambda^{\prime})$ is perfect as an $A$-module,
so it suffices to see that it has $\Tor$-amplitude $0$. 
It obviously suffices to show this for 
$\Coker(\nabla:t^N\Lambda \to t^{N-1}\Lambda^{\prime})$ instead.
By construction, this object has $\Tor$-amplitude in $[-1,0]$, so it suffices
to show that it has $\Tor$-amplitude $\geq 0$.

To this end, note that since the above map is zero, we obtain an isomorphism:

\[
\Ker(V/t^N\Lambda \xar{\nabla} Vdt/t^{N-1}\Lambda^{\prime}) \simeq 
H_{dR}^*(\o{\cD}_S,\chi) \oplus \Coker(t^N\Lambda \xar{\nabla} t^{N-1}\Lambda^{\prime})
\]

\noindent upon taking its cone. Therefore, our cokernel is a direct summand
of a complex visibly of $\Tor$-amplitude $[0,1]$, and therefore itself
has $\Tor$-amplitude $[0,1]$ as desired.

\end{proof}

\subsection{}

With a bit more work, we have the following more precise version of 
Lemma \ref{l:fred-lattices} \eqref{i:fred-lattices-2}.

\begin{lem}\label{l:fred-lattices-quant}

In the notation of Lemma \ref{l:fred-lattices}:

Suppose that $A$ has finitely many minimal prime ideals and its nilradical
is nilpotent.\footnote{E.g., $A$ is Noetherian, or an integral domain, or
a (possibly infinite) polynomial algebra over a Noetherian ring.}

Then there exist integers $\ell$ and $r_0$ such that for all $r\geq r_0$,
$t^{r+\ell}\Lambda^{\prime} \subset \nabla(t^r\Lambda)$ with finite rank projective quotient.

\end{lem}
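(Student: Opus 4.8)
The plan is to bootstrap from Lemma \ref{l:fred-lattices}\eqref{i:fred-lattices-2}: that result already gives, for each sufficiently large $N$, that $\Coker(t^N\Lambda \xar{\nabla} \Lambda')$ is of the form $P[-1]$ with $P$ finite rank projective. Unwinding what $\Coker$ being concentrated in degree $1$ means, this says precisely that $\nabla: t^N\Lambda \to \Lambda'$ is \emph{injective} with image an admissible sub-$A$-module whose cokernel is finite rank projective. The goal is to massage this ``injectivity with nice cokernel of $\nabla$ on $t^N\Lambda$'' into the desired two-sided containment $t^{r+\ell}\Lambda' \subset \nabla(t^r\Lambda)$, uniformly in $r$.

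First I would fix $N = N_0$ large enough that Lemma \ref{l:fred-lattices}\eqref{i:fred-lattices-2} applies, so $\nabla: t^{N_0}\Lambda \into \Lambda'$ has finite rank projective cokernel $P$. As in the proof of \eqref{i:fred-lattices-2}, $\nabla$ then carries $t^r\Lambda$ into $t^{r-N_0}\Lambda'$ for all $r \geq N_0$ (using $\nabla(t^r s) = r t^{r-1}s\,dt + t^r\nabla(s)$ together with $sdt \in \Lambda'$). So the map $\nabla : t^r\Lambda \to t^{r-N_0}\Lambda'$ makes sense for all large $r$, and I want to compare its image with $t^{r-N_0+\ell}\Lambda'$ for a suitable $\ell$ independent of $r$. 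The key point is that, modulo $t$, the situation stabilizes: the map $\nabla$ induces at the associated-graded level (for the $t$-adic filtrations) a family of $A$-linear maps $t^m\Lambda/t^{m+1}\Lambda \to t^{m-1}\Lambda'/t^m\Lambda'$, and for $m \gg 0$ one expects these to become isomorphisms — indeed this is exactly the mechanism behind Examples \ref{e:invertible} and \ref{e:rs-evalues}, and in the Fredholm case the obstruction to it (a nonvanishing $H^1_{dR}$ contribution at level $m$) can occur only for finitely many $m$ since the total de Rham cohomology is perfect. This is where the hypothesis on $A$ (finitely many minimal primes, nilpotent nilradical) should enter: it lets one detect ``eventually an isomorphism'' on the finitely many residue fields at minimal primes, then propagate to $A$ itself via Nakayama-type / devissage arguments along the nilradical filtration, getting a single threshold $m_0$ working for all of $\Spec A$ simultaneously.

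Once one knows that $\gr_m(\nabla)$ is an isomorphism for all $m \geq m_0$, a standard completeness / successive-approximation argument (the lattices are $t$-adically complete and separated, being finite rank free over $A[[t]]$, and no $\Tor$'s intervene in the graded pieces) upgrades this to: $\nabla$ restricts to an isomorphism $t^{m_0}\Lambda \isom t^{m_0-1}\Lambda'$ — or more carefully, to a statement that $\nabla(t^r\Lambda) \supseteq t^{r-1+c}\Lambda'$ for all $r$ past some point, for a fixed constant $c$ coming from how far $m_0$ exceeds the ``generic'' threshold. Absorbing constants, this is the claim with $\ell$ and $r_0$ extracted from $N_0$, $m_0$, and $c$. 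The main obstacle I anticipate is precisely the uniformity: getting the ``$\gr_m(\nabla)$ is an isomorphism for $m \geq m_0$'' with $m_0$ independent of the point of $\Spec A$, over a ring $A$ that need not be Noetherian. Perfectness of $H^*_{dR}(\o{\cD}_S,\chi)$ controls things globally, but converting that into a uniform bound on where the graded maps stabilize requires care — the hypothesis on minimal primes and the nilradical is exactly the crutch that makes this a finite check, and I'd expect the bulk of the actual proof to be the bookkeeping that turns that finite check into the stated $\ell, r_0$.
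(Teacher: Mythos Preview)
Your approach has a genuine gap: the claim that the associated graded maps $\gr_m(\nabla)$ eventually become isomorphisms is false in general, even over a field, for the \emph{given} lattice $\Lambda$. The mechanism you cite from Examples \ref{e:invertible} and \ref{e:rs-evalues} works precisely because in those cases the leading coefficient of the connection matrix (relative to the chosen lattice) is either invertible or becomes invertible after the shift by $N\cdot\id$ coming from the $d$-part. But for an irregular connection whose leading term $\Gamma_{-r}$ (with $r\geq 2$) is nilpotent, the associated graded at level $m$ is just multiplication by $\Gamma_{-r}$ for \emph{every} $m$ (the $d$-contribution lands in a strictly higher filtration step), so it is never an isomorphism. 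Such connections are nonetheless Fredholm over a field, so perfectness of $H^*_{dR}$ does not force the graded pieces to stabilize; the obstruction you describe is not a ``finite $H^1$ contribution at level $m$'' but a structural failure of the filtration to be adapted to $\nabla$.

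The paper's proof fixes this by \emph{changing the lattice}: after the reductions to algebraically closed fields (which you correctly identify as the role of the hypotheses on $A$), it invokes the Levelt-Turrittin decomposition to produce a new lattice $\Lambda_0$ (possibly after adjoining $t^{1/e}$) for which $t^{r-1}\Lambda_0\,dt \subset \nabla(t^r\Lambda_0)$ holds for all $r\geq 0$. This is exactly the statement that the associated graded is surjective at every level --- but for $\Lambda_0$, not for the original $\Lambda$. One then transfers the conclusion back to $\Lambda,\Lambda'$ by sandwiching. So the missing ingredient in your plan is Levelt-Turrittin (or some other reduction theory producing a well-adapted lattice); the associated-graded argument you sketch is the right endgame, but it cannot be run on the original lattice.
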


\begin{proof}

We proceed by steps.

\step\label{st:fred-2-start}

First, observe that projectivity of the quotient follows at once if we know the inclusion:

\begin{equation}\label{eq:nabla-incl}
t^{r+\ell}\Lambda^{\prime} \subset \nabla(t^r\Lambda).
\end{equation}

Indeed, first note that we may safely assume $r_0$ is large enough such that
the conclusion of Lemma \ref{l:fred-lattices} holds (i.e., so that
$\nabla(t^r\Lambda)$ is a lattice). 
Then we are taking the quotient of one lattice by another,
so the claim follows from Lemma \ref{l:lattice-quot}. 
\step 

Next, we reduce to the case where $A$ is reduced. More precisely, suppose $I \subset A$
is a nilpotent ideal. We claim that if the lemma holds for our
lattices modulo $I$, then it holds for our lattices.
We obviously can reduce to the case where $I^2 = 0$ (just to make 
the numerics simpler).

Suppose $(r_0,\ell)$ satisfy the conclusion of the lemma for our
lattices modulo $I$. We will show that $(r_0,2\ell+r_0)$ satisfies the conclusion of the lemma for $A$.

Using Lemma \ref{l:fred-lattices}, we can make sure to choose $r_0$
so that $t^{r_0-1}\Lambda dt \subset \Lambda^{\prime}$
and $\nabla(t^{r_0}\Lambda) \subset \Lambda^{\prime}$,
in which case:

\begin{equation}\label{eq:simple-incl}
\nabla(t^r\Lambda) \subset t^{r-r_0}\Lambda^{\prime}
\end{equation}

\noindent for all $r \geq r_0$. Indeed, for $s \in \Lambda$,
we then have:

\[
\nabla(t^r s) = \nabla(t^{r-r_0} \cdot ( t^{r_0} s)) =
(r-r_0) t^{r-1} s dt + t^{r-r_0} \nabla(t^{r_0}s) 
\]

We now claim that:

\begin{equation}\label{eq:alsosimple-incl}
t^{r+\ell}\Lambda^{\prime} \subset \nabla(t^r\Lambda) + I t^{r-r_0}\Lambda^{\prime}.
\end{equation}

\noindent Indeed, for $\omega \in t^{r+\ell}\Lambda^{\prime}$,
note that $\omega = \nabla(s) + \vareps$ where
$s\in t^r\Lambda$ and $\vareps \in I\cdot V$, since
we are assuming $(r_0,\ell)$ satisfies our hypotheses
modulo $I$. Moreover, by \eqref{eq:simple-incl},
$\vareps \in I V \cap t^{r-r_0}\Lambda^{\prime}$,
so, it suffices to note that
$I V \cap t^{r-r_0}\Lambda^{\prime} = 
I \cdot t^{r-r_0}\Lambda^{\prime}$. In turn, this equality holds
because $I V = I \otimes_A V$ and 
$I \Lambda^{\prime} = I \otimes_A \Lambda^{\prime}$ by
pro-projectivity, and then we see that (with e.g. $\Ker$ denoting
homotopy kernels everywhere):

\[
\begin{gathered}
\big(I V \cap t^{r-r_0}\Lambda^{\prime} \big) /
I \cdot t^{r-r_0}\Lambda^{\prime} = 
H^0\big(\Ker(t^{r-r_0}\Lambda^{\prime}/It^{r-r_0}\Lambda^{\prime} \to
V/IV)\big) = \\
H^0\big(A/I \underset{A}{\otimes} \Ker(t^{r-r_0}\Lambda^{\prime} \to V)\big)
= H^0(A/I \underset{A}{\otimes} V/t^{r-r_0}\Lambda^{\prime}[-1]) = 0
\in \Tate(A\mod)
\end{gathered}
\]

\noindent as desired.

Finally, we show that $(r_0,2\ell+r_0)$ satisfies for the
lemma. Indeed, iterating \eqref{eq:alsosimple-incl},
we have:

\[
\begin{gathered}
t^{r+r_0+2\ell} \Lambda^{\prime} \subset
\nabla(t^{r+r_0+\ell}\Lambda) + I t^{r+\ell}\Lambda^{\prime} \subset
\nabla(t^{r+r_0+\ell}\Lambda) + I (\nabla(t^r\Lambda)+I t^{r-r_0}\Lambda^{\prime}) = \\
\nabla(t^{r+r_0+\ell}\Lambda) + I \nabla(t^r\Lambda) \subset
\nabla(t^r\Lambda)
\end{gathered}
\]

\noindent giving the claim.

\step

We now reduce to the case where $A = F$ is an algebraically closed field.

First, suppose $A \into B$ is any embedding of commutative rings.
We claim that if $(r_0,\ell)$ suffice for our lattices tensored with $B$,
then they suffice before tensoring with $B$ as well.

By Step \ref{st:fred-2-start},
it suffices to show that the inclusion \eqref{eq:nabla-incl} 
holds if and only if it holds after tensoring with $B$.
Indeed, such an inclusion is equivalent to the fact that
the map $t^{r+\ell}\Lambda^{\prime} \to \Lambda^{\prime}/\nabla(t^r\Lambda)$
is zero, and since the quotient on the right is projective, it embeds
into its tensor product with $B$. 

Now the fact that $A$ is reduced with finitely many minimal primes means\footnote{Recall
that the intersection of minimal prime ideals in any commutative ring is
the nilradical.} that we have:

\[
A \into \prod_{\fp \subset A \text{ minimal}} A/\fp \into 
\prod_{\fp \subset A \text{ minimal}} F_{\fp}
\]

\noindent where $F_{\fp}$ is a choice of algebraic closure of the fraction field of $A/\fp$.
Since this product is finite, we can reduce to the case where $A$ coincides
with one of the factors, as desired.

\step\label{st:assume-levelt-lattice}

In the next step, we will show that for $A = F$ algebraically closed, there is an $F[[t]]$-lattice 
$\Lambda_0 \subset V$ with the property that:

\[
t^{r-1} \Lambda_0 dt \subset \nabla(t^{r} \Lambda_0) 
\] 

\noindent for all $r \geq 0$. Assume this for now, and we will deduce the lemma.

We may then find $r_0 \geq 0$ such that $t^{r_0+1}\Lambda^{\prime} \subset \Lambda_0 dt$
and $\ell \geq 0$ such that $t^{\ell}\Lambda_0 \subset t^{r_0}\Lambda$. 
We claim that this choice of integers suffices.

Indeed, for any $r \geq r_0$, we have
$t^{\ell+r-r_0}\Lambda_0 \subset t^r\Lambda$,
and applying our hypothesis on $\Lambda_0$, we obtain:

\[
t^{\ell+r-r_0-1}\Lambda_0 dt \subset
\nabla(t^{\ell+r-r_0}\Lambda_0) \subset
\nabla(t^{\ell+r}\Lambda).
\]

\noindent We then have:

\[
t^{\ell+r}\Lambda^{\prime} \subset
t^{\ell+r-r_0-1}t^{r_0+1}\Lambda^{\prime} \subset t^{\ell+r-r_0-1}\Lambda_0 dt
\]

\noindent as desired.

\step

We now construct $\Lambda_0$ as above using the Levelt-Turrittin 
decomposition\footnote{It would be great to have a more direct argument here.}
c.f. \cite{levelt}.

First, suppose that for some $e \in \bZ^{>0}$ we have constructed:

\[
\Lambda_1 \subset V\underset{F((t))}{\otimes} F((t^{\frac{1}{e}}))
\]

\noindent satisfying the corresponding property, i.e., such that:

\[
(t^{\frac{1}{e}})^{r-1} \Lambda_1 d(t^{\frac{1}{e}}) = 
t^{\frac{r}{e}-1} \Lambda_1 dt
\subset \nabla(t^{\frac{r}{e}} \Lambda _1)
\]

\noindent for every $r \geq 0$.
Define $\Lambda_0$ as $\Lambda_1 \cap V \subset V \otimes_{F((t))} F((t^{\frac{1}{e}}))$.
Clearly $\Lambda_0$ is an $A[[t]]$-lattice, and we claim it satisfies the desired property:

Indeed, if $r \in \bZ^{\geq 0}$ and $s \in t^{r-1} \Lambda_0 dt $, then
we can find $\sigma \in t^r \Lambda_1$ with $\nabla(\sigma) = s$.
Note that $V\otimes_{F((t))} F((t^{\frac{1}{e}}))$ decomposes as an $F((t))$-module
as $\oplus_{i=0}^{e-1} V t^{\frac{i}{e}}$, and similarly for 
$V \otimes_{F((t))}F((t^{\frac{1}{e}})) dt$, and these decompositions are compatible with
$\nabla$. Since $\nabla(\sigma) = s \in Vdt$, the
component $\sigma_0 \in V \subset \oplus_{i=0}^{e-1} V t^{\frac{i}{e}}$ of $\sigma$
also maps to $s$ under $\nabla$. Since $\sigma_0 \in t^r\Lambda_0$, this gives the
desired claim.

Therefore, we may replace $F((t))$ by $F((t^{\frac{1}{e}}))$ for any $e>0$. 
Then, because $F$ is algebraically closed, 
the Levelt-Turrittin theorem says\footnote{More specially, it says that
after such an extension a differential module decomposes as a direct
sum of a regular module and modules that are tensor products of
a rank 1 irregular connection with a regular connection. The latter kind
of connection obviously has an invertible (diagonal) leading term.}
that (after such an extension) $V$ is a direct sum of 
differential modules each of which is either regular
or else (up to gauge transformation) has an invertible leading term,
reducing us to treating each of these cases.

But these cases were treated already in Examples \ref{e:invertible} and \ref{e:rs-evalues},
completing the proof of the existence of the lattice
claimed in Step \ref{st:assume-levelt-lattice}.

\end{proof}

\subsection{}

We also record the following simple result for later use.

\begin{lem}\label{l:extn-scalars}

Let $(V,\nabla)$ be a differential module over $S = \Spec(A)$. Suppose 
$d \in \bZ^{>0}$ is given. 

Then if $V[t^{\frac{1}{d}}] \coloneqq V \otimes_{A((t))} A((t^{\frac{1}{d}}))$ equipped
with its natural connection is Fredholm, then $(V,\nabla)$ is Fredholm.

\end{lem}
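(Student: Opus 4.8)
The statement is that Fredholmness descends along the finite extension $A((t)) \to A((t^{1/d}))$. The key structural input is that $V[t^{1/d}]$, as an $A((t))$-module, is a direct sum $\bigoplus_{i=0}^{d-1} V t^{i/d}$, and this decomposition is compatible with the connection in the sense that $\nabla(V t^{i/d} dt) \subset V t^{i/d} dt$ (one computes $\nabla(v t^{i/d}) = \nabla_V(v)\, t^{i/d} + \tfrac{i}{d} v\, t^{i/d - 1}\, dt$, which lies in $Vdt \cdot t^{i/d}$). Equivalently, if we write $W \coloneqq V[t^{1/d}]$ with its connection $\tilde\nabla$, then as an object of $\Tate(A\mod)$ equipped with an $A$-linear endomorphism complex, $(W, \tilde\nabla)$ decomposes as a direct sum over $i$ of the summand $(V t^{i/d}, \tilde\nabla|_{V t^{i/d}})$, and the $i=0$ summand is precisely $(V,\nabla)$ up to the obvious identification.

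So the plan is: first, make precise that $\nabla: W \to Wdt$ in $\Tate(A\mod)$ is the direct sum, over $i = 0, \dots, d-1$, of maps $\nabla_i : V t^{i/d} \to V t^{i/d} dt$, with $\nabla_0$ identified with $\nabla : V \to Vdt$. This is a direct computation with the Leibniz rule, as indicated above; the only mild care needed is to check the $t$-adic boundedness so that everything genuinely lives in $\Tate(A\mod)$ and the direct sum decomposition is internal to that category (each $V t^{i/d}$ is an $A((t))$-module free of rank $n$, hence a colattice, and $\nabla_i$ differs from $\nabla_0$ by the bounded-order operator $v \mapsto \tfrac{i}{d} v \, t^{-1} dt$, so it remains a morphism in $\Tate(A\mod)$). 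Second, take kernels: $H^*_{dR}(\o{\cD}_S, W) = \Ker(\tilde\nabla) = \bigoplus_{i=0}^{d-1} \Ker(\nabla_i)$, and in particular $H^*_{dR}(\o{\cD}_S, \chi)$ for $\chi = (V,\nabla)$ is the $i=0$ summand $\Ker(\nabla_0)$. Third, conclude: by hypothesis $\bigoplus_i \Ker(\nabla_i) \in \Perf(A\mod)$, and $\Perf(A\mod)$ is closed under retracts (it is idempotent-complete), so each direct summand — in particular $\Ker(\nabla_0) = H^*_{dR}(\o{\cD}_S,\chi)$ — lies in $\Perf(A\mod)$. That is exactly the assertion that $(V,\nabla)$ is Fredholm.

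I do not expect a serious obstacle here; the result is genuinely elementary once the direct-sum decomposition of $W$ over $V$ is set up, and indeed the same decomposition was already used in the proof of Lemma \ref{l:fred-lattices-quant} (in the step reducing to $F((t^{1/e}))$), where the compatibility of $\bigoplus_i V t^{i/e}$ with $\nabla$ was invoked. If anything, the one point requiring a sentence of care is that the decomposition $W = \bigoplus_i V t^{i/d}$ and the corresponding decomposition of $\tilde\nabla$ take place in $\Tate(A\mod)$ rather than just on the level of underlying objects, so that passing to the kernel commutes with the direct sum — but since $\Tate(A\mod)$ is a stable $\infty$-category, finite direct sums are both limits and colimits there, and kernels commute with finite direct sums automatically. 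Hence the proof is really just: decompose, take kernels, use idempotent-completeness of $\Perf(A\mod)$.
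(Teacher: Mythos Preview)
Your proof is correct and is essentially the same as the paper's: both decompose $V[t^{1/d}]$ as an $A((t))$-differential module into $\bigoplus_{i=0}^{d-1} V t^{i/d}$ (the paper names the $i$th summand as $V \otimes \chi_{i/d}$ with $\chi_{i/d} = d + \tfrac{i}{d}\tfrac{dt}{t}$, which is exactly your $\nabla_i$ under the identification $v \mapsto v t^{i/d}$), and then observe that $(V,\nabla)$ is the $i=0$ summand, hence its de Rham complex is a retract of a perfect complex. One small slip: you wrote $\nabla(V t^{i/d} dt) \subset V t^{i/d} dt$ where you meant $\tilde\nabla(V t^{i/d}) \subset V t^{i/d} dt$, but your parenthetical computation makes the intended statement clear.
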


\begin{proof}

As a differential module over $A((t))$, $V[t^{\frac{1}{d}}]$ is isomorphic to:

\[
\oplus_{i=0}^{d-1} \, V \otimes \chi_{\frac{i}{d}}
\]

\noindent where $\chi_{\frac{i}{d}}$ is the rank 1 connection:

\[
d + \frac{i}{d} \frac{dt}{t}.
\]

\noindent This obviously gives the claim, since $V$ is a direct summand of this differential
module.

\end{proof}

\subsection{Application to algebraicity of some stacks}

Our principal use of the above notions is the following technical theorem, which
should be understood as dreaming that
every connection is Fredholm, and deducing that 
$t^{-r}\fg[[t]]dt/G(O)$ is an algebraic stack of finite type.
 
The reader who 
is interested to first see the construction of a large supply
of Fredholm local systems may safely skip ahead to \S \ref{ss:post-artin}.

\begin{thm}\label{t:artin}

Let $G$ be an affine algebraic group with $\fg \coloneqq \Lie(G)$,
and integers $r>0$ and $s\geq 0$.

Let $T$ be a Noetherian affine scheme with a morphism 
$T \to t^{-r} \fg[[t]]dt /t^s \fg[[t]] dt$, and let
$S$ denote the fiber product:

\[
S \coloneqq T \underset{t^{-r} \fg[[t]]dt /t^s \fg[[t]] dt} {\times} t^{-r} \fg[[t]]dt.
\]

Note that $S$ is equipped with an action of the congruence subgroup
$\cK_{r+s} \coloneqq \Ker(G(O) \to G(O/t^{r+s}O)$: indeed, the gauge action
of this group on forms with poles of order $\leq r$ leaves the first $r+s$ coefficients 
fixed.\footnote{The numerics here is the reason we assume $r>0$;
for $r = 0$, we would need to take $\cK_{r+s+1}$ instead of $\cK_{r+s}$ everywhere.}

Note that the structure morphism $S \to t^{-r}\fg[[t]]$ 
defines a $G$-local system on $\o{\cD}_S$.
Suppose that the local system associated to this $G$-local system by the adjoint representation is Fredholm.

Then the prestack quotient $S/\cK_{r+s}$ is an Artin stack, 
and the morphism $S/\cK_{r+s} \to T$ is smooth (in particular, finitely presented).

\end{thm}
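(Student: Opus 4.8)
The plan is to reduce the statement to a statement about the action of a \emph{prounipotent} group scheme on a scheme of finite type, where ``fppf-local triviality'' of the action is controlled by the Fredholm hypothesis via de Rham cohomology. First I would observe that the congruence group $\cK_{r+s}$ is prounipotent, filtered by the further congruence subgroups $\cK_n$ with $n \geq r+s$, and that the successive quotients $\cK_n/\cK_{n+1}$ are vector groups (isomorphic to $\fg$). The gauge action of $\cK_n$ on $S$, modulo $\cK_{n+1}$, is ``linear'' in the appropriate sense: if $g = \exp(\xi t^n + \ldots) \in \cK_n$ acts on a form $\Gamma dt$, then $\Gauge_g(\Gamma dt) - \Gamma dt \equiv -d(\xi t^n) + [\Gamma, \xi t^n] \pmod{t^{\text{higher}}}$, i.e., the infinitesimal action at level $n$ is (up to lower order) given by $\xi \mapsto \nabla_{\on{ad}}(\xi t^n)$, where $\nabla_{\on{ad}}$ is the connection on the adjoint local system attached to $S$. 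The Fredholm hypothesis is exactly what makes the cokernel of $\nabla_{\on{ad}}$ on lattices behave well — by Lemma \ref{l:fred-lattices-quant} (applied with $A$ the coordinate ring of $T$, which is Noetherian), there are integers $\ell, r_0$ so that for $r \geq r_0$, $t^{r+\ell} \Lambda' \subset \nabla_{\on{ad}}(t^r \Lambda)$ with finite-rank projective quotient.

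The main step: I would show that for $n$ large enough (say $n \geq r_0 + \ell$, uniformly over $T$ since $T$ is Noetherian — this is where finiteness is really used), the quotient $S/\cK_n$ is \emph{representable by an affine scheme, smooth over $T$}, and moreover the action of the finite-dimensional quotient $\cK_{r+s}/\cK_n$ on it makes $S/\cK_{r+s} = (S/\cK_n)/(\cK_{r+s}/\cK_n)$ a quotient of a smooth affine $T$-scheme by a (finite-dimensional, unipotent) algebraic group, hence an Artin stack smooth over $T$. To get $S/\cK_n$ representable, I would argue inductively down the filtration: $S/\cK_{n+1} \to S/\cK_n$ should be shown to be (after the initial truncation) a trivial $\fg$-bundle torsor, or more precisely that the level-$m$ piece of the action for $m \geq n$ is free with affine-space quotient, because the infinitesimal action map $\fg \to (\text{coker of } \nabla_{\on{ad}} \text{ at that level})$ is an isomorphism once $m$ exceeds $r_0 + \ell$ — this is precisely the content of Lemma \ref{l:fred-lattices-quant}, that past a certain level $\nabla_{\on{ad}}$ surjects onto the relevant lattice quotient with the expected (vanishing, after the shift) kernel. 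So quotienting $S$ by the ``tail'' $\cK_n$ of $\cK_{r+s}$ strips off an ind-affine-space worth of coordinates and leaves a finite-type scheme.

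Concretely the inductive claim is: the map $\varinjlim$-style presentation $S \to S/\cK_n$ exhibits $S/\cK_n$ as $\varprojlim_m (S/\cK_m) \times_{\ldots}$-type object which stabilizes, because each map $S/\cK_{m+1} \to S/\cK_m$ for $m \geq n$ is a $\bG_a^{\dim \fg}$-torsor that is Zariski-locally (in fact, by prounipotence, globally after a section) trivial — here I would use that a torsor under a split unipotent group over an affine base is trivial. The key computation showing it is a torsor, i.e., that $\cK_m/\cK_{m+1}$ acts freely with quotient the correct affine space, is the surjectivity-plus-injectivity of $\nabla_{\on{ad}}$ at level $m$, which again is Lemma \ref{l:fred-lattices-quant} together with the observation that the kernel $H^0$ of $\nabla_{\on{ad}}$ on the relevant graded piece vanishes for $m$ large (the $[-1]$-shift in that lemma says the cokernel, not the kernel, is where the projective module lives). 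Smoothness over $T$ then follows because at each finite stage we have a torsor under a smooth group (giving smoothness of $S/\cK_n \to T$) and then a quotient by a smooth affine unipotent group scheme (giving the Artin stack $S/\cK_{r+s} \to S/\cK_n$ smooth), and smoothness is stable under composition; finite presentation is automatic from finite type over Noetherian $T$ plus smoothness.

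The hard part will be making the ``uniformly in $T$'' stabilization precise and checking that the level at which the action becomes a torsor can be chosen independently of the point of $T$ — this is exactly where the Noetherian hypothesis on $T$ is essential and where one must be careful that Lemma \ref{l:fred-lattices-quant}'s integers $\ell, r_0$ depend only on the pole order $r$ and the coefficient ring, not on a chosen point. A secondary technical annoyance is that the gauge action is not literally linear, only linear modulo higher-order terms, so the induction must be set up so that the ``error terms'' at level $m$ are absorbed into the already-quotiented tail $\cK_{m+1}$; this is a standard but slightly fiddly bookkeeping with the lower central / congruence filtration. Everything else — prounipotence, torsors under split unipotent groups over affine bases being trivial, quotients of smooth affine schemes by smooth affine groups being smooth Artin stacks — is formal.
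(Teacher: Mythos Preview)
Your infinitesimal picture is right, but the layer-by-layer induction does not work, and this is exactly the step the paper flags as ``the most serious difficulty.'' The specific claim that the action of $\cK_m/\cK_{m+1} \cong \fg$ at level $m$ is an isomorphism onto the relevant coordinate is false: for $r>1$, the induced map on the $\Gamma_{m-r}$-coordinate is $\xi \mapsto [\xi,\Gamma_{-r}]$, whose kernel is the centralizer of $\Gamma_{-r}$ --- nontrivial whenever $\Gamma_{-r}$ is, say, nilpotent (and Theorem \ref{t:jumps} shows the Fredholm hypothesis can still hold in such cases). Lemma \ref{l:fred-lattices-quant} controls $\nabla$ on the \emph{entire tail} $t^N\fg_A[[t]]$, not on individual graded pieces; it gives $t^{N+\ell}\Lambda' \subset \nabla(t^N\Lambda)$, which is a statement mixing levels by an amount $\ell$. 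So the action does not ``strip off one coordinate at a time,'' and even granting that each $S/\cK_{m+1} \to S/\cK_m$ is a $\fg$-torsor of prestacks, your induction has no base case: the tower is indexed by $m \geq n$ with $S$ at the top and $S/\cK_n$ at the bottom, and nothing finite to start from.

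The paper's replacement for this step is to prove directly (Lemma \ref{l:artin-fp}) that the full action map $\cK_N \times S \to S \times_T S$ is a \emph{finitely presented closed embedding}. This requires working not with the $t$-adic filtration but with a tilted filtration $\Lambda_i = \nabla^{-1}(t^{N+i+\ell-r}\fg_A[[t]]dt) \cap t^{N+i}\fg_A[[t]]$ adapted to $\nabla$, and then invoking two nontrivial results about morphisms of infinite-type affine spaces: a linearization criterion for being a closed embedding (Lemma \ref{l:affine-iso}) and a conormal-sheaf criterion for finite presentation (Lemma \ref{l:affine-fp}, resting on height bounds for primes in polynomial rings in infinitely many variables). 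Only after this does a Noetherian descent argument (Steps \ref{st:fp-claim}--\ref{st:fp-finish}) produce a factorization $S \to S_m \to S/\cK_N$ exhibiting $S_m$ as a smooth atlas. What you call ``standard but slightly fiddly bookkeeping'' is in fact the whole content of the proof; the author himself remarks he would be glad to hear a simpler argument.
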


\begin{example}

Suppose $G$ is commutative. 
Then the adjoint representation is trivial, and therefore the above connection
on $S$ is trivial, and in particular Fredholm. It follows that:

\[
t^{-r}\fg[[t]]dt/G(O)
\]

\noindent is an Artin stack in this case. 
More generally, if $\Lie(G)$ is a successive extension of trivial representations
(e.g., if $G$ is unipotent),
then the above connection is a successive extension of trivial local systems
and therefore Fredholm, so the same conclusion holds. 

\end{example}

We will prove Theorem \ref{t:artin} in \S \ref{ss:artin-pf-start}-\ref{ss:artin-pf-finish}
below, after some preliminary remarks in 
\S \ref{ss:affine-iso}-\ref{ss:affine-iso-2}.

\subsection{}\label{ss:affine-iso}

The most serious difficulty in proving Theorem \ref{t:artin} is that we need to ``integrate"
from infinitesimal information about the gauge action
(through the Fredholm condition on the adjoint representation) to
global information. We will use two results, Lemmas \ref{l:affine-iso}
and \ref{l:affine-fp}, to do this. Together, they give a way to check that
a morphism of affine spaces is finitely presented: the former
gives a reasonably soft\footnote{Note that there is no hope of
obtaining a purely infinitesimal criterion, because the Jacobian
conjecture is unknown. The criterion below is instead modeled on
the simplest construction of non-linear automorphisms of affine space,
c.f. Example \ref{e:jacobianconj}.}
criterion for such a map to be a closed embedding,
while the latter gives an infinitesimal criterion for a closed embedding
to be finitely presented.

Let $S$ be a base scheme, say affine with $S = \Spec(A)$ for expositional ease only.

We define an \emph{($\aleph_0$-)affine space} $V$ over $S$ to be an $S$-scheme
arising by the following construction:

Suppose $P = P_V \in A\mod^{\heart}$ is a colimit $P = \colim_{i \geq 0} P_i$
with $P_i$ a finite rank projective $A$-module and each structure
map $P_i \to P_j$ being injective with projective quotient.
Then we set $V \coloneqq \Spec(\Sym_A(P))$.
Note that $V(S) = \lim\, P_i^{\vee}$, so informally we should think of
$V$ as the pro-projective $A$-module
$\lim_i P_i^{\vee} \in \Pro(A\mod^{\heart})$.

\begin{construction}

Let $f: V \to W$ be a morphism of affine spaces preserving zero sections
(i.e., commuting with the canonical sections from $S$).
We define the \emph{linearization} $\fL(f): V \to W$ as the
induced morphism by considering $V$ and $W$ as the tangent spaces
of $V$ and $W$ at $0$. 

More precisely, if $P$ (resp. $Q$) is the 
ind-projective module defining $V$ (resp. $W$) and $I_V \subset \Sym_A(P)$
is the augmentation ideal, then $I_V/I_V^2 = P$ (resp. $I_W/I_W^2 = Q$).
Moreover, the map $f^*:\Sym_A(Q) \to \Sym_A(P)$ preserves augmentation
ideal by assumption, so we obtain a morphism:

\[
Q = I_W/I_W^2 \to I_V/I_V^2 = P
\]

\noindent and by covariant functoriality of the assignment $P \mapsto V$,
we obtain the desired map $\fL(f):V \to W$. 

\end{construction}

Suppose $P$ and $Q$ are as above, and suppose
$P = \colim_{i\geq 0} P_i$ and $Q = \colim_{i\geq 0} Q_i$
as in the definition of affine space. 
Let $V_i$ (resp. $W_i$) denote $\Spec(\Sym_A(P_i))$
(resp. $\Spec(\Sym_A(Q_i))$),
so that e.g. $V = \lim V_i$ with each morphism $V \to V_i$ being dominant
(and a fibration with affine space fibers).

\begin{lem}\label{l:affine-iso}

Suppose $f: V \to W$ is a morphism of $S$-schemes preserving $0$.
Suppose that: 

\begin{itemize}

\item For every $i$, we have a (necessarily unique, by dominance) map:

\[
\xymatrix{
V \ar[r]^f \ar[d] & W \ar[d] \\
V_i \ar@{..>}[r]^{f_i} & W_i.
}
\]

\item For every $i$, there is a (necessarily unique) factorization:

\[
\xymatrix{
V_{i} \ar[rr]^{f_{i}-\fL(f_{i})} \ar[d] && W_{i} \\
V_{i-1} \ar@{..>}[urr]_{\alpha_i}.
}
\]

\noindent where for $i = 0$ we use the normalization $V_{i-1} = \Spec(A)$.

\item  Each morphism $\fL(f_i):V_i \to W_i$ is a closed embedding.

\end{itemize}

Then $f$ is a closed embedding. 

\end{lem}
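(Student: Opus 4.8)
The plan is to descend everything to coordinate rings and prove ``$f$ is a closed embedding'' in the form ``$f^* \colon \cO(W) \to \cO(V)$ is surjective''. Write $B = \Sym_A(P)$, $C = \Sym_A(Q)$ and $B_i = \Sym_A(P_i)$, $C_i = \Sym_A(Q_i)$. Since each transition map $P_{i-1} \into P_i$ has projective quotient, $B_{i-1} \into B_i$ is (split) injective and $B = \colim_i B_i = \bigcup_i B_i$, the inclusion $B_i \into B$ corresponding to the dominant projection $V \to V_i$ (and likewise for $C$). Because $\Image(f^*) \subset B$ is an $A$-subalgebra and $B_i$ is generated over $A$ by its degree-one part $P_i$, it suffices to prove $P_i \subset \Image(f^*)$ — equivalently $B_i \subset \Image(f^*)$ — for every $i$, and I would do this by induction on $i$.

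The heart of the matter is a single identity. For $q$ in the degree-one part $Q_i \subset C_i$, viewed as a linear coordinate on $W_i$, one has $(f_i - \fL(f_i))^*(q) = f_i^*(q) - \fL(f_i)^*(q)$: indeed $W_i = \Spec \Sym_A(Q_i)$ is a vector group $S$-scheme, $q$ determines a homomorphism $\ev_q \colon W_i \to \bG_a$, and a homomorphism of group schemes commutes with the difference of two morphisms into $W_i$. By the second hypothesis the left-hand side lies in $B_{i-1}$ (it is a value of the pullback along $V_i \to V_{i-1} \xrightarrow{\alpha_i} W_i$, and $V_i \to V_{i-1}$ corresponds to the inclusion $B_{i-1} \into B_i$). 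By the third hypothesis $\fL(f_i)^* \colon C_i \onto B_i$ is surjective; as $\fL(f_i)^*$ is the symmetric algebra of an $A$-linear map $\phi_i \colon Q_i \to P_i$, taking degree-one parts shows $\phi_i = \fL(f_i)^*|_{Q_i} \colon Q_i \onto P_i$ is surjective. Finally the first hypothesis identifies $f^*|_{C_i}$ with $f_i^*$ under $C_i \into C$ and $B_i \into B$.

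Assembling these: for $q \in Q_i$ one gets, inside $B$, the equation $f^*(q) = \phi_i(q) + b_q$, where $b_q := (f_i - \fL(f_i))^*(q) \in B_{i-1}$ and $\phi_i(q) \in P_i$. Granting the inductive hypothesis $B_{i-1} \subset \Image(f^*)$, we conclude $\phi_i(q) = f^*(q) - b_q \in \Image(f^*)$; letting $q$ range over $Q_i$ and using $\phi_i(Q_i) = P_i$ yields $P_i \subset \Image(f^*)$, hence $B_i \subset \Image(f^*)$. For the base case $i = 0$: with the normalization $V_{-1} = \Spec(A)$, the second hypothesis says $f_0 - \fL(f_0)$ factors through $\Spec(A)$; since $f_0$ and $\fL(f_0)$ both preserve the zero section, their difference is the constant morphism $0$, so $f_0 = \fL(f_0)$ is a closed embedding by the third hypothesis, hence $f_0^* \colon C_0 \onto B_0$ is surjective and $B_0 = f^*(C_0) \subset \Image(f^*)$. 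The induction then gives $\Image(f^*) \supseteq \bigcup_i B_i = B$, which is the desired surjectivity.

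The part that needs care is the bookkeeping: translating each of the three hypotheses into a statement about $\Image(f^*) \subset B$, and in particular justifying the additivity identity for $(f_i - \fL(f_i))^*$ via the vector-group structure on $W_i$ and the fact that linear coordinates are group homomorphisms to $\bG_a$. Once that is set up the induction itself is immediate; the ``projective quotient'' assumption is used only to make the $B_{i-1} \into B_i$ honest inclusions with $B = \bigcup_i B_i$, so no $\Tor$ or flatness subtleties arise.
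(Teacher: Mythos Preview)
Your proof is correct and follows essentially the same inductive strategy as the paper: both argue by induction on $i$, using the factorization of $f_i - \fL(f_i)$ through $V_{i-1}$ together with surjectivity of $\fL(f_i)^*$ to lift generators of $B_i$ into $\Image(f^*)$. The paper phrases the induction as ``each $f_i$ is a closed embedding'' while you phrase it as ``$B_i \subset \Image(f^*)$'', but these are equivalent. Your treatment is in fact slightly more careful than the paper's on one point: the identity $(f_i - \fL(f_i))^*(\psi) = f_i^*(\psi) - \fL(f_i)^*(\psi)$ that the paper writes down only holds for \emph{linear} $\psi$, and you correctly restrict to $q \in Q_i$ and justify this via the group-scheme structure on $W_i$, whereas the paper's writeup applies it to an arbitrary function (though linear $\psi$ suffice, since one only needs $P_i$ in the image).
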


\begin{example}\label{e:jacobianconj}

In the above, the lemma remains true if ``closed embedding" is replaced by 
``isomorphism" everywhere.
Then a toy model for the above lemma is the fact that
any morphism of the form:

\[
\bA^2 \xar{(x,y) \mapsto (x,y+p(x))} \bA^2
\]

\noindent (with $p$ any polynomial) is an isomorphism.

\end{example}

\begin{proof}[Proof of Lemma \ref{l:affine-iso}]

Passing to the limit, it suffices to show that each $f_i$ is a closed embedding.
We will prove this by induction on $i$.

Note that $\alpha_0: \Spec(A) \to W_i$ must be the zero
section, since $f_0 - \fL(f_0)$ preserves zero sections. 
Therefore, $f_0 = \fL(f_0)$, i.e., $f_0$ is linear. 
Since $\fL(f_0)$ is assumed to be a closed embedding, 
this of course implies that $f_0$ is as well.

We now show $f_i$ is a closed embedding, assuming that $f_{i-1}$ is.
Since $V_i$ and $W_i$ are affine, it suffices to show that
pullback of functions along $f_i$ is surjective.

Suppose $\vph$ is a function on 
$V_i$. Since $\fL(f_i)$ is an isomorphism, there is a
function $\psi$ on $W_i$ with $\fL(f_i)^*(\psi) = \vph$.
We obtain:

\[
\vph = f_i^*(\psi) - (f_i - \fL(f_i))^*(\psi).
\]

\noindent Using $\alpha_i$, we see that 
$(f_i - \fL(f_i))^*(\psi)$ descends to a function on $V_{i-1}$,
and therefore is obtained by restriction (along $f_{i-1}$) from a function on
 $W_{i-1}$ by induction.
In particular, $(f_i - \fL(f_i))^*(\psi)$ is obtained by restriction (along $f_i$) from
a function on $W_i$, as desired.

\end{proof}

\subsection{}\label{ss:affine-iso-2}

Above, we gave a way to check that a morphism of
affine spaces is a closed embedding.
We also have the following criterion for checking when a closed embedding
as above is actually finitely presented.

\begin{lem}\label{l:affine-fp}

Suppose $I_1$ and $I_2$ are sets and 
we are given a closed embedding:

\[
i:\bA_T^{I_1} \into \bA_T^{I_2}
\]

\noindent of affine spaces over a Noetherian base $T$,
and suppose that $i$ is compatible with zero sections, i.e.,
the diagram:

\[
\xymatrix{
\bA_T^{I_1} \ar[rr]^i & & \bA_T^{I_2} \\
& T \ar[ul]_0 \ar[ur]^0
}
\]

\noindent commutes.

Then $i$ is finitely presented if and only if its
conormal sheaf $N_{\bA^{I_1}/\bA^{I_2}}^* \in \QCoh(\bA^{I_1})^{\heart}$ is coherent.

\end{lem}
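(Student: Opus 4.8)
The plan is to reduce the statement to a purely module-theoretic criterion about the ideal $J \subset \Sym_T(\bigoplus_{I_2} \sO_T)$ cutting out $i$, and then to observe that finite presentation of a closed embedding between (possibly infinite) affine spaces over a Noetherian base is governed by whether the ideal is generated in a ``uniformly bounded'' way, which in turn is detected by coherence of the conormal sheaf $N^* = J/J^2$. One direction is immediate: if $i$ is finitely presented, then $J$ is finitely generated as an ideal in a Noetherian-over-$T$ polynomial ring in finitely many of the $I_2$-variables (finite presentation forces $i$ to factor through a finite-type truncation $\bA_T^{I_2'} \subset \bA_T^{I_2}$ with $I_2'$ finite, for a finitely-presented closed immersion targets only finitely many coordinates), hence $N^* = J/J^2$ is a finitely generated module over the coordinate ring of $\bA_T^{I_1}$, i.e.\ coherent. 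The compatibility with zero sections is what pins down that $i$ lands in a finite-type sub-affine-space: $i^\sharp$ preserves augmentation ideals, so the images of the $I_2$-coordinate functions lie in the augmentation ideal of $\Sym_T(\bigoplus_{I_1}\sO_T)$, and finite presentation says only finitely many of these images are ``nonlinear or involve infinitely many variables''—but I should be careful here, so let me instead phrase the forward direction as: finitely presented $\Rightarrow$ $J$ is finitely generated $\Rightarrow$ $J/J^2$ finitely generated over $\sO_{\bA^{I_1}}$ $\Rightarrow$ coherent, using Noetherianity of $T$.

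For the substantive direction, assume $N^*_{\bA^{I_1}/\bA^{I_2}}$ is coherent. First I would pass to the zero section: pulling back the conormal sheaf along $0: T \to \bA_T^{I_1}$ gives a coherent $\sO_T$-module $0^*N^* = J/(J^2 + \fm_0 J)$, where $\fm_0$ is the augmentation ideal; by Nakayama (here $T$ is Noetherian, so this is legitimate) and the fact that $i$ is compatible with zero sections (so $J \subset \fm_0^{\bA^{I_2}}$, and the coordinate functions of $\bA^{I_2}$ restrict into $\fm_0^{\bA^{I_1}}$), I can extract finitely many sections $f_1,\dots,f_m \in J$ whose images generate $J/J^2$ as an $\sO_{\bA^{I_1}}$-module, at least in a Zariski neighborhood of the zero section. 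The key point is then to upgrade ``$J/J^2$ finitely generated'' to ``$J$ finitely generated.'' This is where one exploits the grading: both $\Sym_T(\bigoplus_{I_2}\sO_T)$ and $J$ are graded (the closed immersion is conical because it's compatible with zero sections and — here I use that the $f_i$ can be taken homogeneous after replacing them by their graded pieces, since $J$ is a graded ideal). For a graded ideal $J$ in a polynomial ring, $J$ is generated by any set of elements mapping to generators of $J/J_+J = J/(J^2 + \text{(irrelevant)}\cdot J)$ — the graded Nakayama lemma — and coherence of $J/J^2$ over $\sO_{\bA^{I_1}} = (\Sym_T(\bigoplus_{I_2}\sO_T))/J$ plus Noetherianity of $T$ bounds this in each degree, while conicality handles the passage across degrees. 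Concretely: $J/J^2$ coherent over the quotient ring means it's generated by finitely many homogeneous elements $\bar f_1,\dots,\bar f_m$; lift to homogeneous $f_i \in J$; then graded Nakayama gives $J = (f_1,\dots,f_m)$, so $i$ factors through the finite-type closed subscheme $\Spec(\Sym_T(\bigoplus_{I_2'}\sO_T)/(f_1,\dots,f_m))$ for $I_2'$ the finite set of coordinates appearing in the $f_i$, and this closed subscheme is finitely presented over $T$ since $T$ is Noetherian. Composing with the finitely presented (indeed flat) projection $\bA_T^{I_2} \to \bA_T^{I_2'}$ exhibits $i$ as finitely presented.

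The main obstacle I anticipate is the graded Nakayama / ``$J/J^2$ finitely generated $\Rightarrow$ $J$ finitely generated'' step in this infinite-variable setting, because $\Sym_T(\bigoplus_{I_2}\sO_T)$ is very much not Noetherian when $I_2$ is infinite, so one cannot invoke Hilbert's basis theorem; everything has to ride on the conical structure (homogeneity of $J$, coming from compatibility with zero sections) to run degree-by-degree, together with the fact that in each fixed degree only finitely many coordinates can appear in a given finite generating set of $J/J^2$. A secondary subtlety is making the Nakayama argument global over $\bA^{I_1}$ rather than just near the zero section — but conicality again saves us, since a graded module over a graded ring that is finitely generated after inverting the irrelevant ideal's complement, and finitely generated mod the irrelevant ideal, is finitely generated, so the zero-section computation genuinely suffices and no further patching is required.
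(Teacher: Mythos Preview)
Your argument has a genuine gap at the step where you assert that $J$ is a graded ideal. Compatibility with zero sections only says that $J$ is contained in the augmentation ideal $\fm_0$ of $\Sym_T(\bigoplus_{I_2}\sO_T)$; it does \emph{not} say that $i$ is $\bG_m$-equivariant for the scaling action, i.e., that $J$ is homogeneous. For instance, the closed embedding $\bA^1_T \into \bA^2_T$ given by $t \mapsto (t, t^2)$ is compatible with zero sections, but its ideal $J = (y - x^2)$ is not homogeneous. So graded Nakayama is simply unavailable here, and with it collapses your entire mechanism for passing from ``$J/J^2$ finitely generated'' to ``$J$ finitely generated'' in the non-Noetherian polynomial ring. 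The phrase ``the closed immersion is conical because it's compatible with zero sections'' is the false step.

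The paper's argument proceeds by a quite different route. After reducing to $T$ integral (so that $J$ is a prime ideal), it invokes a nontrivial commutative-algebra fact about polynomial rings in possibly infinitely many variables over a Noetherian base: a prime ideal is finitely generated if and only if it has finite height, equivalently if and only if it is contained in some finitely generated prime ideal. One then chooses finitely many $f_1,\ldots,f_n \in J$ generating $J/J^2$, lets $I_2' \subset I_2$ be the finite set of variables actually appearing in the $f_j$, and shows that $J$ is contained in the finitely generated prime ideal $(\{x_i\}_{i \in I_2'})$. This last containment is where compatibility with zero sections is genuinely used: reducing modulo $(\{x_i\}_{i \in I_2'})$, the image $\ol{J}$ satisfies $\ol{J} = \ol{J}^2$ (since all the $f_j$ die) and $\ol{J}$ sits inside the augmentation ideal of the remaining polynomial ring, whence $\ol{J} = \bigcap_r \ol{J}^r \subset \bigcap_r (\text{augmentation})^r = 0$.
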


\begin{proof}

We can assume $T$ affine, so $T = \Spec(B)$. Moreover, straightforward 
reductions allow us to assume $T$ is integral (and in all our
applications, $B$ will actually be a field).

The key fact we will need from commutative algebra is that a prime ideal in
any (possibly infinitely generated) polynomial algebra over a Noetherian ring (e.g., $k$)
is finitely generated if and only if it has finite height (in the usual sense of commutative algebra).
Indeed, this result is essentially given by \cite{polynomials} Theorem 4 
(see also \cite{opers} Proposition 4.3, which completes the argument in 
some simple respects).\footnote{
For the reader's
convenience, we quickly sketch the proof down
here in the footnotes.

The main point is the following simple observation:
if $Z_{n+1} \subset \bA^{n+1}$ is a closed and integral
subscheme, then the scheme-theoretic image
$Z_n \coloneqq \ol{p(Z_{n+1})} \subset \bA^n$ of 
$Z_{n+1}$ along the projection
$p:\bA^{n+1} \to \bA^n$ either has smaller codimension,
or else $Z_{n+1} = p^{-1}(Z_n)$. Indeed, 
$Z_{n+1} \subset p^{-1}(Z_n)$, and either the latter
has codimension one less than $Z_{n+1}$, or else
they have equal codimension, and so are equal
by irreducibility.

Now for $Z \subset \bA^{\infty} \coloneqq
\Spec(k[x_1,x_2,\ldots])$ (note that $k$ can be any Noetherian
ring, and we could be less lazy and allow uncountable
indexing sets for our indeterminates)
a closed and integral subscheme corresponding to
a finite height prime ideal, the scheme-theoretic image 
$Z_n \subset \bA^n$ of $Z$ along the projection
$p_n: \bA^{\infty} \to \bA^n$ corresponds to a prime ideal
of the same height for $n$ large enough, and then
the above analysis shows that $Z = p_n^{-1}(Z_n)$. In
particular, $Z$ is defined by a finitely presented ideal.

Similarly, if $Z$ is defined by a finitely presented
ideal, then $Z = p_n^{-1}(Z_n)$ for $n$ large enough,
and our earlier analysis shows that the height
does not increase as we pullback to finite affine spaces;
since finite height prime ideals in $k[x_1,x_2,\ldots]$ 
are finitely presented, one readily deduces that
the height of a prime ideal in this ring is bounded
by the number of generators (since this holds in
each of the Noetherian rings
$k[x_1,\ldots,x_m]$ by Krull's theorem).}

Let $J \subset B[\{x_i\}_{i\in I_2}]$ be the ideal of the closed embedding.
Note that $\bA_T^{I_1}$ is integral (since $T$ is), and therefore $J$ is a prime
ideal.

By assumption, $J/J^2$ is finitely generated. Choose $f_1,\ldots,f_n \in J$
generating modulo $J^2$.

Suppose that $f_1,\ldots,f_n$ lie in $B[\{x_i\}_{i\in I_2^{\prime}}]$
for $I_2^{\prime} \subset I_2$ a finite subset (as we may safely do).
We claim that $J$ is contained in the ideal generated by $\{x_i\}_{i \in I_2^{\prime}}$.
Note that this implies the claim by commutative
algebra, since $J^+$ is a finitely generated prime ideal,
and as noted above, prime subideals of finitely generated prime ideals are finitely generated
themselves.

Let $\ol{J}$ denote the reduction of $J$ modulo $(\{x_i\}_{i \in I_2^{\prime}})$ (the ideal
generated by our finite subset of $x_i$'s).
It suffices to see that $\ol{J} = 0$. 

First, note that $\ol{J} = \ol{J}^2$ by construction of our generators.
Moreover:

\[
\ol{J} \subset 
B[\{x_i\}_{i \in I_2}]/(\{x_i\}_{i \in I_2^{\prime}}) = 
B[\{x_i\}_{i \in I_2 \setminus I_2^{\prime}}]
\]

\noindent is contained in the ideal generated by all the $x_i$: indeed, this
is true for $J$ in the original ring $B[\{x_i\}_{i\in I_2}]$ by the compatibility with zero sections,
implying the corresponding statement for $\ol{J}$.

But now we have:

\[
\ol{J} = \bigcap_{r=1}^{\infty} \ol{J}^r \subset 
\bigcap_{r=1}^{\infty} (\{x_i\}_{i \in I_2\setminus I_2^{\prime}})^r = 0
\]

\noindent giving the claim.

\end{proof}

\subsection{Proof of Theorem \ref{t:artin}}\label{ss:artin-pf-start}

We now prove Theorem \ref{t:artin}. The proof will occupy 
\S \ref{ss:artin-pf-start}-\ref{ss:artin-pf-finish}.

\subsection{}

Note that, since $r>0$, $\cK_{r+s}$ is pro-unipotent.
Because torsors
for unipotent groups are trivial on affine schemes, $\cK_{r+s}$-torsors on affine schemes 
are as well. 
Therefore, the prestack quotient $S/\cK_{r+s}$ is already a sheaf.

\subsection{Formulation of the key lemma}

The following result will be the key input to the proof of Theorem \ref{t:artin}.

\begin{lem}\label{l:artin-fp}

For all sufficiently large integers $N\geq r+s$, the map:

\begin{equation}\label{eq:artin-gauge}
\begin{gathered}
\cK_N \times S \to t^{-r}\fg[[t]]dt \times S \\
(\varga,s) \mapsto (\Gauge_{\varga}(\Gamma(s) dt),s)
\end{gathered}
\end{equation}

\noindent is a finitely presented closed embedding,
where here $s\mapsto \Gamma(s) dt$ is the structure map $S \to t^{-r}\fg[[t]]dt$.

\end{lem}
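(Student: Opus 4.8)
The plan is to realize the map \eqref{eq:artin-gauge} as a morphism of $\aleph_0$-affine spaces over $S$ preserving zero sections, and then to verify the hypotheses of Lemmas \ref{l:affine-iso} and \ref{l:affine-fp}; all the genuine difficulty will sit in the infinitesimal (linearization) input, which is where the Fredholm hypothesis and Lemmas \ref{l:fred-lattices} and \ref{l:fred-lattices-quant} enter.

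\emph{Reductions.} Write $S = \Spec(A)$. Since $S$ is the fiber product of the Noetherian affine scheme $T$ with the affine space $t^{-r}\fg[[t]]dt$ over the finite-dimensional affine space $t^{-r}\fg[[t]]dt/t^s\fg[[t]]dt$, the ring $A$ is a (possibly infinitely generated) polynomial algebra over $\cO(T)$; in particular $A$ has finitely many minimal primes and nilpotent nilradical, so Lemmas \ref{l:fred-lattices} and \ref{l:fred-lattices-quant} apply to the adjoint differential module on $\o{\cD}_S$, which is Fredholm by the hypothesis of Theorem \ref{t:artin}. Next, translating the target of \eqref{eq:artin-gauge} by the section $s \mapsto \Gamma(s)dt$ — an automorphism of $t^{-r}\fg[[t]]dt \times S$ over $S$ — reduces us to showing that
\[
\phi\colon \cK_N \times S \longrightarrow t^{-r}\fg[[t]]dt \times S, \qquad (\varga, s) \longmapsto \big(\Gauge_{\varga}(\Gamma(s)dt) - \Gamma(s)dt,\ s\big)
\]
is a finitely presented closed embedding. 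As $r>0$, $\cK_N$ is prounipotent, so $\exp\colon t^N\fg[[t]] \isom \cK_N$ is an isomorphism of schemes; under it $\phi$ becomes a morphism of $\aleph_0$-affine spaces over $S$ preserving zero sections, and we equip source and target with ind-presentations by $t$-adic truncation (so level $i$ of the source remembers the coefficients $\xi_N,\dots,\xi_{N+i}$ of $\xi\in t^N\fg[[t]]$, and level $i$ of the target remembers $\omega_{-r},\dots,\omega_{c_i}$ for a non-decreasing $c_i\to\infty$ to be chosen, measured with respect to a lattice adapted to the adjoint connection — see below).

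\emph{Closed embedding via Lemma \ref{l:affine-iso}.} Expanding the gauge formula with $\varga = \exp(\xi)$ gives
\[
\Gauge_{\exp(\xi)}(\Gamma dt) - \Gamma dt = \big([\xi,\Gamma]\,dt - d\xi\big) + R(\xi),
\]
where $R(\xi)$ is a sum of iterated brackets $\tfrac1{k!}\ad_\xi^k(\Gamma)\,dt$ and $\tfrac1{k!}\ad_\xi^{k-1}(d\xi)$ for $k\geq 2$, each of $\xi$-degree $\geq 2$. Since $\xi\in t^N\fg[[t]]$ and $\Gamma$ has a pole of order $\leq r$, any such term lies in $t^{2N-r}\fg[[t]]dt$, and more precisely the coefficient of $t^m\,dt$ in $R(\xi)$ depends only on the coefficients $\xi_j$ with $N\leq j\leq m+r$. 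Taking $N>r$ and $c_i\leq N+i-1-r$ (so still $c_i\to\infty$), one gets the first two bullets of Lemma \ref{l:affine-iso}: the map descends to $\phi_i\colon V_i\to W_i$, and the nonlinear part $\phi_i-\fL(\phi_i)$ (whose output is the truncation of $R$) factors through $V_{i-1}$ because its level-$i$ output only involves $\xi_N,\dots,\xi_{N+i-1}$. The linearization is, up to sign, $\fL(\phi) = \nabla^{\ad}|_{t^N\fg[[t]]}$, the adjoint connection restricted to the lattice $t^N\fg[[t]]$ and viewed as a map into $t^{-r}\fg[[t]]dt$. The third bullet — that each $\fL(\phi_i)$ is a closed embedding — is the substantive point, treated next; granting it, Lemma \ref{l:affine-iso} gives that $\phi$, hence the original map, is a closed embedding.

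\emph{The linearization --- the main obstacle.} One must show that, for $N\gg 0$ and a suitable choice of the truncation lattices, each finite truncation $\fL(\phi_i)\colon V_i\to W_i$ of $\nabla^{\ad}$ is a closed embedding, i.e. the underlying linear map of vector bundles over $S$ is a split monomorphism with finite-rank projective cokernel. The naive leading-term analysis fails, since $\ad_{\Gamma_{-r}}$ need not be injective on $\fg$, so one cannot peel off the coefficients of $\xi$ one at a time against the standard lattice $t^\bullet\fg[[t]]$; this is exactly the phenomenon handled in the proof of Lemma \ref{l:fred-lattices-quant} by Levelt--Turrittin reduction. The remedy is to use the lattice $\Lambda_0$ adapted to $\nabla^{\ad}$ produced there — arguing as in that proof, the reductions to the case where $A$ is an algebraically closed field are available under our standing hypotheses on $A$ — for which $t^{j-1}\Lambda_0 dt \subseteq \nabla^{\ad}(t^j\Lambda_0)$ for all $j\geq 0$. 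Concretely, Lemmas \ref{l:fred-lattices} and \ref{l:fred-lattices-quant} applied to the Fredholm adjoint module yield integers $\ell, N_0$ such that for $N\geq N_0$ the map $\nabla^{\ad}\colon t^N\fg[[t]]\to t^{-r}\fg[[t]]dt$ is injective with finite-rank projective cokernel $\cP$, and $\nabla^{\ad}(t^N\fg[[t]]) \supseteq t^{N+\ell-r}\fg[[t]]dt$ with finite-rank projective quotient. Choosing the filtrations via $\Lambda_0$ so that, at level $i$, the image of $\nabla^{\ad}$ contains the tail being quotiented away and $W_i$ captures a complement of that image, one deduces that $\fL(\phi_i)$ is injective with cokernel canonically $\cP$ (independent of $i$). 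I expect this bookkeeping — converting the soft Fredholm control into \emph{uniform-in-$i$} closed-embedding statements rather than mere pointwise injectivity — to be the crux, mirroring the trickiness of Theorem \ref{t:artin} itself.

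\emph{Finite presentation via Lemma \ref{l:affine-fp}.} Finally, since $T$ is Noetherian, Lemma \ref{l:affine-fp} reduces finite presentation of the closed embedding $\phi$ to coherence of its conormal sheaf. By the triangular structure established above, the ideal of $\phi$ in $\cO(t^{-r}\fg[[t]]dt\times S)$ is cut out by the same finite collection of equations as that of its linearization $\fL(\phi)$ — namely the pullbacks of finitely many linear coordinates spanning $\cP^\vee$, the codimension being $\rk\,\cP = \rk\,\Coker(\nabla^{\ad}\colon t^N\fg[[t]]\to t^{-r}\fg[[t]]dt)<\infty$. Hence the conormal sheaf of $\cK_N\times S \into t^{-r}\fg[[t]]dt\times S$ is the pullback to $\cK_N\times S$ of the finite-rank projective $\cO_S$-module $\cP^\vee$, in particular coherent, and Lemma \ref{l:affine-fp} completes the proof.
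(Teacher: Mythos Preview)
Your overall strategy --- reduce to Lemmas \ref{l:affine-iso} and \ref{l:affine-fp}, with the Fredholm input providing the linearization control --- matches the paper's exactly. But there is a genuine gap at the point you yourself flag as ``the crux.''

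The problem is that your verification of the three bullets of Lemma \ref{l:affine-iso} uses \emph{incompatible} filtrations. For the first two bullets (descent of $\phi$ to $\phi_i$, and factorization of $\phi_i-\fL(\phi_i)$ through $V_{i-1}$) you use the standard $t$-adic filtration on the source, $V_i = t^N\fg[[t]]/t^{N+i+1}\fg[[t]]$. You then correctly observe that with this filtration the third bullet fails, and propose switching to a filtration built from the adapted lattice $\Lambda_0$ of Lemma \ref{l:fred-lattices-quant}. But that lattice is only constructed over an algebraically closed field (via Levelt--Turrittin); the reductions in that lemma prove the existence of the \emph{integers} $\ell,r_0$ over general $A$, not of $\Lambda_0$ itself. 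And even granting some adapted source filtration, you never go back and re-verify the first two bullets for it --- which is where the real work lies.

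The paper's resolution is to keep the target filtration standard, $W_i = t^{-r}\fg_A[[t]]dt / t^{N+i+\ell-r}\fg_A[[t]]dt$, but take the source filtration to be $\Lambda_i \coloneqq \nabla^{-1}(t^{N+i+\ell-r}\fg_A[[t]]dt)\cap t^{N+i}\fg_A[[t]]$, which exists over $A$ and makes the third bullet tautological (by construction $\nabla$ maps $\Lambda_i$ isomorphically onto the target tail). The price is that the first two bullets are no longer the simple degree count you give: one must show that for $\eta\in\Lambda_i$ (not merely $\eta\in t^{N+i+1}\fg_A[[t]]$), the gauge transform $\Gauge_{\exp(\xi+\eta)}(\Gamma dt)$ agrees with $\Gauge_{\exp(\xi)}(\Gamma dt)$ modulo $t^{N+i+\ell-r}\fg_A[[t]]dt$. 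This requires the explicit Campbell--Baker--Hausdorff estimates of the paper's Steps 3--6 (and the normalization $N\geq\ell+1$), and is where most of the labor in the proof sits.

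A smaller point: your conormal argument claims the ideal of $\phi$ equals that of $\fL(\phi)$, which is false (think of $x\mapsto(x,x^2)$). The paper instead observes that the original map \eqref{eq:artin-gauge} is $\cK_N$-equivariant, so its conormal sheaf is pulled back from its restriction along $\{1\}\times S$, where it is visibly dual to $\Coker(\nabla\colon t^N\fg_A[[t]]\to t^{-r}\fg_A[[t]]dt)$ and hence finite rank.
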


\begin{proof}

We proceed by steps.
Let $\nabla$ be the connection
on the $\fg_A[[t]]$ defined by 
$\nabla \coloneqq d - [\Gamma,-] dt$ throughout.

\step 

First, we construct $N$:

Note that because $T$ is Noetherian and $S \to T$ is a fibration
with affine space fibers, $S$ satisfies the hypotheses of Lemma \ref{l:fred-lattices-quant}. 
Applying the lemma (with $\Lambda = \fg_A[[t]]$ and 
$\Lambda^{\prime} = t^{-r}\fg_A[[t]]dt$), 
we find $\ell \geq 0$ and an integer $N \geq r+s$ such that
$\nabla|_{t^N\fg_A[[t]]}$ is injective, and for all
$i \geq 0$ we have:

\[
t^{N+i+\ell-r}\fg_A[[t]]dt \subset \nabla(t^{N+i}\fg_A[[t]]) \subset t^{N+i-r}\fg_A[[t]]dt
\] 

\noindent with finite rank projective quotients.
Note that $\nabla(t^{N+i}\fg_A[[t]])$ is a lattice in this case.

Finally, note that we can safely replace $N$ by $N+i$ for any $i$ 
as above and
the conclusion remains, so we may assume $N\geq \ell+1$.

\step \label{st:affineiso-start}

In Steps \ref{st:affineiso-start}-\ref{st:affineiso-finish},
we will apply Lemma \ref{l:affine-iso} to see that \eqref{eq:artin-gauge}
is a closed embedding. 
In the present step, we just set up notation for this.

We abuse notation in letting e.g. $\fg_A[[t]]$ denote the affine space over $S = \Spec(A)$ 
associated with $\fg_A[[t]]$.

To be in the setting of Lemma \ref{l:affine-iso}, we need to have a map 
between affine spaces over a Noetherian base preserving zero. First, we use the 
exponential isomorphism $t^N\fg_A[[t]]dt \isom K_N \times S$
so that our map goes between affine spaces. Moreover, 
since $S \to T$ is an affine space over $T$, our map is between
affine spaces over a Noetherian base (namely, $T$). Our map does not
quite preserve zero: $0 \mapsto \Gamma dt \in t^{-r}\fg_A[[t]]dt$ the
given gauge form. But of course, we can just correct this by subtracting off $\Gamma dt$.

Define:

\[
\Lambda_i \coloneqq \nabla^{-1}(t^{N+i+\ell-r}\fg_A[[t]]dt) \cap t^{N+i}\fg_A[[t]].
\]

\noindent Note that $\Lambda_i$ is a lattice, 
since it maps isomorphically onto the lattice
$t^{N+i+\ell-r}\fg_A[[t]]dt$. 
In particular, it has an associated affine space over $S$,
which we again denote by $\Lambda_i$.

\step\label{st:fp-factor-start}

In the next step, we will show that the composite map:

\begin{equation}\label{eq:fp-comp}
t^N\fg_A[[t]] \xar{\xi \mapsto \Gauge_{\exp(\xi)}(\Gamma dt) - \Gamma dt} 
t^{-r}\fg_A[[t]]dt \xar{\text{proj}.}
t^{-r}\fg_A[[t]]dt/t^{N+i+\ell-r}\fg_A[[t]]dt
\end{equation}

\noindent factors through $t^N\fg_A[[t]]/\Lambda_i$
(therefore satisfying the first condition from Lemma \ref{l:affine-iso}).

In other words, for\footnote{Here $\xi$ and $\eta$ are points of these
schemes with values in some test affine scheme. We suppress the affine
scheme from the notation to keep the notation simple, and we maintain such
abuses throughout.}
$\xi \in t^N\fg_A[[t]]$ and $\eta \in \Lambda_i$, we need
to see that:

\[
\begin{gathered}
\Gauge_{\exp(\xi)}(\Gamma dt) - \Gamma dt)  \text{ and } \\
\Gauge_{\exp(\xi+\eta)}(\Gamma dt) - \Gamma dt)
\end{gathered}
\]

\noindent differ by an element of $t^{N+i+\ell-r}\fg_A[[t]]dt$.
We will do this by explicitly computing 
$ \Gauge_{\exp(\xi+\eta)}(\Gamma dt) - \Gamma dt) $.

In a first motion towards this, we analyze 
the relationship between the ``matrix" exponential
appearing above and the term ``$dg\cdot g^{-1}$"
appearing in the definition of the gauge action.
More precisely, this step will show the following identity
of points of $\fg_A[[t]]dt$:

\begin{equation}\label{eq:dlog}
\begin{gathered}
\big(d \exp(\xi+\eta)\big) \cdot \exp(-\xi-\eta) \in  
\big(d \exp(\xi)\big) \cdot \exp(-\xi) + d \eta + t^{2N+i-1} \fg_A[[t]]dt. \\
\end{gathered}
\end{equation}

\noindent
Note that e.g. $\exp(\xi)^{-1} = \exp(-\xi)$,
hence the appearance of these terms.

By the Tannakian formalism, it suffices to prove
\eqref{eq:dlog} for the general linear group.
Then note that:

\[
\exp(\xi+\eta) \in \exp(\xi)\exp(\eta) + t^{2N+i}\fg_A[[t]].
\]

\noindent Indeed, the Campbell-Baker-Hausdorff formula says:

\[
\exp(\xi)\exp(\eta) = \exp(\xi+\eta+
\underset{\in \, t^{2N+i}\fg_A[[t]]}{\underbrace{
\frac{1}{2}[\xi,\eta] + \text{etc.}
}}
).
\]

\noindent (Here we recall that $\xi \in t^N\fg_A[[t]]$ and 
$\eta \in t^{N+i}\fg_A[[t]]dt$.) Then for 
$a \coloneqq  \xi+\eta$ and 
$b \coloneqq \frac{1}{2}[\xi,\eta] + \text{etc.} \in t^{2N+i}\fg_A[[t]]$,
we have:

\[
\begin{gathered}
\exp(a+b) = \id + (a+b) + \frac{1}{2}(a+b)^2 + \ldots = \\
\id + a + \frac{1}{2} a^2 + \ldots +
b + \frac{1}{2}(ab+ba+b^2) + \ldots \in 
\exp(a) + t^{2N+i}\fg_A[[t]]
\end{gathered}
\]

\noindent as desired.

We now show \eqref{eq:dlog}. Let $g = \exp(\xi)$ and
$h = \exp(\eta)$.
By the above, we have:

\[
\begin{gathered}
\big(d \exp(\xi+\eta)\big) \cdot \exp(-\xi-\eta) \in \\
(d (gh) + t^{2N+i-1} \fg_A[[t]]dt) \cdot 
(h^{-1} g^{-1} + t^{2N+i}\fg_A[[t]]) = \\
d(gh) h^{-1} g^{-1} + d(gh)\cdot t^{2N+i}\fg_A[[t]]dt +
t^{2N+i-1}\fg_A[[t]]dt \cdot h^{-1}g^{-1} + t^{4N+2i-1}\fg_A[[t]]dt \subset \\
d(gh) h^{-1} g^{-1} + t^{2N+i-1}\fg_A[[t]]dt
\end{gathered} 
\]

\noindent where the last line follows from the observations
that $d(gh) \in \fg_A[[t]]dt$ and $h^{-1}g^{-1} \in G(O)$.

Note that $d(gh) h^{-1} g^{-1} = dg\cdot g^{-1} + g (dh \cdot h^{-1}) g^{-1}$.
We compute:

\begin{equation}\label{eq:dexp-approx}
dh \cdot h^{-1} = (d \exp(\eta)) \cdot \exp(-\eta) =
(d\eta + \frac{1}{2}d(\eta^2) + \ldots ) (\id - \eta + \ldots) \in 
d\eta + t^{2(N+i)-1}\fg_A[[t]]dt.
\end{equation}

\noindent Since $g \in \cK_N$ and since 
$d\eta \in t^{N+i-1}\fg_A[[t]]dt$, we have
$g (d\eta) g^{-1} \in d\eta + t^{2N+i-1}\fg_A[[t]]dt$.
Since $\Ad_g$ clearly preserves $t^{2(N+i)-1}\fg_A[[t]]$,
this combines with the above to give:

\[
dg \cdot g^{-1} + g (dh \cdot h^{-1}) g^{-1} \in 
dg \cdot g^{-1} + d\eta + t^{2N+i-1}\fg_A[[t]]
\]

\noindent as was claimed in \eqref{eq:dlog}.

\step

We now complete the factorization claim from the beginning
of Step \ref{st:fp-factor-start}. 

Note that the map \eqref{eq:fp-comp} 
sends $\xi+\eta$ to:\footnote{
For clarity, the notation $\Ad_{\xi+\eta}^n(\Gamma dt)$ means
$[\xi+\eta,[\xi+\eta,[\ldots,[\xi+\eta,\Gamma dt]\ldots]]]$.}

\begin{equation}\label{eq:exp-expansion}
\begin{gathered}
\Gauge_{\exp(\xi+\eta)}(\Gamma dt) - \Gamma dt = 
\Ad_{\exp(\xi+\eta)}(\Gamma dt) - 
d\big(\exp(\xi+\eta)\big) \cdot \exp(-\xi-\eta) - \Gamma dt = \\
[\xi+\eta,\Gamma dt] + \sum_{n=2}^{\infty} \frac{1}{n!}\Ad_{\xi+\eta}^n(\Gamma dt) -
d\big(\exp(\xi+\eta)\big) \cdot \exp(-\xi-\eta) \in \\
[\xi+\eta,\Gamma dt] + \sum_{n=2}^{\infty} \frac{1}{n!}\Ad_{\xi+\eta}^n(\Gamma dt) -
(d\exp(\xi)) \cdot \exp(-\xi) - d\eta + t^{2N+i-1}\fg_A[[t]]dt = \\
-\nabla(\eta) + 
[\xi,\Gamma dt] + 
\sum_{n=2}^{\infty} \frac{1}{n!}\Ad_{\xi+\eta}^n(\Gamma dt) -
(d\exp(\xi)) \cdot \exp(-\xi) + t^{2N+i-1}\fg_A[[t]]dt.
\end{gathered}
\end{equation}

\noindent Here we have applied the calculation \eqref{eq:dlog}.
Note that, since $N \geq \ell+1$, 
we have $2N+i-1 \geq N+i+\ell \geq N+i+\ell-r$, and therefore
in the last equation we can replace $t^{2N+i-1}\fg_A[[t]]dt$
by the larger lattice $t^{N+i+\ell-r}\fg_A[[t]]dt$.

We will show the following points all lie in 
$t^{N+i+\ell-r}\fg_A[[t]]dt$:

Next, observe that $\nabla(\eta) \in t^{N+i+\ell-r}\fg_A[[t]]dt$ by definition of
$\Lambda_i$. 
Moreover, below we will show that that for any $n \geq 2$:

\begin{equation}\label{eq:ad-higherterms}
\Ad_{\xi+\eta}^n(\Gamma dt) \in  
\Ad_{\xi}^n(\Gamma dt) + t^{N+i+1+\ell-r}\fg_A[[t]]dt.
\end{equation}

\noindent Assume this for the moment, and we will conclude the argument.
We then see that
the last term in \eqref{eq:exp-expansion} lies in:

\[
\begin{gathered}
[\xi,\Gamma dt] + 
\sum_{n=2}^{\infty} \frac{1}{n!}\Ad_{\xi}^n(\Gamma dt) -
(d\exp(\xi)) \cdot \exp(-\xi) + t^{N+\ell+i-r}\fg_A[[t]]dt = \\
\Ad_{\exp(\xi)}(\Gamma dt) -
(d\exp(\xi)) \cdot \exp(-\xi) + t^{N+\ell+i-r}\fg_A[[t]]dt = \\
\Gauge_{\exp(\xi)}(\Gamma dt) + t^{N+\ell+i-r}\fg_A[[t]]dt
\end{gathered}
\]

\noindent as desired.

It remains to show \eqref{eq:ad-higherterms}.
More precisely, we claim that for any $n \geq 0$,
we have:

\begin{equation}\label{eq:ad-higherterms-ind}
\Ad_{\xi+\eta}^n(\Gamma dt) \in  
 \Ad_{\xi}^n(\Gamma dt) + t^{Nn+i-r}\fg_A[[t]]dt
\end{equation}

\noindent which would imply the claim of \eqref{eq:ad-higherterms}, 
since if $n \geq 2$, we have $Nn+i-r \geq 2N+i-r \geq  N + i + 1 + \ell - r$ 
(since $N \geq \ell+1$ by definition).

We show \eqref{eq:ad-higherterms-ind} by induction, 
the base case $n = 0$ being obvious.
Assume \eqref{eq:ad-higherterms-ind} holds for $n$,
and we will show it holds for $n+1$.
We apply $\Ad_{\xi+\eta} = \Ad_{\xi} + \Ad_{\eta}$ to both
sides. Obviously the left hand side of \eqref{eq:ad-higherterms-ind}
transforms as desired. For the right hand side, 
note that $\Ad_{\xi+\eta}$ maps $t^{Nn+i-r}\fg_A[[t]]dt$
into $t^{N(n+1)+i-r}\fg_A[[t]dt$, since $\xi+\eta \in t^N\fg_A[[t]]dt$; 
moreover, since $\Ad_{\xi}^n(\Gamma dt) \in t^{Nn - r}\fg_A[[t]]dt$,
$\Ad_{\eta}$ maps it into 
$t^{Nn-r+N+i} \fg_A[[t]]dt = t^{N(n+1)+i-r}\fg_A[[t]]dt$.

\step

We now verify the remaining conditions of Lemma \ref{l:affine-iso}.

The linearization of \eqref{eq:fp-comp} is:

\[
-\nabla: 
t^N\fg_A[[t]]/\Lambda_i \to t^{-r}\fg_A[[t]]dt/t^{N+i+\ell-r}\fg_A[[t]]dt
\]

\noindent which is an embedding: indeed, if $\xi \in t^N\fg_A[[t]]$
with $\nabla(\xi) \in t^{N+i+\ell-r}\fg_A[[t]]dt$, then
we have $\xi \in t^{N+i}\fg_A[[t]]$ by construction, and
$\Lambda_i$ is exactly $ \nabla^{-1}(t^{N+i+\ell-r}\fg_A[[t]]dt) \cap 
t^{N+i} \fg_A[[t]]$.

\step\label{st:affineiso-finish}

Finally, we show that the maps $\alpha_i$ from Lemma \ref{l:affine-iso}
exist.
I.e., we should show that if $\eta \in \Lambda_{i-1}$, then:

\[
\Gauge_{\exp(\xi+\eta)}(\Gamma dt) - \Gamma dt + \nabla(\eta) 
\text{ mod } t^{N+i+\ell-r}\fg_A[[t]]dt
\]

\noindent is independent of $\eta$.

This follows readily from our earlier work.
We compute the left hand side using \eqref{eq:exp-expansion},
and we find that it equals:

\[ 
[\xi,\Gamma dt] +
\sum_{n=2}^{\infty} \frac{1}{n!}\Ad_{\xi+\eta}^n(\Gamma dt) -
(d\exp(\xi)) \cdot \exp(-\xi) + t^{2N+i-2}\fg_A[[t]]dt.
\]

\noindent Now the only terms involving $\eta$ occur inside
that infinite sum, and by \eqref{eq:ad-higherterms}
(substituting $i-1$ for $i$ in \emph{loc. cit}.), we have (for $n \geq 2$):

\[
\Ad_{\xi+\eta}^n(\Gamma dt) \in 
\Ad_{\xi}^n(\Gamma dt) + t^{N+i+\ell-r}\fg_A[[t]]dt
\]

\noindent It remains to observe that $2N+i-2 \geq N+i+\ell-r$,
since $2N+i-2 \geq N+\ell+i-1$ and since we have $r\geq 1$ by
a running assumption.

\step 

Finally, it remains to see that our closed embedding is 
finitely presented. Of course, we do so via Lemma \ref{l:affine-fp};
\emph{loc. cit}. means that we should show that the conormal sheaf is finitely generated.

This is quite easy, in fact.
Our map:

\[
K_N \times S \to t^{-r}\fg_A[[t]]dt
\]

\noindent is given by acting on an $S$-point of the right hand side, so
is obviously $K_N$-equivariant. Therefore, its cornormal sheaf
is also $K_N$-equivariant, so is determined by its restriction to $S$.
Now the cotangent complex of the above map restricted to $S$ is 
dual (is the sense of pro-vector bundles) to the complex:

\[
-\nabla: t^N\fg_A[[t]] \to t^{-r}\fg_A[[t]]dt.
\]

\noindent Therefore, the restriction of the conormal bundle
is dual to the cokernel of this map and therefore a finite rank vector bundle.

\end{proof}

\subsection{}\label{ss:artin-pf-finish}

We now deduce the theorem.

\begin{proof}[Proof of Theorem \ref{t:artin}] \setcounter{steps}{0}

Let $N$ be as in Lemma \ref{l:artin-fp}.
It obviously suffices to show that $S/\cK_N$ is an algebraic space
smooth over $T$, as we will show below.

\step\label{st:fp-claim}

For each integer $m\geq s$, let $S_m$ denote the fiber product:

\[
T \underset{t^{-r} \fg[[t]]dt /t^s \fg[[t]] dt} {\times} t^{-r} \fg[[t]]dt/t^m\fg[[t]]dt.
\]

\noindent Note that as we vary $m$, the structure maps are smooth,
and $\lim_m S_m = S \in \AffSch$.

We claim that the structure map:

\[
S \to S/\cK_N
\]

\noindent factors through $S_m$ for $m\gg 0$ (depending on $N$).

We will show this in Steps \ref{st:fp-start}-\ref{st:fp-finish}.

\step\label{st:fp-start}

In this step, we say in explicit terms what it takes to give a factorization
$S \to S_m \to S/\cK_N$, and in the next step we provide such a construction.

We claim that the data of a section $\sigma_m:S_m \to S$ to
the structure map $\pi_m:S \to S_m$ plus a map
$\varga: S \to \cK_N$ such that the composite:

\[
S \rar{\pi_m \times \varga} 
S_m \times \cK_N \rar{\sigma_m \times \id} 
S \times \cK_m \rar{\act}
S
\]

\noindent is the identity gives rise to a section as above.\footnote{Since every $\cK_N$-torsor
on $S_m$ is trivial, it is easy to see that such a datum is essentially equivalent to giving such a 
factorization.}\footnote{In words: we want to conjugate $s \in S$ to $\sigma_m\pi_m(s)$
to $s$ by means of $\varga(s)$. Our Fredholm assumption and the construction
of $\cK_N$ gives an infinitesimal version of this, and our problem is to integrate to get
a global version.}

Indeed, the map $S_m \times \cK_N \to S$ is equivariant for the
is $\cK_N$-equivariant for the usual action of $\cK_N$ on $S$
and for the action of $\cK_N$ on $S_m \times \cK_N$ defined by the
trivial action on $S_m$ and the left action on $\cK_N$. Passing to the
quotient, we obtain a map $S_m \to S/\cK_N$. 

The composite
map $S \to S_m \to S/\cK_N$ is the composite of $\sigma_m\pi_m$ with
the tautological projection of $S$ to $S/\cK_N$. But the map $\varga$ provides an
obvious way to identify these two maps.

Now observe that any choice of uniformizer $t$ gives an obvious
section $\sigma_m$. Below, we will construct $\varga$ with the desired properties
(for $m$ sufficiently large).

\step\label{st:fp-fred}

Next, we claim that the morphism: 

\[
\cK_N \times S \xar{\act \times p_2} S \underset{T}{\times} S
\]

\noindent is a finitely presented closed embedding.

Observe that: 

\[
S \underset{T}{\times} S =
t^{-r}\fg[[t]]dt \underset{t^{-r}\fg[[t]]dt/t^s\fg[[t]]dt}{\times} S
\]

\noindent and that when we compose the above map with the
structure map to $t^{-r}\fg[[t]]dt \times S$, it sends
$(\varga, s) \in \cK_N \times S$ to $(\Gauge_{\varga}(\omega_s),s)$, where
$s\mapsto \omega_s$ is the structure map $S \to t^{-r}\fg[[t]]dt$.
It suffices to show that this composite map is a finitely presented closed embedding,
but this is the content of Lemma \ref{l:artin-fp}.

\step\label{st:fp-finish}

We now complete the proof of the claim of Step \ref{st:fp-claim} 
by constructing a map $\varga:S \to \cK_N$ with the desired properties.

Observe that $S = \lim_m S \times_{S_m} S$ as an $S \times_T S$-scheme,
where everything in sight is affine. Since we have a morphism:

\[
\underset{m}{\lim} \, S \underset{S_m}{\times} S = 
S \to S \times \cK_N \to S \underset{T}{\times} S
\]

\noindent of $S \times_T S$-schemes (sending $s \in S$ to $(s,1) \in S \times \cK_N$)
and since the latter map is finitely presented, there must exist an integer $m$
and a map:

\begin{equation}\label{eq:noeth-desc}
S \underset{S_m}{\times} S \to S \times \cK_N \to S \underset{T}{\times} S
\end{equation}

\noindent of $S \times_T S$-schemes, and which
induces the diagonal map on the diagonally embedded copy of 
$S \subset S\times_{S_m} S$.

Composing the first map of \eqref{eq:noeth-desc} with
the map $S \xar{\id \times \sigma_m\pi_m} S \times_{S_m} S$
(for any choice of splitting $\sigma_m$, say the one defined by a coordinate $t$),
we obtain a map $S \to S \times \cK_N$ of $S$-schemes, i.e., a map
$\varga:S \to \cK_N$.

It is tautological from the construction that $\varga$ has the desired property.

\step

It follows immediately from the claim in Step \ref{st:fp-claim} 
that $S/\cK_N$ is a stack locally of finite presentation over $T$.
Therefore, it suffices to show that $S_m \to S/\cK_N$ is a smooth covering
and provides an atlas.

Indeed, $S/\cK_N$ obviously has an affine diagonal, and therefore
$S_m \to S/\cK_N$ is affine. Moreover, since $S_m$ and $S/\cK_N$
are locally finitely presented over $T$, this implies that the morphism
is finitely presented (since it is affine and therefore quasi-compact). 
We therefore deduce smoothness by looking at cotangent
complexes. Finally, we easily see that this map is a covering
by base-change along $S \to S/\cK_N$.

\end{proof}

\subsection{Turning points}\label{ss:post-artin}

We have seen in Counterexample \ref{ce:rs-rk1-notfredholm} that the a 
connection can fail to be Fredholm quite severely: for the
$\bA^1$-family of connections there,
there is no stratification of $\bA^1$ such that the restrictions of the connection to strata 
are Fredholm.

However, our next result says that this behavior cannot occur locally.

\begin{thm}\label{t:jumps}

For every pair of positive integers $n,r>0$, 
there exists a constant\footnote{The proof of Theorem \ref{t:jumps} below
gives a very explicit recursive procedure for computing $N_{n,r}$.}
$N_{n,r} \in \bZ^{\geq 0}$ with the following property:

Let $T$ be the spectrum of a field and be equipped with a map:

\[
T \to t^{-r}\gl_n[[t]]dt/t^s\gl_n[[t]]dt
\]

\noindent with $s \geq N_{n,r}$.
Define:

\[
S \coloneqq T \underset{t^{-r} \gl_n[[t]]dt /t^s \gl_n[[t]] dt} {\times} t^{-r} \gl_n[[t]]dt
\]

\noindent and note that there is a canonical rank $n$ family of local systems on $\o{\cD}$
parametrized by $S$.

Then the corresponding family of connections on $S$ is Fredholm.

\end{thm}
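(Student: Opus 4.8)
Throughout write $A$ for the ring of functions on $S$; it is a polynomial algebra over the field $F$ whose indeterminates are the entries of the ``tail'' coefficients $\Gamma_s,\Gamma_{s+1},\ldots$, whereas $\Gamma_{-r},\ldots,\Gamma_{s-1}$ lie in $\gl_n(F)$. We must show that the tautological rank $n$ differential module $\chi=(V,\nabla)$ on $\o{\cD}_S$, with $V=A((t))^{\oplus n}$ and $\nabla=d+\Gamma^{\mathrm{fix}}(t)\,dt+P$ where $\Gamma^{\mathrm{fix}}(t)=\sum_{-r\le j\le s-1}\Gamma_j t^j$ has coefficients in $F$ and $P\in t^s\gl_n(A[[t]])\,dt$, is Fredholm, i.e.\ that $H_{dR}^*(\o{\cD}_S,\chi)\in\Perf(A\mod)$. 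The plan is to exploit the \emph{effective} content of the Levelt--Turrittin reduction in the form given by Babbitt--Varadarajan \cite{babbitt-varadarajan} (cf.\ \cite{levelt}): the formal invariants of a connection of rank $n$ and pole order $\le r$ are determined by boundedly many of its leading coefficients. By Lemma \ref{l:extn-scalars} we are free to replace $K=F((t))$ by $F((t^{1/b}))$ for any $b>0$, and by faithfully flat descent of perfectness along $A\to A\otimes_F F'$ (together with flat base change for the de Rham complex) we may replace $F$ by a finite extension $F'$; both $b$ and $F'$ will be bounded in terms of $n$ and $r$.

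The crux — and the step I expect to be the main obstacle — is a uniform (in the coefficients) version of the Babbitt--Varadarajan reduction, which I would establish by following \emph{loc.\ cit.} and bookkeeping which coefficients the algorithm consults: \emph{there exist integers $N_{n,r}$, $b$, $D$ and a finite extension $F'/F$ (all depending only on $n$ and $r$) such that, once $\Gamma_{-r},\ldots,\Gamma_{N_{n,r}-1}$ are fixed, there is a gauge transformation $g_0$ defined over $F'$, depending only on the fixed coefficients, with $g_0^{\pm1}$ having entries of $t$-order $\ge -D$, which carries the frozen connection $d+\Gamma^{\mathrm{fix}}(t)\,dt$ over $F'((t^{1/b}))$ to the canonical form}
\[
\bigoplus_i E_{q_i}\otimes\rho_i \;\oplus\; \big(d+t^{-1}C\,dt+(\text{higher order})\big),
\]
\emph{where $E_q\coloneqq d+dq$, the $q_i\in t^{-1/b}F'[t^{-1/b}]$ are nonzero and pairwise distinct (so that each block $E_{q_i}\otimes\rho_i$ has invertible leading term, of pole order $\ge 2$ in the variable $t^{1/b}$, $\rho_i$ being regular singular), $C\in\gl_m(F')$ is a constant matrix, and $b,D,F',m,C$, the $q_i$ and the higher-order terms are all among the data determined by the fixed coefficients.} The existence of $N_{n,r}$ is exactly the statement that the ramification, the number of shearing steps, and the shearing exponents in the reduction algorithm are bounded in terms of $n$ and $r$, so that the algorithm never inspects a coefficient of order $\ge N_{n,r}$ and its output depends only (and algebraically) on those it does inspect; this is what yields the promised recursive procedure for $N_{n,r}$.

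Granting this, I would apply $g_0$ to the \emph{whole} tautological connection. Since $\nabla=d+\Gamma^{\mathrm{fix}}(t)\,dt+P$ and a gauge transformation acts on the connection-form part additively by $\Ad$, we get $\Gauge_{g_0}(\nabla)=(\text{the canonical form above})+\Ad_{g_0}(P)$ with $\Ad_{g_0}(P)\in t^{s-2D}\gl_n(A[[t^{1/b}]])\,dt$, a perturbation of arbitrarily high nonnegative $t$-order once $N_{n,r}$ is enlarged as needed. Because the distinct blocks of the canonical form have pairwise distinct exponential factors (counting $0$ for the last block), the cohomological equation that block-triangulates this high-order perturbation is solvable over $A$ order by order — the relevant off-diagonal operator has a genuine pole, hence invertible leading term — so a further gauge transformation $g_1=1+(\text{high-order, over }A)$ brings $\Gauge_{g_0}(\nabla)$ to an honest block-diagonal connection $\bigoplus_i E_{q_i}\otimes\widetilde\rho_i\;\oplus\;(d+t^{-1}C\,dt+\text{higher order over }A)$. (Alternatively one bypasses $g_1$ and runs the $t$-adic filtration argument below directly on $\Gauge_{g_0}(\nabla)$, as in Lemma \ref{l:fred-lattices-quant}.)

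Finally I would read off Fredholmness block by block. For a block $E_{q_i}\otimes\widetilde\rho_i$ with $q_i\neq 0$: the leading term of $dq_i$ is a nonzero scalar matrix at pole order $\ge 2$ in $t^{1/b}$, so the $t$-adic filtration computation of Example \ref{e:invertible} (unaffected by lower-order or high-order terms) shows this block's de Rham complex is acyclic over $A$. For the last block $d+t^{-1}C\,dt+(\ldots)$: $\det(N\cdot\Id+C)$ is a monic polynomial in $N$ with coefficients in $F'$, hence invertible in $A$ for all but finitely many integers $N$, so by Example \ref{e:rs-evalues} its de Rham complex over $A$ is quasi-isomorphic to a $2$-term complex of finite rank projective $A$-modules, in particular perfect. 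Summing, $H_{dR}^*$ of $\Gauge_{g_0 g_1}(\nabla)$, hence of $\nabla$, lies in $\Perf(A\mod)$ after the base change $t\mapsto t^{1/b}$, $F\mapsto F'$; descending along $A\to A\otimes_F F'$ and invoking Lemma \ref{l:extn-scalars} shows that $\chi$ is Fredholm. This proves the theorem, with $N_{n,r}$ the constant produced in the reduction step.
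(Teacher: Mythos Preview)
Your approach is correct and is essentially the paper's: both rest on an effective form of the Babbitt--Varadarajan reduction and then read off Fredholmness from Examples \ref{e:invertible} and \ref{e:rs-evalues}. The step you flag as ``the main obstacle'' is precisely where the paper does its work: rather than invoking the reduction as a black box, it runs a double induction on the rank $n$ and on $\delta=\dim G\cdot\Gamma_{-r}$, using Slodowy-slice geometry (Lemma \ref{l:nilp-orbits}) to show that each shearing $H(t^{-\alpha})$ either makes the new leading term non-nilpotent (so one can split off a smaller-rank summand via Jordan decomposition) or strictly increases its orbit dimension, yielding the explicit recursive bounds \eqref{eq:ndelta-eqns} for $N_{n,r}$.

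Two minor differences in execution are worth noting. First, the paper never extends the residue field: Jordan decomposition and Jacobson--Morozov already work over $F$, so only roots of $t$ are adjoined via Lemma \ref{l:extn-scalars}. Your claim that $F'$ can be chosen depending only on $n$ and $r$ is a slip --- it has to depend on the fixed coefficients $\Gamma_{-r},\ldots,\Gamma_{s-1}$ --- though this is harmless for the theorem, and the paper's route sidesteps the issue entirely. Second, the paper stops short of full Levelt--Turrittin canonical form: it halts once each block is either regular singular with leading term constant over $F$, or irregular with invertible leading term. This saves your block-diagonalization step $g_1$ and the faithfully-flat descent along $A\to A\otimes_F F'$, but both routes land in the same place.
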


\begin{rem}

This result says that any family of connections
with all leading terms the same (to some large enough order) is Fredholm. 

\end{rem}

Combining Theorem \ref{t:artin} with Theorem \ref{t:jumps}, we obtain:

\begin{cor}\label{c:locsys-pol-strat}

For $G$ an affine algebraic group, $s\gg r$ and $\eta$ any
(possibly non-closed) point in $t^{-r} \fg[[t]]dt /t^s \fg[[t]] dt$,
the quotient of the fiber
of $t^{-r}\fg[[t]]dt$ at $\eta$
by $\cK_{r+s}$ is a Noetherian and regular Artin stack 
that is smooth over $\eta$.

\end{cor}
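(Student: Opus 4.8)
The plan is to combine Theorems~\ref{t:artin} and~\ref{t:jumps}: once the Fredholm hypothesis of the former is supplied by the latter, there is essentially nothing left to do. Fix $\eta$ and set $T \coloneqq \Spec \kappa(\eta)$, a Noetherian (indeed $0$-dimensional regular) affine scheme, whether or not $\eta$ is closed; equip it with the tautological map $T \to t^{-r}\fg[[t]]dt/t^s\fg[[t]]dt$ and let $S$ be the fiber product appearing in Theorem~\ref{t:artin}, so that $S \to T$ is an affine space carrying a canonical $G$-connection. The only thing to check is that, for $s$ large in terms of $r$ and $G$, the local system $\chi_{\ad}$ associated to this $G$-connection by the adjoint representation $\ad\colon G \to GL(\fg)$ is Fredholm; Theorem~\ref{t:artin} will then give at once that $S/\cK_{r+s}$ is an Artin stack and that $S/\cK_{r+s} \to T$ is smooth and of finite presentation.

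For the Fredholm statement I would avoid faithful representations of $G$ and work directly with the adjoint representation, of dimension $m \coloneqq \dim \fg$. The $k$-linear map $\ad\colon \fg \to \gl_m$ induces $t^{-r}\fg[[t]]dt \to t^{-r}\gl_m[[t]]dt$ compatibly with truncation modulo $t^s$, and since the first $s$ Laurent coefficients of the gauge form on $S$ are constant (being those of $\eta$), the composite $S \to t^{-r}\gl_m[[t]]dt \to t^{-r}\gl_m[[t]]dt/t^s\gl_m[[t]]dt$ is constant and hence factors through a single point of its target. Thus $\chi_{\ad}$ is the pullback, along the resulting map to $\widetilde S \coloneqq T \times_{t^{-r}\gl_m[[t]]dt/t^s\gl_m[[t]]dt} t^{-r}\gl_m[[t]]dt$, of the universal rank-$m$ family of connections on $\widetilde S$. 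Now take $s \geq N_{m,r}$, which is the meaning of ``$s \gg r$'' here: Theorem~\ref{t:jumps} says that universal family is Fredholm, and since formation of the de Rham complex commutes with base change and perfect complexes pull back to perfect complexes, the pullback $\chi_{\ad}$ is Fredholm too.

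With Theorem~\ref{t:artin} now applicable, it remains only to upgrade ``Artin stack, smooth and finitely presented over $T$'' to ``Noetherian and regular''. For Noetherianness: $S/\cK_{r+s}$ has affine diagonal (as in the proof of Theorem~\ref{t:artin}) and is quasi-compact, being a quotient of the affine scheme $S$, and is of finite presentation over the Noetherian scheme $T$, hence is a Noetherian algebraic stack; for regularity, $T = \Spec\kappa(\eta)$ is regular and $S/\cK_{r+s} \to T$ is smooth, so $S/\cK_{r+s}$ is regular. I do not expect a genuine obstacle anywhere, since all the real content lies in Theorems~\ref{t:artin} and~\ref{t:jumps}; the one point deserving care is that Theorem~\ref{t:jumps}, stated for $\gl_n$, may be applied to the rank-$m$ adjoint family, which is legitimate precisely because $\ad$ is linear and carries the fixed polar-plus-a-bit data to fixed polar-plus-a-bit data.
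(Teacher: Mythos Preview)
Your proposal is correct and is exactly the argument the paper intends: the corollary is stated immediately after Theorems~\ref{t:artin} and~\ref{t:jumps} with only the phrase ``Combining Theorem~\ref{t:artin} with Theorem~\ref{t:jumps}, we obtain'' and no further proof, and your write-up fills in precisely the intended details --- reducing the Fredholm hypothesis on the adjoint local system to Theorem~\ref{t:jumps} for $\gl_m$ with $m=\dim\fg$ via the linear map $\ad$, and then reading off Noetherian and regular from smoothness over the field $\kappa(\eta)$.
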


\begin{rem}

The proof is essentially an application of the Babbitt-Varadarajan algorithm 
\cite{babbitt-varadarajan}, which they introduced for finding canonical forms for linear
systems of Laurent series differential equations (which is a somewhat different
concern from ours here). We make no claims to originality in our methods here,
and indeed, the reader who is familiar with \cite{babbitt-varadarajan} will
find the argument redundant; we rather include the argument for the
reader who is not so familiar with their reduction theory for linear differential equations.
  
The proof of the theorem will be given in 
\S \ref{ss:jumps-start}-\ref{ss:jumps-finish} below.

\end{rem}

\subsection{Some counterexamples}

We begin by clarifying why Theorem \ref{t:jumps} is formulated in quite the way it is.
This material may safely be skipped by the reader, though we do not particularly advise this.

First, why can we not always take $N_{n,r} = 0$?

\begin{counterexample}

Suppose that we could always take $N_{2,2} = 0$. It would follow that
any connection of the form:

\[
d+
\begin{pmatrix}
0 & 1 \\
0 & 0 
\end{pmatrix} \frac{dt}{t^2} +
0\cdot \frac{dt}{t} + 
\text{lower order terms}
\]

\noindent is Fredholm (this follows from the case $T = \Spec(k)$ in the theorem). 
However, we claim that this is not the case.

Indeed, let $S = \Spec(k[\lambda])$, and consider the connection:

\[
d + \begin{pmatrix}
0 & 1 \\
0 & 0 
\end{pmatrix} \frac{dt}{t^2} +
\begin{pmatrix}
0 & 0 \\
\lambda(\lambda+1) & 0 
\end{pmatrix} dt.
\]

\noindent We claim that this connection is not Fredholm,
in contradiction with the above. 

When we specialize to $\lambda = \ell \in \bZ \subset k$, we find that
the de Rham complex for this connection has a 0-cocycle
$\begin{pmatrix} t^{\ell} \\ -\ell \cdot t^{\ell+1} \end{pmatrix} \in k((t))^{\oplus 2}$.

Now observe that over the generic point of $S$, the de Rham
complex is acyclic. Indeed, by (the proof of) Lemma \ref{l:extn-scalars},
it suffices to see this after adjoining a square root $t^{\frac{1}{2}}$ of $t$.
Then we can apply a gauge transformation by:

\[
\begin{pmatrix}
t^{\frac{1}{2}} & 0 \\
0 & t^{-\frac{1}{2}} 
\end{pmatrix} \in GL_2(K)
\]

\noindent so that our connection becomes:

\[
d + \begin{pmatrix}
-\frac{1}{2} & 1 \\
\lambda(\lambda+1) & \frac{1}{2}
\end{pmatrix} \frac{dt}{t}.
\]

\noindent The leading term of this regular singular connection
has determinant $-\frac{1}{4} - \lambda(\lambda+1)$ and trace zero, 
and therefore is diagonalizable with eigenvalues 
$\pm \sqrt{\frac{1}{4} + \lambda(\lambda+1)} = \pm (\lambda + \frac{1}{2})$.
But the connection $d+\frac{\eta}{t}dt$ is acyclic for $\eta \not\in\bZ$,
giving the acyclicity here.\footnote{Note that 
$\frac{dt}{t} = 2 \frac{d\sqrt{t}}{\sqrt{t}}$, so one should ``really" view the eigenvalues
as being $2\lambda + 1$, which explains why the
complex is not acyclic for $\lambda$ an integer.}

As in Counterexample \ref{ce:rs-rk1-notfredholm}, this implies that
the connection is not Fredholm: indeed, the de Rham cohomology vanishes
at the generic point, but is not supported on any Zariski closed subvariety
of $\Spec(k[\lambda])$, and therefore cannot be coherent.
 
\end{counterexample}

Why do we need perfect complexes? Can we not arrange
that our de Rham cohomology be the sum of two (cohomologically
shifted) vector bundles?

\begin{counterexample}

For rank 2 connections, the example of \S \ref{ss:gm-ga} readily
shows that the answer to the latter question is ``no."

\end{counterexample}

We include the next counterexample just to indicate how jumps
can occur as we vary our point in $t^{-r}\fg[[t]]dt/t^s\fg[[t]]dt$.

\begin{counterexample}

Let $T = \Spec(F)$ be the spectrum of the localization
of $k[\lambda]$ at $0$. Then we claim that the rank one connection:

\[
d+ \frac{\lambda}{t^2}dt
\]

\noindent is not Fredholm. 

Indeed, if it were, then Lemma \ref{l:fred-lattices} would imply that the complex:

\[
B[[t]] \xar{\nabla} t^{-2} B[[t]]dt
\]

\noindent is perfect. However, after extending scalars
to the fields $B[\lambda^{-1}]$ and $B/\lambda = k$, we obtain different
Euler characteristics, a contradiction. Indeed, for $\lambda$ invertible,
we have seen in Example \ref{e:invertible} that the complex
is acyclic, while for $\lambda = 0$, the complex
is obviously quasi-isomorphic\footnote{The kernel is generated by
$1 \in k[[t]]$, while the cokernel is generated
by the classes of $t^{-1}dt$ and $t^{-2}dt$} to
$k \oplus k[-1]\oplus k[-1]$. 

\end{counterexample}

\subsection{Proof of Theorem \ref{t:jumps}}\label{ss:jumps-start}

We now give the proof of Theorem \ref{t:jumps}.

\subsection{}

We let $T = \Spec(F)$ and let $S = \Spec(A)$ in what follows.

%%% Carefully check that you used the notation correctly here.

\subsection{Regular singular case}\label{ss:rs-pf}

Suppose $r = 1$. Then we claim that $N_{n,1} = 0$ suffices for any rank $n$
for the local system.

Indeed, suppose that the map $T = \Spec(F) \to t^{-1}\gl_n[[t]]dt/\gl_n[[t]]dt = \gl_n$
is defined by the matrix $\Gamma_{-1} \in \gl_n(F)$.

Since the characteristic polynomial of $\Gamma_{-1}$ has finitely many roots in $F$, there are only finitely many integers $\ell \in \bZ$ such that
$\Gamma_{-1} + \ell \cdot \id_n$ is not invertible. Therefore, by 
Example \ref{e:invertible}, the corresponding connection is Fredholm.

\subsection{The rank 1 case}

At this point, we proceed by induction on $n$, the rank of the local system.

We begin with the case $n = 1$. We claim that we can take
$N_{1,r} = 0$ for all $r$.

Indeed, suppose our map:

\[
T = \Spec(F) \to t^{-r}k[[t]]dt/k[[t]]dt
\]

\noindent is defined by $f_{-r}t^{-r}dt + \ldots + f_{-1} t^{-1}dt + k[[t]]dt$ with $f_i \in F$.

If $f_i = 0$ for $i<-1$, then we are in the paradigm of \S \ref{ss:rs-pf}.
Otherwise, $f_i \neq 0$ for some $i < -1$, and
by Example \ref{e:invertible}, the corresponding connection is Fredholm.

Therefore, we can assume the result true for all $n'<n$ in what follows.

\subsection{}

We will use the following notation in what follows. Given a map:

\[
T \to t^{-r}\gl_n[[t]]dt/t^s\gl_n[[t]]dt
\]

\noindent we let $\Gamma_i \in \gl_n(F), -r\leq i <s$ denote the coefficient
of $t^i$ above.

For all $-r\leq i$, we let $\Gamma_i \in \gl_n(A)$ denote the coefficient
of $t^i$ in the tautological connection on $S$. Note that the abuse
of notation is justified by the fact that for $-r \leq i<s$, the two matrices
we have called $\Gamma_i$ are the same under the embedding
$\gl_n(F) \into \gl_n(A)$.

Finally, we let $\Gamma \in t^{-r}(\gl_n\otimes A)[[t]]$ denote
the connection matrix $\sum \Gamma_i t^i$.

(Note that $\Gamma_{-r}$ is independent of choice
of coordinate up to scaling, justifying the prominent role that it plays below.)

\subsection{Reduction to the case of nilpotent leading term}\label{ss:nilp-redn}

Suppose $N_{n,r} \geq N_{n^{\prime},r^{\prime}}$ for all $n^{\prime}<n$
and $r^{\prime}\leq r$ (where we can suppose the latter
numbers are all known by induction). 
We then claim that the conclusion of the theorem is true for 
a given map $T \to t^{-r}\gl_n[[t]]dt/t^s\gl_n[[t]]dt$
as long as $\Gamma_{-r}$ is not nilpotent.\footnote{Since $A$ is
an integral domain, there is no ambiguity in the meaning
of \emph{nilpotent} here: it just means that $\Gamma_{-r}$ is nilpotent as a matrix.}

Note that we can safely assume $r > 1$ below.

\step\label{st:noeth-desc}

Let $\Gamma_{-r} = f + s$ with $f$ nilpotent and $s$ semisimple commuting
elements of $\gl_n(F)$ be the Jordan decomposition of $\Gamma_{-r}$.

Note that $F^{\oplus n} \simeq \Ker(s) \oplus \Im(s)$. 
Similarly, $\gl_n(F) = \Ker(\Ad_s) \oplus \Im(\Ad_s)$, and every
matrix in $\Ker(\Ad_s)$ preserves this decomposition of 
$F^{\oplus}$ as a direct sum. 

Next, note that $\Im(\Ad_{s}) \subset \Im(\Ad_{\Gamma_{-r}})$.
Indeed, since $\Ad_s: \Im(\Ad_s) \isom \Im(\Ad_s)$
and since $\Ad_f$ is a nilpotent endomorphism of $\Im(\Ad_s)$
commuting with $\Ad_s$, 
$\Ad_{\Gamma_{-r}} = \Ad_s + \Ad_f: \Im(\Ad_s) \to \Im(\Ad_s)$
is an isomorphism.

Note that we obtain a similar decomposition:

\[
A^{\oplus n} = \Ker(\Ad_s) \oplus \Im(\Ad_s)
\]

\noindent where we consider $s$ acting on $A^{\oplus n}$ by extension
of scalars.

\step\label{st:brackets}

Next, we claim that we can apply a gauge transformation by an element of $\cK_1(S)$
so that each matrix $\Gamma_j \in \gl_n(A)$ ($j \geq -r$) preserves each the
$A$-submodules $\Ker(s)$ and $\Im(s)$ of $A^{\oplus n}$.
(This method is very standard, and goes back at least to \cite{sibuya}.)

More precisely, we will construct by induction $g_i \in \cK_i(S)$ with the property
that the first $i+1$ matrices in $\Gauge_{g_i g_{i-1} \ldots g_1}(\Gamma dt)$
have the property above; note that the infinite product of the 
$g_i$ makes sense in $\cK_1$, and therefore
gives a gauge transformation with the desired property.

These elements $g_i$ will have the property, which will 
be important later, that $g_i$ depends only on $\Gamma_{-r},\ldots,\Gamma_{-r+i}$.
In particular, for $1 \leq i <  r+s$, we will have $g_i \in \cK_1(T) \subset \cK_1(S)$.

To construct the $g_i$: applying the gauge transformation by $g_{i-1}\ldots g_1$, we
can assume that $\Gamma_{-r},\ldots,\Gamma_{-r+i-1}$ preserve our 
submodules. 

Then, we can (uniquely) write 
$\Gamma_{-r+i} = M_1 + M_2$ with $M_1 \in \Ker(\Ad_s)$ and
$M_2 \in \Im(\Ad_s)$. Note that $M_2 = [\Gamma_{-r}, C]$ for
some $C \in \gl_n(A)$ (or $C \in \gl_n(F)$ if $i<r+s$), 
since $\Im(\Ad_{s}) \subset \Im(\Ad_{\Gamma_{-r}})$.

We then take $g_i = \exp(-t^i C) \in \cK_i(A)$. Then:

\[
\begin{gathered}
\Gauge_{g_i}(\Gamma dt) = 
\Ad_{\exp(-t^iC)}(\Gamma dt) - 
(d \exp(-t^i C))\cdot  \exp(t^i C) = \\
\Big(\Gamma dt - [t^iC,\Gamma dt] + {\text{terms divisible by }t^{2i-r}}\Big) 
+ 
\Big(t^iC + {\text{terms divisible by }t^{2i-1}}\Big).
\end{gathered}
\]

\noindent Note that $r \geq 1$ by running assumption,
so divisibility by $2i-1$ implies divisibility by $2i-r$, and since
$i \geq 1$, this implies divisibility by $i-r+1$.
Then reducing modulo $t^{i-r+1}\gl_n[[t]]dt$, we obtain:

\[
\Gamma dt - [t^iC,\Gamma_{-r} dt] + t^{i-r+1}\gl_n[[t]]dt.
\]

\noindent Then we have not changed any $\Gamma_j$ for $j\leq i-r$ except
$\Gamma_{i-r}$, and we have changed it to $M_1$ in the above notation.
Since $M_1$ commutes with the semisimple matrix $\Ad_s$, it 
preserves the decomposition of $F^{\oplus n}$ as $\Im(\Ad_s) \oplus \Ker(\Ad_s)$.

\step 

Finally, suppose that $\Gamma_{-r}$ is not a nilpotent matrix.
Then $s \neq 0$, i.e., $\Ker(\Ad_s) \neq A^{\oplus n}$. 

Therefore, we have shown above that we can gauge transform 
$\Gamma$ to be a direct sum of connection matrices $\Gamma^1$ and
$\Gamma^2$ of smaller rank (so $\Gamma^i \in \gl_{n_i}((t))$
for $n_1+n_2 = n$, $n_i \neq 0$).

Since the $g_i$ constructed above lie in $\cK_1$,
$\Gamma^1$ and $\Gamma^2$ have order of pole at most $r$.

Moreover, since the construction of $g_i$ above depended only 
on the leading $i+1$ terms of $\Gamma$, the coefficients
of $t^j$ for $-r \leq j <s $ depends only on the leading terms of $\Gamma$.
That is, the connection $d + \Gamma^1dt + \Gamma^2 dt$ on $S$
has leading $r+s$-terms defined by some map:

\[
T \to t^{-r}\gl_n[[t]]dt/t^s\gl_n[[t]]dt.
\]

Therefore, in this case we obtain the claim
as long as $N_{n,r}$ satisfies the inequalities we said.

\subsection{}

Below, we will show how to handle the case the $\Gamma_{-r}$ 
is a nilpotent matrix. We use the key idea of \cite{babbitt-varadarajan} here:
use the geometry of Slodowy slices to proceed by
induction on $\dim(\Ad_{\Gamma_{-r}})$.

\subsection{Slodowy review}

We briefly review some facts about nilpotent elements
in reductive Lie algebra. 
Let $G$ be a split reductive group ($G = GL_n$ for us) with Lie algebra
$\fg$ over
a ground field of characteristic zero, which we suppress from the notation
(it will be $F$ for us later).

Let $f$ be a nilpotent element in $\fg$, and extend $f$ to an
$\sl_2$-triple $\{e,f,h\} \subset [\fg,\fg]$ via Jacobson-Morozov.
Let $H:\bG_m \to G$ be the cocharacter with derivative $h$. 

Recall that the \emph{Slodowy slice} $\cS_f$ is the
scheme $f+\Ker(\Ad_e)$ 
(considered as a scheme by thinking of $\Ker(\Ad_e)$ as an affine space).

Equip $\fg$ with the $\bG_m$-action:

\[
\lambda \star \xi \coloneqq \lambda^{2}\Ad_{H(\lambda)}(\xi), \hspace{.5cm} 
\lambda \in \bG_m, \xi \in \fg.
\]

\noindent Here the $\lambda^{-2}$ in front is the normal action 
by homotheties of $\bG_m$ on $\fg$. 

Note that this $\star$-action preserves $\cS_f$ because
$\Ad_{H(\lambda)}(f) = \lambda^{-2}f$. 
Moreover, this action contracts $\cS_f$ onto the point $f$, since
$\Ad_h$ has non-negative eigenvalues on $\Ker(\Ad_e)$
(by the representation theory of $\sl_2$).

Next, observe that the $\star$-action preserves the $G$-orbit
through $f$: indeed, for $g\in G$, we have:

\[
\begin{gathered}
\lambda \star \Ad_g(f) = 
\lambda^2 \Ad_{H(\lambda)} \Ad_g(f) =
\Ad_{H(\lambda)} \Ad_g(\lambda^2 f) = \\
\Ad_{H(\lambda)} \Ad_g \Ad_{H(\lambda)^{-1}} (f) =
\Ad_{H(\lambda) g H(\lambda)^{-1}}(f).
\end{gathered}
\]

More generally, the $\star$-action preserves the $G$-orbit through
\emph{any} nilpotent element $f^{\prime}$. Indeed,
first notice that $\lambda f \in G \cdot f$ for any $\lambda$, since
$\Ad_{H(\sqrt{\lambda})}(f) = \lambda f$. Here nothing about $f$ is
special, so $\lambda f^{\prime} \in G \cdot f^{\prime}$ for any $\lambda$.
Then: 

\[
\lambda \star f^{\prime} = \lambda^{2}\Ad_{H(\lambda)}(f^{\prime}) \in
G \cdot \Ad_{H(\lambda)}(f^{\prime}) = G \cdot f^{\prime}.
\]

With these preliminary geometric observations, we now deduce:

\begin{lem}[c.f. \cite{babbitt-varadarajan} \S 2]\label{l:nilp-orbits}

For any field-valued point $f^{\prime} \in \cS_f$ nilpotent, 
$\dim(G \cdot f^{\prime}) (= \dim \Im(\Ad_{f^{\prime}}))$ is
greater than or equal to $\dim(G \cdot f) (=\dim \Im(\Ad_f))$,
with equality if and only if $f^{\prime} = f$. 

\end{lem}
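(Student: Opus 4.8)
The plan is to exploit the contracting $\bG_m$-action given by $\star$ together with the basic fact that a $G$-orbit closure is a union of $G$-orbits of strictly smaller dimension (here $G = GL_n$ is connected, so all orbits are irreducible).

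First I would fix a nilpotent field-valued point $f' \in \cS_f$ and consider the morphism $\bG_m \to \fg$, $\lambda \mapsto \lambda \star f'$. By the last displayed computation before the lemma, the $\star$-action preserves $G\cdot f'$, so this morphism factors through $G\cdot f'$; and since $\Ad_h$ acts with non-negative eigenvalues on $\Ker(\Ad_e)$, writing $\xi = f + v$ with $v \in \Ker(\Ad_e)$ gives $\lambda\star\xi = f + \sum_{j\geq 0}\lambda^{j+2}v_j$, where $v_j$ is the $\Ad_h$-weight-$j$ component of $v$. Hence the morphism extends to $\bA^1 \to \cS_f \subset \fg$ carrying $0$ to $f$. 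As $\bG_m$ is dense in $\bA^1$, the image of this extended morphism lies in $\overline{G\cdot f'}$, so $f \in \overline{G\cdot f'}$; since $\overline{G\cdot f'}$ is closed and $G$-stable we get $G\cdot f \subset \overline{G\cdot f'}$, and therefore $\dim(G\cdot f) \leq \dim\overline{G\cdot f'} = \dim(G\cdot f')$. (The identifications $\dim(G\cdot f') = \dim\Im(\Ad_{f'})$ etc. come from $G\cdot f' \cong G/Z_G(f')$ together with $\Lie Z_G(f') = \Ker(\Ad_{f'})$.)

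For the equality case, suppose $\dim(G\cdot f) = \dim(G\cdot f') =: d$. Then $\overline{G\cdot f'}$ is irreducible of dimension $d$, its boundary $\overline{G\cdot f'}\setminus(G\cdot f')$ is closed of dimension $<d$, and since $G\cdot f \subset \overline{G\cdot f'}$ has dimension $d$ it cannot lie in this boundary; hence $(G\cdot f)\cap(G\cdot f')\neq\emptyset$ and so $G\cdot f = G\cdot f'$. Thus $f' \in (G\cdot f)\cap\cS_f$, and it remains to show $(G\cdot f)\cap\cS_f = \{f\}$. This intersection is $\star$-stable (both factors are); by transversality of the Slodowy slice to $G$-orbits (cf. \cite{babbitt-varadarajan} \S 2) together with the dimension count $\dim\cS_f + \dim(G\cdot f) = \dim\fg$, it is a finite set of points. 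A finite $\bG_m$-stable set consists of $\bG_m$-fixed points, and if $\lambda\star x = x$ for all $\lambda$ then $x = \lim_{\lambda\to 0}\lambda\star x = f$ by the contraction property. Hence $(G\cdot f)\cap\cS_f = \{f\}$ and $f' = f$.

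The main obstacle is really just the transversality input invoked at the last step; everything else is a formal manipulation with the contracting action and orbit closures. One could try to avoid citing transversality by noting that $\star$ acts on $\cS_f$ with strictly positive weights, exhibiting $\cS_f$ $\bG_m$-equivariantly as a linear contracting affine space, but one still needs a codimension/transversality input to conclude that $(G\cdot f)\cap\cS_f$ is zero-dimensional, so I expect this to be the only genuinely non-formal ingredient.
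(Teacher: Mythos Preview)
Your proof is correct and follows essentially the same route as the paper's: both obtain $G\cdot f \subset \overline{G\cdot f'}$ from the contracting $\star$-action, reduce the equality case to showing $(G\cdot f)\cap\cS_f = \{f\}$, and finish by combining transversality with contraction.

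The one difference worth flagging concerns the final step, precisely the transversality input you were worried about. You invoke transversality of $\cS_f$ to \emph{all} $G$-orbits in order to conclude the intersection is finite, and then observe that a finite $\bG_m$-stable set consists of fixed points. The paper instead uses only transversality at the single point $f$, which is immediate from the decomposition $\fg = \Ker(\Ad_e)\oplus\Im(\Ad_f)$ recorded just before the lemma, and argues directly: if some $x\neq f$ lay in $(G\cdot f)\cap\cS_f$, the closure of its $\star$-orbit would be a curve in this intersection passing through $f$, contradicting that $f$ is isolated there. So the ``genuinely non-formal ingredient'' you identified is lighter than you feared: transversality at $f$ alone, plus contraction, already does the job.
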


\begin{proof}

If $f^{\prime}$ is as above, we must have
$G\cdot f \subset \overline{G \cdot f^{\prime}}$ (the orbit closure),
since the $\star$-action preserves $G \cdot f^{\prime}$ and
contracts $\cS_f$ onto $f$. This implies the claim
on dimensions.

Since $G\cdot f^{\prime}$ is open in its closure, 
if $\dim(G\cdot f) = \dim(G\cdot f^{\prime})$ then we must have
$G \cdot f = G \cdot f^{\prime}$. Therefore, we should see that
$G \cdot f \cap \cS_f = f$.

Observe that the $G$-orbit $G\cdot f$ through $f$ and
$\cS_f$ meet transversally at $f$ \textemdash{} this follows from the identity:

\[
\fg = \Ker(\Ad_e) \oplus \Im(\Ad_f)
\] 

\noindent (which is again a consequence of the representation theory
of $\sl_2$). Because the $\star$-action on 
$\fg$ preserves $G\cdot f$ and $\cS_f$ and contracts onto $f$,
it follows that $G \cdot f$ and $\cS_f$ meet only at the point $f$.

\end{proof}

\subsection{Nilpotent leading term} 

We now treat the case of nilpotent leading term.
At this point, the reader may wish to skip ahead to
\S \ref{ss:bv-example}, where we indicate how the reduction
theory works in the simplest non-trivial case.

\subsection{}

Let $\Gamma_{-r} = f \in \gl_n(A)$ be nilpotent,
and let $\delta \coloneqq \dim G\cdot f$, 
where $G\cdot f \subset \fg \times_{\Spec(k)} \Spec(F)$ as a scheme.
We will proceed by descending induction on
$\delta$.

More precisely, below we will construct 
$N_{n,r,\delta}$ with the property that for 
$s \geq N_{n,r,\delta}$ and $\Gamma_{-r}$ of the above type,
the conclusion of the theorem holds. By induction,
we can assume that knowledge of $N_{n^{\prime},r^{\prime}}$
for all $n^{\prime}<n$, and can assume the knowledge of
$N_{n,r^{\prime},\delta^{\prime}}$ for all $\delta^{\prime}>\delta$.

Note that $\delta>\dim(G)$, the hypotheses are vacuous, 
giving the base case in the induction. Moreover,
this makes clear that if we accomplish the above construction 
of $N_{n,r,\delta}$, we have completed the proof of the theorem: 
combining this with \S \ref{ss:nilp-redn}, we see that if we take:

\[
N_{n,r} \coloneqq \max \big\{
\{N_{n^{\prime},r}\}_{n^{\prime}<n},
\{N_{n,r,\delta}\}_{\delta\leq \dim(G)}
\big\}
\]

\noindent we have obtained a constant satisfying the conclusion of the theorem.

\subsection{}

We take an $\sl_2$-triple $\{e,f = \Gamma_{-r},h\} \in \sl_n(F)$ as
before.
We obtain an identification:

\[
\gl_n(F) = \Ker(\Ad_e) \oplus \Im(\Ad_f)
\]

\noindent and similarly for $\gl_n(A)$. Our $\sl_2$-triple 
integrates to a map at the level of group schemes over $F$, 
and in particular we let $H:\bG_{m,F} \to SL_{m,F}$ integrate $h$. 

By the same method as in Step \ref{st:brackets} of \S \ref{ss:nilp-redn},
we may perform a gauge transformation by an element of $\cK_1(F)$
to obtain a new connection matrix 
$\Gamma^{\prime} = \sum_{i \geq -r} \Gamma_i^{\prime} t^i 
\in t^{-r}\gl_n[[t]]$ with $\Gamma_{-r}^{\prime} = \Gamma_{-r}$ and
$\Gamma_i^{\prime} \in \Ker(\Ad_e)$ for all $i>-r$. 
Moreover, by the construction of \emph{loc. cit}., 
$\Gamma_i^{\prime} \in \gl_n(F) \subset \gl_n(A)$ (i.e., it ``has constant coefficients")
for $-r\leq i <s$.

Since the first $r+s$ terms of our matrix are constant, we might as well
replace $\Gamma$ by $\Gamma^{\prime}$ and thereby assume that the 
$\Gamma_i \in \Ker(\Ad_e)$ for all $i>-r$ (just to simplify the notation
with $\Gamma^{\prime}$).

\subsection{}

For $j \in \bZ$, let $\Gamma_i^{(j)}$ be the degree $j$ component
of $\Gamma_i$ with respect to the grading defined by $H$, so:

\[
\begin{gathered}
[h,\Gamma_i^{(j)}] = j \Gamma_i^{(j)}, \text{ or equivalently, }
\Ad_{H(\lambda)}(\Gamma_i^{(j)}) = \lambda^j \Gamma_i^{(j)} \hspace{.25cm} 
(\lambda \in \bG_m).
\end{gathered}
\]

\noindent For example, $\Gamma_{-r} = \Gamma_{-r}^{(-2)}$. Note that
for $i>-r$, since $\Gamma_i \in \Ker(\Ad_e)$, we have 
$\Gamma_i^{(j)} \neq 0$ only for $j\geq 0$.

Let $\alpha \in \bQ^{>0}$ be defined as:\footnote{
We use the standard convention that $\alpha = \infty$ 
if the set indexing this minimum
is empty.}

\[
\alpha \coloneqq \underset{
\Gamma_i^{(j)} \neq 0, \, -r<i<s
}
{\min} \, \frac{i+r}{j+2}.
\]

\noindent We treat the cases $\alpha \geq \frac{r-1}{2}$ and $\alpha<\frac{r-1}{2}$ separately 
in \S \ref{ss:nilp-type1} and \S \ref{ss:jumps-finish}
respectively. 

%It is convenient to record here that by rearranging terms, $\alpha$  
%satisfies the property: 

%\begin{equation}\label{eq:alpha}
%i-j\alpha \geq -r+2\alpha \text{ if } -r<i<s \text{ and } \Gamma_i^{(j)} \neq 0.
%\end{equation}

%%% 

\subsection{}

At this point, we impose our conditions on $N_{n,r,\delta}$. 
The reader may safely skip these formulae right now:
we are only including them now for the sake of concreteness.

Let $j_{\delta}$ the maximal degree in the grading of $\gl_n$ defined by
$H$ for \emph{some} nilpotent $f$ with orbit having dimension 
$\delta$.\footnote{Of course, $j_{\delta}$ is bounded by 
the maximal degree in the principal grading of $\gl_n$. We could
replace $j_{\delta}$ everywhere by this constant, but we are just
trying to be somewhat more economical by retaining the
dependence on $\delta$.}
This constant is well-defined e.g. because there are only finitely many 
nilpotent orbits.

Below, we will show that as long as:

\begin{equation}\label{eq:ndelta-eqns}
\begin{gathered}
N_{n,r,\delta} \geq -1+ j_{\delta}\cdot \frac{r-1}{2}, \\
N_{n,r,\delta} \geq
 (j_{\delta}+2) \cdot j_{\delta} \cdot \frac{r-1}{2} +
N_{n^{\prime},(j_{\delta}+2)\cdot (r-1) + 1}
\text{ for }n^{\prime}<n, \text{ and} \\
N_{n,r,\delta} \geq  (j_{\delta}+2) \cdot j_{\delta} \cdot \frac{r-1}{2} + 
N_{n,(j_{\delta}+2)\cdot (r-1) + 1,\delta^{\prime}} \text{ for }
\delta<\delta^{\prime} \leq \dim GL_n.
\end{gathered}
\end{equation}

\noindent the connection is Fredholm. Note that there are only
finitely many conditions listed here, and they are of the
desired inductive nature, and so if we can show this,
then we have completed the proof of the theorem.

\subsection{}\label{ss:nilp-type1}

First, suppose that $\alpha \geq \frac{r-1}{2}$.

By Lemma \ref{l:extn-scalars}, it suffices to show our connection is Fredholm
after adjoining $t^{\frac{1}{2}}$.

We then claim that after performing a gauge transformation by 
$H(t^{\frac{1-r}{2}})$, we obtain a connection with regular singularities.

Indeed, first note that $d\log(H(t^{\frac{1-r}{2}}))$ has a regular singularity,
so it suffices to show that $\Ad_{H(t^{\frac{1-r}{2}})}(\Gamma)$ has a pole
of order $\leq 1$.

We clearly have:

\[
\Ad_{H(t^{\frac{1-r}{2}})}(\Gamma_{-r}t^{-r}) =
t^{r-1} \cdot \Gamma_{-r}t^{-r} = 
 \Gamma_{-r} t^{-1}.
\]

Then for $-r<i<s$, we have:

\[
\Ad_{H(t^{\frac{1-r}{2}})}(\Gamma_i t^i) = 
\sum_j \Gamma_i^{(j)} t^{i+\frac{j(1-r)}{2}}.
\]

\noindent We then claim that $\Gamma_i^{(j)} \neq 0$ and
the definition of $\alpha$ implies that:

\[
i+\frac{j(1-r)}{2} \geq -1
\]

\noindent as desired. Indeed, for any such pair $(i,j)$, we
have:

\[
\frac{i+r}{j+2} \geq \alpha \geq \frac{r-1}{2}
\]

\noindent and rearranging terms we get:

\[
i+r \geq j\frac{r-1}{2} + (r-1)
\]

\noindent which is obviously equivalent to the desired inequality.

Finally, for $s\leq i$, recall from \eqref{eq:ndelta-eqns} that
$s > -1+j\frac{r-1}{2}$ for any degree $j$ appearing
in the grading of $\gl_n$ defined by $H$.
Therefore, for $i$ in this range, we have:

\[
i + \frac{j(1-r)}{2} \geq s + \frac{j(1-r)}{2} \geq -1
\]

\noindent as desired.

Therefore, we see that the resulting connection is regular singular. Moreover,
the above shows that the leading term of this regular singular
connection is determined entirely by
$\Gamma_i$ for $i<s$ and therefore
has coefficients in $F \subset A$ (noting that $d\log(H(t^{\frac{r-1}{2}}))$ 
also has coefficients in $F \subset A$). This completes the argument
by \S \ref{ss:rs-pf}.

\subsection{}\label{ss:jumps-finish}

Finally, we treat the case where $\alpha < \frac{r-1}{2}$.

Applying Lemma \ref{l:extn-scalars} again,
it suffices to see that our connection is Fredholm after adjoining
$t^{\alpha}$.  

Performing a gauge transformation by $H(t^{-\alpha})$, we
claim that we obtain a connection with a pole of order $\leq -r+2\alpha$.
Indeed, this follows exactly as in \S \ref{ss:nilp-type1}:

\begin{itemize}

\item The $d\log$ term only affects the coefficient of $t^{-1}$.

\item 

$\Ad_{H(t^{-\alpha})} \Gamma_{-r} t^{-r} = \Gamma_{-r} t^{-r+2\alpha}$.

\item

For $-r<i<s$, we have

$\Ad_{H(t^{-\alpha})} (\Gamma_i t^i) = 
\sum_j \Gamma_i^{(j)} t^{i-j\alpha}$, and for 
$\Gamma_i^{(j)} \neq 0$, we have:

\[
\frac{i+r}{j+2} \geq \alpha \Rightarrow 
i+r \geq j\alpha + 2\alpha \Rightarrow
i-j\alpha \geq -r+2\alpha.
\]

\item For $s\leq i$, the same argument as in
\S \ref{ss:nilp-type1} shows that the corresponding terms can
only change the coefficients of $t^j$ for $j \geq -1$.

\end{itemize}

Note that by assumption, we presently have $-r+2\alpha < -1$.

For any pair $(i,j)$ with $-r<i<s$, $\Gamma_i^{(j)} \neq 0$,
and:

\[
\frac{i+r}{j+2} = \alpha
\]

\noindent we have:

\[
i - j\alpha = i - (j+2)\alpha + 2\alpha = -r + 2\alpha.
\]

\noindent Therefore, the leading term (i.e., $t^{-r+2\alpha}$-coefficient) of our resulting 
connection is:

\begin{equation}\label{eq:slodowy}
\Gamma_{-r} + \sum_{\frac{i+r}{j+2} = \alpha, \, -r<i<s} \Gamma_i^{(j)}.
\end{equation}

Note that this sum cannot be $\Gamma_{-r}$: indeed, there is at least
one summand on the right (since $\alpha <\frac{r-1}{2} < \infty$); 
moreover, a summand $\Gamma_i^{(j)}$ contributes
purely in degree $j$ (with respect to the grading defined by $H$), 
and since for given $j$ there is at most one pair $(i,j)$ 
with $\frac{i+r}{j+2} = \alpha$, there can be no cancellations within the given degree $j$. 

Therefore, this sum lies in the Slodowy slice for $\Gamma_{-r}$. 
By Lemma \ref{l:nilp-orbits}, it is either nilpotent with a larger dimensional
orbit that $\Gamma_{-r}$, or it is not nilpotent, in which case we should bring
\S \ref{ss:nilp-redn} to bear. 

\begin{rem}

The reader should be better off arguing for themselves that we are done at this
point, since this is essentially clear. 
But we include the final bit of the argument (difficult though it may be to
read) for completeness.

\end{rem}

Let $\alpha = \frac{a}{b}$ with $a$ and $b$ coprime.
Note that $b$ is bounded in terms of $\delta$ alone:
$b$ divides $j+2$ for some $0\leq j \leq j_{\delta}$, and
therefore $b \leq j_{\delta}+2$.

Therefore, the order of pole in the extension $A((t^{\frac{1}{b}}))$ of $A((t))$
of our resulting connection is bounded in terms of $\delta$. More precisely,
we have:

\[
t^{-r+2\alpha} dt = b t^{-r+2\alpha+1-\frac{1}{b}} d t^{\frac{1}{b}}
\]

\noindent so that our pole has order (in the $t^{\frac{1}{b}}$-valuation) 
at most:

\[
br-2a-b+1 \leq b(r-1) + 1 \leq (j_{\delta}+2) \cdot (r-1) + 1.
\]

Moreover, the coefficients before (i.e., with lower order of
zero/higher order of pole):

\[
t^{s-j_{\delta}\alpha} dt = (t^{\frac{1}{b}})^{bs-b j_{\delta} \alpha+b-1} d t^{\frac{1}{b}} 
\]

\noindent lie in $\gl_n(F) \subset \gl_n(A)$, as is clear from 
using our same $\Ad_h$-eigenspace decomposition of
$\gl_n$. Note that we can bound
the order of zero here from below independently of $\alpha$:

\[
\begin{gathered}
bs-b j_{\delta} \alpha+b-1 >
bs - b j_{\delta} \cdot \frac{r-1}{2}  + b - 1 =
b(s+1)-1 - b j_{\delta} \cdot \frac{r-1}{2}  \geq \\
s+1 -1 - b j_{\delta} \cdot \frac{r-1}{2} \geq 
s - (j_{\delta}+2) \cdot j_{\delta} \cdot \frac{r-1}{2}.
\end{gathered}
\]

Therefore, if:

\[
\begin{gathered}
N_{n,r,\delta} \geq
 (j_{\delta}+2) \cdot j_{\delta} \cdot \frac{r-1}{2} +
N_{n^{\prime},(j_{\delta}+2)\cdot (r-1) + 1}
\text{ for }n^{\prime}<n, \text{ and} \\
N_{n,r,\delta} \geq  (j_{\delta}+2) \cdot j_{\delta} \cdot \frac{r-1}{2} + 
N_{n,(j_{\delta}+2)\cdot (r-1) + 1,\delta^{\prime}} \text{ for }
\delta<\delta^{\prime} \leq \dim GL_n.
\end{gathered}
\]

\noindent then we obtain the claim. Indeed, the former condition
takes care of the case where the leading term of 
$\Gauge_{H(t^{\alpha})}(\Gamma dt)$ has a non-nilpotent leading term
by \S \ref{ss:nilp-redn}; and the latter condition takes care of the
case where it has nilpotent leading term, by the above analysis.

\subsection{Example: n = 2, r = 2}\label{ss:bv-example}

It is instructive to see what is going on above in a simpler setup.
Suppose that we have a rank 2 connection with a pole of order 2
and with nilpotent leading term. We claim that $N_{2,2} = 1$ 
suffices.\footnote{This is a more precise estimate than
we obtained from the crude estimates of \S \ref{ss:jumps-finish}.}

Up to (constant) change of
basis, we can write:

\[
\Gamma dt = 
\begin{pmatrix} 0 & 0 \\ 1 & 0 \end{pmatrix} \frac{dt}{t^2} +
\Gamma_{-1} \frac{dt}{t} + \Gamma_0 dt + \text{lower order terms}.
\]

Suppose that 
$\Gamma_{-1} = \begin{pmatrix} a & b \\ c & d \end{pmatrix}$
and $\Gamma_0 = \begin{pmatrix} \ast & e \\ \ast & \ast \end{pmatrix}$
where $\ast$ is indicates that the term is irrelevant for our needs below.

There are two cases: when $b = 0$
and when $b \neq 0$. Note that these correspond 
to \S \ref{ss:nilp-type1} 
and \ref{ss:jumps-finish} respectively.

In the former case, we apply a gauge transformation by:

\[
\begin{pmatrix}
t^{-\frac{1}{2}} & 0 \\
0 & t^{\frac{1}{2}}
\end{pmatrix}
\]

\noindent to obtain:

\[
\begin{pmatrix} 
a + \frac{1}{2} & e \\ 1 & d - \frac{1}{2} 
\end{pmatrix} \frac{dt}{t} + \text{lower order terms}.
\]

\noindent So we see that we get a regular singular connection
whose leading terms depends only on the fixed matrices
$\Gamma_{-2}$, $\Gamma_{-1}$, and $\Gamma_0$, and therefore
it is Fredholm. (Note that if $b$ were $\neq 0$, it would contribute
to a non-zero coefficient of $\frac{dt}{t^2}$ here.)

If $b \neq 0$, then we instead apply a gauge transformation
by the matrix:

\[
\begin{pmatrix}
t^{-\frac{1}{4}} & 0 \\
0 & t^{\frac{1}{4}}
\end{pmatrix}
\]

\noindent to obtain a connection of the form:

\[
\begin{pmatrix}
0 & b \\
1 & 0
\end{pmatrix} \frac{dt}{t^{\frac{3}{2}}} + 
\begin{pmatrix}
a+\frac{1}{4} & 0 \\
0 & d-\frac{1}{4}
\end{pmatrix} \frac{dt}{t} +
\text{lower order terms}.
\]

\noindent Since $b \neq 0$, the leading term of this connection
is semisimple. Therefore, our connection can be written as
a sum of two rank 1 connections whose polar parts are
constant. Therefore, the connection is Fredholm by reduction
to the rank 1 case.

\subsection{Coda}

We need some refinements to the above results, making them
more effective. These refinements will be used later in the paper, and
may be safely skipped for the time being.

\subsection{}

Let $G$ be an affine algebraic group.
For $r>0$ and $s \geq 0$,  Theorems \ref{t:artin} and \ref{t:jumps}
imply that the geometric fibers of the map:

\[
t^{-r}\fg[[t]]dt/\cK_{r+s} \to t^{-r}\fg[[t]]dt/t^s\fg[[t]]dt
\]

\noindent are Artin stacks for $s\gg 0$.

\begin{prop}\label{p:bdd-dimns}

The dimensions of these fibers are bounded uniformly in terms of
$G$ and $r$. 

\end{prop}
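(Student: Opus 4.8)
The plan is as follows. Fix $r>0$ and take $s$ large enough (e.g.\ $s\geq N_{\dim\fg,\,r}$ in the notation of Theorem \ref{t:jumps} applied to the adjoint representation, which has rank $\dim\fg$) that the fibers in question are Artin stacks. For a geometric point $\eta$ of $t^{-r}\fg[[t]]dt/t^s\fg[[t]]dt$ with residue field $F$, unwinding the definitions identifies the fiber over $\eta$ with the quotient stack $S_\eta/\cK_{r+s}$, where $S_\eta$ is the affine space over $F$ of connections $d+\Gamma dt$ whose coefficients of $t^{-r},\dots,t^{-r+s-1}$ are those prescribed by $\eta$, and $\cK_{r+s}$ acts by gauge transformations; by Theorems \ref{t:artin} and \ref{t:jumps} this is a smooth Artin stack over $F$. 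One must bound $\dim_F(S_\eta/\cK_{r+s})$ by a constant depending only on $G$ and $r$.

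First I would compute this dimension from the tangent complex at an $F$-point $\nabla=d+\Gamma dt$, which is $[\, t^{r+s}\fg[[t]]\xrightarrow{\ \pm\nabla_{\ad}\ }t^{s-r}\fg[[t]]dt\,]$ in cohomological degrees $-1$ and $0$: the left term is $\Lie(\cK_{r+s})$, the right term is $T_\nabla S_\eta$, and the differential is (up to sign) the adjoint connection $\nabla_{\ad}=d+[\Gamma,-]dt$, which carries $t^{r+s}\fg[[t]]$ into $t^{s-r}\fg[[t]]dt$ because $r>0$. By Lemma \ref{l:fred-lattices} its cokernel is finite-dimensional, so
\[
\dim_F\!\big(S_\eta/\cK_{r+s}\big)=\dim_F\Coker\big(\nabla_{\ad}\colon t^{r+s}\fg[[t]]\to t^{s-r}\fg[[t]]dt\big)-\dim_F\Ker(\cdots)\leq\dim_F\Coker(\cdots),
\]
and it suffices to bound this cokernel uniformly in $\eta$ and $s$.

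To do so I would exploit the Leibniz rule: for $\xi\in t^r\fg[[t]]$ one has $\nabla_{\ad}(t^s\xi)=t^{s-1}\big(s\,\xi\,dt+t\,\nabla_{\ad}\xi\big)$, with $\nabla_{\ad}(t^r\fg[[t]])\subseteq\fg[[t]]dt$, so, writing $\Phi_s(\xi):=s\,\xi\,dt+t\,\nabla_{\ad}\xi\in t\fg[[t]]dt$, we get $\nabla_{\ad}(t^{r+s}\fg[[t]])=t^{s-1}\Phi_s(t^r\fg[[t]])$ and hence a (non-canonical) identification $\Coker(\nabla_{\ad}\colon t^{r+s}\fg[[t]]\to t^{s-r}\fg[[t]]dt)\cong t^{1-r}\fg[[t]]dt/\Phi_s(t^r\fg[[t]])$. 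Since $\Phi_s(t^r\fg[[t]])\subseteq t\fg[[t]]dt$, this cokernel surjects onto $t^{1-r}\fg[[t]]dt/t\fg[[t]]dt$, of dimension $r\dim\fg$, so it remains to bound $t\fg[[t]]dt/\Phi_s(t^r\fg[[t]])$ uniformly. Because $s\neq0$ in characteristic zero, $\Phi_s$ is, for the $t$-adic filtration, ``upper triangular of slope $r-1$'', with associated-graded pieces governed by $\ad_{\Gamma_{-r}}$ (if $r>1$) or by $s+m+\ad_{\Gamma_{-1}}$ (if $r=1$); these pieces are isomorphisms off only finitely many levels, but that becomes visible only after one normalizes $\ad\nabla$. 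This is where I would invoke the reduction theory behind Theorem \ref{t:jumps}: after a ramified base change $t\mapsto t^{1/e}$ with $e$ bounded in terms of $\dim\fg$, Levelt--Turrittin writes $\ad\nabla$ as a direct sum of a regular-singular piece and pieces with invertible leading term of pole order $\leq r$, and Lemma \ref{l:extn-scalars} lets one transfer the bound back. For the invertible-leading-term summands the relevant quotient vanishes once $s$ is large (Example \ref{e:invertible}); for the regular-singular summand the graded pieces $s+m+\ad_{\Gamma_{-1}}$ fail to be invertible for at most $\dim\fg$ values of $m$, so the quotient there has dimension $\leq\dim\fg$. Adding the contributions of the (boundedly many) summands bounds $t\fg[[t]]dt/\Phi_s(t^r\fg[[t]])$, hence $\dim_F(S_\eta/\cK_{r+s})$, by a constant depending only on $\dim\fg$ and $r$.

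The main obstacle is precisely this last step. A priori the kernel and cokernel of $\nabla_{\ad}$ between two fixed lattices are not bounded, since the lattice $t^r\fg[[t]]$ can become ``good'' for $\nabla_{\ad}$ only at a level growing with the residue eigenvalues of $\Gamma_{-1}$ (as already in the rank-one regular case $d+\lambda\,dt/t$); what rescues the argument is that the number of ``bad levels'' is bounded by the rank, a fact visible only through the Levelt--Turrittin normalization. I would therefore carry out this step in close parallel with the proofs of Lemma \ref{l:fred-lattices-quant} and Theorem \ref{t:jumps}, using the Slodowy-slice induction exactly as there to reduce to the standard pieces and thereby extract a constant depending only on $G$ and $r$.
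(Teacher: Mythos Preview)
Your approach would work, but it is considerably more laborious than the paper's, and there are some index slips along the way: the leading-terms space $t^{-r}\fg[[t]]dt/t^s\fg[[t]]dt$ records the coefficients of $t^{-r},\dots,t^{s-1}$ (that is, $r+s$ coefficients, not $s$), so the tangent space to the fiber is $t^s\fg[[t]]dt$ rather than $t^{s-r}\fg[[t]]dt$; and after a degree-$e$ ramified extension a pole of order $r$ becomes one of order $e(r-1)+1$, not $\leq r$. Neither error affects the strategy.

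The paper takes a much shorter route. It too writes the dimension as the Euler characteristic of $\nabla\colon t^{r+s}\fg[[t]]\to t^s\fg[[t]]dt$ and then reduces to bounding the Euler characteristic of $\nabla\colon\Lambda\to t^{-r}\Lambda\,dt$ for an arbitrary lattice $\Lambda$ (Lemma~\ref{l:bdd-dimns-lin}). But instead of re-running the Babbitt--Varadarajan/Levelt--Turrittin machine, it simply invokes two classical facts for connections over a field: $\dim H^0_{dR}\leq n$ and $\dim H^1_{dR}\leq n$ (the latter cited from \cite{bbe} \S5.9). A short lattice-codimension comparison, using \cite{frenkel-zhu} Lemma~8, then converts the global bound on $H^1_{dR}$ into a bound on $t^{-r}\Lambda\,dt/\nabla(\Lambda)$, yielding $|\chi|\leq(2r+1)n$ with $n=\dim\fg$. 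Your Levelt--Turrittin reduction is, in effect, re-proving $\dim H^1_{dR}\leq n$ from scratch, and the $\Phi_s$ substitution buys nothing, since the cokernel you want is already exactly of the form treated by that lemma.
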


Note that these Artin stacks are smooth with tangent complex:

\[
t^{r+s} \fg[[t]] \rar{-\nabla} t^{s}\fg[[t]]dt 
\]

\noindent for $\Gamma dt \in t^{-r}\fg[[t]]dt$ a point and 
$\nabla \coloneqq d - \Gamma dt$. The dimension of 
the Artin stacks above is the Euler characteristic of this complex
(for $\Gamma dt$ any connection in the relevant fiber).
Therefore, Proposition \ref{p:bdd-dimns} follows from the next result:

\begin{lem}\label{l:bdd-dimns-lin}

There is a constant $C_{n,r}$ with the following property:

Let $(V,\nabla)$ be any rank $n$ differential module over\footnote{Of course,
$k$ can be any field of characteristic zero, not just our ground field. The result also
immediately extends to any Fredholm connection over any
commutative ring.} $k((t))$,
and let $L \subset V$ be a $k[[t]]$-lattice such that $\nabla$ maps into
$t^{-r}L dt$.

Then the absolute value of the Euler characteristic of the complex:

\[
\nabla: \Lambda \to t^{-r}\Lambda dt
\]

\noindent is at most $C_{n,r}$.

\end{lem}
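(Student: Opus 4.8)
The plan is to reduce to a statement purely about the jump in pole order attached to a lattice, and then control that jump using the reduction theory developed in Theorem \ref{t:jumps}. First I would observe that the Euler characteristic in question depends only on the differential module $(V,\nabla)$ and not on the auxiliary lattice $\Lambda$: given two $k[[t]]$-lattices $\Lambda_1,\Lambda_2$, the difference $\chi(\Lambda_1 \to t^{-r}\Lambda_1 dt) - \chi(\Lambda_2 \to t^{-r}\Lambda_2 dt)$ is a sum of local contributions of the form $\dim_k(\Lambda_1/\Lambda_1\cap\Lambda_2) - \dim_k(t^{-r}\Lambda_1 dt/(t^{-r}\Lambda_1 dt \cap t^{-r}\Lambda_2 dt))$, and since $dt$ shifts lengths by a fixed amount these cancel; concretely, replacing $\Lambda$ by $t\Lambda$ multiplies neither Euler characteristic (the source drops by $n$, the target drops by $n$). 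So it suffices to bound $|\chi(\nabla: k[[t]]^{\oplus n} \to t^{-r}k[[t]]^{\oplus n}dt)|$ for a connection $\nabla = d + \Gamma dt$ with $\Gamma \in t^{-r}\gl_n[[t]]$, and moreover this Euler characteristic is a locally constant, hence (for a single $k((t))$-point) well-defined integer, which by Example \ref{e:rs-evalues} and Example \ref{e:invertible} is computed once we understand the connection up to gauge equivalence and ramified base change.

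Next I would run the very algorithm underlying Theorem \ref{t:jumps}. That argument shows that, after adjoining a root $t^{1/b}$ with $b$ bounded in terms of $n$ (it divides some $j+2$ with $0 \le j \le j_{\dim GL_n}$) and after finitely many gauge transformations and Slodowy-slice reductions, the connection either becomes regular singular over $k((t^{1/b}))$ or splits as a direct sum of connections of strictly smaller rank whose polar orders are bounded by $(j_\delta+2)(r-1)+1$. By Lemma \ref{l:extn-scalars}, the Euler characteristic over $k((t))$ of our complex is expressible through the Euler characteristic of the pulled-back connection over $k((t^{1/b}))$: passing to $V[t^{1/d}]$ replaces $(V,\nabla)$ by $\oplus_{i=0}^{d-1} V\otimes\chi_{i/d}$, so the Euler characteristic multiplies by $d$ (and the pole order grows in a controlled way), which converts a bound downstairs into a bound upstairs. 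Thus by induction on the pair $(n,\delta)$ — exactly the induction organizing the proof of Theorem \ref{t:jumps} — the Euler characteristic is bounded in terms of $n$ and $r$ alone. The regular-singular base case is handled by Example \ref{e:rs-evalues}: there the relevant complex is concentrated in degrees where the graded pieces $\Gamma_{-1} + N\cdot\id$ are isomorphisms for all but boundedly many $N$ (boundedly in terms of $n$, since $\Gamma_{-1}$ is an $n\times n$ matrix), so $|\chi|$ is bounded by a function of $n$.

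The main obstacle, as usual in this circle of ideas, is making the bookkeeping of pole orders under the reduction steps genuinely uniform: at each Slodowy reduction the ambient field is enlarged and the nominal pole order can increase, so one must check that the quantity being bounded — here the (signed, integer) Euler characteristic — only grows by a factor and additive shift controlled by $n$ and $r$, not by the depth of the recursion. This is not hard in principle because the recursion in Theorem \ref{t:jumps} terminates after a number of steps bounded in terms of $n$ (the orbit dimension $\delta$ strictly increases and is bounded by $\dim GL_n$), but the explicit constant $C_{n,r}$ has to be assembled from the constants $N_{n,r}$ appearing there, exactly as $N_{n,r}$ itself was assembled. I would therefore set $C_{n,r}$ by the same recursion used for $N_{n,r}$ in \eqref{eq:ndelta-eqns}, tracking at each stage how $|\chi|$ transforms, and verify the two base cases (rank $1$, and regular singular) directly from Examples \ref{e:invertible} and \ref{e:rs-evalues}.
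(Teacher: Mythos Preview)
Your approach is substantially more involved than the paper's, and it has a genuine gap. The paper does not invoke the Babbitt--Varadarajan reduction at all: it simply bounds the two cohomology groups separately. The bound $\dim H^0 \leq n$ is immediate (since $\Ker(\nabla|_\Lambda) \subset H^0_{dR}(V,\nabla)$, which has dimension $\leq n$), and the bound on $\dim H^1 = \dim(t^{-r}\Lambda dt/\nabla(\Lambda))$ is obtained by combining the inequality $\dim H^1_{dR}(V,\nabla) \leq n$ from \cite{bbe} \S 5.9 with an elementary codimension estimate (citing a lemma of Frenkel--Zhu) to get $\dim H^1 \leq (2r+1)n$. The whole argument is a few lines of linear algebra over $k((t))$.

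The gap in your argument is in the base-change step. The decomposition $V[t^{1/d}] \simeq \bigoplus_{i=0}^{d-1} V\otimes\chi_{i/d}$ (as differential modules over $k((t))$) gives $\chi(V[t^{1/d}],\Lambda[t^{1/d}],r) = \sum_i \chi(V\otimes\chi_{i/d},\Lambda,r)$, but to conclude that this equals $d\cdot\chi(V,\Lambda,r)$ you must know that $\chi(V\otimes\chi_{i/d},\Lambda,r) = \chi(V,\Lambda,r)$ for each $i$. You have not argued this, and it is not free: the Euler characteristic of $\Lambda \to t^{-r}\Lambda dt$ genuinely depends on the connection and not merely on $(n,r)$ (for instance, compare the trivial rank $1$ connection with $r=2$, where $\chi = -1$, to $d + t^{-2}dt$, where $\chi = 0$). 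One cannot argue by deformation either, since the one-parameter family $\nabla + s\,t^{-1}dt$ over $k[s]$ is typically \emph{not} Fredholm (exactly the phenomenon of Counterexample \ref{ce:rs-rk1-notfredholm}). Without this twisting invariance, the recursion does not close: you cannot pass from a bound on $\chi$ for $V[t^{1/d}]$ back to one for $V$, because the other summands $V\otimes\chi_{i/d}$ are connections of the same rank and pole order as $V$ itself. The invariance under twisting is in fact true (it amounts to the irregularity being unchanged by such twists), but establishing it is comparable in difficulty to the lemma itself and is not addressed by your outline.
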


\begin{proof}

First, note that $\dim(\Ker(\nabla))$ is at most $n$, and therefore
we need to bound $\dim(t^{-r}\Lambda dt/\nabla(\Lambda))$ 
in terms $n$ and $r$ alone.

By \cite{frenkel-zhu} Lemma 8, we have:

\[
\codim(\nabla^{-1}(\La dt) \cap \Lambda \subset \nabla^{-1}(\La dt)) \leq 
\codim(\nabla^{-1}(\Lambda dt) \cap \Lambda \subset \La).
\]

\noindent Moreover, the right hand side is bounded in terms of $r$ and $n$:
it is less than or equal to $rn$. Indeed, this follows from the embedding:

\[
\La /(\nabla^{-1}(\Lambda dt) \cap \Lambda ) \overset{\nabla}{\into}
t^{-r}\Lambda dt/\Lambda dt 
\]

\noindent so it is bounded by $r n$.

Now recall (see e.g. \cite{bbe} \S 5.9) that $H_{dR}^1(V,\nabla)$ is at most
$n$-dimensional. Therefore, $\nabla(\nabla^{-1}(\Lambda dt)) \subset \Lambda dt$
as codimension at most $n$. Combining this with the above, we see that:

\[
\nabla(\nabla^{-1}(\Lambda dt) \cap \Lambda ) \subset \Lambda dt
\]

\noindent has codimension $\leq (r+1) n$, and therefore has codimension
$\leq (2r+1) n$ in $t^{-r}\Lambda dt$.

\end{proof}

\subsection{}

Finally, we record the following consequence of
the Babbitt-Varadarajan method. Note that this
result will not be needed until \S \ref{s:z-geom}.

\begin{prop}\label{p:coda-2}

In the setting of Theorem \ref{t:jumps}, 
the universal connection $\nabla$ on $S = \Spec(A)$ 
(where $S$ is a fiber at a field-valued point
of the map $t^{-r}\gl_n[[t]]dt \to (t^{-r}\gl_n[[t]]dt/t^s\gl_n[[t]]dt)$)
has $H_{dR}^1$ which can be generated by at most $n$ elements
(as an $A$-module).

\end{prop}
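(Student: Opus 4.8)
The plan is to rerun the reduction of Theorem~\ref{t:jumps} while keeping track of the minimal number of generators of $H_{dR}^{1}$. Three elementary facts set this up. First, a gauge transformation over $A((s))$ (for $s=t^{1/d}$ any root of $t$) is an isomorphism of the associated differential modules, hence induces an isomorphism of de Rham complexes, so gauge transformations do not change $H_{dR}^{1}$. Second, the ramified base change $t\mapsto t^{1/d}$ exhibits $H_{dR}^{1}$ of a connection $(V,\nabla)$ as a direct summand, as an $A$-module, of $H_{dR}^{1}$ of its pullback: as in the proof of Lemma~\ref{l:extn-scalars}, that pullback, regarded as a differential module over $A((t))$, is $\bigoplus_{i=0}^{d-1}V\otimes\chi_{i/d}$ with the original $V$ the $i=0$ summand, and $A((t^{1/d}))$ is finite \'etale over $A((t))$ so the two de Rham complexes agree; in particular the minimal number of generators can only increase under this pullback. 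Third, Example~\ref{e:invertible} gives vanishing of $H_{dR}^{*}$ for any connection of pole order $\geq 2$ with invertible leading term.

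By the reduction of Theorem~\ref{t:jumps}, there is a $d\geq 1$ such that, after a gauge transformation over $A((t^{1/d}))$, the pullback of $(V,\nabla)$ becomes a finite direct sum $\bigoplus_j U_j$ in which each $U_j$ is either \emph{(a)} regular singular with residue a constant matrix in $\gl_{m_j}(F)$, or \emph{(b)} of pole order $\geq 2$ with invertible leading term. The summands of type (b) contribute nothing to $H_{dR}^{1}$, so by the first two facts $\mu_A\bigl(H_{dR}^{1}(V,\nabla)\bigr)\leq\sum_{j\ \mathrm{type\ (a)}}\mu_A\bigl(H_{dR}^{1}(U_j)\bigr)$, where $\mu_A$ is the minimal number of generators over $A$. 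The integer $\sum_{j\ \mathrm{type\ (a)}}m_j$ is unchanged under base change along $A\to F_\eta$ to the residue field at the generic point $\eta$ of the integral scheme $S$, where (since $t\mapsto t^{1/d}$ multiplies slopes by $d$) it equals the rank of the regular singular part of $V_\eta$ over $F_\eta((t))$; hence $\sum_{j\ \mathrm{type\ (a)}}m_j\leq n$. It therefore suffices to prove: if $W$ is a regular singular differential module of rank $m$ over $A((s))$ whose residue is a constant $B\in\gl_m(F)$, then $\mu_A\bigl(H_{dR}^{1}(W)\bigr)\leq m$.

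For this, identify $W$ with $A((s))^{\oplus m}$, and $W\,ds$ with $A((s))^{\oplus m}$ via $\omega\,\tfrac{ds}{s}\mapsto\omega$, so that $\nabla$ becomes $v\mapsto s\partial_s v+A(s)v$ with $A(0)=B$. In these coordinates the degree-$k$ component of $\nabla\bigl(\sum_k v_ks^k\bigr)$ is $(k\cdot\mathrm{id}+B)v_k+\sum_{j\geq 1}A_jv_{k-j}$. The degrees $k\in\bZ$ at which $k\cdot\mathrm{id}+B$ is not invertible form the finite set $K:=-(\mathrm{Spec}(B)\cap\bZ)$; outside $K$ the recursion $\nabla(v)=w$ determines $v_k$ uniquely from $v_{<k}$, while at $k\in K$ it is solvable exactly when a certain obstruction — depending $A$-linearly on $w$ and on the previously made choices — lies in $\Image(k\cdot\mathrm{id}+B)$, and then $v_k$ is free modulo $\Ker(k\cdot\mathrm{id}+B)$. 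Collecting the free parameters into $\Xi:=\bigoplus_{k\in K}\Ker(k\cdot\mathrm{id}+B)$ and the obstructions into $\Omega:=\bigoplus_{k\in K}\bigl(A^{\oplus m}/\Image(k\cdot\mathrm{id}+B)\bigr)$, the triangular structure of the recursion identifies $H_{dR}^{1}(W)$ with $\Coker(L)$ for an explicit $A$-linear $L\colon\Xi\to\Omega$. As $B$ has entries in $F$, both $\Xi$ and $\Omega$ are free $A$-modules of the common rank $D:=\sum_{k\in K}\dim_F\Ker(k\cdot\mathrm{id}+B)=\sum_{\mu\in\mathrm{Spec}(B)\cap\bZ}\dim_F\Ker(B-\mu\cdot\mathrm{id})\leq m$. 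Hence $\mu_A\bigl(H_{dR}^{1}(W)\bigr)=\mu_A(\Coker(L))\leq D\leq m$, and combining with the previous paragraph gives $\mu_A\bigl(H_{dR}^{1}(V,\nabla)\bigr)\leq\sum_{j\ \mathrm{type\ (a)}}m_j\leq n$.

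The crux is the regular singular estimate. The naive approach — presenting $H_{dR}^{1}(W)$ as a quotient of $s^{-r}\Lambda\,ds/\Lambda\,ds$ for a $\nabla$-stable lattice $\Lambda$ of pole order $r$ — only yields $rm$ generators, and resonances in $B$ prevent reducing $r$ to $1$ by changing the lattice. What saves the bound is that the residue $B$ is \emph{constant} (this is precisely what the choice of $N_{n,r}$ in Theorem~\ref{t:jumps} secures): then the bad degrees $K$ are a fixed finite set, the obstruction space and the space of free parameters have the same rank $D\leq m$, and $H_{dR}^{1}(W)$ is the cokernel of a square $A$-linear endomorphism of $A^{\oplus D}$. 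The remaining bookkeeping, that the regular singular atoms have total rank $\leq n$ after ramification, is routine once one passes to the generic point of $S$.
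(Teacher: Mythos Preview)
Your proof is correct and follows essentially the same approach as the paper: use the Babbitt--Varadarajan reduction from the proof of Theorem~\ref{t:jumps} (together with the direct-summand property under $t\mapsto t^{1/d}$) to reduce to regular singular summands with constant residue plus summands with vanishing $H_{dR}^*$, and then handle the regular singular case by the graded recursion at the bad degrees, exactly as the paper does with its module $\sF$. One minor simplification: the bound $\sum_{j\ \text{type (a)}} m_j \le n$ is immediate from $\sum_j m_j = n$ (rank is preserved under ramified pullback), so the detour through the generic point and slopes is unnecessary.
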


\begin{proof}

We will show this result explicitly in our favorite special cases, and then
use the Babbitt-Varadarajan method to show that it follows
in general.

\step

First, if the connection has a pole of order $>-1$ and the leading
term is invertible, then the complex of
de Rham cohomology vanishes, and the claim is stupid.

\step

Next, we treat the case where $r = -1$. We will show that
for any connection of the form:
 
\[
d+\Gamma_{-1} \frac{dt}{t} + \text{ lower order terms}
\]

\noindent where $\Gamma_{-1} \in \gl_n(k)$,
$H_{dR}^1$ can be generated by at most $\dim(G)$ elements
(we are using the
ground field $k$ just for simplicity: this holds for any field).
Note that applying this to the universal such connection gives the claim
of the proposition in this case.

Indeed, as in Example \ref{e:rs-evalues}, let
$\Lambda = A[[t]]^{\oplus n}$, and
recall that for the associated $t$-adic filtrations,
$\nabla$ shifts degrees by one and on the
induces the map:

\[
t^N\Lambda/
t^{N+1} \Lambda = A^{\oplus n} 
\xar{\Gamma_{-1} + N\cdot \id}
A^{\oplus n} =
t^{N-1} \Lambda dt/ t^N \Lambda dt
\]

\noindent on passing to the associated graded.
Note that the cokernel of this map is a free $A$-module
of rank equal to the dimension of the ($-N$)-eigenspace
of $\Gamma_{-1} \in \gl_n$ 
(which is a matrix with field-valued coefficients). 

Splitting the surjection:

\[
(t^{N-1} \Lambda dt/ t^N \Lambda dt) \onto 
\Coker(\Gamma_{-1} + N\cdot \id) \in A\mod^{\heart}
\]

\noindent we see that we can find an $A$-free
module $\sF$ of rank $j\leq n$ that is a direct
summand of $A((t))^{\oplus n}$, equipped with
a basis $v_1,\ldots,v_j$ whose symbols
provide a basis for
$ \oplus_N \Coker(\Gamma_{-1} + N\cdot \id) $
(e.g., we can choose the $v_j$ to lie in
$t^{N_j} A^{\oplus n} \subset t^{N_j} A[[t]]^{\oplus n}$
for some $N_j$).

We claim that $\sF \subset A((t))^{\oplus n}dt$
maps surjectively onto $H_{dR}^1$. 

Indeed, suppose that $\omega \in A((t))^{\oplus n} dt$. 
We can suppose that $\omega \in t^{N-1}\Lambda dt$
for some fixed $N$. Then we can find $\vph_0 \in \sF$
so that the images of $\omega$ and $\vph_0$ in 
$\Coker(\Gamma_{-1} + N\cdot \id)$ coincide.
By our earlier symbol computation, this means that:

\[
\omega - \vph_0 \in t^{N}\Lambda dt + \nabla(t^N\Lambda).
\]

\noindent Therefore, the de Rham cohomology class represented
by $\omega - \vph_0$ can be represented by a class in $t^N\Lambda dt$.

By induction, we see that we can find a vector 
$\vph_{\ell} \in \sF$ so that the de Rham cohomology class
of $\omega - \vph_{\ell}$ can be represented by an element
of $t^{N+\ell} \Lambda dt$ for any $\ell$. But for $\ell$ large
enough, $t^{N+\ell}\Lambda dt$ maps onto zero in de Rham cohomology,
so $\omega$ and $\vph_{\ell}$ represent the same class in 
de Rham cohomology, as desired.

\step 

The general case is a consequence of the proof of Theorem \ref{t:jumps}:

Babbitt-Varadarajan reduction theory, as applied
above, shows that after possibly adjoining $t^{\frac{1}{e}}$,
any connection with sufficiently fixed leading terms
can be written as a sum of connections gauge
equivalent to rank 1 connections with fixed polar parts and
regular singular connections with constant polar part.

Since de Rham cohomology
is a summand of the de Rham cohomology
obtained after adjoining $t^{\frac{1}{e}}$, we obtain the result
(since if a module is generated by $n$ elements, any direct
summand of it certainly is also).

\end{proof}

%%%%%%%%%%%%%%%%%%%%
%%%%%%%%%%%%%%%%%%%%
%%%%%%%%%%%%%%%%%%%%

\section{Tameness}\label{s:tame}

%%%%%%%%%%%%%%%%%%%%
%%%%%%%%%%%%%%%%%%%%
%%%%%%%%%%%%%%%%%%%%

\subsection{}

In this section, we study the action of $G(O)$ on gauge forms,
and show that through the lens of homological algebra, this
action has many favorable properties.

In \S \ref{ss:gpact-start}-\ref{ss:gpact-finish}, we study some general
aspects of (weak) actions of 
infinite type algebraic groups on categories. In general, this subject exhibits
pathological behavior that is not present in the finite type situation.

However, we will introduce the notion of a \emph{tame}\footnote{Given
the interests of this paper, we explicitly state that this
word has nothing to do with the notion of tame ramification in Galois/$D$-module
theory.} 
action of such a group, where such behavior is not present. The basic dichotomy
the reader should keep in mind is that the regular representation is tame,
while the trivial representation is not. 

Then the main result of this section, Theorem \ref{t:tame}, is that the 
action of $G(O)$ on gauge forms is tame. 

\subsection{}\label{ss:gpact-start}

Let $\cG$ be an affine group scheme (possibly of infinite type) over $k$.

Then $\QCoh(\cG)$ inherits a usual \emph{convolution} monoidal structure by
$\sF \otimes \sG \coloneqq m_*(\sF \boxtimes \sG)$ for $m: \cG \times \cG \to \cG$
the multiplication. This convolution structure commutes with colimits in each variable
separately, and therefore defines a structure of algebra on 
$\QCoh(\cG) \in \DGCat_{cont}$. 

For $\sC \in \QCoh(\cG)\mod \coloneqq \QCoh(\cG)\mod(\DGCat_{cont})$, we
define the \emph{(weak) invariant} and \emph{(weak) coinvariant} categories as:

\[
\begin{gathered}
\sC^{\cG,w} \coloneqq \underset{\bDelta} \lim \, \big(\sC \rightrightarrows 
\QCoh(\cG) \otimes \sC \ldots\big) = 
\TwoHom_{\QCoh(\cG)\mod}(\Vect,\sC)
\\
\sC_{\cG,w} \coloneqq \underset{\bDelta^{op}} \colim \, \big( \ldots
\QCoh(\cG) \otimes \sC \rightrightarrows \sC = 
\sC \underset{\QCoh(\cG)}{\otimes} \Vect\big).
\end{gathered}
\]

\noindent Here $\Vect$ is induced with the \emph{trivial} $\QCoh(\cG)$-action,
i.e., it is induced by the monoidal functor $\Gamma: \QCoh(\cG) \to \Vect$.

\begin{example}\label{e:inv-desc}

If $\cY$ is a prestack with an action of $\cG$, then 
$\QCoh(\cG)$ acts on $\QCoh(\cY)$. 
Moreover, $\QCoh(\cY)^{\cG,w}$ identifies with 
$\QCoh(\cY/\cG)$.\footnote{The main point here is that
$\QCoh(\cY \times \cG) = \QCoh(\cY) \otimes \QCoh(\cG)$, which 
is a general consequence of the fact that $\QCoh(\cG)$ is dualizable
(being compactly generated).}

\end{example}

\begin{example}

We have canonical identifications:

\[
\begin{gathered}
\QCoh(\cG)^{\cG,w} \simeq \Vect \\
\QCoh(\cG)_{\cG,w} \simeq \Vect.
\end{gathered}
\]

\noindent Indeed, the former is a special case of Example \ref{e:inv-desc},
and the latter equivalence is tautological: we are computing
$\QCoh(\cG) \otimes_{\QCoh(\cG)} \Vect$.

\end{example}

\subsection{The non-renormalized category of representations}

We use the notation $\QCoh(\bB \cG)$ for $\Vect^{\cG,w}$.
Of course, this DG category is (tautologically) $\QCoh$ of the
prestack $\bB \cG$.

Note that $\QCoh(\bB \cG) = \TwoHom_{\QCoh(\cG)}(\Vect,\Vect)$ as monoidal
categories (where the former has the usual tensor product of quasi-coherent sheaves
as its monoidal structure). Therefore,
$\sC^{\cG,w}$ and $\sC_{\cG,w}$ have canonical $\QCoh(\bB G)$-module
structures for any $\sC \in \QCoh(\cG)\mod$.

\begin{rem}

For any affine scheme $\cY$ with an action of $\cG$, $\QCoh(\cY/\cG)$ inherits
a canonical $t$-structure characterized by the fact that $\QCoh(\cY/\cG) \to \QCoh(\cY)$
is $t$-exact; moreover, this $t$-structure is left (and right) complete. Indeed,
these facts follow by the usual argument for Artin stacks (c.f. \cite{dennis-dag}).
In particular, $\QCoh(\bB \cG)$ has a canonical left complete $t$-structure.

\end{rem}

\begin{rem}

For $\cG$ of infinite type, we emphatically do not use the 
notation $\Rep(\cG)$
for $\QCoh(\bB \cG)$ out of deference to Gaitsgory, who suggests
(c.f. \cite{dmod-aff-flag}) to use this notation instead for a ``renormalized"
form of this category in which bounded complexes of finite-dimensional
representations are declared to be compact. However, this
renormalized form will not play any role in this paper.

\end{rem}

\subsection{Averaging}

By the Beck-Chevalley theory, the functor $\Oblv: \sC^{\cG,w} \to \sC$
admits a continuous right adjoint $\Av_*^{\cG,w} = \Av_*^w$, and is
comonadic. Moreover, the comonad $\Oblv \Av_*^w$ on $\sC$ is given by
convolution with $\sO_{\cG} \in \QCoh(\cG)$.

\subsection{}

We have the following basic result.

\begin{prop}\label{p:inv-ff}

\begin{enumerate}

\item

For every $\sC \in \QCoh(\cG)\mod$, the natural functor:

\[
\sC^{\cG,w} \underset{\QCoh(\bB \cG)}{\otimes} \Vect \to
\sC
\]

\noindent is an equivalence.

\item 

The invariants functor:

\[
\QCoh(\cG)\mod \to \QCoh(\bB \cG)\mod
\]

\noindent is fully-faithful.

\end{enumerate}

\end{prop}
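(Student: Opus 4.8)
The plan is to deduce both parts from the Barr--Beck--Lurie comonadicity of $\Oblv\colon \sC^{\cG,w}\to \sC$, which was already recorded just above the statement, together with the self-duality-type identification $\QCoh(\cG)^{\cG,w}\simeq \Vect$ and $\QCoh(\cG)_{\cG,w}\simeq \Vect$. For part (1), I would first note that the functor in question is well-defined because $\sC^{\cG,w}$ is a $\QCoh(\bB\cG)$-module (as recorded above, $\QCoh(\bB\cG)=\TwoHom_{\QCoh(\cG)}(\Vect,\Vect)$ acts on all invariants categories), and that the unit of the $\QCoh(\bB\cG)$-action recovers $\Oblv$ after $\otimes_{\QCoh(\bB\cG)}\Vect$. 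The strategy is then: (a) check the claim for $\sC=\QCoh(\cG)$ itself, where $\sC^{\cG,w}\simeq\Vect$ and $\Vect\otimes_{\QCoh(\bB\cG)}\Vect\simeq\QCoh(\cG)$ by an explicit bar-complex computation (this amounts to $\Vect\otimes_{\QCoh(\bB\cG)}\Vect = \TwoHom_{\QCoh(\cG)}(\Vect,\Vect)^{\vee}$-style bookkeeping, or more directly to identifying the two-sided bar construction of $\QCoh(\cG)$ against $\Vect$); then (b) bootstrap to general $\sC$ using that $\Oblv\colon\sC^{\cG,w}\to\sC$ is comonadic with comonad given by convolution with $\sO_\cG$, so $\sC$ is recovered from $\sC^{\cG,w}$ as coalgebras, and the functor $(-)\otimes_{\QCoh(\bB\cG)}\Vect$ intertwines these. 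Concretely I would write $\sC$ as a colimit (the cobar/comonadic resolution) of categories built from $\QCoh(\cG)\otimes(\text{copies of }\sC^{\cG,w})$, apply part (a) termwise, and use that $(-)\otimes_{\QCoh(\bB\cG)}\Vect$ commutes with colimits in $\DGCat_{cont}$.

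For part (2), once (1) is in hand, full faithfulness of $\mathrm{Inv}\colon\QCoh(\cG)\mod\to\QCoh(\bB\cG)\mod$ follows by exhibiting a one-sided inverse on the relevant hom-spaces. The key observation is that $\mathrm{Inv}$ has a left adjoint, namely $\sD\mapsto \sD\otimes_{\QCoh(\bB\cG)}\QCoh(\cG)$ (using that $\QCoh(\cG)$ is a $\QCoh(\bB\cG)$-algebra — indeed $\QCoh(\cG)=\QCoh(\cG)^{\cG,w}\otimes_{\QCoh(\bB\cG)}\Vect$ is the ``base-changed'' algebra, but one wants it as an algebra object over $\QCoh(\bB\cG)$; alternatively use that $\Vect$ is an algebra in $\QCoh(\bB\cG)\mod$ with $\mathrm{Hom}$-object $\QCoh(\cG)$). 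Then full faithfulness of $\mathrm{Inv}$ is equivalent to the counit of this adjunction being an equivalence, i.e. to the statement that for every $\sC\in\QCoh(\cG)\mod$ the natural functor
\[
\sC^{\cG,w}\underset{\QCoh(\bB\cG)}{\otimes}\QCoh(\cG)\to\sC
\]
is an equivalence. But this is exactly part (1) after one further tensor identification: $\QCoh(\cG)\simeq \Vect\otimes_{\QCoh(\bB\cG)}\QCoh(\cG)$-module-compatibly, so tensoring (1) (which gives $\sC^{\cG,w}\otimes_{\QCoh(\bB\cG)}\Vect\simeq\sC$) along $\Vect\to\QCoh(\cG)$ over $\QCoh(\bB\cG)$ produces the displayed equivalence. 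I would spell this out carefully since the associativity of the relative tensor products is where sign/bookkeeping errors hide.

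The main obstacle I anticipate is the base case (a) of part (1): pinning down the equivalence $\Vect\otimes_{\QCoh(\bB\cG)}\Vect\simeq\QCoh(\cG)$ and, crucially, checking it is compatible with the residual $\QCoh(\cG)$-module structure (otherwise the bootstrap in (b) does not close up). Because $\cG$ may be of infinite type, I cannot invoke finite-type Tannakian shortcuts; instead I would rely only on $\QCoh(\cG)$ being dualizable (it is compactly generated) so that $\QCoh(\cG\times\cG)=\QCoh(\cG)\otimes\QCoh(\cG)$ and the bar complex computing $\Vect\otimes_{\QCoh(\bB\cG)}\Vect$ has terms $\QCoh(\cG)^{\otimes n}=\QCoh(\cG^{\times n})$ with the usual simplicial structure maps (pushforward along multiplication and along $\Gamma$). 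Its geometric realization is then $\QCoh$ of the simplicial prestack $\cG^{\times\bullet}$ with face maps given by multiplication and projection, whose realization is $\cG$ (as a $\cG$-space via left translation), giving both the equivalence and the module compatibility at once. The remaining steps — commuting $(-)\otimes_{\QCoh(\bB\cG)}\Vect$ with the comonadic colimit, and the adjunction juggling in part (2) — are formal once the comonadicity recorded above is invoked, so I would treat them briskly.
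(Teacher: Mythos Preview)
Your overall architecture for part (2) is right: the invariants functor has a left adjoint, and full faithfulness is exactly the counit being an equivalence. But the left adjoint is $\sD \mapsto \sD \otimes_{\QCoh(\bB\cG)} \Vect$ (with $\Vect$ regarded as a $(\QCoh(\bB\cG),\QCoh(\cG))$-bimodule), not $\sD \otimes_{\QCoh(\bB\cG)} \QCoh(\cG)$. So the counit is literally the map in part (1), and no further ``tensor identification'' is needed.

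For part (1), there are two genuine problems. First, the bar complex computing $\Vect \otimes_{\QCoh(\bB\cG)} \Vect$ has terms $\QCoh(\bB\cG)^{\otimes n}$, not $\QCoh(\cG)^{\otimes n}$; your proposed geometric identification of the realization therefore does not go through. Second, your bootstrap is oriented the wrong way: comonadicity of $\Oblv$ expresses $\sC^{\cG,w}$ in terms of $\sC$ (as comodules over a comonad on $\sC$), not $\sC$ in terms of $\sC^{\cG,w}$. There is no evident way to write a general $\sC$ as a colimit of copies of $\QCoh(\cG)$ tensored with $\sC^{\cG,w}$ using only comonadicity, and even if you switch to such a colimit presentation, you would need the left-hand side $\sC \mapsto \sC^{\cG,w} \otimes_{\QCoh(\bB\cG)} \Vect$ to commute with colimits in $\sC$, which is not clear since invariants is a right adjoint.

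The paper's argument sidesteps both issues by using \emph{monadicity of} $\Av_*^w:\sC \to \sC^{\cG,w}$ rather than comonadicity of $\Oblv$. The monad $\Av_*^w\Oblv$ on $\sC^{\cG,w}$ is tensoring with the algebra object $\pi_*(k) \in \QCoh(\bB\cG)$ (for $\pi:\Spec(k)\to\bB\cG$), so $\sC \simeq \pi_*(k)\mod(\sC^{\cG,w})$. Applying this with $\sC=\Vect$ gives $\Vect \simeq \pi_*(k)\mod(\QCoh(\bB\cG))$ as $\QCoh(\bB\cG)$-modules. Then the standard base-change $\sM \otimes_A B\mod(A) \simeq B\mod(\sM)$ yields $\sC^{\cG,w} \otimes_{\QCoh(\bB\cG)} \Vect \simeq \pi_*(k)\mod(\sC^{\cG,w}) \simeq \sC$. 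This handles the base case and the general case in one stroke, with no colimit-commutation issues.
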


\begin{proof}

For the first part:

We claim that the averaging functor $\sC \to \sC^{\cG,w}$ is monadic.
Indeed, its composite with the forgetful functor is tensoring with $\sO_{\cG}$,
and therefore it is obviously conservative. Moreover, it commutes with all colimits,
so this gives the claim.

Now let $\pi$ denote the map $\Spec(k) \to \bB \cG$ and consider
$\pi_*(k)$ as an algebra object in $\QCoh(\bB \cG)$. Note that this
algebra object induces the monad above. Therefore, we have: 

\[
\sC^{\cG,w} \underset{\QCoh(\bB \cG)}{\otimes} \Vect =
\sC^{\cG,w} \underset{\QCoh(\bB \cG)}{\otimes} \pi_*(k)\mod(\QCoh(\bB \cG)) =
\pi_*(k)\mod(\sC^{\cG,w}) = \sC
\]

\noindent as desired.\footnote{The penultimate equality
follows from \cite{dgcat} Proposition 4.8.1.}

Now the second part follows easily from the first. 
Indeed, this functor admits the left adjoint $-\otimes_{\QCoh(\bB \cG)} \Vect$,
and we have just checked that the counit for the adjunction is an equivalence.

\end{proof}

\begin{rem}

This is a different argument from the one given in \cite{shvcat},
which relied on rigidity.

\end{rem}

\subsection{Tameness}

The functor $\Av_*^w:\sC \to \sC^{\cG,w}$ induces a \emph{norm} functor:

\[
\Nm: \sC_{\cG,w} \to \sC^{\cG,w}.
\]

\begin{defin}

$\sC$ is \emph{tame} (with respect to the $\cG$ action) if
the above morphism $\Nm$ is an equivalence.

\end{defin}

\begin{example}

$\QCoh(\cG) \in \QCoh(\cG)\mod$ is tame: indeed, both invariants
and coinvariants are $\Vect$, the norm functor is obviously compatible with
respect to this.

\end{example}

\begin{example}

In \cite{shvcat} \S 7.3, it is shown that for $\cG = \prod_{i=1}^{\infty} \bG_a$,
$\Vect$ is \emph{not} tame.

\end{example}

\begin{example}\label{e:fintype-tame}

If $\cG$ is finite type, then every $\sC$ is tame. Indeed, this follows from 
\cite{shvcat} Theorem 2.2.2 and Proposition 6.2.7.

\end{example}

Generalizing Example \ref{e:fintype-tame}, we obtain the following result:

\begin{lem}\label{l:tame-cptopen}

For $\cG$ acting on $\sC$ and
$\vareps:\cG \to \cG^{\prime}$ a flat morphism with $\cG^{\prime}$ finite
type and $\cK \coloneqq \Ker(\vareps)$, $\sC$ is tame with respect to 
$\cG$ if and only if $\sC$ is tame with respect to $\cK$ (with respect to the
induced action).

\end{lem}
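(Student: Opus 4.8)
The plan is to integrate in stages along the extension $1 \to \cK \to \cG \rar{\vareps} \cG^{\prime} \to 1$. First I would observe that we may as well assume $\vareps$ is faithfully flat: the image of a flat homomorphism of affine group schemes is an open, hence closed, subgroup, so $\cG$ maps flatly and surjectively onto $\vareps(\cG) \subset \cG^{\prime}$, which is still finite type and has the same kernel $\cK$; replacing $\cG^{\prime}$ by $\vareps(\cG)$ affects neither side of the claimed equivalence. Then $\vareps$ exhibits $\cG^{\prime}$ as the quotient $\cG/\cK$, and $\cK$ is a normal subgroup scheme.

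Next I would invoke the ``(co)invariants in stages'' formalism. Since $\cK$ is normal and $\vareps$ is faithfully flat, the bar resolutions computing weak invariants and coinvariants can be refined through $\cG^{\prime}$ (the relevant fiber products satisfy $\cG \times_{\cG^{\prime}} \cG \simeq \cG \times \cK$, so the arguments of \cite{shvcat} go through verbatim even though $\cG$ and $\cK$ are of infinite type), yielding canonical equivalences
\[
\sC^{\cG,w} \simeq \big(\sC^{\cK,w}\big)^{\cG^{\prime},w}, \qquad
\sC_{\cG,w} \simeq \big(\sC_{\cK,w}\big)_{\cG^{\prime},w},
\]
compatible with the $\QCoh(\bB \cG)$-module structures; here $\sC^{\cK,w}$ and $\sC_{\cK,w}$ carry residual $\QCoh(\cG^{\prime})$-module structures by normality of $\cK$. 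The norm $\Nm_{\cK}\colon \sC_{\cK,w} \to \sC^{\cK,w}$ is a morphism of $\QCoh(\cG^{\prime})$-modules, being built out of the $\cG^{\prime}$-equivariant averaging functor $\Av_*^{\cK,w}$. I would then check --- by unwinding the construction of the norm from $\Av_*^w$, using that averaging itself factors in stages, $\Av_*^{\cG,w} \simeq \Av_*^{\cG^{\prime},w} \circ \Av_*^{\cK,w}$ --- that under the displayed equivalences
\[
\Nm_{\cG} \simeq \Nm_{\cG^{\prime}} \circ (\Nm_{\cK})_{\cG^{\prime},w},
\]
where $\Nm_{\cG^{\prime}}$ denotes the $\cG^{\prime}$-norm of the $\QCoh(\cG^{\prime})$-module $\sC^{\cK,w}$ and $(\Nm_{\cK})_{\cG^{\prime},w}$ is the result of applying $(-)_{\cG^{\prime},w}$ to $\Nm_{\cK}$.

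The conclusion would then be formal. Because $\cG^{\prime}$ is finite type, every $\QCoh(\cG^{\prime})$-module is tame (Example \ref{e:fintype-tame}), so $\Nm_{\cG^{\prime}}$ on $\sC^{\cK,w}$ is an equivalence; hence $\Nm_{\cG}$ is an equivalence if and only if $(\Nm_{\cK})_{\cG^{\prime},w}$ is. Using finiteness of $\cG^{\prime}$ once more, the coinvariants functor $(-)_{\cG^{\prime},w}\colon \QCoh(\cG^{\prime})\mod \to \QCoh(\bB \cG^{\prime})\mod$ is naturally identified (via the norm) with the invariants functor, which is fully faithful by Proposition \ref{p:inv-ff}; a fully faithful functor reflects equivalences, so $(\Nm_{\cK})_{\cG^{\prime},w}$ is an equivalence if and only if $\Nm_{\cK}$ is. Chaining everything: $\sC$ is tame for $\cK$ $\iff$ $\Nm_{\cK}$ is an equivalence $\iff$ $(\Nm_{\cK})_{\cG^{\prime},w}$ is an equivalence $\iff$ $\Nm_{\cG}$ is an equivalence $\iff$ $\sC$ is tame for $\cG$.

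The main obstacle I expect is the bookkeeping in the second paragraph: establishing the staged (co)invariants equivalences together with their residual $\QCoh(\cG^{\prime})$-module structures, and above all the compatibility $\Nm_{\cG} \simeq \Nm_{\cG^{\prime}} \circ (\Nm_{\cK})_{\cG^{\prime},w}$ of the norm with the staging. The (co)invariants-in-stages statement is routine cosimplicial algebra once flatness of $\vareps$ is in hand; the norm compatibility is the kind of assertion that is standard in the finite-type setting but must here be extracted by tracing $\Nm$ through its definition in terms of $\Av_*^w$ and the bar/cobar complexes --- this is the one genuinely technical point, everything else being a formal consequence of Example \ref{e:fintype-tame} and Proposition \ref{p:inv-ff}.
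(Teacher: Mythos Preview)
Your proposal is correct and is precisely the argument the paper leaves implicit: the paper offers no proof of this lemma beyond the sentence ``Generalizing Example \ref{e:fintype-tame}, we obtain the following result,'' and your staged (co)invariants argument along $1 \to \cK \to \cG \to \cG' \to 1$, together with the norm factorization $\Nm_{\cG} \simeq \Nm_{\cG'} \circ (\Nm_{\cK})_{\cG',w}$ and Example \ref{e:fintype-tame} plus Proposition \ref{p:inv-ff}, is exactly how one unpacks that hint.

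One small correction to your reduction step: the claim that a flat homomorphism has \emph{open} image need not hold, since $\cG$ may be of infinite type and hence $\vareps$ need not be finitely presented. But the conclusion stands anyway: over a field of characteristic zero the image of any homomorphism of affine group schemes is a closed subgroup scheme (by Hopf algebra considerations), so you may replace $\cG'$ by $\vareps(\cG)$ as you intend.
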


\subsection{}

We have the following result, which relates the failure
of tameness to the failure of the invariants functor to be a Morita equivalence.

\begin{prop}

For every $\sC \in \QCoh(\cG)\mod$, the norm functor induces an equivalence:

\[
\sC_{\cG,w} \underset{\QCoh(\bB\cG)}{\otimes} \Vect \isom
\sC^{\cG,w} \underset{\QCoh(\bB\cG)}{\otimes} \Vect.
\]

\end{prop}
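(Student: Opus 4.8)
The plan is to identify both sides of the norm map with $\sC$ itself, by two applications of Proposition \ref{p:inv-ff}, and then to reduce the claim that $\Nm$ base-changes to an equivalence to the already-recorded fact that the regular representation $\QCoh(\cG)$ is tame. First, observe that the two assignments $\sC \mapsto \sC_{\cG,w}\otimes_{\QCoh(\bB\cG)}\Vect$ and $\sC\mapsto \sC^{\cG,w}\otimes_{\QCoh(\bB\cG)}\Vect$ are functorial on the category of $\QCoh(\cG)$-module categories and $\QCoh(\cG)$-linear functors, and the norm provides a natural transformation $\Nm_{\Vect}$ between them. By Proposition \ref{p:inv-ff}(1) the counit exhibits the second functor as canonically isomorphic to the identity functor $\sC\mapsto\sC$; in particular that functor preserves all colimits.

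Next I would analyze the source. Writing $\sC_{\cG,w} = \sC\otimes_{\QCoh(\cG)}\Vect$ gives
\[
\sC_{\cG,w}\underset{\QCoh(\bB\cG)}{\otimes}\Vect \;\simeq\; \sC\underset{\QCoh(\cG)}{\otimes}\Big(\Vect\underset{\QCoh(\bB\cG)}{\otimes}\Vect\Big).
\]
Applying Proposition \ref{p:inv-ff}(1) to $\sC=\QCoh(\cG)$ with its left convolution action, together with the identification $\QCoh(\cG)^{\cG,w}\simeq\Vect$, yields $\Vect\otimes_{\QCoh(\bB\cG)}\Vect\simeq\QCoh(\cG)$; running the same argument with the right convolution action pins this down as an equivalence of $\QCoh(\cG)$-bimodules. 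Hence $\sC_{\cG,w}\otimes_{\QCoh(\bB\cG)}\Vect\simeq \sC\otimes_{\QCoh(\cG)}\QCoh(\cG)\simeq\sC$, and in particular this functor also preserves all colimits, being a composite of base-change functors.

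To finish, since both functors preserve geometric realizations it suffices to check that $\Nm_{\Vect}$ is an equivalence on a class of modules generating $\QCoh(\cG)\mod$ under such colimits. Resolving an arbitrary $\sC$ by the bar complex of the monad $\QCoh(\cG)\otimes_{\Vect}(-)$, every $\sC$ is a geometric realization of free modules $\Ind(\sD)\coloneqq \QCoh(\cG)\otimes_{\Vect}\sD$, $\sD\in\DGCat_{cont}$; so one reduces to $\sC=\Ind(\sD)$. For such modules the cosimplicial, resp. simplicial, diagrams computing $\Ind(\sD)^{\cG,w}$ and $\Ind(\sD)_{\cG,w}$ are obtained from those for the regular representation $\QCoh(\cG)$ by applying $-\otimes_{\Vect}\sD$; as the latter diagrams are split (via the unit section of $\cG$) one gets $\Ind(\sD)_{\cG,w}\simeq\sD\simeq\Ind(\sD)^{\cG,w}$ with the norm identified with $\Nm_{\QCoh(\cG)}\otimes\id_{\sD}$. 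Since $\QCoh(\cG)$ is tame, $\Nm_{\QCoh(\cG)}$ is an equivalence, hence so is $\Nm_{\QCoh(\cG)}\otimes\id_{\sD}$, and therefore so is its base change $\Nm_{\Vect}$ along $-\otimes_{\QCoh(\bB\cG)}\Vect$.

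The step I expect to be the main obstacle is the bookkeeping of $\QCoh(\bB\cG)$- and $\QCoh(\cG)$-module structures in the second and third paragraphs: one must verify that the identification $\Vect\otimes_{\QCoh(\bB\cG)}\Vect\simeq\QCoh(\cG)$ genuinely respects the bimodule structures, so that $\sC\otimes_{\QCoh(\cG)}\QCoh(\cG)\simeq\sC$, and that the construction of the norm is compatible with $-\otimes_{\Vect}\sD$ on module categories, so that the norm for $\Ind(\sD)$ really is $\Nm_{\QCoh(\cG)}\otimes\id_{\sD}$. Once these compatibilities are in place the rest is formal.
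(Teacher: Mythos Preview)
Your proof is correct and follows essentially the same approach as the paper: identify the right-hand side with $\sC$ via Proposition \ref{p:inv-ff}, observe that both sides commute with colimits in $\sC$, and reduce to the case $\sC=\QCoh(\cG)$ (equivalently, to free modules) where tameness is already known. The paper's write-up is a three-line sketch of exactly this; your version fills in the bar-resolution and free-module steps that the paper leaves implicit.

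One small remark: your second paragraph, explicitly identifying the left-hand side with $\sC$ via $\Vect\otimes_{\QCoh(\bB\cG)}\Vect\simeq\QCoh(\cG)$, is correct but more than is needed. The paper simply observes that the left-hand side obviously commutes with colimits in $\sC$, since $\sC\mapsto\sC_{\cG,w}=\sC\otimes_{\QCoh(\cG)}\Vect$ is a tensor product and then one tensors again over $\QCoh(\bB\cG)$. So you can bypass the bimodule bookkeeping you flag as the main obstacle: once both sides are known to preserve colimits, the reduction to $\QCoh(\cG)$ goes through without ever identifying the left-hand side explicitly.
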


\begin{proof}

By Proposition \ref{p:inv-ff}, the right hand side identifies with $\sC$. In particular,
both sides commutes with colimits in $\sC$ (since this is obvious for the left hand side),
and therefore we reduce to the case $\QCoh(\cG)$, where the result is clear.

\end{proof}

\subsection{Tameness and 1-affineness}\label{ss:gpact-finish}

Suppose $\cY$ is a prestack with an action of $\cG$.

\begin{defin}

$\cY$ is \emph{tame} (with respect to the action of $\cG$) if 
$\QCoh(\cY)\in \QCoh(\cG)\mod$ is tame.

\end{defin}

\begin{prop}\label{p:tame-1aff}

In the above notation, suppose that $\cY$ is 1-affine and tame. 
Then $\cY/\cG$ is 1-affine.

\end{prop}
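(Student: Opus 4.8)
The plan is to reduce the 1-affineness of $\cY/\cG$ to that of $\bB\cG$ and to the tameness hypothesis, by analyzing the $\Gamma$-$\Loc$ adjunction for $\cY/\cG$ directly in terms of the $\QCoh(\cG)$-module structure on $\QCoh(\cY)$. First I would record the basic translations: by Example \ref{e:inv-desc} we have $\QCoh(\cY/\cG) \simeq \QCoh(\cY)^{\cG,w}$, and sheaves of categories on $\cY/\cG$ (equivalently, the slice $\ShvCat_{/(\cY/\cG)}$) are identified with $\QCoh(\cG)$-equivariant sheaves of categories on $\cY$. When $\cY$ is 1-affine, the latter is in turn $\QCoh(\cG)\mod(\QCoh(\cY)\mod)$, i.e.\ module categories over $\QCoh(\cY)$ equipped with a compatible $\QCoh(\cG)$-action; under the equivalence $\QCoh(\cY)\mod \simeq \ShvCat_{/\cY}$ this is exactly $\ShvCat_{/(\cY/\cG)}$. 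So the content to prove is that the pair
\[
\Loc_{\cY/\cG} : \QCoh(\cY/\cG)\mod \rightleftarrows \QCoh(\cG)\mod\big(\QCoh(\cY)\mod\big) : \Gamma
\]
is an equivalence.

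Next I would exploit Proposition \ref{p:inv-ff}: the invariants functor $\QCoh(\cG)\mod \to \QCoh(\bB\cG)\mod$ is fully faithful, and for any $\sC\in\QCoh(\cG)\mod$ one has $\sC^{\cG,w}\otimes_{\QCoh(\bB\cG)}\Vect \isom \sC$. Applying this with $\sC = \QCoh(\cY)$ gives $\QCoh(\cY/\cG)\otimes_{\QCoh(\bB\cG)}\Vect \isom \QCoh(\cY)$, and more generally I would show that passing to $\cG$-invariants sets up an equivalence between $\QCoh(\cY)$-module categories-with-$\QCoh(\cG)$-action and $\QCoh(\cY/\cG)$-module categories (over the base $\QCoh(\bB\cG)$), the quasi-inverse being $-\otimes_{\QCoh(\bB\cG)}\Vect$. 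This is the step where tameness enters: to identify the relevant homotopy limits/colimits (i.e.\ to descend module structures along $\Vect \to \QCoh(\bB\cG)$ correctly, and to see that $\Gamma$ is conservative and commutes with the relevant colimits), one needs invariants and coinvariants for the $\QCoh(\cG)$-action to agree, which is precisely the assertion that $\QCoh(\cY)$ — hence each module over it — is tame. Concretely I expect to reduce, via the monadic/comonadic description of $\Oblv\Av_*^w$ (convolution with $\sO_\cG$) and the algebra object $\pi_*(k)\in\QCoh(\bB\cG)$, to checking that the norm map $\sC_{\cG,w}\to\sC^{\cG,w}$ is an equivalence for the modules in play, which follows from tameness of $\cY$ by base change along $\QCoh(\cY)\mod$.

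The main obstacle I anticipate is the bookkeeping needed to reconcile the two candidate notions of ``category over $\cY/\cG$'' at the level of the $\Gamma$-$\Loc$ adjunction — i.e.\ showing that the abstract equivalence $\ShvCat_{/(\cY/\cG)} \simeq \QCoh(\cG)\mod(\ShvCat_{/\cY})$ coming from 1-affineness of $\cY$ and descent for sheaves of categories is compatible with the $\QCoh(\bB\cG)$-linear structures, so that $\Gamma$ and $\Loc$ for $\cY/\cG$ really are obtained from the (already-established) equivalence for $\cY$ by applying the functor $\QCoh(\cG)\mod(-)$. Once that compatibility is in place, the unit and counit of the $\cY/\cG$-adjunction are obtained from those of the $\cY$-adjunction (which are isomorphisms by hypothesis) by the exact functors above, together with the $\bB\cG$-side equivalence coming from 1-affineness of $\bB\cG$ (which holds since $\cG$ is an affine group scheme, not necessarily finite type — but here one uses that the statement is about $\cY/\cG$ with $\cY$ 1-affine and tame, so the relevant $\bB\cG$-statements are exactly what tameness plus Proposition \ref{p:inv-ff} provide). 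After that the conclusion is formal: $\Gamma$ and $\Loc$ for $\cY/\cG$ are mutually inverse, so $\cY/\cG$ is 1-affine.
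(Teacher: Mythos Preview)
Your overall strategy matches the paper's: identify $\ShvCat_{/(\cY/\cG)}$ with modules over the semidirect product $\cG\ltimes\QCoh(\cY)$ (using that $\pi:\cY\to\cY/\cG$ is a 1-affine morphism), then show that the invariants functor $\sC\mapsto\sC^{\cG,w}$ to $\QCoh(\cY/\cG)\mod$ is an equivalence, using Proposition~\ref{p:inv-ff} for one half and tameness for the other. That is exactly what the paper does.

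However, there is a genuine gap in how you invoke tameness. You write that one needs ``invariants and coinvariants for the $\QCoh(\cG)$-action to agree, which is precisely the assertion that $\QCoh(\cY)$ --- hence each module over it --- is tame,'' and later that this ``follows from tameness of $\cY$ by base change along $\QCoh(\cY)\mod$.'' The word \emph{hence} is doing unjustified work: tameness of $\QCoh(\cY)$ does not formally imply tameness of an arbitrary $\sC\in\cG\ltimes\QCoh(\cY)\mod$, and ``base change along $\QCoh(\cY)\mod$'' has no clear meaning here. The paper avoids this entirely. After establishing that $\Gamma=(-)^{\cG,w}$ is fully faithful (via Proposition~\ref{p:inv-ff}), it shows directly that the \emph{left adjoint} $\Loc:\sD\mapsto\sD\otimes_{\QCoh(\cY/\cG)}\QCoh(\cY)$ is conservative, by the calculation
\[
\big(\sD\underset{\QCoh(\cY/\cG)}{\otimes}\QCoh(\cY)\big)_{\cG,w}
=\sD\underset{\QCoh(\cY/\cG)}{\otimes}\QCoh(\cY)_{\cG,w}
=\sD\underset{\QCoh(\cY/\cG)}{\otimes}\QCoh(\cY)^{\cG,w}=\sD,
\]
where only the middle equality uses tameness, and only of $\QCoh(\cY)$ itself. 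Note that it is $\Loc$, not $\Gamma$, whose conservativity is at stake (your text has this backwards); $\Gamma$ is already conservative once it is fully faithful.

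A smaller point: your parenthetical that $\bB\cG$ is 1-affine ``since $\cG$ is an affine group scheme'' is false in the generality needed here --- indeed $\bB G(O)$ and $\bB\!\prod_{i\ge1}\bG_a$ are explicitly listed in the paper as \emph{not} 1-affine. You partially catch this, but the argument should not invoke 1-affineness of $\bB\cG$ at all; the relevant input is precisely Proposition~\ref{p:inv-ff}, which holds for any affine group scheme.
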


\begin{proof}

Because $\pi:\cY \to \cY/\cG$ is a 1-affine morphism, we obtain:

\[
\ShvCat_{/(\cY/\cG)} \simeq 
\cG \ltimes \QCoh(\cY)\mod
\]

\noindent where $\cG \ltimes \QCoh(\cY)$ is the
semidirect (or crossed) product of $\QCoh(\cG)$ with $\QCoh(\cY)$
(as a mere object of $\DGCat_{cont}$, this category is $\QCoh(\cG) \otimes \QCoh(\cY)$).
That is, for $\mathsf{C} \in \ShvCat_{/\cY/\cG}$, 
$\Gamma(\cY,\pi^*(\mathsf{C}))$ is acted on by $\QCoh(\cY)$ and
$\QCoh(\cG)$ satisfying the compatibility that the semi-direct product of
these two algebras acts;
moreover, this gives an equivalence with categories acted on by the semi-direct product.

We should show that the functor:

\begin{equation}\label{eq:semidirect-inv}
\begin{gathered}
\cG \ltimes \QCoh(\cY)\mod \to \QCoh(\cY/\cG)\mod = \QCoh(\cY)^{\cG,w}\mod \\
\sC \mapsto 
\sC^{\cG,w} = 
\TwoHom_{\cG \ltimes \QCoh(\cY)}(\QCoh(\cY),\sC) 
\end{gathered}
\end{equation}

\noindent is an equivalence. This functor admits the left
adjoint:

\[
\big(\sD \in \QCoh(\cY/\cG)\mod \big) \mapsto 
\sD \underset{\QCoh(\cY/\cG)}{\otimes} \QCoh(\cY).
\]

First, we claim (as in Proposition \ref{p:inv-ff}) that \eqref{eq:semidirect-inv}
is fully-faithful. Indeed, first note that for $\sD \in \QCoh(\cY/\cG)\mod$, we have:

\[
\sD \underset{\QCoh(\bB \cG)}{\otimes} \Vect \isom
\sD \underset{\QCoh(\cY/\cG)}{\otimes} \QCoh(\cY) 
\]

\noindent since it suffices to check that this map is an isomorphism
for $\QCoh(\cY/\cG)$ (since both sides commute with colimits),
and there it follows from Proposition \ref{p:inv-ff}. Then the claim
of fully-faithfulness follows directly from Proposition \ref{p:inv-ff}.

It therefore suffices now to show that our left adjoint is conservative. This follows
from the calculation:

\[
\begin{gathered}
\big(\sD \underset{\QCoh(\cY/\cG)}{\otimes} \QCoh(\cY) \big) 
\underset{\QCoh(\cG)}{\otimes} \Vect =
\sD \underset{\QCoh(\cY/\cG)}{\otimes} \QCoh(\cY)_{\cG,w} = \\
\sD \underset{\QCoh(\cY/\cG)}{\otimes} \QCoh(\cY)^{\cG,w} =
\sD \underset{\QCoh(\cY/\cG)}{\otimes} \QCoh(\cY/\cG) =
\sD
\end{gathered}
\]

\noindent for any $\sD \in \QCoh(\cY/\cG)\mod$.

\end{proof}

\subsection{}

Let $G$ be an affine algebraic group for the remainder of this section.
We now formulate the main result of this section.

\begin{thm}\label{t:tame}

For any $r \geq 0$, $t^{-r}\fg[[t]]dt$ is tame with
respect to the $G(O)$-action on it.

\end{thm}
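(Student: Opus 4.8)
The plan is to reduce to a finite-type quotient and then deduce tameness from the fact that the congruence subgroup acts ``almost freely'' on $t^{-r}\fg[[t]]dt$ in the precise sense supplied by Theorem \ref{t:artin} and Theorem \ref{t:jumps}. By Lemma \ref{l:tame-cptopen}, it suffices to show that $t^{-r}\fg[[t]]dt$ is tame with respect to $\cK_{\rho}$ for some (equivalently every) large $\rho$, since $G(O)/\cK_\rho$ is finite type and $\cK_\rho$ is flat in $G(O)$. So fix $\rho \gg r$ large enough that the geometric fibers of
\[
\pi: t^{-r}\fg[[t]]dt/\cK_{\rho} \to t^{-r}\fg[[t]]dt/t^{\rho - r}\fg[[t]]dt =: B
\]
are Artin stacks, smooth over $k$, of bounded dimension (here I use Theorems \ref{t:artin} and \ref{t:jumps} applied to the adjoint representation, together with Corollary \ref{c:locsys-pol-strat} and Proposition \ref{p:bdd-dimns}; note $B$ is an affine space and $\cK_\rho$ acts on the fibers preserving the projection). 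Write $\cX := t^{-r}\fg[[t]]dt$.

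The key step is the following principle: if $\cG$ acts on an affine scheme $\cX$ in such a way that the geometric fibers of $\cX/\cG \to B$ (for some $\cG$-invariant map with $B$ affine) are algebraic stacks, then the action is tame. The mechanism is that on each such fiber the action is tame by Example \ref{e:fintype-tame} (the fiber is finite type), and tameness is local over $B$ in an appropriate sense. More precisely, I would argue as follows. Both $\Oblv \Av_*^w$ on $\QCoh(\cX)$ (convolution with $\sO_{\cK_\rho}$) and the norm functor $\Nm$ are $\QCoh(B)$-linear, since the $\cK_\rho$-action commutes with the $\QCoh(B)$-action coming from the invariant map $\cX \to B$ factoring through $\cX/\cK_\rho \to B$. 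Thus it suffices to check $\Nm$ is an equivalence after base change to each field-valued point $\Spec(F) \to B$, i.e. after $- \otimes_{\QCoh(B)} \Vect_F$. Here one must know that invariants and coinvariants (hence the norm) commute with this base change: coinvariants always commute with colimits hence with $\otimes_{\QCoh(B)}\Vect_F$; for invariants one uses that $\cK_\rho$ is prounipotent so that $\Av_*^w$ has bounded cohomological amplitude on the fiber (via the Artin-stack structure of the fiber and its bounded dimension — this is exactly where Proposition \ref{p:bdd-dimns} is used, guaranteeing that $\Av_*^w$ commutes with the relevant filtered colimits in the $B$-direction). After base change we are looking at the $\cK_\rho$-action on the fiber $\cX_F := \cX \times_B \Spec F$, which is a smooth affine scheme over $F$ on which $\cK_{\rho,F}$ acts with quotient an algebraic stack of finite type over $F$; tameness here is Example \ref{e:fintype-tame} (or rather its evident relative version over $\Spec F$: $\cK_{\rho,F}$ maps flatly onto the finite-type quotient group, and Lemma \ref{l:tame-cptopen} applies again).

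The main obstacle I anticipate is the interchange of $\Av_*^w$ with base change along $\Spec F \to B$ — i.e.\ proving that weak invariants for the infinite-dimensional prounipotent group $\cK_\rho$ behave well in families over $B$. This is precisely the subtlety that makes $\bB G(O)$ fail to be $1$-affine (the trivial representation has infinite cohomological amplitude), so it cannot be circumvented abstractly; one genuinely needs the input from \S\ref{s:locsys} that, \emph{after} fixing enough leading terms of the connection, the relevant de Rham complexes are Fredholm and the quotients are finite-type Artin stacks of uniformly bounded dimension. Concretely I would exhibit $\Av_*^w$ on $\QCoh(\cX)$ as a filtered colimit (over $\rho' \geq \rho$) of functors that are $\QCoh(B)$-linear and individually commute with base change, using the Cousin/pro-unipotent filtration of $\cK_\rho$ by the $\cK_{\rho'}$, and check that this colimit commutes with $-\otimes_{\QCoh(B)}\Vect_F$ because the amplitudes stabilize fiberwise thanks to Proposition \ref{p:bdd-dimns}. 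Once that is in place, the reduction to Example \ref{e:fintype-tame} fiber-by-fiber finishes the proof.
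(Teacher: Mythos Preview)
Your proposal correctly identifies both the reduction to a congruence subgroup via Lemma \ref{l:tame-cptopen} and the essential geometric input (the fiberwise Artin-stack structure of $t^{-r}\fg[[t]]dt/\cK_\rho$ over the leading-terms base $B$, with uniformly bounded dimension). The fiberwise tameness claim is also fine: on each fiber some deeper $\cK_N$ acts freely, so $\QCoh(\cX_F)$ is a free $\QCoh(\cK_N)$-module and hence tame, and then Lemma \ref{l:tame-cptopen} upgrades this to $\cK_\rho$.

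The gap is in the passage from fiberwise to global. You assert that it suffices to check $\Nm$ after $-\otimes_{\QCoh(B)}\Vect_F$, but two separate things need to be proved and neither is: (a) that invariants commute with this base change, and (b) that a $\QCoh(B)$-linear functor which is an equivalence on all fibers is globally an equivalence. For (a), your sketch writes $\Av_*^{\cK_\rho,w}$ as a filtered \emph{colimit} over $\rho'\geq\rho$; but $\sC^{\cK_\rho,w}=\lim_{\rho'}\sC^{\cK_\rho/\cK_{\rho'},w}$ is a \emph{limit}, and it is precisely this limit that fails to commute with tensor products in general --- this is why $\bB G(O)$ is not 1-affine. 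The remark that ``amplitudes stabilize fiberwise'' does not help: the $N$ for which $\cK_N$ acts freely depends on the fiber with no uniform bound (the $\bG_m\ltimes\bG_a$ example again), so no finite stage of the limit suffices globally. For (b), knowing both sides commute with base change would still leave you needing a Cousin-type decomposition of $\sC^{\cK_\rho,w}=\QCoh(\cX/\cK_\rho)$ over $B$, which is essentially what you are trying to prove.

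The paper's route avoids this circularity by \emph{not} trying to localize the norm map directly. Instead it reformulates tameness as monadicity of the (a priori non-continuous) right adjoint $\pi^?$ to $\pi_*$, and then checks the hypotheses of a $t$-structure-based Barr--Beck criterion (Corollary \ref{c:monad-tstr-iterates}): $\pi^?$ is conservative, cohomologically bounded, and the iterates $(\pi_*\pi^?)^n$ have uniformly bounded amplitude. The cohomological-boundedness input is Proposition \ref{p:tame-amplitude}, stating that $\Gamma$ on $t^{-r}\fg[[t]]dt/G(O)$ has finite amplitude; \emph{this} is the statement that is proved by Cousin over $B$ together with the uniform dimension bound on fibers. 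The point is that cohomological amplitude is a concrete, object-level notion that the Cousin argument handles cleanly, whereas the equivalence-of-categories statement you aim for directly does not localize so easily.
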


\begin{rem}

Since $\QCoh(\fg((t))dt)$ is a colimit/limit of the categories
$\QCoh(t^{-r}\fg[[t]]dt)$, we immediately obtain that $\fg((t))dt$ 
is tame with respect to the action of $G(O)$ as well.

\end{rem}

From Proposition \ref{p:tame-1aff}, we obtain:

\begin{cor}\label{c:gauge/cpt-1aff}

The quotient $t^{-r}\fg[[t]]dt/G(O)$ is 1-affine.

\end{cor}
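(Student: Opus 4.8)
The plan is to deduce this immediately from Theorem \ref{t:tame} together with Proposition \ref{p:tame-1aff}. By Proposition \ref{p:tame-1aff}, if a prestack $\cY$ carrying a $\cG$-action is both $1$-affine and tame (in the sense that $\QCoh(\cY)$ is tame as a $\QCoh(\cG)$-module), then the quotient $\cY/\cG$ is $1$-affine. So I would apply this with $\cY = t^{-r}\fg[[t]]dt$ and $\cG = G(O)$.

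First I would check the tameness hypothesis: this is exactly the content of Theorem \ref{t:tame}, which asserts that $t^{-r}\fg[[t]]dt$ is tame with respect to the $G(O)$-action, i.e.\ that the norm functor $\QCoh(t^{-r}\fg[[t]]dt)_{G(O),w} \to \QCoh(t^{-r}\fg[[t]]dt)^{G(O),w}$ is an equivalence. Second I would check $1$-affineness of the space $\cY = t^{-r}\fg[[t]]dt$ itself (before quotienting). This is the only point requiring an independent remark: although $\fg((t))dt$ is a product of an ind-infinite-dimensional affine space and a pro-infinite-dimensional affine space and hence is \emph{not} $1$-affine, the truncated piece $t^{-r}\fg[[t]]dt$ is a genuine affine scheme — it is $\Spec$ of a (pro-finitely-presented) polynomial algebra, as it is the limit of the finite-type affine spaces $t^{-r}\fg[[t]]dt/t^m\fg[[t]]dt$, equivalently an $\aleph_0$-affine space over $k$ in the sense of \S\ref{ss:affine-iso}. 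By the first item of Gaitsgory's theorem recalled in \S\ref{ss:1aff-exs} (any quasi-compact quasi-separated DG scheme is $1$-affine — and an affine scheme certainly qualifies), $t^{-r}\fg[[t]]dt$ is $1$-affine.

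With both hypotheses of Proposition \ref{p:tame-1aff} verified, that proposition yields directly that the prestack quotient $t^{-r}\fg[[t]]dt/G(O)$ is $1$-affine, which is the assertion of the corollary.

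I do not expect any genuine obstacle here: the entire difficulty has been concentrated in Theorem \ref{t:tame}, and the corollary is a purely formal consequence. The only subtlety worth flagging to the reader is the contrast between $t^{-r}\fg[[t]]dt$ (an honest affine scheme, $1$-affine) and the full space of gauge forms $\fg((t))dt$ (an indscheme of ind-infinite type, not $1$-affine), which is precisely why one works with the truncated spaces and only afterward passes to the colimit/limit in $r$ — as noted in the remark following Theorem \ref{t:tame}, tameness of the full $\fg((t))dt$ follows formally, but its $1$-affineness does not, which is why this corollary is stated only for fixed $r$.
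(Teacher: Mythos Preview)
Your proposal is correct and matches the paper's approach exactly: the paper simply states ``From Proposition \ref{p:tame-1aff}, we obtain'' the corollary, and your proof spells out precisely this deduction, supplying the (implicit in the paper) verification that $t^{-r}\fg[[t]]dt$ is an affine scheme and hence 1-affine.
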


The proof of Theorem \ref{t:tame} will occupy the remainder of this section.

\subsection{} 

The main point in proving Theorem \ref{t:tame} is the following.

\begin{prop}\label{p:tame-amplitude}

The global sections functor $\Gamma: \QCoh(t^{-r}\fg[[t]]dt/G(O)) \to \Vect$
has finite cohomological amplitude.

\end{prop}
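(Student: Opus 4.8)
The goal is to show that $\Gamma\colon \QCoh(t^{-r}\fg[[t]]dt/G(O)) \to \Vect$ has finite cohomological amplitude. By Lemma \ref{l:tame-cptopen} (or rather its proof) and the fact that $G(O)/\cK_{\rho}$ is a finite type algebraic group, it suffices to bound the amplitude of $\Gamma\colon \QCoh(t^{-r}\fg[[t]]dt/\cK_{\rho}) \to \Vect$ for a single well-chosen congruence subgroup $\cK_{\rho}$: passing from $\cK_{\rho}$ to $G(O)$ only adds the cohomology of the finite type group $G(O)/\cK_{\rho}$, which is bounded. So fix $r$, and choose $\rho = r+s$ with $s$ large enough that Theorems \ref{t:artin} and \ref{t:jumps} apply, i.e.\ large enough that the geometric fibers of
\[
p\colon t^{-r}\fg[[t]]dt/\cK_{r+s} \to t^{-r}\fg[[t]]dt/t^s\fg[[t]]dt =: B
\]
are smooth Artin stacks of finite type.

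\textbf{The Cousin argument.} The base $B$ is an ind-finite-dimensional affine space, stratified (say by the pole order and other discrete invariants of the leading-term data, or simply by a chosen exhaustive filtration by finite-dimensional affine subspaces and their complements) into locally closed finite-dimensional pieces. On each stratum $B_\alpha \subset B$, the restriction of $p$ is a smooth Artin stack over $B_\alpha$ whose relative dimension is bounded — here I invoke Proposition \ref{p:bdd-dimns}, which says these fiber dimensions are bounded uniformly in terms of $G$ and $r$ alone. For a quasi-compact quasi-separated Artin stack that is smooth over an affine scheme with fibers of dimension $\leq d$, the global sections functor has cohomological amplitude bounded in terms of $d$ (the worst case being $\mathbb{B}$ of a reductive group, contributing amplitude controlled by $\dim$). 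Thus on each stratum the pushforward to $B_\alpha$, and hence to $\Vect$ since $B_\alpha$ is affine finite type, has amplitude bounded by a constant $C$ independent of $\alpha$.

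Now I assemble the global bound via the Cousin/coniveau spectral sequence associated to the stratification of $B$. For $\sF \in \QCoh(t^{-r}\fg[[t]]dt/\cK_{r+s})^{\heart}$, write $\Gamma(\sF)$ as the totalization of the complex whose terms are $\Gamma$ of the $!$-restrictions of $p_*\sF$ to the (locally closed) strata, shifted by codimension. Each term has amplitude $\leq C$ by the previous paragraph, but \emph{a priori} the totalization over infinitely many strata could destroy boundedness. This is where the key structural fact enters: the relevant pushforward $p_*\sF$, after restriction to a fixed finite-dimensional piece of $B$, only involves finitely many strata, and the higher-codimension contributions are killed because we are restricting to that piece — in other words the Cousin filtration on $B$, when tested against any quasi-compact truncation, has finite length, and the transition maps in the inverse/direct system defining $\QCoh(B)$ are such that global sections commutes with the relevant limit. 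Concretely: $t^{-r}\fg[[t]]dt/\cK_{r+s}$ maps to $B$, and one checks that $\Gamma(-) = \colim_m \Gamma(\text{restriction to }p^{-1}(B_{\leq m}))$ where $B_{\leq m}$ is an exhausting sequence of finite-dimensional closed subschemes; on each $B_{\leq m}$ the Cousin complex has finitely many terms each of amplitude $\leq C$, giving amplitude $\leq C + (\text{length})$, but the length is also bounded because a finite-dimensional affine space has a stratification of bounded length — so we get a uniform bound $C'$ independent of $m$, and passing to the colimit (which is $t$-exact, being a filtered colimit along closed embeddings) preserves it.

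\textbf{Main obstacle.} The delicate point is the last one: justifying that the infinite stratification of the ind-scheme $B$ does not accumulate cohomological amplitude, i.e.\ that $\Gamma$ genuinely commutes with the relevant limit/colimit and that the Cousin spectral sequence converges with bounded length on each finite-dimensional stratum. This requires care about (a) the left-completeness and compatibility with filtered colimits of the $t$-structure on $\QCoh(t^{-r}\fg[[t]]dt/\cK_{r+s})$ — available by the remark after the definition of $\QCoh(\bB\cG)$ for quotients of affine schemes by group schemes — and (b) the fact that $p$ is an affine-type-enough morphism that $p_*$ is $t$-exact up to a bounded shift, which again comes from Theorems \ref{t:artin}–\ref{t:jumps} plus Proposition \ref{p:bdd-dimns}. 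Everything else — the reduction to $\cK_{r+s}$, the stratification, the per-stratum bound — is routine; the heart of the matter is organizing the Cousin filtration so that the a priori infinite process terminates with a uniform bound, which is exactly the ``new idea'' flagged in \S\ref{ss:intro-cptgen}.
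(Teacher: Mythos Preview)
Your overall strategy matches the paper's: replace $G(O)$ by a deep enough congruence subgroup $\cK_{r+s}$, use a Cousin decomposition over the leading-terms space, and bound the fibers uniformly via Proposition~\ref{p:bdd-dimns}. But two points in your execution are off, and one of them is exactly the pathology the paper is built around.

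\textbf{The base is finite-dimensional.} The leading-terms space $B = t^{-r}\fg[[t]]dt/t^s\fg[[t]]dt$ is a \emph{finite-dimensional} affine space, of dimension $(r+s)\dim\fg$. It is not an indscheme, and there is no need to exhaust it by finite-dimensional closed subschemes. The Cousin complex on $B$ therefore has finite length from the outset, and your entire ``Main obstacle'' paragraph is addressing a phantom. (You may be conflating $B$ with $t^{-r}\fg[[t]]dt$ itself, which is indeed pro-infinite-dimensional.)

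\textbf{The map is not Artin over strata, only over points.} You assert that over each stratum $B_\alpha \subset B$ the restriction of $p$ is a smooth Artin stack over $B_\alpha$. This is false in general for strata of positive dimension: it is precisely the content of \S\ref{ss:gm-ga} and the Remark after the theorems in \S\ref{ss:geom-finish} that $t^{-r}\fg[[t]]dt/\cK_{r+s}$ is \emph{not} locally of finite type over $B$, even though each geometric fiber is an Artin stack. Theorems~\ref{t:artin} and~\ref{t:jumps} only give you Artin stacks over \emph{field-valued} (schematic) points of $B$. So the Cousin decomposition must be run all the way down to schematic points $\eta \in B$, as the paper does; over each such $\eta$ the fiber is a smooth Artin stack of uniformly bounded dimension, and one then bounds its cohomological dimension by the explicit proposition in \S\ref{ss:tame-amp-finish} on quotients of finite-type affines by unipotent groups. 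The finite length of the Cousin filtration on $B$ (namely $\dim B$) then assembles these pointwise bounds into a global one.

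Finally, a genuine subtlety you skate over: to make the Cousin argument work you need $\Gamma$ to commute with the infinite direct sums appearing in the Cousin resolution. The paper handles this by first showing (\S\ref{ss:gamma-bdd-below}) that $\Gamma$ commutes with colimits bounded uniformly from below, and then using Lemma~\ref{l:postnikov} to reduce the amplitude question to $\QCoh^+$. Only with this in hand can one tensor $\sF \in \QCoh^+$ with the Cousin resolution of $\sO_B$ and pass the direct sums through $\Gamma$.
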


The proof of Proposition \ref{p:tame-amplitude} will be given
in \S \ref{ss:gamma-bdd-below}-\ref{ss:tame-amp-finish}.

\subsection{}\label{ss:gamma-bdd-below}

First, we observe that
$\Gamma$ commutes with colimits bounded uniformly from below.\footnote{A slightly
different version of this argument follows by noting that the
same fact occurs on $\bB G(O)$, and reducing to that
case using affineness of the morphism $t^{-r}\fg[[t]]dt/G(O) \to \bB G(O)$; 
however the corresponding fact for $\bB G(O)$ is proved by 
exactly the same argument as that presented here.} 

Indeed, this follows from $t$-exactness and comonadicity of the functor
$\QCoh(t^{-r}\fg[[t]]dt/G(O)) \to \QCoh(t^{-r}\fg[[t]]dt)$ by a well-known argument.
We include this argument below for completeness.

Let $\pi$ denote the structure map $t^{-r}\fg[[t]]dt \to t^{-r}\fg[[t]]dt/G(O)$.
Suppose $\sF \in \QCoh(t^{-r}\fg[[t]]dt/G(O))^{\geq 0}$. By comonadicity,
we have:

\[
\sF = \underset{[m] \in \bDelta}{\lim} \, (\pi_*\pi^*)^{m+1}(\sF).
\]

\noindent Since $\Gamma$ commutes with limits (being a right adjoint), we
have:

\[
\Gamma(\sF) = 
\underset{[m] \in \bDelta}{\lim} \, \Gamma(t^{-r}\fg[[t]]dt/G(O), (\pi_*\pi^*)^{m+1}(\sF)) =
\underset{[m] \in \bDelta}{\lim} \, \Gamma(t^{-r}\fg[[t]]dt, \pi^*(\pi_*\pi^*)^{m}(\sF)).
\]

\noindent Therefore, we have:

\[
\begin{gathered}
\tau^{\leq n} \Gamma(\sF) = 
\tau^{\leq n} \underset{[m] \in \bDelta}{\lim} \, \Gamma(t^{-r}\fg[[t]]dt, \pi^*(\pi_*\pi^*)^{m}(\sF)) = 
\tau^{\leq n} \underset{[m] \in \bDelta_{\leq n+1}}{\lim} \, 
\Gamma(t^{-r}\fg[[t]]dt, \pi^*(\pi_*\pi^*)^{m}(\sF)).
\end{gathered}
\]

\noindent where $\bDelta_{\leq n+1} \subset \bDelta$ is the subcategory
consisting of totally ordered finite sets of size at most $n+1$. Here
the crucial second equality follows from the fact that 
$\tau^{\leq n} \lim_{\bDelta}$ computes the limit in the 
$(n+1)$-category $\Vect^{[0,n]}$. 

But now this expression visibly
commutes with filtered colimits, since:

\begin{itemize}

\item The $t$-structure on $\Vect$
is compatible with filtered colimits. 

\item $\Gamma$  on 
$t^{-r}\fg[[t]]dt$ commutes with all colimits (by affineness).

\item $\pi_*$ commutes with colimits, since $\pi$ is affine.

\item Our limit is now finite.

\end{itemize}

Finally, we obtain the claim from right completeness of the $t$-structure on $\Vect$.

\subsection{}

Next, we reduce to showing that $\Gamma$ has finite cohomological
amplitude\footnote{To be totally clear: a DG functor
$\sC \to \sD$ between DG categories with $t$-structures 
has \emph{finite cohomological amplitude} if there is an integer $\delta$
for which $F(\sC^{\leq 0}) \subset \sD^{\leq \delta}$ and 
$F(\sC^{\geq 0}) \subset \sD^{\geq -\delta}$.}
when restricted to $\QCoh(t^{-r}\fg[[t]]dt/G(O))^+$, i.e., when
restricted to objects bounded from below.

This follows from the following general lemma (which Gaitsgory
informs us appeared already in a simpler form as \cite{finiteness} Lemma 2.1.4).

\begin{lem}\label{l:postnikov}

Suppose $\sC$ and $\sD$ are DG categories 
equipped with left complete $t$-structures.

Suppose $F: \sC \to \sD$ is a (possibly non-continuous) DG functor.
Suppose that
$F|_{\sC^+}$ has finite cohomological amplitude. 

\begin{enumerate}

\item 

The following conditions are equivalent:

\begin{enumerate}

\item\label{i:coh-amp} $F$ has finite cohomological amplitude.

\item\label{i:postnikov} $F$ commutes with left Postnikov towers, i.e.:
for any $\sF \in \sC$, the morphism:

\[
F(\sF) = F( \underset{n}{\lim} \, \tau^{\geq -n} \sF) \to
\underset{n}{\lim} \,  F(\tau^{\geq -n} \sF)
\]

\noindent is an isomorphism.

\end{enumerate}

\item 

Suppose moreover that $\sC$ and $\sD$ are cocomplete,
the $t$-structures on them are compatible with
filtered colimits, and that 
$F|_{\sC^+}$ commutes with colimits bounded uniformly from
below. 

Then the equivalent conditions above imply that $F$ is continuous.

\end{enumerate}

\end{lem}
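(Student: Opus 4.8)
The plan is to prove the two implications of the first assertion and then the second, using only formal $t$-structure manipulations together with the two left-completeness hypotheses.

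\emph{Proof of $(a)\Rightarrow(b)$.} Suppose $F$ has cohomological amplitude $\delta$. Fix $\sF\in\sC$ and set $\sF_n\coloneqq\tau^{\geq -n}\sF$, so that $\sF=\lim_n\sF_n$ by left completeness of $\sC$. Applying the exact functor $F$ to the fiber sequences $\tau^{\leq -(n+1)}\sF\to\sF\to\sF_n$ yields fiber sequences $F(\tau^{\leq -(n+1)}\sF)\to F(\sF)\to F(\sF_n)$ in which, by the amplitude bound, $F(\tau^{\leq -(n+1)}\sF)\in\sD^{\leq\delta-n-1}$. Since limits preserve fiber sequences, passing to the limit over $n$ identifies the fiber of the comparison map $F(\sF)\to\lim_nF(\sF_n)$ with $\lim_n F(\tau^{\leq -(n+1)}\sF)$. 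For each fixed $m$ the tower $\{H^m F(\tau^{\leq -(n+1)}\sF)\}_n$ is eventually zero (it vanishes once $n\geq\delta-m$), so the associated $\lim$ and $\lim^1$ both vanish, i.e.\ this fiber has vanishing cohomology in every degree; by left completeness of $\sD$ it is therefore zero, and $(b)$ holds.

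\emph{Proof of $(b)\Rightarrow(a)$.} Fix $\delta_0$ so that $F|_{\sC^+}$ has amplitude $\delta_0$. Since $\sC^{\geq 0}\subset\sC^+$ we get $F(\sC^{\geq 0})\subset\sD^{\geq -\delta_0}$ for free. For $\sF\in\sC^{\leq 0}$ each truncation $\tau^{\geq -n}\sF$ lies in $\sC^{\leq 0}\cap\sC^+$, hence $F(\tau^{\geq -n}\sF)\in\sD^{\leq\delta_0}$; by $(b)$, $F(\sF)=\lim_nF(\tau^{\geq -n}\sF)$, and $\sD^{\leq\delta_0}$, being the right orthogonal of $\sD^{\geq\delta_0+1}$, is closed under limits, so $F(\sF)\in\sD^{\leq\delta_0}$. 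Thus $F$ has amplitude $\delta_0$.

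\emph{Proof of the second assertion.} Assume $(a)$, hence $(b)$, together with the cocompleteness hypotheses; as $F$ automatically preserves finite colimits it suffices to treat a filtered diagram $\{\sF_i\}_{i\in I}$ with colimit $\sF$. Let $\delta$ be the amplitude of $F$. Compatibility of the $t$-structures with filtered colimits makes each $\tau^{\geq -n}$ commute with filtered colimits, so for fixed $n$ the diagram $\{\tau^{\geq -n}\sF_i\}_i$ lies in $\sC^{\geq -n}$ with colimit $\tau^{\geq -n}\sF$, whence $F(\tau^{\geq -n}\sF)=\colim_iF(\tau^{\geq -n}\sF_i)$ because $F|_{\sC^+}$ commutes with colimits bounded uniformly from below. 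Now fix a degree $m$ and choose $n\geq\delta-m$. Exactly as in $(a)\Rightarrow(b)$, the maps $F(\sF)\to F(\tau^{\geq -n}\sF)$ and each $F(\sF_i)\to F(\tau^{\geq -n}\sF_i)$ have fibers in $\sD^{\leq\delta-n-1}$ and so induce isomorphisms on $H^m$; combining this with the identity $F(\tau^{\geq -n}\sF)=\colim_iF(\tau^{\geq -n}\sF_i)$ and with compatibility of $H^m$ with filtered colimits, the canonical map $\colim_iF(\sF_i)\to F(\sF)$ becomes on $H^m$ the identity of $\colim_iH^m F(\tau^{\geq -n}\sF_i)$, hence an isomorphism. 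As $m$ was arbitrary, $\colim_iF(\sF_i)\to F(\sF)$ is an isomorphism, so $F$ is continuous.

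I expect the main obstacle to be bookkeeping rather than anything deep: specifically, ensuring that the truncation threshold $n\geq\delta-m$ in the second part is \emph{uniform in $i$} (this is precisely what the ``uniformly bounded below'' hypothesis delivers), and correctly invoking left completeness of $\sD$ — first to turn ``vanishing cohomology in all degrees'' into ``zero'', and second to write $\sF=\lim_n\tau^{\geq -n}\sF$. Everything else is routine.
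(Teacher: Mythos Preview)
Your argument for $(b)\Rightarrow(a)$ contains a genuine error: you claim $\sD^{\leq\delta_0}$ is the right orthogonal of $\sD^{\geq\delta_0+1}$ and hence closed under limits. In fact $\sD^{\leq\delta_0}$ is the \emph{left} orthogonal (the $t$-structure axiom reads $\Hom(\sD^{\leq 0},\sD^{\geq 1})=0$, not the reverse), so it is closed under colimits, not limits; for instance in $D(\bZ)$ one has $\Hom(\bZ/p[-1],\bZ/p)=\Ext^1_{\bZ}(\bZ/p,\bZ/p)\neq 0$ with $\bZ/p[-1]\in\sD^{\geq 1}$ and $\bZ/p\in\sD^{\leq 0}$. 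Thus you cannot pass directly from ``each $F(\tau^{\geq -n}\sF)\in\sD^{\leq\delta_0}$'' to ``$\lim_n F(\tau^{\geq -n}\sF)\in\sD^{\leq\delta_0}$.'' What rescues the conclusion is the special structure of this particular tower: the fiber of each transition map lies in $\sD^{\leq\delta_0-n-1}$, so for fixed $m$ the tower $\tau^{\geq -m}F(\tau^{\geq -n}\sF)$ stabilizes. The paper exploits this by first writing each term as $\lim_m\tau^{\geq-m}(-)$ via left completeness, swapping the two limits, and then identifying $\tau^{\geq-m}F(\sF)$ with a single term of the stabilized tower, which visibly lies in $\sD^{\leq\delta_0}$.

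The same ``swap the limits'' maneuver repairs two smaller gaps. In $(a)\Rightarrow(b)$, your appeal to the Milnor $\lim/\lim^1$ sequence requires countable products in $\sD$ to interact well with the $t$-structure, which is not assumed; instead write each $F(\tau^{\leq-(n+1)}\sF)$ as $\lim_m\tau^{\geq-m}(-)$ and swap, whereupon the inner limit is eventually zero. In part 2, ``isomorphism on every $H^m$'' does not force an isomorphism without right separatedness of $\sD$, which is not among the hypotheses; but your fiber estimate already shows the comparison map is an isomorphism after applying $\tau^{\geq-m}$ for every $m$, and left completeness then finishes, exactly as in the paper.
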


\begin{proof}

To see that \eqref{i:coh-amp} implies \eqref{i:postnikov}:

We compute:

\begin{equation}\label{eq:post}
\underset{n}{\lim} \,  F(\tau^{\geq -n} \sF) =
\underset{n}{\lim} \, \underset{m}{\lim} \,  \tau^{\geq -m} F(\tau^{\geq -n} \sF) =
\underset{m}{\lim} \, \underset{n}{\lim} \, \tau^{\geq -m} F(\tau^{\geq -n} \sF).
\end{equation}

For fixed $m$, we have:

\[
\underset{n}{\lim} \, \tau^{\geq -m} F(\tau^{\geq -n} \sF) = 
\tau^{\geq -m} F(\sF)
\]

\noindent since the limit stabilizes to $F(\sF)$ for $m$ large enough by the
boundedness of the cohomological amplitude of $F$.

Therefore, we compute the right hand side of \eqref{eq:post} as:

\[
\underset{m}{\lim} \, \tau^{\geq -m} F(\sF) = F(\sF)
\]

\noindent by left completeness of the $t$-structure on $\sD$.

Now suppose that \eqref{i:postnikov} holds.

We begin by reversing the above logic to show that the hypothesis \eqref{i:postnikov} 
implies that for every $\sF\in \sC$ and every $m$, we have:

\begin{equation}\label{eq:post-2}
\tau^{\geq -m} F(\sF) \isom
\underset{n}{\lim} \, \tau^{\geq -m} F(\tau^{\geq -n} \sF).
\end{equation}

Because $F|_{\sC^+}$ assumed to be cohomologically bounded,
the limit on the right stabilizes. Let $\sG_m \in \sD$ denote this limit. 
Note that $\sG_m \in \sD^{\geq -m}$ since this subcategory is closed under limits. 
Moreover, $\tau^{\geq -m}(\sG_{m+1}) = \sG_m$ since
each equals $ \tau^{\geq -m} F(\tau^{\geq -n} \sF)$ for $n$ large enough
(i.e., because the limits defining $\sG_{m}$ and $\sG_{m+1}$ stabilize).

Since $m \mapsto \sG_m$ is compatible under truncations, this system
is equivalent to the datum of the object $\sG = \lim_m \sG_m \in \sD$,
by left completeness of the $t$-structure of $\sD$. Therefore, it suffices
to show that:

\[
F(\sF) \to \underset{m}{\lim} \, \sG_m \isom 
\underset{m}{\lim} \, \underset{n}{\lim} \, \tau^{\geq -m} F(\tau^{\geq -n} \sF)
\]

\noindent is an isomorphism. But this is clear, since the right hand side equals:

\[
\underset{n}{\lim} \, \underset{m}{\lim} \, \tau^{\geq -m} F(\tau^{\geq -n} \sF) =
\underset{n}{\lim} \, F(\tau^{\geq -n} \sF) \overset{\eqref{i:postnikov}}{=}
F(\sF).
\]

Now, since we noted that the limit in the right hand side of \eqref{eq:post-2}
stabilizes, this makes it clear that $F$ is cohomologically bounded,
so we see that \eqref{i:postnikov} implies \eqref{i:coh-amp}.

Finally, suppose that 
$F|_{\sC^+}$ commutes with colimits uniformly 
bounded from below and that the $t$-structures are compatible
with filtered colimits. We will show that \eqref{i:postnikov} implies that $F$ is continuous.

We need to show that for
$i \mapsto \sF_i$ a \emph{filtered} diagram, we have:

\[
\underset{i}{\colim} \, F(\sF_i) \isom F(\underset{i}{\colim} \, \sF_i).
\]

\noindent It suffices to show this after applying $\tau^{\geq -m}$ for any
integer $m$. 
Since the $t$-structures are compatible with filtered colimits,
the left hand side of the above becomes $\colim_i \tau^{\geq -m} F(\sF_i)$
upon truncation.

We now compute the right hand side as:

\[
\tau^{\geq -m} F(\underset{i}{\colim} \, \sF_i) \overset{\eqref{eq:post-2}}{=}
\underset{n}{\lim} \, \tau^{\geq -m} F(\tau^{\geq -n} \underset{i}{\colim} \, \sF_i).
\]

\noindent Using the compatibility between filtered colimits and $t$-structures,
and the commutation of $F$ with uniformly bounded colimits, this term now becomes:

\[
\underset{n}{\lim} \, \underset{i}{\colim} \, \tau^{\geq -m} F(\tau^{\geq -n} \sF_i).
\]

\noindent Observe that this limit is eventually constant, since $F|_{\sC^+}$ has
bounded amplitude. This justifies interchanging the limit and the colimit, so that
we can now compute further:

\[
\underset{i}{\colim} \, \underset{n}{\lim} \, \tau^{\geq -m} F(\tau^{\geq -n} \sF_i) 
\overset{\eqref{eq:post-2}}{=} 
\underset{i}{\colim} \, \tau^{\geq -m} F(\sF_i)
\]

\noindent as desired.

\end{proof}

As an auxiliary remark, Lemma \ref{l:postnikov} allows us to deduce the
following result from Proposition \ref{p:tame-amplitude}:

\begin{cor}\label{c:cpt}

$\Gamma$ on $t^{-r}\fg[[t]]dt/G(O)$ commutes with colimits, i.e.,
the structure sheaf of $t^{-r}\fg[[t]]dt/G(O)$ is compact. 

\end{cor}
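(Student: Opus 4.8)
\textbf{Proof proposal for Corollary \ref{c:cpt}.}
The plan is to combine Proposition \ref{p:tame-amplitude} with the general homological criterion of Lemma \ref{l:postnikov}, applied to the functor $F = \Gamma \colon \sC \to \sD$ with $\sC = \QCoh(t^{-r}\fg[[t]]dt/G(O))$ and $\sD = \Vect$. Note first that the two phrasings in the statement are the same assertion: since $\Gamma(\sF) = \Hom_{\QCoh(t^{-r}\fg[[t]]dt/G(O))}(\sO,\sF)$ functorially, the structure sheaf $\sO$ is compact precisely when $\Gamma$ commutes with (filtered, hence all) colimits. So it suffices to prove continuity of $\Gamma$.

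First I would check the hypotheses of Lemma \ref{l:postnikov}. The $t$-structure on $\QCoh(t^{-r}\fg[[t]]dt/G(O))$ is left complete (by the remark recording that quotients of affine schemes by affine group schemes behave like Artin stacks in this respect), and it is compatible with filtered colimits since $\QCoh(t^{-r}\fg[[t]]dt/G(O)) \to \QCoh(t^{-r}\fg[[t]]dt)$ is $t$-exact, conservative, and commutes with colimits; the same properties for $\Vect$ are standard. By Proposition \ref{p:tame-amplitude}, $\Gamma$ has finite cohomological amplitude, so in particular $\Gamma|_{\sC^+}$ does; this is exactly condition \eqref{i:coh-amp} of Lemma \ref{l:postnikov}, which is therefore equivalent to condition \eqref{i:postnikov}. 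Finally, in \S \ref{ss:gamma-bdd-below} it was shown that $\Gamma$ commutes with colimits bounded uniformly from below, which is the remaining hypothesis needed for part (2) of Lemma \ref{l:postnikov}.

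Applying part (2) of Lemma \ref{l:postnikov}, we conclude that $\Gamma$ is continuous; since $\Gamma$ is a DG functor it automatically commutes with finite colimits, hence with all colimits. This gives the corollary. I do not expect any genuine obstacle here: the content has already been extracted in Proposition \ref{p:tame-amplitude} (whose proof is the substantive part, occupying \S\ref{ss:gamma-bdd-below}--\ref{ss:tame-amp-finish}), and the present deduction is a purely formal bookkeeping step around Lemma \ref{l:postnikov}. If anything, the only point requiring a word of care is verifying that the $t$-structure on $\QCoh(t^{-r}\fg[[t]]dt/G(O))$ is left complete and compatible with filtered colimits, which is immediate from $t$-exactness and comonadicity of pullback along $t^{-r}\fg[[t]]dt \to t^{-r}\fg[[t]]dt/G(O)$.
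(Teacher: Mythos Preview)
Your proposal is correct and is exactly the paper's approach: the paper states Corollary \ref{c:cpt} with the one-line justification that ``Lemma \ref{l:postnikov} allows us to deduce the following result from Proposition \ref{p:tame-amplitude},'' and your argument is precisely the unpacking of that sentence (using \S\ref{ss:gamma-bdd-below} for the uniformly-bounded-colimits hypothesis and the left completeness/compatibility with filtered colimits of the $t$-structures).
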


We emphasize that Corollary \ref{c:cpt} is a consequence of the
(yet unproved) Proposition \ref{p:tame-amplitude}, so of course
we will not appeal to it in the course of the argument below.

We will return to Corollary \ref{c:cpt} in \S \ref{s:cpt-gen}.

\begin{rem}

If Corollary \ref{c:cpt} appears innocuous 
given \S \ref{ss:gamma-bdd-below}, do note
that it fails (as does Proposition \ref{p:tame-amplitude}) for $\bB G(O)$
in place of $t^{-r}\fg[[t]]dt/G(O)$, unlike \S \ref{ss:gamma-bdd-below}. 
The reader might also glance at
\cite{dmod-aff-flag} to see these homological subtleties 
play out in the context of Kac-Moody representations.

\end{rem}

\subsection{}

By Theorems \ref{t:artin} and \ref{t:jumps}, we can therefore
can choose $s$ so that the geometric fibers of the map:\footnote{Implicitly, we are
assuming $r>0$ here that the polar part map makes sense; of course this
is fine for our purposes.}

\[
t^{-r}\fg[[t]]dt/\cK_{r+s} \to t^{-r}\fg[[t]]dt/t^s\fg[[t]]dt
\]

\noindent are smooth Artin stacks.

Now observe that it suffices to prove Proposition \ref{p:tame-amplitude}
after replacing $G(O)$ by any congruence subgroup; we will
prove the homological boundedness for global sections
on $t^{-r}\fg[[t]]dt/\cK_{r+s}$ instead.

\subsection{}

Next, we reduce to showing that there is a \emph{uniform} bound (see below)
on the possible cohomological amplitude of 
$\Gamma(t^{-r}\fg[[t]]dt/\cK_{r+s},\sF)$ whenever $\sF$ is set-theoretically
supported\footnote{For a (possibly non-closed) point $\eta \in S$ 
for $S$ a smooth (finite type) scheme, we will say that $\sF$ is
supported on this point if it can be written as a filtered colimit
of sheaves $i_{\eta,*}(\sG)$ for $\sG \in \QCoh(\eta)$ (i.e.,
the DG category of vector spaces with coefficients
in the residue field of $\eta$) for $i_{\eta}: \eta \into S$.

We warn that this is a bad definition for general $S$ and is being
introduced in an ad hoc way to make the language easier in
our current setting.} 
on the fiber of a schematic\footnote{
We use the term \emph{schematic point}
to mean the generic point of an integral subscheme, i.e., a point in the
topological space underlying a scheme in the locally ringed spaces perspective on the
subject.} 
point. 

\begin{rem}

Here by \emph{uniform bound}, we mean that the bound 
should be independent of which fiber we choose.
It is an immediate consequence of Theorem \ref{t:artin} that there is a bound
fiber-by-fiber, but what will remain to show after this
subsection is that we can bound the relevant 
cohomological dimensions uniformly. 

\end{rem}

For notational simplicity, let $\cP$ denote the affine space $t^{-r}\fg[[t]]dt/t^s\fg[[t]]dt$.
The Cousin complex then provides a bounded resolution of $\sO_{\cP}$ by
direct sums of sheaves of the form $\mathscr{Cous}_{\eta}[-\on{ht}(\eta)]$ 
for $\eta \in \cP$ a schematic point and $\mathscr{Cous}_{\eta} \in \QCoh(\cP)^{\heart}$
set-theoretically supported on $\eta$.

Tensoring with the Cousin resolution then implies that any $\sF$ bounded from
below admits a finite filtration where the associated graded terms
are direct sums of sheaves of the form $\sF_{\eta}$ where $\sF_{\eta}$ is supported
on a point $\eta \in \cP$, and where the $\sF_{\eta}$ are bounded
uniformly (in $\eta$) from below.

Because the filtration is finite, and because 
$\Gamma(t^{-r}\fg[[t]]dt/\cK_{r+s},-)$ commutes with direct sums bounded uniformly,
we obtain the desired reduction.

\subsection{Completion of the proof of Proposition \ref{p:tame-amplitude}}\label{ss:tame-amp-finish}

Again using the fact that $\Gamma$ commutes with colimits bounded
uniformly from below, we see that it suffices to treat those
sheaves $\sF$ that are pushed forward from the fiber of
$t^{-r}\fg[[t]]dt/\cK_{r+s}$ at some schematic point in 
$t^{-r}\fg[[t]]dt/t^s\fg[[t]]dt$. 

Therefore, it suffices to show that the cohomological dimension
of the global sections functor is bounded uniformly on the fibers
the map $t^{-r}\fg[[t]]dt/\cK_{r+s} \to t^s\fg[[t]]dt$.

Recall that the dimensions of these fibers are bounded \emph{uniformly}:
this is Proposition \ref{p:bdd-dimns}. Moreover, the dimensions of the automorphism
group at a point is uniformly bounded by $\dim(\fg)$,
since the Lie algebra of the stabilizer subgroup at a point embeds into 
$H_{dR}^0$ of the corresponding connection.

Therefore, the claim follows from the next
result.\footnote{There is a slight finiteness issue to clarify here, since
our stacks occur a priori as quotients of infinite type affine schemes
by prounipotent groups. However, since the quotient is an Artin stack,
when we quotient by a small enough congruence subgroup, we obtain a scheme,
and then by Noetherian descent \cite{thomason-trobaugh} Proposition C.6,
we obtain that the quotient by a small enough congruence subgroup $\cK_N$ 
is finite type affine. Then our stack of interest
is the quotient of this finite type affine by the residual 
$\cK_{r+s}/\cK_N$-action, as desired.}

\begin{prop}

Suppose that $X$ is an affine scheme of finite type and $K$ is a unipotent group
acting on $X$.\footnote{We really
need to extend scalars here and work over base-fields other than the
arbitrary (characteristic zero) ground field $k$.
Of course, this is irrelevant, so for simplicity we just let $k$ denote our
ground field in this proposition.}

Define the stack $\sX \coloneqq X/K$.
Then the cohomological dimension of the global sections functor
$\Gamma(\sX,-)$ is bounded by:

\[
\dim(\sX)+
\underset{x \in \sX \text{ a geometric point}}{\max} 
2\dim(\Aut_{\sX}(x)).
\]

\end{prop}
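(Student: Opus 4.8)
The plan is to reduce first, by a left-completeness argument exactly of the type codified in Lemma~\ref{l:postnikov} (note that $\Gamma(\sX,-)$ is left $t$-exact, since $\QCoh(\sX)\to\QCoh(X)$ is $t$-exact and $X$ is affine), to the problem of bounding $H^k(\sX,\sF)$ for $\sF\in\QCoh(\sX)^{\heart}$. Throughout I would use two model computations as black boxes: that $\Gamma$ on a finite-type algebraic space of Krull dimension $n$ has cohomological dimension $\leq n$ (Grothendieck vanishing), and that $\Gamma$ on $\bB U$, for $U$ a unipotent group of dimension $d$ over a field of characteristic zero, has cohomological dimension exactly $d$ --- the latter because rational cohomology of such a $U$ is computed by the Chevalley--Eilenberg complex of $\Lie(U)$, which has length $d$, so that $\operatorname{RHom}_{\mathsf{Rep}(U)}(k,-)$ has amplitude $[0,d]$.

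The geometric heart of the argument is a stratification. Since $K$ is unipotent, the Kostant--Rosenlicht theorem guarantees all $K$-orbits in $X$ are closed, and in any case $x\mapsto\dim K_x$ is upper semicontinuous, so one gets a finite decomposition $X=\bigsqcup_d X_d$ into locally closed $K$-stable subschemes with $\dim K_x\equiv d$ on $X_d$, where $0\leq d\leq d_{\max}:=\max_x\dim\Aut_\sX(x)$; refining further (stratifying repeatedly along the $K$-stable singular loci) I may assume each $X_d$ is smooth. On the corresponding locally closed substack $\sX_d=X_d/K$ all automorphism groups are unipotent of dimension $d$, and the rigidification $\sX_d\to\ol{\sX}_d$ of the stabilizer gerbe has target a finite-type algebraic space with all geometric fibers of the form $\bB(\text{unipotent of dimension }d)$. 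Since $\sX_d$ is a substack of $\sX$, $\dim\ol{\sX}_d=\dim\sX_d+d\leq\dim\sX+d$, and combining the two model computations (plus flat base change to reduce $\Gamma(\sX_d,-)$ to $\Gamma(\ol{\sX}_d,-)$ composed with the gerbe direct image) gives that $\Gamma(\sX_d,-)$ has cohomological dimension $\leq(\dim\sX+d)+d\leq\dim\sX+2d_{\max}$. This is where the factor of $2$ comes from.

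Finally I would glue the strata. Filtering $\sF\in\QCoh(\sX)^{\heart}$ by the subsheaves of sections supported on the successive closed loci $\sZ_{\geq j}=X_{\geq j}/K$, and inducting on $\dim\Supp(\sF)$, each graded piece is either supported in strictly smaller dimension --- dispatched by the inductive hypothesis --- or, up to such a piece, is pushed forward from one of the smooth strata $\sX_d$, where the bound above applies; the codimension shifts incurred when comparing $\Gamma$ on $\sX$ with $\Gamma$ on a closed substack are absorbed because $\codim(X_d\subset X)+\dim\ol{\sX}_d=\dim\sX+d$. The hard part will be precisely making this last dévissage clean: one must arrange that the relevant closed immersions are regular, so that $i^!$ shifts by exactly the codimension rather than up to $\dim\sX$, and control the Leray contributions of the higher direct images along the open strata (which produces the danger of an off-by-one). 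I expect this bookkeeping, rather than any single conceptual point, to be the main obstacle; I also note that for the application in Proposition~\ref{p:tame-amplitude} only finiteness of the cohomological dimension is needed, so a coarser version of the last step (bounding all $i^!$-shifts by $\dim\sX$ and inducting) already suffices at the cost of a worse constant.
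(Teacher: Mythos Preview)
Your approach is workable but differs from the paper's, which sidesteps exactly the gluing you flag as the hard part. The paper uses unipotence not via Kostant--Rosenlicht but via the fact that every nonzero $K$-representation has a nonzero invariant: given any $K$-stable open $U\subset X$, the ideal of its complement contains a $K$-invariant function $f$, so $X_f\subset U$ is a $K$-stable \emph{affine} open. Taking $U$ the regular locus yields a $K$-stable affine $V$ with smooth stabilizer group scheme; then $V/K\to V/\!/K=\Spec(\sO(V)^K)$ has affine target and fibers $\bB(\text{stabilizer})$, so the cohomological dimension of $V/K$ is at most $\max_x\dim\Aut(x)$, with no Grothendieck-vanishing term. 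The rest is a one-step Noetherian induction on $\dim X$: the complement $Y$ has $\dim(Y/K)\le\dim\sX-1$, and since $V/K\hookrightarrow\sX$ is affine one open--closed triangle gives $\mathrm{cohdim}(\sX)\le\max\{\mathrm{cohdim}(V/K),\,1+\mathrm{cohdim}(Y/K)\}$. Your route makes the factor of $2$ visible earlier and would adapt to non-unipotent $K$; the paper's avoids rigidification on non-affine strata, flat-inertia checks, and the multi-stratum d\'evissage entirely.
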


\begin{proof}

First, note that by replacing $X$ by $X^{red}$, we can assume that $X$ is reduced.
The action of $K$ also preserves irreducible components, so we can assume $X$
is integral.

In the case that $K$ acts transitively on $X$, the result is clear, since (perhaps after
innocuously extending our base-field $k$) $\sX = \bB(\Stab_{K}(x))$ for a 
$k$-point $x \in X$, and then the cohomological dimension
is bounded by the dimension of this stabilizer.

Let $U \subset X$ be a non-zero $K$-stable open subscheme. We claim
that $U$ contains a non-zero $K$-stable \emph{affine} open subscheme.

Indeed, let $Z \subset X$ be the reduced complement to $U$.
By unipotence of $K$, we can find a non-zero 
$K$-invariant function $f$ vanishing on $Z$, since the ideal of functions
vanishing on $Z$ is a non-zero $K$-representation. 

Therefore, we can find an affine $K$-stable open $\emptyset \neq V \subset X$ such that
the group scheme of stabilizers is smooth over $V$ (since we can find some $K$-stable
open for which this is true, namely, the open of \emph{regular} points, for which stabilizer
has the minimal dimension).

We can then compute $\Gamma(V/K,-)$ by pulling a quasi-coherent sheaf back to
$V$ and then taking Lie algebra cohomology with respect to the Lie algebra
of the stabilizer group scheme. Since $V$ is affine, this means that the
cohomological dimension of $V/K$ is at most the dimension of this
stabilizer, which we bound as:

\[
\underset{x \in V/K}{\max} \, \dim (\Aut_{\sX}(x)) \leq  
\underset{x \in \sX}{\max} \, \dim (\Aut_{\sX}(x)) \leq  
\dim(\sX) + \underset{x \in \sX}{\max} \, 2\dim (\Aut_{\sX}(x)) .
\]

\noindent (Here we begin a convention throughout this argument
that e.g. $\max_{x \in \sX}$ assumes $x$ a geometric point.)

Now let $Y \subset X$ be the reduced complement to $V$, and note
that $\dim(Y) < \dim(X)$.
By Noetherian descent, we can assume the result holds for $\sY \coloneqq Y/K$. 
Then by an easy argument (c.f. \cite{finiteness} Lemma 2.3.2), the cohomological
dimension of $X/K$ is then bounded by:\footnote{For the comparison with
\cite{finiteness}, note that $V/K \into X/K$ is affine, so the pushforward is
$t$-exact.}

\[
\begin{gathered}
\max \big\{
\dim(\sX) +
\underset{x \in \sX}{\max} \,
 2\dim(\Aut_{\sX}(x)), \,
1 + \dim(\sY) + \underset{y \in \sY}{\max} \, 
2\dim(\Aut_{\sY}(x))
\big\} \leq \\
\dim(\sX)  + 
\underset{x \in \sX}{\max} 
\dim(2\Aut_{\sX}(x))
\end{gathered}
\]

\noindent as desired.

\end{proof}

\subsection{Digression: $t$-structures and monadicity}

We digress for a moment to discuss the relationship between $t$-structures
and monadicity. This is well-known, and appears in some form in both
\cite{higheralgebra} and \cite{shvcat}, but we include it here because 
Corollary \ref{c:monad-tstr-iterates} is not quite stated in a conveniently referenced way in either
source.

We will use the following general (and well-known) results to check monadicity.

\begin{lem}\label{l:geom-real-tstr}

Suppose that
$\sC$ and $\sD$ are (possibly non-cocomplete) DG categories equipped
with left complete $t$-structures.
Suppose that $G:\sC \to \sD$ is right $t$-exact up to cohomological shift.

Then $\sC$ admits geometric realizations of simplicial diagrams bounded uniformly from
above, and $G$ preserves these geometric realizations. I.e., the functor 
$\sC^{\leq 0} \to \sD$ commutes with geometric realizations.

\end{lem}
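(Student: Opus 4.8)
The plan is to prove Lemma \ref{l:geom-real-tstr} directly from the definition of geometric realization as a colimit over $\bDelta^{op}$, reduced to a colimit over the truncations $\bDelta_{\leq n}^{op}$ using the boundedness hypothesis and left completeness. First I would recall that for a simplicial object $\sF_\bullet: \bDelta^{op} \to \sC$, the geometric realization is $\colim_{\bDelta^{op}} \sF_\bullet$, and that this colimit, if it exists, can be computed via the skeletal filtration: $|\sF_\bullet| = \colim_n \on{sk}_n |\sF_\bullet|$, where each $\on{sk}_n |\sF_\bullet|$ is built from $\sF_0,\ldots,\sF_n$ by finitely many pushouts (latching object constructions). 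Since $\sC$ is a DG (stable) category, it has finite colimits, so each $\on{sk}_n|\sF_\bullet|$ exists in $\sC$; the only issue is the existence of the sequential colimit $\colim_n \on{sk}_n|\sF_\bullet|$.

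The key point is the boundedness hypothesis: if $\sF_\bullet$ is bounded uniformly from above, say $\sF_m \in \sC^{\leq 0}$ for all $m$, then since the $n$-th skeleton is built from $\sF_0, \ldots, \sF_n$ by $n$-fold iterated cofibers and shifts (the normalized/Dold-Kan picture), one checks that $\on{cofib}(\on{sk}_{n-1}|\sF_\bullet| \to \on{sk}_n|\sF_\bullet|)$ is the $n$-fold shift $L_n[n]$ of the $n$-th latching-quotient object $L_n \in \sC^{\leq 0}$, hence lies in $\sC^{\leq -n}$. Therefore, for any fixed cohomological degree $d$, the maps $\tau^{\geq d}\on{sk}_{n-1}|\sF_\bullet| \to \tau^{\geq d}\on{sk}_n|\sF_\bullet|$ are isomorphisms for $n > -d$ (using that $\sC^{\leq -n}$ contributes nothing in degrees $\geq d$ once $-n < d$). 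So the sequence $\on{sk}_n|\sF_\bullet|$ stabilizes after truncation in every degree; by left completeness of the $t$-structure on $\sC$, the system $\{\on{sk}_n|\sF_\bullet|\}_n$ has a limit, which is also a colimit (the connecting maps being eventually isomorphisms after truncation, the colimit in the $(n+1)$-category $\sC^{[d-1,d]}$ agrees with the eventual value), and this object is the desired geometric realization $|\sF_\bullet| \in \sC^{\leq 0}$.

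Next I would address $G$. Since $G$ is right $t$-exact up to shift, there is an integer $\delta$ with $G(\sC^{\leq 0}) \subset \sD^{\leq \delta}$, so $G(\sF_\bullet)$ is again a simplicial object bounded uniformly from above in $\sD$, and $\sD$ is also left complete; hence $|G(\sF_\bullet)|$ exists by the same argument. To see $G|_{\sC^{\leq 0}}$ preserves the realization, note that $G$, being a DG functor, automatically commutes with the finite colimits building each $\on{sk}_n$, so $G(\on{sk}_n|\sF_\bullet|) = \on{sk}_n|G(\sF_\bullet)|$. It then remains to show $G$ commutes with the stabilizing sequential colimit $\colim_n \on{sk}_n(-)$; since this colimit is computed degreewise by the stabilization argument (in each cohomological degree the tower is eventually constant, and $G$ shifts amplitude by a bounded amount so $G$ of the tower is also eventually constant in each degree), applying $\tau^{\geq d} \circ G$ and using right $t$-exactness up to shift shows $G$ of the colimit and the colimit of $G$ agree after every truncation; left completeness of $\sD$ then gives the isomorphism on the nose. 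The main obstacle I anticipate is being careful with the skeletal/latching bookkeeping — namely verifying cleanly that the cofiber of consecutive skeleta lives in $\sC^{\leq -n}$ — but this is the standard Dold–Kan estimate and should go through without difficulty; everything else is a formal consequence of left completeness and the two boundedness hypotheses.
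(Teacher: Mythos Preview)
Your proposal is correct. The paper does not give an argument of its own here: after reducing to the case where $G$ is right $t$-exact, it simply cites Lurie's \emph{Higher Algebra}, Lemma~1.3.3.11(2). Your direct argument via the skeletal filtration, the Dold--Kan estimate $\on{cofib}(\on{sk}_{n-1}\to\on{sk}_n)\simeq N_n[n]\in\sC^{\leq -n}$, and assembly via left completeness is essentially the content of that reference.

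One phrasing worth tightening: saying the tower $\{\on{sk}_n|\sF_\bullet|\}$ ``has a limit, which is also a colimit'' is misleading, since the limit of an increasing tower $X_0\to X_1\to\cdots$ is just $X_0$. The clean statement is that left completeness identifies $\sC$ with $\lim_d \sC^{\geq d}$ along the left adjoints $\tau^{\geq d}$; the levelwise towers $\tau^{\geq d}(\on{sk}_n)$ are eventually constant, hence have colimits in each $\sC^{\geq d}$, and since the transition functors are left adjoints these levelwise colimits are compatible and assemble to the colimit in $\sC$. Your argument for preservation by $G$ is fine as written: $\on{cofib}(\on{sk}_n\to|\sF_\bullet|)\in\sC^{\leq -n-1}$, so its image under $G$ lies in $\sD^{\leq -n-1+\delta}$, and left completeness of $\sD$ finishes the identification.
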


\begin{proof}

We can assume $G$ is right $t$-exact, and then this result is 
\cite{higheralgebra} Lemma 1.3.3.11 (2).

\end{proof}

\begin{cor}\label{c:monad-tstr-iterates}

Suppose that $G: \sC \to \sD$ is a conservative DG functor 
between DG categories with left and right complete $t$-structures.
Suppose moreover that $G$ admits a left adjoint $F$ such that:

\begin{itemize}

\item The functors $F$ and $G$ are of bounded cohomological amplitude.

\item The upper amplitude of the functors $(FG)^n$ is bounded independently of $n$,
i.e., there is an integer $N$ such that each functor $(FG)^n$ $(n\geq 0)$ maps
$\sC^{\leq 0}$ into $\sC^{\leq N}$.

\end{itemize}

Then $G$ is monadic.

\end{cor}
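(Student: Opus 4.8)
The plan is to deduce monadicity from the $\infty$-categorical Barr--Beck theorem (\cite{higheralgebra}, \S 4.7.3): since $G$ is conservative and admits a left adjoint $F$, it is enough to produce geometric realizations of $G$-split simplicial objects in $\sC$ which $G$ preserves. The version I would actually verify is the equivalent bar-resolution reformulation, carried out through the comparison functor to $(GF)\mod(\sD)$: it suffices to show that for every $\sF \in \sC$ the bar resolution $\Bar_\bullet(FG,\sF)$, with $n$-th term $(FG)^{n+1}(\sF)$, admits a geometric realization in $\sC$, that $G$ preserves this realization, and that the augmentation $|\Bar_\bullet(FG,\sF)| \to \sF$ is an equivalence, together with the mirror statement for the simplicial objects $n \mapsto (FG)^n(F\sG)$, $\sG \in \sD$, which controls the essential image of the comparison functor.

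First I would dispatch the case of $\sF$ bounded above. If $\sF \in \sC^{\leq j}$, then since $F$ and $G$ are DG functors the hypothesis that $(FG)^n$ carries $\sC^{\leq 0}$ into $\sC^{\leq N}$ for a fixed $N$ gives, after a cohomological shift, that $(FG)^{n+1}$ carries $\sC^{\leq j}$ into $\sC^{\leq N+j}$ for every $n$; hence $\Bar_\bullet(FG,\sF)$ is a simplicial object of $\sC$ bounded uniformly from above. By Lemma \ref{l:geom-real-tstr} applied to $\id_\sC$, left completeness of the $t$-structure on $\sC$ then guarantees that $|\Bar_\bullet(FG,\sF)|$ exists in $\sC$; applied to $G$, which is right $t$-exact up to a shift because it has bounded amplitude, the same lemma shows $G$ preserves this realization. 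Consequently $G|\Bar_\bullet(FG,\sF)| \isom |(GF)^{\bullet+1}G\sF|$, and this augmented simplicial object of $\sD$ is split by the unit $\id_\sD \to GF$, so its realization is $G\sF$ via the augmentation. Since $G$ is conservative, $|\Bar_\bullet(FG,\sF)| \to \sF$ is an equivalence. The simplicial objects $n \mapsto (FG)^n(F\sG)$ for $\sG$ bounded above are handled identically ($F\sG$ is again bounded above and $(FG)^n$ is uniformly bounded above), and $G$ sends their realization to the bar resolution of the corresponding free $(GF)$-module; this already proves that the comparison functor restricts to an equivalence on bounded-below objects.

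Finally I would bootstrap from bounded objects to all of $\sC$, using left (and right) completeness of the $t$-structures and the bounded amplitude of $F$ and $G$: truncating the bar resolution and its $G$-image at each coconnective level $\tau^{\geq -m}$, the uniform upper bound on $(FG)^n$ forces the truncated realizations to stabilize, so they assemble --- by left completeness, exactly in the style of the proof of Lemma \ref{l:postnikov} --- into the statement for an arbitrary $\sF$, and the mirror statement for the essential image is reassembled the same way. The step I expect to be the main obstacle is precisely this last reduction: $|\Bar_\bullet(FG,\sF)|$ is a colimit and does not commute on the nose with the limits expressing left completeness, so one cannot simply write $\sF = \lim_m \tau^{\geq -m}\sF$ and pass the realization through; the argument must be run truncation-by-truncation, and it is exactly the uniform boundedness of the upper amplitude of the $(FG)^n$ --- rather than any finer information --- that makes the truncated pieces stabilize and the reassembly succeed. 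Everything else is formal manipulation with the bar construction.
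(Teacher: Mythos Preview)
Your treatment of bounded-above $\sF$ is correct and coincides with the paper's: the uniform upper bound on $(FG)^n$ places all bar terms $(FG)^{n+1}(\tau^{\leq m}\sF)$ in $\sC^{\leq N+m}$ independently of $n$, Lemma~\ref{l:geom-real-tstr} applies, and conservativity of $G$ together with the $G$-splitting finishes.

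The gap is in your passage to arbitrary $\sF$. You propose to truncate with $\tau^{\geq -m}$ and reassemble via left completeness, and you correctly flag the limit--colimit interchange as the obstacle, but the ``stabilization'' you invoke does not follow from the hypotheses. For $\tau^{\geq -m}$ of the partial realizations to stabilize one would need the bar terms $(FG)^{n+1}(\sF)$ to be uniformly bounded above, so that successive latching contributions eventually land in $\sC^{<-m}$; but the hypothesis gives such a bound only when $\sF$ itself is bounded above. For unbounded $\sF$ there is no upper bound available, and the argument as written does not close.

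The paper avoids the limit--colimit issue entirely by truncating from the other side. Each individual $(FG)^{n+1}$ has bounded cohomological amplitude (simply because $F$ and $G$ do), so the dual of Lemma~\ref{l:postnikov}, using right completeness, gives $(FG)^{n+1}(\sF)=\colim_m\,(FG)^{n+1}(\tau^{\leq m}\sF)$ for every $\sF$. Then
\[
\underset{[n]\in\bDelta^{op}}{\colim}\,(FG)^{n+1}(\sF)
=\underset{[n]}{\colim}\,\underset{m}{\colim}\,(FG)^{n+1}(\tau^{\leq m}\sF)
=\underset{m}{\colim}\,\underset{[n]}{\colim}\,(FG)^{n+1}(\tau^{\leq m}\sF)
=\underset{m}{\colim}\,\tau^{\leq m}\sF=\sF,
\]
the interchange now being of two colimits and hence automatic, the third equality being the bounded case already established, and the last step being right completeness. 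So it is right completeness, not left, that drives the extension to all of $\sC$; the uniform bound on the upper amplitude of $(FG)^n$ is used only in the bounded step, while the extension uses only that each iterate separately has bounded amplitude.
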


\begin{rem}

For example, the last two hypotheses hold if $F$ and $G$ are both $t$-exact.
However, we will see below that one can sometimes arrange this
situation \emph{without} $t$-exactness of both functors. 

\end{rem}

\begin{proof}[Proof of Corollary \ref{c:monad-tstr-iterates}]

By the Barr-Beck theorem, it suffices to show that for every $\sF \in \sC$, the map
(coming from the bar resolution):

\[
\underset{[n]\in \bDelta^{op}} {\colim} \, (FG)^{n+1} (\sF) \to \sF
\]

\noindent is an isomorphism.

We have that $G|_{\sC^{\leq 0}}$ commutes with
geometric realizations by Lemma \ref{l:geom-real-tstr}.

For $\sF \in \sC$ as above and for any $m \geq 0$, we claim that the map:

\[
\underset{[n] \in \bDelta^{op}}{\colim} \, (FG)^{n+1} \tau^{\leq m} \sF \to
\tau^{\leq m} \sF
\]

\noindent is an isomorphism. Indeed, by conservativeness,
it suffices to see this after applying $G$. This geometric realization is of terms
bounded uniformly from above by hypothesis on the endofunctors $(FG)^{n+1}$, 
so the colimit commutes with $G$; then since this simplicial
diagram is $G$-split, we obtain the claim.

Because $(FG)^{n+1}$ is cohomologically bounded for each $n$ (since
$F$ and $G$ are),
for every $\sF \in \sC$ we have:

\[
(FG)^{n+1} (\sF) =
\underset{m}{\colim} \,
(FG)^{n+1} \tau^{\leq m} \sF
\] 

\noindent by (the dual to) Lemma \ref{l:postnikov}. 
Combining this with the above, we compute:

\[
\begin{gathered}
\underset{[n] \in \bDelta^{op}}{\colim} \,
(FG)^{n+1} (\sF) =
\underset{[n] \in \bDelta^{op}}{\colim} \, \underset{m}{\colim} \,
(FG)^{n+1} \tau^{\leq m} \sF = 
 \underset{m}{\colim} \,
\underset{[n] \in \bDelta^{op}}{\colim} \,
(FG)^{n+1} \tau^{\leq m} \sF = \\
\underset{m}{\colim} \, \tau^{\leq m} \sF = \sF
\end{gathered}
\] 

\noindent as desired. 

\end{proof}

\subsection{}\label{ss:tame-pf}

We now prove Theorem \ref{t:tame}.

\begin{proof}[Proof of Theorem \ref{t:tame}]

Let $\pi$ denote the morphism $t^{-r}\fg[[t]]dt \to t^{-r}\fg[[t]]dt/G(O)$.

\step 

By the Beck-Chevalley method (c.f. \cite{shvcat} Appendix C), the 
\emph{non-continuous} right adjoint to the
canonical functor $\QCoh(t^{-r}\fg[[t]]dt) \to \QCoh(t^{-r}\fg[[t]]dt)_{G(O),w}$
is monadic.

Let $\pi^?$ denote the (non-continuous) right adjoint 
$\pi^?$ to $\pi_*$. One easily checks (c.f. \cite{shvcat} 
\S 6.3, especially Proposition 6.3.7) that the monads on 
$\QCoh(t^{-r}\fg[[t]]dt)$ corresponding to $\pi^?\pi_*$ and to
the monad coming from coinvariants coincide, compatibly with
the norm functor comparing invariants and coinvariants.

Therefore, it suffices to show that $\pi^?$ is monadic.

\step

We claim that $\pi^?$ is conservative. It is equivalent to say that $\pi_*$ generates
the target under colimits, which is the form in which we will check this result.

First, note that we can replace $G(O)$ by any congruence subgroup. Indeed,
by \cite{shvcat} Proposition 6.2.7 and 1-affineness of the classifying stack
of a finite type algebraic group $\vG$, $\Av_*^{\vG,w}: \sC \to \sC^{\vG,w}$ generates
the target under colimits for any $\QCoh(\vG)$-module category $\sC$.

Therefore, by Theorems \ref{t:artin} and \ref{t:jumps}
it suffices to show this result for $t^{-r}\fg[[t]]dt/\cK_{r+s}$ where we
have a polar part map to $t^{-r}\fg[[t]]dt/t^s\fg[[t]]dt$ with geometric fibers
smooth Artin stacks. 

Note that $\pi_*$ is a morphism of $\QCoh(t^{-r}\fg[[t]]dt/t^s\fg[[t]]dt)$-module
categories. Moreover, as for any smooth scheme, 
$\QCoh(t^{-r}\fg[[t]]dt/t^s\fg[[t]]dt)$ is generated under colimits by skyscraper
sheaves at geometric points. Therefore, it suffices to show that
the morphism $\pi_*$ generates under colimits when restricted to any geometric
fiber.

But on these geometric fibers, the map $\pi$ base-changes to
$\widetilde{\pi}: S \to S/K$ where $S$ is an affine scheme
(the geometric fiber of $t^{-r}\fg[[t]]dt$), $K$ is a prounipotent group scheme
and $S/K$ is a smooth (finite-dimensional) Artin stack. 

By Noetherian descent (\cite{thomason-trobaugh} Proposition C.6),
there is a normal compact open subgroup $K_0 \subset K$ such
that $S/K_0$ is an affine scheme of finite type. Then
$S \to S/K_0$ is a $K_0$-torsor. Moreover, this torsor
is necessarily trivial because $K_0$ is prounipotent
and $S/K_0$ is affine. Therefore, the pushforward obviously generates under
colimits in this case. 

Then $S/K_0 \to S/K$ generates under colimits
using the fact that $K/K_0$ is finite type, and using \cite{shvcat} Proposition 6.2.7 once again. 

\step

Next, we claim that $\pi^?$ has bounded cohomological amplitude.

Note that:

\[
\ul{\Hom}_{\QCoh(t^{-r}\fg[[t]]dt)}(\sO_{t^{-r}\fg[[t]dt},\pi^?(\sF)) =
\ul{\Hom}_{\QCoh(t^{-r}\fg[[t]]dt/G(O)}(\pi_*(\sO_{t^{-r}\fg[[t]dt}),\sF)
\in \Vect
\]

\noindent so it suffices to show that the latter complex is bounded.

But observe that $\pi_*(\sO_{t^{-r}\fg[[t]dt})$ is an ind-finite dimensional
vector bundle under a countable direct limit: indeed, it is obtained by
pullback from the regular representation in $\QCoh(\bB G(O))$ 
(i.e., the pushforward of $k \in \Vect$ under $\Spec(k) \to \bB G(O)$).

By Proposition \ref{p:tame-amplitude} (and a dualizability argument),
$\ul{\Hom}_{\QCoh(t^{-r}\fg[[t]]dt/G(O)}$ out of any finite rank vector bundle
has cohomological amplitude bounded independently of the vector bundle.

Note $\ul{\Hom}_{\QCoh(t^{-r}\fg[[t]]dt/G(O)}(\pi_*(\sO_{t^{-r}\fg[[t]dt}),-)$
is a countable limit of such functors. Since the limit is countable, 
the ``$R\lim$" aspect can increase the cohomological amplitude by at most $1$,
so this functor also has bounded cohomological amplitude. 

\step

Finally, we claim that $(\pi_*\pi^?)^n$ has cohomological amplitude
bounded independently of $n$. Note that this suffices by Corollary \ref{c:monad-tstr-iterates}.

By the previous step, and since $\pi_*$ is $t$-exact
(by affineness), it suffices to show that $\pi^?\pi_*$ is $t$-exact. 
Observe that:

\begin{equation}\label{eq:?-basechange}
\pi^?\pi_* = \act_* p_2^?
\end{equation}

\noindent where the maps here are 
$G(O) \times t^{-r}\fg[[t]]dt \overset{p_2}{\underset{\act}{\rightrightarrows}} t^{-r}\fg[[t]]dt$.
Indeed, the identity \eqref{eq:?-basechange} follows immediately 
from the base-change between upper-* and lower-* functors.

Since $\act_*$ is $t$-exact (by affineness of $G(O)$), we need to see that
$p_2^?$ is $t$-exact. This follows by explicit calculation:
it is equivalent to see that: 

\[
\ul{\Hom}_{\QCoh(t^{-r}\fg[[t]]dt)}(p_{2,*}(\sO_{G(O)\times t^{-r}\fg[[t]]dt}),-) =
\ul{\Hom}_{\QCoh(t^{-r}\fg[[t]]dt)}(\Gamma(G(O),\sO_{G(O)}) \otimes \sO_{t^{-r}\fg[[t]]dt},-)
\]

\noindent is $t$-exact. Clearly $\Gamma(G(O),\sO_{G(O)}) \otimes \sO_{t^{-r}\fg[[t]]dt}$
is a free quasi-coherent sheaf on an affine scheme, giving the claim.

\end{proof}

%%%%%%%%%%%%%%%%%%%%
%%%%%%%%%%%%%%%%%%%%
%%%%%%%%%%%%%%%%%%%%

\section{Compact generation}\label{s:cpt-gen}

%%%%%%%%%%%%%%%%%%%%
%%%%%%%%%%%%%%%%%%%%
%%%%%%%%%%%%%%%%%%%%

\subsection{}

In this section, we discuss applications of the results
of \S \ref{s:tame} to questions of compact generation.

This material logically digresses from the overall goal of
proving our main theorem (c.f. \S \ref{ss:mainthm-statement}).
However, it is a simple
application of the results of the previous section, so we include it here.

\subsection{}

First, we give the following result, valid for any affine algebraic
group $G$.

\begin{prop}\label{p:tame-cpt-gen}

For every $r \geq 0$, $\QCoh(t^{-r}\fg[[t]]dt/G(O))$ is compactly
generated. Moreover, an object in this category
is compact if and only if it is perfect. 

In particular, $\QCoh(t^{-r}\fg[[t]]dt/G(O))$ is rigid symmetric monoidal.

\end{prop}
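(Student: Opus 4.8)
The plan is to reduce the statement to the key homological input of the previous section, namely that global sections on $t^{-r}\fg[[t]]dt/G(O)$ is of finite cohomological amplitude (Proposition \ref{p:tame-amplitude}), together with the observation (Corollary \ref{c:cpt}) that $\sO$ is compact there. Write $\sX \coloneqq t^{-r}\fg[[t]]dt/G(O)$ and let $\pi: t^{-r}\fg[[t]]dt \to \sX$ be the projection. First I would record that $\sX$ has affine diagonal: indeed $\pi$ is affine (it is a $G(O)$-torsor composed with nothing worse), and $t^{-r}\fg[[t]]dt$ is affine, so the diagonal $\sX \to \sX \times \sX$ is affine by the usual argument. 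Consequently, for $\sF, \sG \in \QCoh(\sX)$ with $\sF$ perfect (dualizable, with dual $\sF^\vee$), one has $\ul{\Hom}_{\QCoh(\sX)}(\sF,\sG) = \Gamma(\sX, \sF^\vee \otimes \sG)$, and since $\Gamma(\sX,-)$ commutes with colimits (Corollary \ref{c:cpt}) and $\sF^\vee \otimes -$ does too, $\ul{\Hom}(\sF,-)$ commutes with colimits; hence \emph{every perfect object is compact}.

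Next I would prove compact generation, i.e. that the perfect objects generate. By comonadicity of $\pi^*: \QCoh(\sX) \to \QCoh(t^{-r}\fg[[t]]dt)$ (Beck–Chevalley, as used throughout \S\ref{s:tame}), together with the fact that $\pi_* = \Av^w_*$ generates the target under colimits whenever $\pi$ is affine — more precisely, $\pi_*\pi^*(\sF)$ resolves $\sF$ via the cobar/comonadic tower, so $\sF$ lies in the colimit-closure of the image of $\pi_*$ — it suffices to show that $\pi_*(\sH)$ lies in the cocomplete subcategory generated by perfect objects for every $\sH \in \QCoh(t^{-r}\fg[[t]]dt)$. Since $t^{-r}\fg[[t]]dt$ is an (ind-pro-)affine space, $\QCoh$ of it is generated under colimits by $\sO$, hence it suffices to treat $\sH = \sO$; but $\pi_*(\sO) = \pi_*\pi^*(\sO_\sX)$, and by the comonadic resolution $\sO_\sX$ itself is built from $(\pi_*\pi^*)^{\bullet+1}(\sO_\sX)$, so it is enough to know $\pi_*(\sO_{t^{-r}\fg[[t]]dt})$ is compact. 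That last fact follows because $\pi_*(\sO)$ is (pulled back from) the regular representation $\sO_{G(O)} \in \QCoh(\bB G(O))$, which as noted in \S\ref{s:tame} is an ind-finite dimensional vector bundle presented as a \emph{countable} filtered colimit of finite rank vector bundles $\{E_i\}$; each $E_i$ is perfect hence compact on $\sX$ by the first paragraph, and a countable filtered colimit of compacts along \emph{split} monomorphisms (which is the case here, since each $E_i \to E_{i+1}$ splits) is compact. This shows $\sO_\sX$ is compact (re-deriving Corollary \ref{c:cpt}) and simultaneously that $\pi_*(\sO)$ lies in the perfect-generated subcategory; running the comonadic resolution then shows the perfects generate $\QCoh(\sX)$.

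Now I would extract the "compact $\iff$ perfect" equivalence. One direction is the first paragraph. Conversely, if $\sF$ is compact, then since the perfect objects generate and are themselves compact, $\sF$ is a retract of a finite colimit of perfect objects, hence perfect (the perfect objects, being the thick subcategory generated by $\sO$, are closed under finite colimits and retracts). Finally, for rigidity of the symmetric monoidal category $(\QCoh(\sX), \otimes, \sO_\sX)$: by \cite{dgcat} (rigidity criterion), a compactly generated symmetric monoidal DG category is rigid provided the unit $\sO_\sX$ is compact and every compact object is dualizable. The unit is compact by the above. And every compact object is perfect, hence dualizable (perfect objects of $\QCoh$ of any prestack are dualizable — this is formal, $\sF \mapsto \ul{\Hom}(\sF, \sO)$ supplies the dual, with evaluation and coevaluation checked locally). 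Hence $\QCoh(\sX)$ is rigid symmetric monoidal.

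The main obstacle is the compact generation step, specifically controlling $\pi_*(\sO)$: everything hinges on the identification of $\pi_*(\sO_{t^{-r}\fg[[t]]dt})$ with (the pullback of) the regular representation $\sO_{G(O)}$ and on the structural fact that $\sO_{G(O)} \in \QCoh(\bB G(O))$ is a \emph{countable} increasing union of finite rank vector bundles with split transition maps — this is what lets the finite-amplitude result of \S\ref{s:tame} upgrade from "$\Gamma$ bounded" to "$\sO_\sX$ compact" and then to "perfects generate". Once $\sO_\sX$ is known compact, the diagonal-is-affine argument makes all perfects compact and the comonadic descent argument closes the loop; these latter pieces are routine.
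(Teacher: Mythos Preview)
There is a genuine error in your compact generation step. The claim that ``a countable filtered colimit of compacts along split monomorphisms is compact'' is false: already in $\Vect$, the object $k^{\oplus \infty} = \colim_n k^{\oplus n}$ is a filtered colimit of compacts along split monomorphisms but is not compact. In fact $\pi_*(\sO_{t^{-r}\fg[[t]]dt})$ is genuinely \emph{not} compact in $\QCoh(\sX)$: it is an infinite-rank vector bundle, and once we know (as you argue in the first paragraph) that compact implies perfect, it cannot be compact. Consequently your ``re-derivation'' of Corollary~\ref{c:cpt} from this step also fails; that corollary really does require the finite-amplitude input of Proposition~\ref{p:tame-amplitude} and cannot be obtained by the formal maneuver you propose.

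The good news is that this false claim is entirely unnecessary for the proof, and you already state the fact that does the job. You want to show that perfect objects generate under colimits. You correctly reduce (via the fact, proved in Step~2 of the proof of Theorem~\ref{t:tame}, that the image of $\pi_*$ generates under colimits) to showing that $\pi_*(\sO)$ lies in the colimit-closure of perfects. But this is immediate from what you wrote: $\pi_*(\sO) = \rho^*(\sO_{G(O)}) = \colim_i \rho^*(E_i)$ with each $\rho^*(E_i)$ a finite-rank vector bundle, hence perfect. No compactness of the colimit is needed. This is exactly the paper's argument (the paper phrases it via the filtration $\sO_{G(O)} = \colim_{\cK} \sO_{G(O)/\cK}$ and further observes, using unipotence of $\Ker(G(O)/\cK \to G)$, that one can take pullbacks of finite-dimensional $G$-representations as the generating set, but this refinement is not needed for the proposition as stated). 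A small additional remark: your sentence ``$\pi_*$ generates \ldots whenever $\pi$ is affine'' is not right as stated, and your justification via the comonadic cobar resolution expresses $\sF$ as a \emph{limit}, not a colimit; simply cite the conservativity of $\pi^?$ established in the proof of Theorem~\ref{t:tame}. With these corrections the rest of your outline (compact $\Leftrightarrow$ perfect, rigidity) is fine.
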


\begin{proof}

By dualizability, perfect objects are compact if and only 
if the structure sheaf is compact, and this holds
in our case by Corollary \ref{c:cpt}.

Now let $\rho$ denote the structure map 
$t^{-r}\fg[[t]]dt/G(O) \to \bB G(O) \to \bB G$. We claim that
the objects $\rho^*(V)$ for $V \in \Rep(G)$ a bounded complex
of finite-dimensional representations form a set of compact generators.
Obviously this would imply that $\QCoh(t^{-r}\fg[[t]]dt/G(O))$ is compactly
generated by perfect objects, and therefore the two notions would
coincide as desired.

We now claim that these objects generate under colimits.
Indeed, as in the proof of Theorem \ref{t:tame}, 
$\QCoh(t^{-r}\fg[[t]]dt/G(O))$ is generated under colimits
and shifts by $\pi_*(\sO_{t^{-r}\fg[[t]]dt/G(O)}) = 
\rho^*(\sO_{G(O)})$. Remarks on the notation:
as in \S \ref{s:tame}, $\pi$ is the projection map
$t^{-r}\fg[[t]]dt \to t^{-r}\fg[[t]]dt/G(O)$, and we
are letting $\sO_{G(O)}$ denote the regular representation
of $G(O)$.

Then observe that $\sO_{G(O)}$ is the colimit of 
$\sO_{G(O)/\cK}$ for $\cK$ a congruence subgroup. 
Since $\Ker(G(O)/\cK \to G)$ is unipotent, we then see that
$\sO_{G(O)/\cK}$ lies in the category generated under colimits
by representations of $G$.

\end{proof}

\subsection{}

We readily deduce the following.

\begin{cor}\label{c:cptgen-int}

$\QCoh(\fg((t))dt/G(O))$ is compactly generated.

\end{cor}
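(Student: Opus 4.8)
The plan is to deduce Corollary \ref{c:cptgen-int} from Proposition \ref{p:tame-cpt-gen} by writing $\fg((t))dt/G(O)$ as a filtered colimit of the finite-stage quotients $t^{-r}\fg[[t]]dt/G(O)$ and transporting compact generation through this colimit. First I would observe that $\fg((t))dt = \colim_r t^{-r}\fg[[t]]dt$ as an indscheme, where the transition maps are closed immersions; since each of these is $G(O)$-stable (the gauge action of $G(O)$ preserves the order of the pole), we get $\fg((t))dt/G(O) = \colim_r t^{-r}\fg[[t]]dt/G(O)$ as prestacks, with transition maps $j_r: t^{-r}\fg[[t]]dt/G(O) \hookrightarrow t^{-(r+1)}\fg[[t]]dt/G(O)$ that are schematic closed immersions (in particular affine and proper).

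Next I would pass to quasi-coherent sheaves. Because $\QCoh$ sends colimits of prestacks along such maps to limits of DG categories under $*$-pullback, we have $\QCoh(\fg((t))dt/G(O)) = \lim_r \QCoh(t^{-r}\fg[[t]]dt/G(O))$, with the limit taken along the functors $j_r^*$. The key point is that each $j_r^*$ admits a continuous right adjoint $j_{r,*}$ (affineness of $j_r$) which is moreover fully faithful with $j_{r,*}j_r^* \cong \on{id}$ composed with a suitable projection — more precisely, since $j_r$ is a closed immersion, $j_{r,*}$ is fully faithful. This exhibits the limit as also a colimit: $\QCoh(\fg((t))dt/G(O)) = \colim_r \QCoh(t^{-r}\fg[[t]]dt/G(O))$ in $\DGCat_{cont}$ along the functors $j_{r,*}$. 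A colimit in $\DGCat_{cont}$ of compactly generated categories along functors that preserve compactness (which $j_{r,*}$ does, being a right adjoint whose left adjoint $j_r^*$ is continuous and — crucially — $t$-bounded so that $j_r^*$ preserves perfectness, hence $j_{r,*}$ preserves compacts by the duality criterion: a right adjoint preserves compacts iff its left adjoint preserves all colimits, which holds here) is again compactly generated, with compact objects generated by the images of the compacts at each stage.

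Concretely, then, the generators of $\QCoh(\fg((t))dt/G(O))$ are the objects $j_{r,*}(\sF)$ for $\sF \in \QCoh(t^{-r}\fg[[t]]dt/G(O))$ compact, i.e. (by Proposition \ref{p:tame-cpt-gen}) perfect. I would verify these generate: given $\sG$ in the big category with $\on{Hom}(j_{r,*}(\sF), \sG) = 0$ for all $r$ and all perfect $\sF$, adjunction gives $\on{Hom}(\sF, j_r^!(\sG)) = 0$ — here I would instead use the colimit presentation directly, so $\sG$ corresponds to a compatible family $\sG_r = j_r^*(\sG)$ (under the limit presentation), and vanishing of $\on{Hom}(j_{r,*}\sF, \sG)$ against all perfect $\sF$ forces $\sG_r = 0$ for all $r$ by compact generation of each stage, hence $\sG = 0$. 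The main obstacle is the bookkeeping around which adjoint functors are continuous and preserve compacts: the essential input is that $j_r^*$ is of bounded cohomological amplitude (which holds since $j_r$ is a regular closed immersion of finite presentation into a smooth-over-$\bB G(O)$ stack, or more directly since pullback along a closed immersion is right $t$-exact and the immersion is l.c.i. of bounded codimension), so that $j_{r,*}$ carries perfect complexes to perfect complexes; granting this, the general formalism of colimits in $\DGCat_{cont}$ (as in \cite{dgcat}) closes the argument without further work.
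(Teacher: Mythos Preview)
Your overall strategy matches the paper's: present $\QCoh(\fg((t))dt/G(O))$ as a filtered colimit in $\DGCat_{cont}$ of the compactly generated categories $\QCoh(t^{-r}\fg[[t]]dt/G(O))$, along compact-preserving functors. The gap is in which adjoint you use.

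You take the limit along $j_r^*$ and then claim it coincides with the colimit along the \emph{right} adjoints $j_{r,*}$. But the standard ``limit equals colimit'' principle (c.f.\ \cite{dgcat}) says: a limit along functors $F_r$ coincides with the colimit along their \emph{left} adjoints. The colimit along $j_{r,*}$ is rather $\lim_r$ along the right adjoints $j_r^!$; identifying this with $\lim_r$ along $j_r^*$ is not automatic (the two differ by the twist $j_r^! \simeq j_r^*(-)\otimes \det(N_r)^{-1}[-\dim\fg]$, with $N_r$ the normal bundle), and the full faithfulness of $j_{r,*}$ alone does not supply it.

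Your compactness criterion is also misstated. ``A right adjoint preserves compacts iff its left adjoint preserves all colimits'' is vacuous in $\DGCat_{cont}$, since every morphism there already preserves colimits. The correct criterion is that a functor preserves compacts iff its \emph{right} adjoint is continuous.

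The paper's fix is exactly the one your setup invites: since $j_r$ is a regular embedding, $j_r^*$ admits a \emph{left} adjoint, and the limit along $j_r^*$ is the colimit along these left adjoints. These preserve compacts immediately, being left adjoints to the continuous $j_r^*$. Replace $j_{r,*}$ by this left adjoint throughout and your argument becomes the paper's proof verbatim.
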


\begin{proof}

The structure maps $t^{-r}\fg[[t]]dt/G(O) \to t^{-r-1}\fg[[t]]dt/G(O)$ are
regular embeddings, and therefore the restriction for quasi-coherent
sheaves admits a \emph{left} adjoint. 
Therefore, we have:

\[
\QCoh(\fg((t))dt/G(O)) = 
\underset{r}{\lim} \, \QCoh(t^{-r}\fg[[t]]dt/G(O)) =
\underset{r}{\colim} \, \QCoh(t^{-r}\fg[[t]]dt/G(O))
\]

\noindent where the colimit is under the left adjoints.
Since each of the structure functors in this colimit
obviously preserves compacts (being left adjoints to continuous functors), 
this implies that
the colimit is compactly generated as well 
(since $\Ind:\DGCat \to \DGCat_{cont}$ is a left adjoint, so commutes
with colimits).

\end{proof}

\subsection{}

Finally, we deduce the following.

\begin{thm}\label{t:cpt-gen}

For $G$ reductive, $\QCoh(\LocSys_G(\o{\cD}))$ is compactly
generated.

\end{thm}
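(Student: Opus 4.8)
The plan is to deduce compact generation of $\QCoh(\LocSys_G(\o{\cD}))$ from that of the intermediate quotient $\fg((t))dt/G(O)$, which is already known (Corollary \ref{c:cptgen-int}), by propagating it across the remaining quotient using the ind-properness of $\Gr_G$ for reductive $G$. Write $X \coloneqq \fg((t))dt$ and let $q\colon X/G(O) \to X/G(K) = \LocSys_G(\o{\cD})$ be the natural morphism. Since $G$ is reductive, $\Gr_G$ — the fppf sheafification of $G(K)/G(O)$ — is ind-proper and of ind-finite type; as $\QCoh$ is insensitive to sheafification, and as the base change of $q$ along the $G(K)$-torsor $X \to X/G(K)$ is the projection $X \times \Gr_G \to X$, the morphism $q$ is ind-proper. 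In particular, $q_*$ is continuous and satisfies the projection formula $q_*(\sF \otimes q^*\sG) \simeq q_*\sF \otimes \sG$; this is the point at which the theory of pushforward along ind-proper morphisms (and, with it, $\IndCoh$-style technology) must be invoked.

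First I would establish that $q_*\sO_{X/G(O)} \simeq \sO_{\LocSys_G(\o{\cD})}$. By descent along $X \to X/G(K)$ this reduces to the statement that $\Gamma(\Gr_G, \sO_{\Gr_G}) \simeq k$ with no higher cohomology: $\Gr_G$ is an increasing union of projective Schubert varieties, each with rational singularities in characteristic zero, so $H^0(\sO) = k$ and $H^{>0}(\sO) = 0$ on each, and the sequential homotopy limit of the (essentially constant) tower $k$ is again $k$. Combining this with the projection formula gives, for every $\sG \in \QCoh(\LocSys_G(\o{\cD}))$,
\[
q_* q^* \sG \simeq q_*\big(\sO_{X/G(O)} \otimes q^*\sG\big) \simeq q_*\sO_{X/G(O)} \otimes \sG \simeq \sG,
\]
so the unit $\id \to q_* q^*$ is an equivalence and $q^*$ is fully faithful. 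Equivalently, $q^*$ identifies $\QCoh(\LocSys_G(\o{\cD}))$ with the essential image of the colimit-preserving idempotent comonad $q^* q_*$ on $\QCoh(X/G(O))$.

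Because $q^*$ and $q_*$ are both continuous, this exhibits $\QCoh(\LocSys_G(\o{\cD}))$ as a retract, in $\DGCat_{cont}$, of the compactly generated DG category $\QCoh(\fg((t))dt/G(O))$; and a retract in $\DGCat_{cont}$ of a compactly generated DG category is again compactly generated, its compact objects being exactly the images under the retraction $q_*$ of the compact objects upstairs. The main obstacle I expect is precisely the technical input that $q_*$ is a morphism in $\DGCat_{cont}$ — that pushforward along the ind-proper but genuinely infinite-type morphism $q$ is continuous and satisfies the projection formula — which is where the finiteness results of \S\ref{s:tame} (cohomological boundedness of global sections on $\fg((t))dt/G(O)$) and the formalism of ind-coherent sheaves are brought to bear; the formal part, namely full faithfulness of $q^*$ and the retract argument, is robust once that point and the vanishing $R\Gamma(\Gr_G,\sO_{\Gr_G}) \simeq k$ are in hand.
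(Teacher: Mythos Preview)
There is a genuine gap: the functor $q_*$ (the right adjoint to $q^*$) is \emph{not} continuous. Under the identification $\QCoh(\Gr_G)\simeq\IndCoh(\Gr_G)$, the structure sheaf $\sO_{\Gr_G}$ corresponds to the dualizing complex $\omega_{\Gr_G}$, which has full support and hence is not compact (compacts in $\IndCoh(\Gr_G)=\colim_n\IndCoh(X_n)$ are coherent sheaves supported on some finite-dimensional closed subscheme $X_n$). So $\Gamma=\Hom(\sO_{\Gr_G},-)$ does not commute with colimits, and by base change neither does $q_*$. The $\IndCoh$ formalism for ind-proper morphisms does not rescue this: what it supplies is a \emph{left} adjoint $q_?$ to $q^*$ (the $\IndCoh$-pushforward, which under $\QCoh\simeq\IndCoh$ becomes left adjoint to $*$-pullback because $!$- and $*$-pullback agree up to a twist on formally smooth targets). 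The finiteness results of \S\ref{s:tame} concern $\Gamma$ on $\fg((t))dt/G(O)$, not $q_*$, and do not bear on this point. A second, independent problem: your computation $R\Gamma(\Gr_G,\sO_{\Gr_G})\simeq k$ assumes $\Gr_G$ is connected, which fails for non-simply-connected $G$ (already $G=\bG_m$ gives $\Gr_G=\bZ$), so $q^*$ is not fully faithful in general.

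The paper's argument sidesteps both issues. It uses exactly the left adjoint $q_?$ that the $\IndCoh$ technology provides, and then only needs $q^*$ to be \emph{conservative} (clear, since the further pullback to $\fg((t))dt$ is conservative) rather than fully faithful. Given that, $q_?$ carries compact objects to compact objects (its right adjoint $q^*$ is continuous) and the images of a set of compact generators upstairs generate downstairs (by conservativity of $q^*$). No computation of $R\Gamma(\Gr_G,\sO)$ and no retract argument is needed, and the proof works uniformly for all reductive $G$.
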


\begin{proof}

The map $p:\fg((t))dt/G(O) \to \LocSys_G(\o{\cD}) = \fg((t))dt/G(K)$
is a $G(K)/G(O)$-fibration, i.e., a $\Gr_G$-fibration up to 
sheafification. Therefore, this map is ind-proper (up to sheafification).

Identifying $\QCoh$ and $\IndCoh$ for $\Gr_G$ and in
\cite{indschemes}, we see that the pullback $p^*$ admits
a \emph{left} adjoint. Clearly $p^*$ is continuous and conservative,
so this gives the result from Corollary \ref{c:cptgen-int}.

\end{proof}

%%%%%%%%%%%%%%%%%%%%
%%%%%%%%%%%%%%%%%%%%
%%%%%%%%%%%%%%%%%%%%

\section{Infinitesimal analysis: reductions}\label{s:z-red}

%%%%%%%%%%%%%%%%%%%%
%%%%%%%%%%%%%%%%%%%%
%%%%%%%%%%%%%%%%%%%%

\subsection{Motivation}

Recall from \cite{shvcat} that none of
$\bA^{\infty} \coloneqq \colim_n \bA^n$,
$(\bA^{\infty})_0^{\wedge}$, and $\bB (\bA^{\infty})_0^{\wedge}$
is 1-affine.

In particular, the first of these results
means that while we have shown 
$t^{-r}\fg[[t]]dt/G(O)$ is 1-affine for each
$r$, there is no hope for $\fg((t))dt/G(O)$ to be 1-affine
(e.g., consider $G = \bG_m$ or $\bG_a$ to explicitly see
$\bA^{\infty}$).

However, $\bA_{dR}^{\infty}$ \emph{is} 1-affine by
\cite{shvcat}. Therefore, considering $\bG_m$ and $\bG_a$,
this indicates that we should expect that 
for $\widehat{G(O)}$ the formal completion
of $G(O)$ in $G(K)$, $\fg((t))dt/\widehat{G(O)}$ is 1-affine.

This section and the next are devoted to showing the following:

\begin{thm}\label{t:mod-g(o)-hat}

$\fg((t))dt/\widehat{G(O)}$ is 1-affine.

\end{thm}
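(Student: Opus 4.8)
The strategy is to bootstrap from the two results already in hand: Theorem \ref{t:z}, which asserts that the formal completion of $t^{-r}\fg[[t]]dt$ inside $\fg((t))dt$, modulo the gauge action of $\widehat{G(O)}$, is 1-affine, and Corollary \ref{c:gauge/cpt-1aff} (equivalently Theorem \ref{t:tame}), which handles the non-completed pieces $t^{-r}\fg[[t]]dt/G(O)$. The point is that $\fg((t))dt/\widehat{G(O)}$ should be written as a filtered colimit (in prestacks, along closed embeddings) of the formally completed quotients appearing in Theorem \ref{t:z}, and then one invokes the fact that 1-affineness is compatible with such colimits, using descent for sheaves of categories from \cite{shvcat} Appendix A.

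\textbf{Key steps.} First I would set up the presentation of $\fg((t))dt$ as the colimit $\colim_r t^{-r}\fg[[t]]dt$ along the regular closed embeddings, and correspondingly present $\fg((t))dt/\widehat{G(O)}$ as the colimit of the prestacks $\big(t^{-r}\fg[[t]]dt\big)^{\wedge}_{\fg((t))dt}/\widehat{G(O)}$ — that is, the formal completion of $t^{-r}\fg[[t]]dt$ in $\fg((t))dt$, modulo the formal group $\widehat{G(O)}$ — each of which is 1-affine by Theorem \ref{t:z}. The subtlety here is that one wants the colimit of the \emph{completed} neighborhoods to recover $\fg((t))dt$ itself; this is exactly the kind of statement that holds because $\fg((t))dt$ is an ind-scheme and its de Rham-type / formal neighborhoods exhaust it appropriately — and since we are not sheafifying, one can be quite explicit. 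Second, I would verify that the transition maps in this colimit are the pullback functors $\ShvCat_{/\sY_{r+1}} \to \ShvCat_{/\sY_r}$ and that they are compatible with $\Loc$ and $\Gamma$. Third, and this is the heart of the matter, one must check that $\Gamma = \Gamma(\fg((t))dt/\widehat{G(O)}, -)$ is computed as the limit of the $\Gamma$'s over the pieces and that $\Loc$ likewise passes to the colimit, so that the $(\Loc,\Gamma)$-adjunction on $\fg((t))dt/\widehat{G(O)}$ is the colimit/limit of mutually inverse adjunctions and hence itself an equivalence. Equivalently, one uses that a sheaf of categories on $\fg((t))dt/\widehat{G(O)}$ is the same datum as a compatible family of sheaves of categories on the 1-affine pieces, by Zariski/fppf-type descent for $\ShvCat$ applied to the (ind-)covering by the formal neighborhoods.

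\textbf{Main obstacle.} The delicate point is not the formal colimit argument but rather making sure that the formal completion of $t^{-r}\fg[[t]]dt$ in $\fg((t))dt$ is the \emph{right} object — i.e., that quotienting $\fg((t))dt$ by the formal group $\widehat{G(O)}$ genuinely "localizes" onto these formal neighborhoods in a way compatible with passage to the colimit. Concretely: $\widehat{G(O)}$ does not preserve the individual $t^{-r}\fg[[t]]dt$ (the gauge action increases the order of the pole of $dg\cdot g^{-1}$ only in a controlled way for congruence subgroups, but $\widehat{G(O)}$ contains elements of $G(O)$), so one has to argue that the orbit of a formal neighborhood of $t^{-r}\fg[[t]]dt$ stays inside a formal neighborhood of $t^{-r'}\fg[[t]]dt$ for $r'$ only slightly larger — this is a boundedness-of-pole-jump statement in the spirit of \S\ref{ss:gm-ga} and the congruence-subgroup bookkeeping in the proof of Theorem \ref{t:artin}. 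Once that compatibility is in place, the 1-affineness of $\fg((t))dt/\widehat{G(O)}$ follows formally from Theorem \ref{t:z} and the stability of 1-affineness under the relevant filtered colimits of prestacks, together with the observation (already used repeatedly above) that $\ShvCat_{/-}$ satisfies the descent needed to glue.
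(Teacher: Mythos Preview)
Your overall strategy is correct and matches the paper's: present $\fg((t))dt/\widehat{G(O)}$ via the filtered system $r \mapsto \sZ_r/\widehat{G(O)}$ (where $\sZ_r$ is the formal completion of $t^{-r}\fg[[t]]dt$ in $\fg((t))dt$), invoke Theorem \ref{t:z} on each piece, and imitate the proof that $\bA_{dR}^{\infty}$ is 1-affine from \cite{shvcat} \S 12. However, you have misidentified the obstacle, and the real one is hidden inside your phrase ``colimit/limit of mutually inverse adjunctions hence itself an equivalence.''

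First, your stated worry is a non-issue. For $g \in G(O)$ one has $dg\cdot g^{-1} \in \fg[[t]]dt$, so $G(O)$ already preserves each $t^{-r}\fg[[t]]dt$ exactly; consequently $\widehat{G(O)}$ preserves each $\sZ_r$ on the nose. No pole-jump bookkeeping is required, and Theorem \ref{t:tame} plays no role in this deduction.

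The genuine obstacle is conservativity of $\Gamma$ on the colimit. Writing $\Gamma(\fg((t))dt/\widehat{G(O)},\mathsf{C}) = \lim_r \Gamma(\sZ_r/\widehat{G(O)},\mathsf{C})$ with each structure map admitting a left adjoint shows $\Gamma$ is continuous and $\DGCat_{cont}$-linear, whence $\Loc$ is fully faithful. But to conclude you must show, for each $r$, that
\[
\Gamma(\fg((t))dt/\widehat{G(O)},\mathsf{C}) \underset{\QCoh(\fg((t))dt/\widehat{G(O)})}{\otimes} \QCoh(\sZ_r/\widehat{G(O)}) \isom \Gamma(\sZ_r/\widehat{G(O)},\mathsf{C}),
\]
and this rests on the tensor-product identity
\[
\QCoh(\sZ_r/\widehat{G(O)}) \underset{\QCoh(\fg((t))dt/\widehat{G(O)})}{\otimes} \QCoh(\sZ_{r^{\prime}}/\widehat{G(O)}) \isom \QCoh(\sZ_r/\widehat{G(O)}) \quad (r^{\prime} \geq r).
\]
This is exactly where the formal completion enters: the restriction $\QCoh(\fg((t))dt/\widehat{G(O)}) \to \QCoh(\sZ_{r^{\prime}}/\widehat{G(O)})$ admits a \emph{fully-faithful} left adjoint, and the kernel of the restriction acts trivially on $\QCoh(\sZ_r/\widehat{G(O)})$. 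The analogous statement with $\sY_r = t^{-r}\fg[[t]]dt$ and $G(O)$ in place of $\sZ_r$ and $\widehat{G(O)}$ fails (and must fail, since $\fg((t))dt/G(O)$ is not 1-affine). The paper flags this step as the unique point where $\widehat{G(O)}$ is distinguished from $G(O)$; your sketch does not isolate it.
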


Since, as we recalled above,
$\bB (\bA^{\infty})_0^{\wedge}$ is also not 1-affine, 
this result is a global way of encoding the finite-dimensionality
of de Rham cohomology at each point.

\subsection{Notation}

This material would be unbearable without some notational 
shorthands. Therefore, throughout through \S \ref{s:z-geom}, we let
$\sY_r = t^{-r}\fg[[t]]dt$ for $r>0$, 
and we let $\sZ_r$ denote the formal 
completion of $t^{-r}\fg[[t]]dt$ in $\fg((t))dt$.

Note that $\widehat{G(O)}$ acts on $\sZ_r$ via gauge transformations.

We let $\pi$ denote the morphism $\sY_r/G(O) \to \sZ_r/\widehat{G(O)}$.

\subsection{Structural remarks}

In this section, we will give some reductions toward proving
Theorem \ref{t:mod-g(o)-hat}. We remark from the beginning
that the hard work in proving Theorem \ref{t:mod-g(o)-hat}
will be given in the subsequent sections: in particular,
the reduction steps in the present section will not\footnote{
Except in as much as we refer to Theorem \ref{t:tame}. But
note that Theorem \ref{t:mod-g(o)-hat} is simply a formal consequence
of Theorem \ref{t:tame}, i.e., we really do need to turn back
to the geometry.
}
rely on the results
of \S \ref{s:locsys}. The role of the present section is therefore
mostly organizational: it is intended to motivate
the objectives and methods of \S \ref{s:z-tens}. 

In \S \ref{ss:tz-formulation}, 
we will reduce Theorem \ref{t:mod-g(o)-hat} 
to Theorem \ref{t:z}, which says that $\sZ_r/\widehat{G(O)}$
is 1-affine.
This reduction step
exactly imitates the deduction from \cite{shvcat} that $\bA_{dR}^{\infty}$
is 1-affine from the fact that $\bA_{dR}^n$ is.

We then obviously should try to 
push Theorem \ref{t:tame} (that $\sY_r/G(O)$ is
1-affine) as far as we can. The good news is that $\pi$ is
a nilisomorphism, so this feels possible. The bad news is that
that is not nearly enough, since $0 \into (\bA^{\infty})_0^{\wedge}$
and $(\bA^{\infty})_0^{\wedge} \to 0$ are also (while
this formal completion is not 1-affine),
and $\pi$ is a composition of morphisms that look similar to these
maps.

To this end, we will formulate Proposition \ref{p:zr-main}
in \S \ref{ss:zr-main-statement}. This result is about calculating
some tensor products of DG categories (and the map $\pi$), 
and is the main result of \S \ref{s:z-tens}. 
In \S \ref{ss:shvcat-dig-start}-\ref{ss:shvcat-dig-finish},
we will show how this
tensor product calculation implies Theorem \ref{t:z}.

\subsection{DAG}

Throughout our discussion of $\sZ_r$, we will assume we are properly 
in the setting of derived algebraic geometry. E.g., the reader should 
understand all fiber products as derived fiber products
(but most of them will be classical anyway).

\subsection{First reduction}\label{ss:tz-formulation}

The following result readily implies Theorem \ref{t:mod-g(o)-hat}

\begin{thm}\label{t:z}

$\sZ_r/\widehat{G(O)}$ is 1-affine.

\end{thm}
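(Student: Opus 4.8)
The plan is to deduce Theorem~\ref{t:z} from Corollary~\ref{c:gauge/cpt-1aff} — the $1$-affineness of $\sY_r/G(O)$ — by transporting $1$-affineness along the nilisomorphism $\pi\colon \sY_r/G(O) \to \sZ_r/\widehat{G(O)}$. As the introductory heuristics stress, a nilisomorphism does \emph{not} preserve $1$-affineness in general (contrast $\bA^{\infty} \to \bA^{\infty}_{dR}$, whose target is $1$-affine, with $\bA^{\infty} \to \ast$, whose target is not), so one must use the particular structure of $\pi$: it is assembled from two formal completions — completing $\fg((t))dt$ along $t^{-r}\fg[[t]]dt$, and completing $G(K)$ along $G(O)$ — whose ``smallness'' is governed precisely by the finite-dimensionality of de Rham cohomology at field-valued points. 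Recall that $\sZ_r/\widehat{G(O)}$ is $1$-affine as soon as $\Loc$ is fully faithful and $\Gamma$ is conservative; the strategy is to verify both by exhibiting $\ShvCat_{/\sZ_r/\widehat{G(O)}}$ and $\QCoh(\sZ_r/\widehat{G(O)})\mod$ as categories of comodules over one and the same comonad — built from $\pi$ — acting downstairs on $\ShvCat_{/\sY_r/G(O)}$, resp.\ on $\QCoh(\sY_r/G(O))\mod$, and then invoking the $1$-affineness of $\sY_r/G(O)$.

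The engine for this matching is Proposition~\ref{p:zr-main}, which computes, inside $\DGCat_{cont}$, the iterated relative tensor powers $\QCoh(\sY_r/G(O)) \otimes_{\QCoh(\sZ_r/\widehat{G(O)})} \cdots \otimes_{\QCoh(\sZ_r/\widehat{G(O)})} \QCoh(\sY_r/G(O))$ — equivalently, the comonad $(-)\otimes_{\QCoh(\sZ_r/\widehat{G(O)})}\QCoh(\sY_r/G(O))$ on $\QCoh(\sY_r/G(O))\mod$ — together with the functors $\pi$ induces on $\QCoh$. Granting it, the deduction of Theorem~\ref{t:z} runs in two parallel halves. On the sheaf-of-categories side one checks that the pullback $\pi^{*,\ShvCat}$ is comonadic: its conservativity and the exactness needed for the cobar totalization to converge both reduce to the cohomological-amplitude estimates of \S\ref{s:tame}, hence ultimately to the geometric Theorems~\ref{t:artin} and \ref{t:jumps}. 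On the module-category side one shows, again using Proposition~\ref{p:zr-main}, that $\QCoh(\sZ_r/\widehat{G(O)}) \to \QCoh(\sY_r/G(O))$ is of effective descent, so that $\QCoh(\sZ_r/\widehat{G(O)})\mod$ is comodules over the comonad displayed above. Since $\Gamma_{\sY_r/G(O)}$ is an equivalence — $\sY_r/G(O)$ being $1$-affine — and, by Proposition~\ref{p:zr-main}, intertwines the two comonads compatibly with their comultiplications, one concludes that $\Gamma_{\sZ_r/\widehat{G(O)}}$ is an equivalence, i.e.\ that $\sZ_r/\widehat{G(O)}$ is $1$-affine; Theorem~\ref{t:mod-g(o)-hat} then follows by the passage to the colimit in $r$ explained in \S\ref{ss:tz-formulation}.

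I expect the main obstacle to be Proposition~\ref{p:zr-main} itself, together with the comonadicity of $\pi^{*,\ShvCat}$. The difficulty is that $\pi$ is a composite of a completion ``in the gauge-form direction'', $\sY_r/G(O)\to\sZ_r/G(O)$, and a completion ``in the group direction'', $\sZ_r/G(O)\to\sZ_r/\widehat{G(O)}$, each of which taken alone resembles one of the $1$-affineness-destroying maps $\ast \to \bB(\bA^{\infty})_0^{\wedge}$ and $(\bA^{\infty})_0^{\wedge}\to\ast$; only the interaction of the two — mediated by the Fredholm property of the adjoint local system, and hence by the finiteness of $H^*_{dR}$ — rescues the descent. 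Controlling this will require the Barr-Beck/Beck-Chevalley bookkeeping of \S\ref{s:z-tens}, in the form I learned from \cite{akhil-thesis}, to reduce the whole question to a finite, manifestly convergent computation, in the same spirit as the Cousin-filtration argument used for compact generation in \S\ref{s:cpt-gen}.
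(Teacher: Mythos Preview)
Your overall plan is right: Proposition~\ref{p:zr-main} together with the $1$-affineness of $\sY_r/G(O)$ is the engine, and you correctly locate the real difficulty in proving Proposition~\ref{p:zr-main} via the effective-monad machinery of \cite{akhil-thesis}. But the specific deduction you sketch --- presenting both $\ShvCat_{/\sZ_r/\widehat{G(O)}}$ and $\QCoh(\sZ_r/\widehat{G(O)})\mod$ as comodule categories over matching comonads downstairs --- has a gap on the module-category side.

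To identify $\QCoh(\sZ_r/\widehat{G(O)})\mod$ with comodules for the comonad $(-)\otimes_{\QCoh(\sZ_r/\widehat{G(O)})}\QCoh(\sY_r/G(O))$ on $\QCoh(\sY_r/G(O))\mod$, you need the induction functor (base-change along $\pi^*$) to be comonadic, in particular conservative. But ``$M\otimes_{\QCoh(\sZ_r/\widehat{G(O)})}\QCoh(\sY_r/G(O))=0 \Rightarrow M=0$'' is not a consequence of Proposition~\ref{p:zr-main}: that proposition only treats module categories of the special form $\Gamma(\sZ_r/\widehat{G(O)},\mathsf{C})$ for $\mathsf{C}$ an actual sheaf of categories, not arbitrary $\QCoh(\sZ_r/\widehat{G(O)})$-modules. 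In fact, conservativity of induction is essentially what $1$-affineness of $\sZ_r/\widehat{G(O)}$ would give you, so the argument as written is circular. (Note also that Proposition~\ref{p:zr-main} does not ``compute iterated relative tensor powers''; it is a single comparison isomorphism for each $\mathsf{C}$.)

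The paper's deduction (Proposition~\ref{p:proper-ec-loc-1}) sidesteps this. Rather than describe $\QCoh(\sZ_r/\widehat{G(O)})\mod$ as a comodule category, it uses Proposition~\ref{p:zr-main} directly to show that $\Gamma(\sZ_r/\widehat{G(O)},-)$ commutes with colimits and is $\DGCat_{cont}$-linear --- whence $\Gamma\circ\Loc=\id$ and $\Loc$ is fully faithful --- and then that $\Gamma$ is conservative: if $\Gamma(\sZ_r/\widehat{G(O)},\mathsf{C})=0$, tensor with $\QCoh(\sY_r/G(O))$ and apply Proposition~\ref{p:zr-main} to get $\Gamma(\sY_r/G(O),\pi^{*,\ShvCat}\mathsf{C})=0$, hence $\pi^{*,\ShvCat}\mathsf{C}=0$ by $1$-affineness of $\sY_r/G(O)$, hence $\mathsf{C}=0$ since $\pi^{*,\ShvCat}$ is conservative. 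No comonadicity on the module side is invoked.

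A smaller correction: the comonadicity of $\pi^{*,\ShvCat}$ does not require the amplitude estimates of \S\ref{s:tame}. It is automatic: pullback of sheaves of categories always commutes with limits, and conservativity follows from general proper descent for ind-(proper and eventually coconnective) coverings over eventually coconnective bases (Corollary~\ref{c:ec-desc-shvcat}). The amplitude estimates enter only inside the proof of Proposition~\ref{p:zr-main}, through the effective-monad criterion (Lemma~\ref{l:effmonad-tstr} fed by Lemma~\ref{l:pi-n-bdded}).
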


\begin{proof}[Proof that Theorem \ref{t:z} implies Theorem \ref{t:mod-g(o)-hat}]

The argument closely follows the proof of 1-affineness
of $\colim_n (\bA^n)_{dR}$ from \cite{shvcat} \S 12:

First, note that $\Gamma: \ShvCat_{/\fg((t))dt/\widehat{G(O)}} \to \DGCat_{cont}$
commutes with colimits and is a morphism of categories
tensored over $\DGCat_{cont}$. Indeed,
for a sheaf of categories $\mathsf{C}$ on $\fg((t))dt/\widehat{G(O)}$,
we have:

\begin{equation}\label{eq:mod-g(o)-hat-gamma}
\Gamma(\fg((t))dt/\widehat{G(O)}, \mathsf{C}) = 
\underset{r}{\lim} \, \Gamma(\sZ_r/ \widehat{G(O)},\mathsf{C})
\end{equation}

\noindent and each of the structure maps in this limit admits a
left adjoint (since that is true for 
the restriction $\QCoh(\sZ_{r+1}/\widehat{G(O)}) \to \QCoh(\sZ_r/\widehat{G(O)})$).
Therefore, this limit can also be written as a colimit, implying
the desired results.

The commutation with colimits and compatibility tensoring
by (cocomplete) DG categories formally
implies that 

\[
\Loc: \QCoh(\sZ_r/\widehat{G(O)})\mod \to \ShvCat_{/\sZ_r/\widehat{G(O)}}
\]

\noindent is fully-faithful, since we readily see that $\Gamma \circ \Loc$
is the identity.

Before\footnote{This is
the only part where we seriously distinguish between
$\fg((t))dt/\widehat{G(O)}$ and $\fg((t))dt/G(O)$.
I.e., the analysis to this point
could have been given for $\fg((t))dt/G(O)$, using $\sY_r/G(O)$ in
place of $\sZ_r/\widehat{G(O)}$ throughout.}
showing that $\Gamma$ is conservative,
we claim that for $r^{\prime} \geq r$, we have:

\[
\QCoh(\sZ_r/\widehat{G(O)}) 
\underset{\QCoh(\fg((t))dt/\widehat{G(O)})}{\otimes}
\QCoh(\sZ_{r^{\prime}}/\widehat{G(O)}) \isom 
\QCoh\big((\sZ_r \underset{\fg((t))dt}{\times} \sZ_{r^{\prime}})
/\widehat{G(O)}\big) =
\QCoh(\sZ_r/\widehat{G(O)})
\]

\noindent an isomorphism (where we remind that 
the fiber product appearing there
is the derived one).
Indeed, this follows readily from the fact that
that the restriction 
$\QCoh(\fg((t))dt/\widehat{G(O)}) \to \QCoh(\sZ_{r^{\prime}}/\widehat{G(O)})$
admits a fully-faithful left adjoint, and since the kernel of this
restriction functor acts trivially on $\QCoh(\sZ_r/\widehat{G(O)})$.

We now show that $\Gamma$ is conservative.
For $\mathsf{C} \in \ShvCat_{/\fg((t))dt/\widehat{G(O)}}$, it suffices
to check that:

\[
\Gamma(\fg((t))dt/\widehat{G(O)},\mathsf{C}) 
\underset{\QCoh(\fg((t))dt/\widehat{G(O)})} {\otimes}
\QCoh(\sZ_r/\widehat{G(O)}) \to
\Gamma(\sZ_r/\widehat{G(O)},\mathsf{C})
\] 

\noindent is an isomorphism. Indeed, since the functor of restriction to every
$\sZ_r/\widehat{G(O)}$ is tautologically conservative
(i.e., the restriction functor to $\prod_r \ShvCat_{/\sZ_r/\widehat{G(O)}}$),
this would show that $\Gamma$ is conservative.

We will do this using the expression \eqref{eq:mod-g(o)-hat-gamma}
for the left hand side. We compute (using filteredness of the colimit):

\[
\begin{gathered}
\Gamma(\fg((t))dt/\widehat{G(O)},\mathsf{C}) 
\underset{\QCoh(\fg((t))dt/\widehat{G(O)})} {\otimes}
\QCoh(\sZ_r/\widehat{G(O)}) = \\
\underset{r^{\prime} \geq r} {\colim} \, 
\Gamma(\sZ_{r^{\prime}}/\widehat{G(O)},\mathsf{C}) 
\underset{\QCoh(\fg((t))dt/\widehat{G(O)})} {\otimes}
\QCoh(\sZ_r/\widehat{G(O)}) = \\
\underset{r^{\prime} \geq r} {\colim} \, 
\Gamma(\sZ_{r^{\prime}}/\widehat{G(O)},\mathsf{C}) 
\underset{\QCoh(\sZ_{r^{\prime}}/\widehat{G(O)})} {\otimes}
\QCoh(\sZ_{r^{\prime}}/\widehat{G(O)})
\underset{\QCoh(\fg((t))dt/\widehat{G(O)})} {\otimes}
\QCoh(\sZ_r/\widehat{G(O)}) = \\
\underset{r^{\prime} \geq r} {\colim} \, 
\Gamma(\sZ_{r^{\prime}}/\widehat{G(O)},\mathsf{C}) 
\underset{\QCoh(\sZ_{r^{\prime}}/\widehat{G(O)})} {\otimes}
\QCoh(\sZ_r/\widehat{G(O)}).
\end{gathered}
\]

\noindent But for each $r^{\prime} \geq r$, we have:

\[
\Gamma(\sZ_{r^{\prime}}/\widehat{G(O)},\mathsf{C}) 
\underset{\QCoh(\sZ_{r^{\prime}}/\widehat{G(O)})} {\otimes}
\QCoh(\sZ_r/\widehat{G(O)}) = 
\Gamma(\sZ_r/\widehat{G(O)},\mathsf{C})
\]

\noindent by (the assumed) 1-affineness of $\sZ_r/\widehat{G(O)}$
and $\sZ_{r^{\prime}}/\widehat{G(O)}$. By filteredness
of the colimit, we obtain the claim.

\end{proof}

\subsection{Formulation of the main technical result}\label{ss:zr-main-statement}

The following result will be proved in
\S \ref{s:z-tens}.

\begin{prop}\label{p:zr-main}

For every sheaf of categories $\mathsf{C}$ on
$\sZ_r/\widehat{G(O)}$, the functor:

\[
\Gamma(\sZ_r/\widehat{G(O)},\mathsf{C}) 
\underset{\QCoh(\sZ_r/\widehat{G(O)})}{\otimes}
\QCoh(\sY_r/G(O)) \to
\Gamma(\sY_r/G(O),\pi^{*,\ShvCat}(\mathsf{C})) 
\]

\noindent is an equivalence.

\end{prop}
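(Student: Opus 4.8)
The plan is to reduce the assertion to a base–change statement over $\sY_r$ itself, where the finiteness results of \S\ref{s:locsys} can be applied, and then to prove that statement by a Barr--Beck/descent argument in the spirit of Corollary \ref{c:monad-tstr-iterates}.

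First I would dispose of the $G(O)$–quotient. Since $\sY_r/G(O)$ is $1$-affine with $\QCoh(\sY_r/G(O))$ rigid symmetric monoidal (Corollary \ref{c:gauge/cpt-1aff} and Proposition \ref{p:tame-cpt-gen}), the map $q\colon \sY_r\to\sY_r/G(O)$ is a faithfully flat affine morphism, so $-\otimes_{\QCoh(\sY_r/G(O))}\QCoh(\sY_r)$ is conservative on $\QCoh(\sY_r/G(O))$-modules, and $\Gamma$ commutes with $q^{*,\ShvCat}$ because $q$ is $1$-affine. Applying $-\otimes_{\QCoh(\sY_r/G(O))}\QCoh(\sY_r)$ to the functor in question thus reduces us to showing that
\[
\Gamma(\sZ_r/\widehat{G(O)},\mathsf{C})\underset{\QCoh(\sZ_r/\widehat{G(O)})}{\otimes}\QCoh(\sY_r)\longrightarrow \Gamma\bigl(\sY_r,(\pi\circ q)^{*,\ShvCat}(\mathsf{C})\bigr)
\]
is an equivalence. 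Now $\pi\circ q$ factors as $\sY_r\xrightarrow{j}\sZ_r\xrightarrow{p}\sZ_r/\widehat{G(O)}$, where $j$ is a closed immersion — inside each finite stage $\sZ_{r,m}$ the subscheme $\sY_r$ is cut out by the finitely generated ideal corresponding to $t^{-m}\fg[[t]]dt/t^{-r}\fg[[t]]dt$, so $j$ is $1$-affine — and $p$ exhibits $\sZ_r$ as a $\widehat{G(O)}$-torsor over the quotient. Since $\sZ_r=\colim_m\sZ_{r,m}$ has $1$-affine stages (formal completions of affine spaces, \cite{shvcat}) with left-adjointable transition functors, $\Gamma(\sZ_r,-)$ commutes with colimits and is $\DGCat_{cont}$-linear; together with $1$-affineness of $j$ this identifies the right-hand side with $\Gamma(\sZ_r,p^{*,\ShvCat}\mathsf{C})\otimes_{\QCoh(\sZ_r)}\QCoh(\sY_r)$. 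Writing $\mathsf{E}:=\Gamma(\sZ_r,p^{*,\ShvCat}\mathsf{C})$ — a sheaf of categories on $\sZ_r$ carrying a compatible $\widehat{G(O)}$-action, so that $\Gamma(\sZ_r/\widehat{G(O)},\mathsf{C})=\mathsf{E}^{\widehat{G(O)},w}$ — everything comes down to showing that the canonical map
\[
\mathsf{E}^{\widehat{G(O)},w}\underset{\QCoh(\sZ_r)^{\widehat{G(O)},w}}{\otimes}\QCoh(\sY_r)\longrightarrow \mathsf{E}\underset{\QCoh(\sZ_r)}{\otimes}\QCoh(\sY_r)
\]
is an equivalence; informally, that forming $\widehat{G(O)}$-invariants commutes with restriction from $\sZ_r$ to $\sY_r$.

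To prove this I would bring in tameness and a resolution argument. The $\widehat{G(O)}$-action on $\sZ_r$ is free (it is a torsor over the quotient) and "regular-representation-like" in the formal directions, so I expect $\QCoh(\sZ_r)$ to be tame with respect to $\widehat{G(O)}$; in any case, using the Beck--Chevalley/Barr--Beck package summarised at the start of \S\ref{s:z-tens}, I would resolve the (co)monad computing $\widehat{G(O)}$-invariants of $\mathsf{E}$ so that each term of the resolution is, after $\Loc$, pulled back from a \emph{finite-type} quotient $t^{-r}\fg[[t]]dt/\cK_{N}$ with $N$ furnished by Theorems \ref{t:artin} and \ref{t:jumps} (the fuel being that $\QCoh(\sY_r/G(O))$ is compactly generated by objects pulled back from $\bB G$, Proposition \ref{p:tame-cpt-gen}). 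For those finite-type stages the two sides of the displayed map agree by $1$-affineness of finite-type algebraic stacks \cite{shvcat}, so passing to the limit/colimit along the resolution yields the general case — exactly as $1$-affineness of $\bA^{\infty}_{dR}$ is deduced in \emph{loc.\ cit.}\ from its finite stages.

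The hard part will be convergence of this resolution, i.e.\ verifying the hypotheses of Corollary \ref{c:monad-tstr-iterates}: that the iterates of the relevant adjunction have cohomological amplitude bounded independently of the number of iterations. This is precisely where the infinitely many formal directions of $\sZ_r$ must be seen to cancel the infinitely many "group" directions of $\widehat{G(O)}$; concretely the amplitude of the endofunctors is controlled by the de Rham cohomology of the universal connection on $\sZ_r$, and the required uniform bound is the finite-dimensionality of that de Rham cohomology at field-valued points — the Fredholm-type finiteness of \S\ref{s:locsys}, in the form of Proposition \ref{p:bdd-dimns} and the estimates behind Proposition \ref{p:tame-amplitude}. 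Once that uniform bound is in place, Lemma \ref{l:postnikov} and Corollary \ref{c:monad-tstr-iterates} make the resolution converge and complete the proof.
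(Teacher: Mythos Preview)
Your overall shape is right, and you correctly locate the crux in uniform cohomological bounds coming from \S\ref{s:locsys}. But the tool you reach for at the end, Corollary~\ref{c:monad-tstr-iterates}, is not strong enough, and this is the genuine gap.

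The problem is twofold. First, Corollary~\ref{c:monad-tstr-iterates} requires $t$-structures on the categories in play, but your $\mathsf{E}=\Gamma(\sZ_r,p^{*,\ShvCat}\mathsf{C})$, arising from an arbitrary sheaf of categories $\mathsf{C}$, carries no $t$-structure in general; the corollary does not even apply to the map you care about. Second, and more seriously, even where $t$-structures are available the corollary only yields \emph{comonadicity}---that a particular totalization converges. It gives no control over whether that convergence survives tensoring with another category, or forming the infinite direct sums that appear in a Cousin decomposition over the leading-terms space. Your passage from finite-type/fiberwise statements to the global one is exactly such a commutation of a totalization with an infinite direct sum, and bare Barr--Beck says nothing about this. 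The paper flags this explicitly: at the start of \S\ref{s:z-tens} it says the step ``is completely impossible without new ideas beyond the bare Barr--Beck formalism.''

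The missing idea is the notion of an \emph{$n$-effective monad} (\S\ref{ss:nilpcomonads-start}--\ref{ss:nilp-comonad-finish}, following Mathew): one asks that the tower $i\mapsto\Coker(\id\to\Tot^i T^{\dot+1})$ be $n$-nilpotent in $\End(\sC)$ for a \emph{fixed} $n$. This quantitative refinement of comonadicity \emph{is} stable under tensoring (Proposition~\ref{p:tens-eff}) and under infinite direct sums with a uniform $n$ (Lemma~\ref{l:nilp-sums}), which is precisely what the Cousin argument needs (Proposition~\ref{p:ptwise-eff/equiv}). The cohomological-amplitude bounds you propose are still the input, but they are used one level down: via Lemma~\ref{l:effmonad-tstr} and Lemma~\ref{l:pi-n-bdded} they establish effectiveness of $\pi_N^*\pi_{N,?}$ on $\QCoh(\sY_r/\cK_N)$ (where there \emph{is} a $t$-structure), and effectiveness---unlike mere bounded amplitude---then propagates for free to the tensored categories where no $t$-structure exists. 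Combined with the fiberwise $1$-affineness of $\sZ_{r,\eta}/\widehat{\cK}_{r+s}$ (Proposition~\ref{p:zr-eta-1aff}), this globalizes the pointwise equivalence and finishes the argument.
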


\begin{rem}

Note that Proposition \ref{p:zr-main} is a formal
consequence of Theorem \ref{t:z} (and Theorem \ref{t:tame}). 

\end{rem}

\subsection{Some generalities on sheaves of categories}\label{ss:shvcat-dig-start}

Next, we will show that Proposition \ref{p:zr-main} implies
Theorem \ref{t:z} (recall that the Proposition \ref{p:zr-main} is
the main object of Section \ref{s:z-tens}).

Note that $\pi$ is an ind-proper covering morphism. 
We will address the general question: in this
setting, what does the 1-affineness of $\sY_r/G(O)$ tell
us about the 1-affineness of $\sZ_r/\widehat{G(O)}$? 
What does it take for $\sZ_r/\widehat{G(O)}$ to be 1-affine?

We will obtain some partial results in this direction in 
\S \ref{ss:shvcat-dig-start}-\ref{ss:shvcat-dig-finish}. Note that
the deduction of Theorem \ref{t:z} from the proposition will be
given in \S \ref{ss:tensprod-implies-thm}.

\subsection{Some DAG, and $?$-pushforwards}\label{ss:?-pushforward}

Suppose $f:S \to T$ is an eventually coconnective\footnote{
Synonymously:
$\Tor$-finite.} proper morphism
of DG schemes. Recall that $f^*: \QCoh(T) \to \QCoh(S)$
admits a left adjoint, which we will denote $f_?$.

More generally, it is formal to deduce that this left adjoint $f_?$ exists if
$f$ is merely assumed to be ind-(proper and eventually coconnective).
Formation of this left adjoint commutes with base-change, so this
holds for any such morphism of prestacks.

Finally, it is easy to see that $\pi$ is a morphism of this kind.
Indeed, $\sY_r/G(O) \into \sZ_r/G(O)$ is an ind-regular embedding,
and $\sZ_r/G(O) \to \sZ_r/\widehat{G(O)}$ is a 
$\widehat{G(O)}/G(O)$-fibration.

\subsection{Descent and some applications}

We will be giving some consequences of the 
following well-known result.

\begin{lem}\label{l:proper-desc-schs}

$\QCoh$ satisfies descent for proper, 
eventually coconnective morphisms between eventually coconnective
(e.g., classical) DG schemes.
I.e., if $f: S \to T$ is a proper, eventually coconnective
covering with $S$ and $T$ themselves eventually
coconnective, then the
canonical map:

\[
\QCoh(T) \to \underset{\bDelta} {\lim} \, 
\QCoh(S \underset{T}{\times} \ldots \underset{T}{\times} S)
\]

\noindent is an equivalence.

\end{lem}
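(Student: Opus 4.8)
The plan is to reduce proper descent for $\QCoh$ to the already-known case of flat (fppf) descent plus $!$-descent for $\IndCoh$, or more directly to the standard machinery of comonadic descent via the Beck--Chevalley formalism. First I would recall that for a proper, eventually coconnective morphism $f\colon S\to T$ the pullback $f^*\colon\QCoh(T)\to\QCoh(S)$ admits a left adjoint $f_?$ (this is the $?$-pushforward of \S\ref{ss:?-pushforward}, existing because $f$ is eventually coconnective and proper), and more importantly that $f^*$ admits a \emph{continuous right adjoint} $f_*$ satisfying the projection formula and base change; properness is what gives base change against arbitrary maps into $T$ and $t$-boundedness of $f_*$. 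The cosimplicial object whose limit appears in the statement is the \v{C}ech nerve of $f$, so by the Beck--Chevalley/Barr--Beck criterion the assertion that $\QCoh(T)\to\lim_{\bDelta}\QCoh(S\times_T\cdots\times_T S)$ is an equivalence is equivalent to the conjunction of: (i) $f^*$ is conservative; and (ii) $f^*$ satisfies base change along itself, i.e.\ the natural transformation $f^*g_*\to g'_*f'^*$ is an isomorphism for the relevant squares formed from the \v{C}ech nerve. Point (ii) is immediate from proper base change for $\QCoh$. So the content is point (i), conservativity of $f^*$.

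To prove conservativity I would argue as follows. Since $f$ is a covering (a surjection on, say, geometric points) and $T$ is eventually coconnective, it suffices to check that $f^*\sF\simeq 0$ implies $\sF\simeq 0$. Using the projection formula, $f_*f^*\sF\simeq \sF\otimes f_*\sO_S$, so it is enough to show that $f_*\sO_S$ is a ``generator'' in the appropriate sense — concretely, that $\sF\otimes f_*\sO_S\simeq 0 \Rightarrow \sF\simeq 0$. One reduces to $T$ affine (the statement is local on $T$ for the Zariski, hence fppf, topology), so $T=\Spec(R)$ with $R$ eventually coconnective. Then $f_*\sO_S$ is a connective-up-to-shift, cohomologically bounded complex of $R$-modules whose support is all of $\Spec(R)$ (by the surjectivity of $f$ on points, using properness so that the image is closed), and by Nakayama-type/support arguments on the eventually coconnective ring $R$ one concludes that tensoring with it is conservative. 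Alternatively, and perhaps more cleanly, I would cite that proper descent for $\QCoh$ in this generality is established in \cite{indschemes} (or in Gaitsgory--Rozenblyum), since the excerpt freely allows invoking earlier-cited foundational material; the self-contained argument above is then a sketch of why it holds.

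The main obstacle is the conservativity statement (i): without properness one has no base change and the comonadic descent machinery does not apply, and without the eventual coconnectivity hypotheses the projection formula and $t$-boundedness of $f_*$ can fail, so the support argument needs those hypotheses genuinely. The secondary technical point to be careful about is that the terms $S\times_T\cdots\times_T S$ of the \v{C}ech nerve are \emph{derived} fiber products, hence a priori not eventually coconnective even if $S$ and $T$ are; but $\QCoh$ is insensitive to this (it only sees the structure sheaf as an algebra), and the Beck--Chevalley argument only needs $f_*$ and base change, both of which survive. In writing this up I would therefore: (1) set up the adjunction $(f^*,f_*)$ with projection formula and proper base change; (2) invoke comonadic Barr--Beck to reduce to conservativity of $f^*$; (3) prove conservativity by the projection-formula-plus-support argument after reducing to $T$ affine; and (4) remark that the \v{C}ech-nerve terms being derived is harmless. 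Since this is a lemma ``recalled'' rather than proved from scratch, I would keep the exposition brief and lean on \cite{indschemes} for the full details.
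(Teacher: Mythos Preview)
Your proposal is correct and lands on the same overall architecture as the paper (Barr--Beck plus conservativity of $f^*$), but both halves are executed differently.

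For conservativity, the paper argues more directly: since $T$ is eventually coconnective, $*$-restriction to the collection of all geometric points of $T$ is conservative, and every geometric point of $T$ lifts to $S$ because $f$ is a covering. Your route via the projection formula and the support of $f_*\sO_S$ is valid but more circuitous, and the Nakayama/support step you gesture at is essentially the same ``check on geometric points'' fact the paper invokes outright.

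For the Barr--Beck step, the paper uses the \emph{monadic} form via the $(f_?,f^*)$ adjunction rather than your comonadic form via $(f^*,f_*)$: since $f^*$ commutes with colimits and has the left adjoint $f_?$, monadicity of $f^*$ is immediate once conservativity is known, and one then matches the monad with the one coming from the \v{C}ech nerve (this is exactly the argument of \cite{shvcat} Appendix~A for finite flat maps, carried over verbatim). Your comonadic version also works, but to make it airtight you should say explicitly that $f^*$ preserves the relevant totalizations; this follows because $f^*$ has the left adjoint $f_?$ and hence preserves all limits, a point you note but do not deploy. The paper's choice buys a one-line verification of Barr--Beck; yours buys a more familiar-looking descent statement but needs that extra sentence.
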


\begin{counterexample}

Let us see that all these hypotheses are actually necessary.
We will show that even if we assume $f$ is eventually coconnective,
descent can fail if $T$ is not itself eventually coconnective.

Let $A = \Sym(k[2])$ and denote the generator of $H^2(A)$ by $\beta$.
Consider $f:S = \Spec(k) \into T = \Spec(A)$.
This morphism is a closed embedding and obviously a covering.
Moreover, this morphism is eventually coconnective:
it suffices to see that $k$ is perfect as an $A$-module,
which is clear since $k = \Coker(A[2] \xar{\beta} A)$. 
But $f^*$ is \emph{not} conservative, since 
$f^*(k[\beta,\beta^{-1}]) = 0$. 

\end{counterexample}

\begin{proof}[Proof of Lemma \ref{l:proper-desc-schs}]

First, we show that $f^*$ is conservative. 
Because $T$ is eventually coconnective,
it is easy to see that $*$-restriction to all geometric points of $T$
is conservative. Then since every geometric point of $T$
lifts to $S$, we obtain the claim.

From here, the proof of descent given for finite flat maps
in \cite{shvcat} Appendix A holds verbatim:
by Beck-Chevalley, one sees that the totalization maps monadically to
$\QCoh(S)$, but then $f^*$ is obviously monadic (since it commutes
with colimits and has the left adjoint $f_?$), and it is easy to 
see that the two monads coincide.

\end{proof}

\begin{cor}

For $f$ as above, $f^{*,\ShvCat}: \ShvCat_{/T} \to \ShvCat_{/S}$ 
is conservative. 

\end{cor}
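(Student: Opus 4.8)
The plan is to trade the statement for a question about module categories and then apply the descent formalism in the style of \cite{akhil-thesis}. Since $T$ and $S$ are quasi-compact quasi-separated DG schemes, each is $1$-affine by \cite{shvcat} (no coconnectivity hypothesis is needed at this step), so $\ShvCat_{/T}\simeq\QCoh(T)\mod$ and $\ShvCat_{/S}\simeq\QCoh(S)\mod$; under these equivalences $f^{*,\ShvCat}$ is carried to the base-change functor $M\mapsto M\otimes_{\QCoh(T)}\QCoh(S)$, since $f^{*,\ShvCat}$ is tautologically compatible with $\Loc$ and $\Loc$ intertwines $\QCoh$-pullback with extension of scalars of module categories. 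Thus it suffices to prove that $-\otimes_{\QCoh(T)}\QCoh(S)$ is conservative on $\QCoh(T)\mod$. Note that Lemma \ref{l:proper-desc-schs} by itself does \emph{not} give this: it only says that $\QCoh$-\emph{objects} are detected on $S$, i.e.\ that $f^*\colon\QCoh(T)\to\QCoh(S)$ is conservative, which is a strictly weaker statement than conservativity of base change on module categories.

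To bridge this gap I would show that the morphism of commutative algebras $\QCoh(T)\to\QCoh(S)$ in $\DGCat_{cont}$ is \emph{descendable} in the sense of Mathew. The input is contained in the proof of Lemma \ref{l:proper-desc-schs} together with the existence of the left adjoint $f_?$ from \S\ref{ss:?-pushforward}: because $f$ is proper and eventually coconnective, $f_*(\sO_S)$ is a perfect complex on $T$; because $f$ is a covering, $f_*(\sO_S)$ has full support; and because $T$ is eventually coconnective, the thick tensor ideals of $\Perf(T)$ are governed by specialization-closed subsets of $|T|$ as in the classical Thomason classification, so $\sO_T$ lies in the thick tensor ideal generated by $f_*(\sO_S)$, which is exactly descendability. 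Granting this, the base-change functor on $\QCoh(T)\mod$ satisfies descent along the Amitsur complex $\QCoh(S)^{\otimes_{\QCoh(T)}\bullet}$: if $M\otimes_{\QCoh(T)}\QCoh(S)=0$ then each Amitsur term $M\otimes_{\QCoh(T)}\QCoh(S)^{\otimes_{\QCoh(T)}n}$ vanishes, and descendability forces $M$ to be the totalization of that complex, whence $M=0$. This yields conservativity of $f^{*,\ShvCat}$; with the Künneth identification $\QCoh(S)\otimes_{\QCoh(T)}\QCoh(S)\simeq\QCoh(S\times_T S)$ (valid since $\QCoh(S)$ is dualizable over $\QCoh(T)$ for $f$ proper and eventually coconnective) it in fact upgrades to full descent $\ShvCat_{/T}\simeq\lim_{\bDelta}\ShvCat_{/S\times_T\cdots\times_T S}$, which is how one would want to use it downstream.

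The main obstacle is precisely the passage from the comonadic $\QCoh$-descent of Lemma \ref{l:proper-desc-schs} to descendability of $\QCoh(T)\to\QCoh(S)$ — the only step strong enough to descend \emph{categories} rather than mere objects — and this is exactly where the eventual coconnectivity of $S$ and $T$ enters essentially and cannot be dispensed with. The Counterexample preceding this corollary shows why: for $T=\Spec(\Sym(k[2]))$, which is not eventually coconnective, the perfect complex $f_*(\sO_S)=k$ has full support on $|T|$ yet generates only a proper thick tensor ideal — everything in it is annihilated by inverting the class $\beta$, while $\sO_T$ is not — so $\QCoh(T)\to\QCoh(S)$ is not descendable and $f^{*,\ShvCat}$ is not conservative there.
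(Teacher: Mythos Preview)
Your argument is essentially correct but takes a heavier route than the paper. You and the paper agree on reducing via 1-affineness to conservativity of $-\otimes_{\QCoh(T)}\QCoh(S)$ on $\QCoh(T)\mod$, and you rightly note that object-level descent (Lemma~\ref{l:proper-desc-schs}) alone does not give this. But where you then invoke Mathew-style descendability --- arguing that $f_*(\sO_S)$ is perfect with full support and appealing to Thomason's classification of thick tensor ideals to place $\sO_T$ in the thick $\otimes$-ideal it generates --- the paper instead observes directly that the Amitsur limit
\[
\QCoh(T)\;\isom\;\underset{\bDelta}{\lim}\,\QCoh(S^{\times_T\bullet})
\]
commutes with $-\otimes_{\QCoh(T)} M$ for every $M\in\QCoh(T)\mod$, simply because every structure functor in the diagram (a pullback along a base-change of $f$) admits a $\QCoh(T)$-linear left adjoint, namely the $?$-pushforward of \S\ref{ss:?-pushforward}. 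The limit is therefore simultaneously a colimit in $\QCoh(T)\mod$, hence preserved by tensoring; one gets $M\simeq\lim_{\bDelta}M\otimes_{\QCoh(T)}\QCoh(S^{\times_T\bullet})$ and conservativity follows at once. The paper also notes one can alternatively re-run the Beck--Chevalley argument of Lemma~\ref{l:proper-desc-schs} directly at the level of sheaves of categories.

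Your route has the virtue of naming the phenomenon and anticipating the nilpotence machinery of \S\ref{s:z-tens}, but it leans on Thomason's theorem, which is proved for classical qcqs schemes and whose extension to eventually coconnective DG schemes you assert without justification. The paper's ``limit $=$ colimit'' observation needs only the existence of $f_?$ and sidesteps this issue entirely.
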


\begin{proof}

One can either show by the same argument as above
that descent holds for general sheaves of categories on $T$,
or we can use 1-affineness and the fact that the descent
limit above commutes with tensor products over $\QCoh(T)$ 
(since every morphism admits a $\QCoh(T)$-linear left adjoint).

\end{proof}

\begin{cor}\label{c:ec-desc-shvcat}

Suppose that $f: S \to T$ is an ind-(eventually coconnective
and proper covering) morphism, with $T$ an
eventually coconnective prestack. 

Then: 

\begin{enumerate}

\item $f^{*,\ShvCat}$ is conservative.

\item\label{i:ec-desc-shvcat-2} 

For every $\mathsf{C} \in \ShvCat_{/T}$, the canonical
morphism:

\[
\mathsf{C} \to \underset{\bDelta}{\lim} \, 
(f_*^{\ShvCat}f^{*,\ShvCat})^{\dot + 1}(\mathsf{C})
\]

\noindent is an equivalence. Each morphism in the
corresponding semisimplicial diagram admits a left adjoint
in the 2-category $\ShvCat_{/T}$. 

\end{enumerate}

\end{cor}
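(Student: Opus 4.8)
The plan is to reduce the whole statement to the scheme-level case already established (the preceding corollary) together with the Beck--Chevalley/Barr--Beck formalism of \cite{shvcat} Appendix A, the only new geometric input being the left adjoint supplied by \S\ref{ss:?-pushforward}. First I would record that $f^{*,\ShvCat}$ sits in a chain of $\QCoh(T)$-linear adjoints $f_?^{\ShvCat} \dashv f^{*,\ShvCat} \dashv f_*^{\ShvCat}$: the right adjoint $f_*^{\ShvCat}$ exists and satisfies base change unconditionally (\cite{shvcat} \S3), while the left adjoint $f_?^{\ShvCat}$ is produced from \S\ref{ss:?-pushforward}, where $f_?$ on quasi-coherent sheaves is constructed for any ind-(eventually coconnective and proper) morphism compatibly with base change; this base-change compatibility, together with the defining limit presentation of $\ShvCat_{/-}$, promotes it to a left adjoint $f_?^{\ShvCat}$ of $f^{*,\ShvCat}$, which is $\QCoh(T)$-linear by the projection formula for $f_?$ (a consequence of that for $f_*$). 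In particular $f^{*,\ShvCat}$ preserves all limits and all colimits.

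Next, conservativity (part (1)). Since $T$ is eventually coconnective, $\ShvCat_{/T} \simeq \lim_{\Spec(A)\to T} \ShvCat_{/\Spec(A)}$ with $\Spec(A)$ running over eventually coconnective affine test schemes, and $f^{*,\ShvCat}$ is computed by base-changing $f$ along these; so I may assume $T = \Spec(A)$ with $A$ eventually coconnective. Then if $f^{*,\ShvCat}(\mathsf{C}) \simeq 0$, for every residue field $F$ of $A$ the geometric point $\Spec(F)\to T$ lifts through $f$ (a covering is surjective on geometric points), whence the geometric fibre $\mathsf{C}\underset{A\mod}{\otimes} F\mod \simeq 0$; since $A$ is eventually coconnective, an $A\mod$-module category with vanishing geometric fibres is zero (cf.\ \cite{shvcat}), so $\mathsf{C} \simeq 0$.

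Finally, the descent statement \eqref{i:ec-desc-shvcat-2}. With $f^{*,\ShvCat}$ conservative (the previous step), admitting the right adjoint $f_*^{\ShvCat}$, and — having the left adjoint $f_?^{\ShvCat}$ — preserving all limits, the Barr--Beck--Lurie theorem for comonads (\cite{higheralgebra}) shows $f^{*,\ShvCat}$ is comonadic; unwinding comonadicity with the base-change identity $f_*^{\ShvCat}f^{*,\ShvCat} \simeq (g_n)_*^{\ShvCat}(g_n)^{*,\ShvCat}$ for the structure maps $g_n$ of the \v{C}ech nerve of $f$ yields exactly that $\mathsf{C}\to \lim_{\bDelta}(f_*^{\ShvCat}f^{*,\ShvCat})^{\dot+1}(\mathsf{C})$ is an equivalence. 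For the last clause, each coface map of the associated semisimplicial diagram is, under the \v{C}ech identification, obtained by applying the $2$-functor $(g_n)_*^{\ShvCat}$ to a pullback-type map along a face map $d_j$ of the \v{C}ech nerve; as $d_j$ is a base change of $f$ it is again ind-(eventually coconnective and proper covering), so that map admits a left adjoint by \S\ref{ss:?-pushforward}, and $(g_n)_*^{\ShvCat}$ preserves adjunctions, giving the required $\QCoh(T)$-linear left adjoints in $\ShvCat_{/T}$.

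The genuinely non-formal point is the conservativity step: that a covering in our sense is surjective on geometric points and, more substantially, that a module category over an eventually coconnective ring is detected on its geometric fibres — this is where the hypothesis that $T$ be eventually coconnective is essential (it cannot be dropped, by the counterexample with $A=\Sym(k[2])$). Everything else is bookkeeping with the $\QCoh(T)$-linear adjoints $f_?^{\ShvCat} \dashv f^{*,\ShvCat} \dashv f_*^{\ShvCat}$ and the standard descent machinery, exactly parallel to the scheme case treated just above.
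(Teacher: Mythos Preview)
Your argument is correct and follows the same broad strategy as the paper --- reduce to $T$ affine, establish conservativity, and invoke Barr--Beck to obtain comonadicity of $f^{*,\ShvCat}$ --- but there are two places where you work harder than necessary.

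First, and most significantly: you go to some length to construct a left adjoint $f_?^{\ShvCat}$ to $f^{*,\ShvCat}$, in order to conclude that $f^{*,\ShvCat}$ preserves limits. This is unnecessary. The paper simply observes that pullback for sheaves of categories \emph{always} preserves limits, for any morphism of prestacks whatsoever: $\ShvCat_{/T}$ is by definition a limit over $\AffSch_{/T}$, $f^{*,\ShvCat}$ is restriction along $\AffSch_{/S} \to \AffSch_{/T}$, and limits in a limit category are computed termwise. So the Barr--Beck hypothesis (preservation of the relevant limits) is free, and your construction of $f_?^{\ShvCat}$ --- whose justification (``base-change compatibility \ldots promotes it'') is in any case rather sketchy --- can be dropped entirely.

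Second, for conservativity you argue via vanishing of geometric fibres and cite a fact about $A\mod$-module categories that is not explicitly stated in \cite{shvcat}. The paper instead reduces to $T$ affine, writes $S = \colim_i S_i$, and applies the immediately preceding corollary (conservativity of $f_i^{*,\ShvCat}$ for each scheme $S_i \to T$, which is proper, eventually coconnective, and a covering). Since each $S_i \to S$ factors the map $S_i \to T$, vanishing of $f^{*,\ShvCat}(\mathsf{C})$ forces vanishing of each pullback to $S_i$, hence $\mathsf{C} = 0$. This is more direct and stays within what has already been proved.

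Your treatment of the left adjoints in the semisimplicial diagram is fine and matches the paper's (terse) remark.
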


\begin{proof}

We tautologically can reduce to the case where $T$ is an 
affine DG scheme. Suppose $S = \colim S_i$ as above.
Then the pullback for sheaves of categories from 
$T$ to each $S_i$ is conservative, and therefore the pullback
to $S$ is as well. 

Then observe that $f^{*,\ShvCat}$ is comonadic: indeed,
it commutes with arbitrary limits since pullback for sheaves of categories
always does, and it is conservative by the above.
The expression for $\mathsf{C}$ as a limit then is a formal
consequence the (co)monadic formalism.

Finally, the fact that each morphism in the semisimplicial
diagram admits a left adjoint is a formal consequence
of $f$ be ind-(eventually coconnective and proper).

\end{proof}

\subsection{}\label{ss:tensprod-implies-thm}

We now deduce the following result, which immediately
shows that Proposition \ref{p:zr-main} implies 
Theorem \ref{t:z}.

\begin{prop}\label{p:proper-ec-loc-1}

Let $f: S \to T$ be an ind-(proper and eventually coconnective covering)
morphism, with $T$ an eventually coconnective prestack.
Suppose that $S$ is 1-affine.

Then $T$ is 1-affine if and only if for every 
$\mathsf{C} \in \ShvCat_{/T}$, the morphism:

\begin{equation}\label{eq:st-tens}
\Gamma(T,\mathsf{C}) \underset{\QCoh(T)}{\otimes} \QCoh(S) \to
\Gamma(S,f^{*,\ShvCat}(\mathsf{C}))
\end{equation}

\noindent is an equivalence.

\end{prop}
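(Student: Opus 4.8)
The plan is to verify the two conditions defining $1$-affineness of $T$ by pulling everything back along $f$ to the $1$-affine prestack $S$ and comparing via the hypothesis, keeping in mind that $\Loc_T$ is left adjoint to $\Gamma(T,-)$. First I would dispatch the ``only if'' direction, which is essentially formal: if $T$ is $1$-affine then any $\mathsf{C} \in \ShvCat_{/T}$ is $\Loc_T(\sD)$ for $\sD = \Gamma(T,\mathsf{C})$, and using that $f^{*,\ShvCat}$ is compatible with $\Loc$ (so $f^{*,\ShvCat}(\mathsf{C}) \simeq \Loc_S(\sD \underset{\QCoh(T)}{\otimes}\QCoh(S))$) together with $1$-affineness of $S$ (so $\Gamma(S,-)\circ\Loc_S = \id$), both sides of \eqref{eq:st-tens} get identified with $\sD \underset{\QCoh(T)}{\otimes}\QCoh(S)$; one then checks the canonical map is exactly this identification.

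For the ``if'' direction I would first record a conservativity statement: the base change functor $\underset{\QCoh(T)}{\otimes}\QCoh(S): \QCoh(T)\mod \to \QCoh(S)\mod$ is conservative. To see this, apply $\Gamma(T,-)$ to the totalization presentation of $\QCoh_{/T}$ furnished by Corollary \ref{c:ec-desc-shvcat} (using $\Gamma(T,f_*^{\ShvCat}(-)) = \Gamma(S,-)$ and $\Gamma(\QCoh_{/S}) = \QCoh(S)$) to obtain $\QCoh(T) \simeq \underset{\bDelta}{\lim}\, \QCoh(S\underset{T}{\times}\cdots\underset{T}{\times}S)$; every transition functor here is $*$-pullback along an ind-proper, eventually coconnective projection, hence admits a left adjoint, so this limit is also the colimit over $\bDelta^{op}$ of those left adjoints. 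Tensoring this identification over $\QCoh(T)$ with a fixed $\sD$ (which commutes with colimits) exhibits $\sD$ as a colimit of the modules $\sD \underset{\QCoh(T)}{\otimes}\QCoh(S\underset{T}{\times}\cdots\underset{T}{\times}S)$; since each iterated fiber product lies over $S$, vanishing of $\sD \underset{\QCoh(T)}{\otimes}\QCoh(S)$ forces all of these, hence $\sD$ itself, to vanish. I would also invoke Corollary \ref{c:ec-desc-shvcat} for conservativity of $f^{*,\ShvCat}$.

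With these in place, it suffices to show the unit $\sD \to \Gamma(T,\Loc_T(\sD))$ and the counit $\Loc_T\Gamma(T,\mathsf{C}) \to \mathsf{C}$ are equivalences. For the unit, apply $\underset{\QCoh(T)}{\otimes}\QCoh(S)$ and invoke \eqref{eq:st-tens} for $\mathsf{C} = \Loc_T(\sD)$ together with $f^{*,\ShvCat}\Loc_T(\sD) \simeq \Loc_S(\sD\underset{\QCoh(T)}{\otimes}\QCoh(S))$ and $1$-affineness of $S$; this identifies the base-changed unit with the identity of $\sD\underset{\QCoh(T)}{\otimes}\QCoh(S)$, so the conservativity lemma gives that the unit is an equivalence. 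For the counit, apply $f^{*,\ShvCat}$ and use the same inputs to identify $f^{*,\ShvCat}$ of the counit with the counit $\Loc_S\Gamma(S,f^{*,\ShvCat}\mathsf{C}) \to f^{*,\ShvCat}\mathsf{C}$ of the $1$-affine $S$ (an equivalence); since $f^{*,\ShvCat}$ is conservative, the counit is an equivalence. Hence $\Loc_T$ and $\Gamma(T,-)$ are mutually inverse, i.e. $T$ is $1$-affine.

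The main obstacle — the only genuinely non-formal point — is the coherence hidden in the phrases ``one checks'' and ``identifies'' above: namely that the canonical map in \eqref{eq:st-tens}, which is built from the unit $\mathsf{C} \to f_*^{\ShvCat}f^{*,\ShvCat}\mathsf{C}$ and the compatibilities of $\Loc$ and $\Gamma$ with $f^{*,\ShvCat}$ and $f_*^{\ShvCat}$, carries the unit and counit of the adjunction $(\Loc_T,\Gamma(T,-))$ to those of $(\Loc_S,\Gamma(S,-))$ after the appropriate base change. Unwinding this is a diagram chase with the triangle identities for these three interlocking adjunctions, and it is the part that should be written out with care.
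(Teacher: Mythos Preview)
Your proof is correct. The paper's argument for the ``if'' direction is organized differently: rather than checking the unit and counit of $(\Loc_T,\Gamma(T,-))$ directly, it first establishes that $f_*^{\ShvCat}$ is colimit-preserving and $\DGCat_{cont}$-linear (using \eqref{eq:st-tens} to identify $\Gamma(S, f^{*,\ShvCat} f_*^{\ShvCat}(-))$ with a manifestly nice functor), and then that $\Gamma(T,-)$ inherits the same properties via the limit-equals-colimit expression of Corollary \ref{c:ec-desc-shvcat} applied to an arbitrary $\mathsf{C}$. From $\DGCat_{cont}$-linearity of $\Gamma(T,-)$ full faithfulness of $\Loc_T$ follows formally from the single identity $\Gamma(T,\Loc_T(\QCoh(T)))=\QCoh(T)$, and conservativity of $\Gamma(T,-)$ is read off from \eqref{eq:st-tens} together with conservativity of $f^{*,\ShvCat}$. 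Both arguments rest on the same descent input, but you apply it to $\QCoh_{/T}$ to extract conservativity of base change on $\QCoh(T)\mod$, whereas the paper applies it to a general $\mathsf{C}$ to extract continuity of $\Gamma(T,-)$. Your route is more direct for this statement in isolation; the paper's buys the intermediate structural facts about $\Gamma(T,-)$ and $f_*^{\ShvCat}$, and also sidesteps most of the coherence diagram chase you flag at the end.
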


\begin{proof}

\step 

We now claim that $f_*^{\ShvCat}: \ShvCat_{/S} \to \ShvCat_{/T}$
commutes with colimits and is a morphism
of $\DGCat_{cont}$-module categories.
Indeed, since $f^{*,\ShvCat}$ is conservative and 
is obviously $\DGCat_{cont}$-linear, it suffices
to show that $f^{*,\ShvCat}f_*^{\ShvCat}$ has the desired properties.
Similarly, by 1-affineness of $S$, it suffices to show that
$\Gamma(S,f^{*,\ShvCat} f_*^{\ShvCat}(-))$ has the desired properties.
But by assumption, we can compute the latter as:

\[
\Gamma(T,f_*^{\ShvCat}(-)) \underset{\QCoh(T)}{\otimes} \QCoh(S) =
\Gamma(S,-) \underset{\QCoh(T)}{\otimes} \QCoh(S)
\]

\noindent which obviously has the desired properties (by
1-affineness of $S$).

\step

Next, we claim that $\Gamma(T,-): \ShvCat_{/T} \to \DGCat_{cont}$
commutes with colimits and is $\DGCat_{cont}$-linear.

Note that Corollary \ref{c:ec-desc-shvcat} \eqref{i:ec-desc-shvcat-2}
gives an expression:

\begin{equation}\label{eq:shvcat-colim}
\mathsf{C} = 
\underset{\bDelta}{\lim} \, 
(f_*^{\ShvCat}f^{*,\ShvCat})^{\dot + 1}(\mathsf{C}) =
\underset{\bDelta^{op}}{\colim} \, 
(f_*^{\ShvCat}f^{*,\ShvCat})^{\dot + 1}(\mathsf{C})
\end{equation}

\noindent for $\mathsf{C} \in \ShvCat_{/T}$,
where in the last expression we are using the left adjoints to the
canonical maps (this follows from the usual 
``limit under right adjoints = colimit under left adjoints"
format, c.f. \cite{dgcat}).

Then for $\mathsf{C} \in \ShvCat_{/T}$, we can then compute:

\[
\Gamma(T,\mathsf{C}) = 
\underset{\bDelta}{\lim} \, 
\Gamma\big(T,(f_*^{\ShvCat}f^{*,\ShvCat})^{\dot + 1}(\mathsf{C})\big)
\]

\noindent by commuting $\Gamma$ with limits. 
But since $\Gamma$ is a morphism of 2-categories,
each of the structure functors admits a left adjoint, so we can
compute this as:

\[
\Gamma(T,\mathsf{C}) = 
\underset{\bDelta^{op}}{\colim} \, 
\Gamma\big(T,(f_*^{\ShvCat}f^{*,\ShvCat})^{\dot + 1}(\mathsf{C})\big) =
\Gamma\big(S,(f^{*,\ShvCat}f_*^{\ShvCat})^{\dot}f^{*,\ShvCat}(\mathsf{C})\big)
\]

\noindent which commutes with colimits and is a morphism
of $\DGCat_{cont}$-module categories because $S$ is 1-affine
and because $f_*^{\ShvCat}$ has the desired properties.

\step

It now follows formally that $\Gamma(T,-): \QCoh(T) \to \DGCat_{cont}$
is a morphism of $\QCoh(T)\mod$-module categories. 
Then the tautological identity 
$\Gamma(T,\Loc(\QCoh(T))) = \QCoh(T)$ means that if
$\Gamma \circ \Loc = \id$,  so $\Loc$ is fully-faithful.
Therefore, it suffices to show that $\Gamma$ is conservative.

But this is clear from the identity \eqref{eq:st-tens} and
the fact that
$f^{*,\ShvCat}$ is conservative (i.e., 
Corollary \ref{c:ec-desc-shvcat}).

\end{proof}

\subsection{Conclusion}\label{ss:shvcat-dig-finish}

We finish this section with the following result, which addresses what more
can be said in Proposition \ref{p:proper-ec-loc-1} if we also
assume that the morphism $f$ is 1-affine. 

Note that this assumption is not at all obvious for the morphism
$\pi$, and therefore the next result will necessarily play an 
auxiliary role in the course of proving Proposition \ref{p:zr-main}
(and also will be key in the deduction of the 1-affineness of $\LocSys$ from
Theorem \ref{t:mod-g(o)-hat}). Therefore, the reader may safely
skip this result for the time being, and refer back to it as necessary.
(That said, it is obviously logically related to the above, and the proof
is close to that of Proposition \ref{p:proper-ec-loc-1}.)

\begin{prop}\label{p:proper-ec-loc-2}

Let $f: S \to T$ be an ind-(proper and eventually coconnective covering)
morphism, with $T$ an eventually coconnective prestack.
Suppose that $f$ is 1-affine and that $S$ is
1-affine.

Then: 

\begin{enumerate}

\item\label{i:proper-ec-1} 

$\Loc: \QCoh(T)\mod \to \ShvCat_{/T}$
is fully-faithful.

\item\label{i:proper-ec-2} 

$\Gamma: \ShvCat_{/T} \to \DGCat_{cont}$ commutes
with colimits and is a morphism of $\DGCat_{cont}$-module categories.

\item\label{i:proper-ec-3}

The category $\ShvCat_{/T}$ is generated under
colimits by objects of the form 
$f_*^{\ShvCat}(\QCoh_{/S}) \otimes \sD$
for $\sD \in \DGCat_{cont}$.

\item\label{i:proper-ec-4}

$T$ is 1-affine if and only if the morphism:

\begin{equation}\label{eq:s/t-qcoh-tens}
\QCoh(S) \underset{\QCoh(T)}{\otimes} \QCoh(S) \to
\QCoh(S \underset{T}{\times} S)
\end{equation}

\noindent is an equivalence.

\end{enumerate}

\end{prop}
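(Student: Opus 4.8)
The plan is to run the proof of Proposition \ref{p:proper-ec-loc-1} again, observing that the extra hypothesis — that $f$ is $1$-affine — supplies for free the one ingredient of that argument which there required assuming the tensor-product identity \eqref{eq:st-tens}. Concretely, for $1$-affine $f$ the pushforward $f_*^{\ShvCat}$ automatically commutes with colimits, satisfies the projection formula, and is $\DGCat_{cont}$-linear; so Step~1 of the proof of Proposition \ref{p:proper-ec-loc-1} is automatic, and Steps~2--3 of that proof go through verbatim to give \eqref{i:proper-ec-1} and \eqref{i:proper-ec-2}. Namely, feeding the semisimplicial resolution of Corollary \ref{c:ec-desc-shvcat}\eqref{i:ec-desc-shvcat-2} into $\Gamma(T,-)$ and using $\Gamma(T,f_*^{\ShvCat}(-))=\Gamma(S,-)$ together with base-change for sheaves of categories expresses $\Gamma(T,-)$ as a colimit of functors of the form $\Gamma(S,(f^{*,\ShvCat}f_*^{\ShvCat})^{\bullet}f^{*,\ShvCat}(-))$, each of which is continuous and $\DGCat_{cont}$-linear because $S$ is $1$-affine; then $\Gamma\circ\Loc=\id$ on free $\QCoh(T)$-modules and hence, since $\Loc$, $\Gamma$ and the bar resolution of an arbitrary object all commute with colimits, $\Gamma\circ\Loc=\id$ everywhere, so $\Loc$ is fully-faithful.

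For \eqref{i:proper-ec-3} I would combine the same resolution \eqref{eq:shvcat-colim} with the fact that, $S$ being $1$-affine, $\ShvCat_{/S}=\QCoh(S)\mod$ is generated under colimits by the free modules $\QCoh_{/S}\otimes\sD$; applying the colimit-preserving, $\DGCat_{cont}$-linear functor $f_*^{\ShvCat}$ shows that every object in its essential image is a colimit of objects $f_*^{\ShvCat}(\QCoh_{/S})\otimes\sD$, and \eqref{eq:shvcat-colim} writes an arbitrary $\mathsf{C}$ as a colimit of such objects.

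For \eqref{i:proper-ec-4} the forward direction is soft: if $T$ is $1$-affine then, $f$ being $1$-affine, the base change $S\times_T S\to S$ is $1$-affine, hence $S\times_T S$ is $1$-affine, so \eqref{eq:s/t-qcoh-tens} is the instance for the square $(S\times_T S,\,S,\,S,\,T)$ of the fiber-product formula for $\QCoh$ of $1$-affine prestacks recalled earlier from \cite{shvcat}. For the converse I would invoke Proposition \ref{p:proper-ec-loc-1}, which reduces $1$-affineness of $T$ to checking that \eqref{eq:st-tens} is an equivalence for every $\mathsf{C}$. Both sides of \eqref{eq:st-tens} are continuous in $\mathsf{C}$ (the left side by \eqref{i:proper-ec-2}, the right since $f^{*,\ShvCat}$ is a left adjoint and $S$ is $1$-affine), so by \eqref{i:proper-ec-3} it is enough to verify \eqref{eq:st-tens} for $\mathsf{C}=f_*^{\ShvCat}(\QCoh_{/S})$; for this object the left side is $\QCoh(S)\otimes_{\QCoh(T)}\QCoh(S)$, while base-change for sheaves of categories identifies $f^{*,\ShvCat}f_*^{\ShvCat}(\QCoh_{/S})$ with $(p_1)_*^{\ShvCat}(\QCoh_{/S\times_T S})$, for $p_1,p_2$ the two projections $S\times_T S\to S$, and hence the right side with $\QCoh(S\times_T S)$, so \eqref{eq:st-tens} becomes precisely \eqref{eq:s/t-qcoh-tens}.

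The only genuinely non-formal point, and the one I would be most careful about, is this last identification in \eqref{i:proper-ec-4}: one must check that under the above identifications the canonical comparison map of \eqref{eq:st-tens} really does go over to the canonical map \eqref{eq:s/t-qcoh-tens}, i.e.\ that the base-change isomorphism for $\ShvCat$ is compatible with the $\Loc$--$\Gamma$ adjunction in the required way. This is a diagram chase, but it is where all the bookkeeping lives; everything else is a repackaging of the arguments already given for Proposition \ref{p:proper-ec-loc-1} and of the $1$-affineness formalism of \cite{shvcat}.
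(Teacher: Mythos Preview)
Your proposal is correct and follows essentially the same route as the paper's proof: both use the semisimplicial resolution from Corollary \ref{c:ec-desc-shvcat}, the fact that $1$-affineness of $f$ makes $f_*^{\ShvCat}$ continuous and $\DGCat_{cont}$-linear, and then reduce \eqref{i:proper-ec-4} via Proposition \ref{p:proper-ec-loc-1} and the generators from \eqref{i:proper-ec-3} to the single case $\mathsf{C}=f_*^{\ShvCat}(\QCoh_{/S})$, where base-change for sheaves of categories gives exactly \eqref{eq:s/t-qcoh-tens}. The only cosmetic difference is ordering (the paper proves \eqref{i:proper-ec-3} first), and your flagged ``diagram chase'' is precisely the point the paper dispatches with the phrase ``by base-change for sheaves of categories''; one small remark is that in reducing from the generators $f_*^{\ShvCat}(\QCoh_{/S})\otimes\sD$ to $\sD=\Vect$ you should invoke the $\DGCat_{cont}$-linearity of both sides (not just continuity), but this is exactly what \eqref{i:proper-ec-2} provides.
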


\begin{proof}

From \eqref{eq:shvcat-colim}, we see that
$f_*^{\ShvCat}$ generates 
$\ShvCat_{/T}$ under colimits. Since $\ShvCat_{/S}$
is generated under colimits by objects
$\QCoh_{/S} \otimes \sD$ by 1-affineness of $S$,
we obtain \eqref{i:proper-ec-3}.

For \eqref{i:proper-ec-2}, we claim that for 
$\mathsf{C} \in \ShvCat_{/T}$, \eqref{eq:shvcat-colim}
induces an isomorphism:

\[
\underset{\bDelta^{op}}{\colim} \, 
\Gamma(T,(f_*^{\ShvCat}f^{*,\ShvCat})^{\dot + 1}(\mathsf{C})) \isom
\Gamma(T, \underset{\bDelta^{op}}{\colim} \, 
(f_*^{\ShvCat}f^{*,\ShvCat})^{\dot + 1}(\mathsf{C})) =
\Gamma(T,\mathsf{C}).
\]

\noindent Indeed, this is as in the proof of Proposition \ref{p:proper-ec-loc-1}:
we can express all the colimits in sight
as limits under the corresponding right adjoints, and $\Gamma$
commutes with all limits.

To deduce that $\Gamma(T,-)$ commutes with colimits,
we need to see that for each $[n] \in \bDelta$, the corresponding term:

\[
\Gamma(T,(f_*^{\ShvCat}f^{*,\ShvCat})^{n + 1}(\mathsf{C})) =
\Gamma(S,f^{*,\ShvCat} (f_*^{\ShvCat}f^{*,\ShvCat})^{n}(\mathsf{C}))
\]

\noindent commutes with colimits. But since $f_*^{\ShvCat}$ commutes
with colimits (since $f$ is 1-affine), and since $\Gamma(S,-)$ commutes
with colimits (since $S$ is 1-affine), we obtain the claim.

For \eqref{i:proper-ec-1}, observe that the same argument shows that
$\Gamma(T,-): \ShvCat_{/T} \to \DGCat_{cont}$ is a morphism
of $\DGCat_{cont}$-module categories. As in the proof
of Proposition \ref{p:proper-ec-loc-1}, this formally implies 
that $\Loc$ is fully-faithful.

Finally, for \eqref{i:proper-ec-4}, first note that the one implication is
clear: if $T$ is 1-affine, then the compatibility with
tensor products is standard.

Conversely, we claim that if that identity holds, then
for every $\mathsf{C} \in \ShvCat_{/T}$, the map:

\begin{equation}\label{eq:s/t-tensprod-identity}
\Gamma(T,\mathsf{C}) \underset{\QCoh(T)}{\otimes} \QCoh(S) \to
\Gamma(S,f^{*,\ShvCat}(\mathsf{C}))
\end{equation}

\noindent is an isomorphism. Note that this would imply
the claim by Proposition \ref{p:proper-ec-loc-1}.

To see \eqref{eq:s/t-tensprod-identity}, note that
by our earlier results, it suffices to check this
identity for $\mathsf{C} = f_*^{\ShvCat}(\QCoh_{/S} \otimes \sD)$
for $\sD \in \DGCat_{cont}$: indeed, everything in sight
commutes with colimits, and $\ShvCat_{/T}$ is generated
by objects of this form. 

Moreover, since each functor
is a morphism of $\DGCat_{cont}$-module categories,
we reduce to $\sD = \Vect$. 
Then the desired identity is exactly \eqref{eq:s/t-qcoh-tens}
by base-change for sheaves of categories.

\end{proof}

%%%%%%%%%%%%%%%%%%%%
%%%%%%%%%%%%%%%%%%%%
%%%%%%%%%%%%%%%%%%%%

\section{Infinitesimal analysis: tensor product calculations via nilpotence}\label{s:z-tens}

%%%%%%%%%%%%%%%%%%%%
%%%%%%%%%%%%%%%%%%%%
%%%%%%%%%%%%%%%%%%%%

\subsection{}

In this section, we continue the project begun in \S \ref{s:z-red},
retaining the notation of \emph{loc. cit}.

So we have reduced Theorems \ref{t:mod-g(o)-hat}
and \ref{t:z} to the calculation of some tensor products of DG categories,
namely, to Proposition \ref{p:zr-main}. Our goal
in this section is to prove this proposition.

\subsection{Outline of the method}

Recall the statement of Proposition \ref{p:zr-main}:
for $\mathsf{C} \in \ShvCat_{/\sZ_r/\widehat{G(O)}}$, 
we want to show that the horizontal arrow:

\[
\xymatrix{
\Gamma(\sZ_r/\widehat{G(O)},\mathsf{C}) 
\underset{\QCoh(\sZ_r/\widehat{G(O)})}{\otimes}
\QCoh(\sY_r/G(O)) \ar[rr] \ar@<-.4ex>[dr]_(.65){\pi_{\mathsf{C},?}} & &
\Gamma(\sY_r/G(O),\pi^{*,\ShvCat}(\mathsf{C})) \ar@<-.4ex>[dl]_{\pi_{\mathsf{C},?}} \\
& \Gamma(\sZ_r/G(O),\mathsf{C}) 
\ar@<-.4ex>[ul]_(.35){\pi_{\mathsf{C}}^*} \ar@<-.4ex>[ur]_{\pi_{\mathsf{C}}^*}
}
\]

\noindent is an equivalence. 

Some remarks on the arrows in this diagram
are in order. The functor on the left labeled $\pi_{\mathsf{C}}^*$ is induced 
by usual pullback of quasi-coherent sheaves, while the functor
on the right labeled in the same way is pullback of sections for
sheaves of categories. The functors labeled $\pi_{\mathsf{C},?}$ 
are left adjoints,
and exist for the same reason as in \S \ref{ss:?-pushforward}
(namely, that $\pi$ is ind-(proper and eventually coconnective)).
Finally, it is straightforward to see that the 
horizontal arrow is compatible with either pair of functors,
and the induced morphism of comonads, each abusively notated
as $\pi_{\mathsf{C},?}\pi_{\mathsf{C}}^*$, is an equivalence 
(justifying, somewhat, the abuse).

So our objective will be to show that each functor labeled
$\pi_{\mathsf{C},?}$ is comonadic, which would
then give the claim of Proposition \ref{p:zr-main}.

\begin{rem}

The reader should keep in mind that this result cannot be something
that e.g. could be deduced by formal means from Theorem \ref{t:tame}. 
E.g., we have
to distinguish somehow between $\sZ_r/G(O)$ (for which
the corresponding statement is false) and $\sZ_r/\widehat{G(O)}$;
and any meaningful geometric distinction between the two inevitably relies
on the full force of the results from \S \ref{s:locsys}.

\end{rem}

\subsection{What is the method and what are the difficulties?}

We should try to imitate the technique of \S \ref{s:tame}.
So we shrink our congruence subgroup to
$\cK_{r+s}$, work fiberwise over\footnote{To be precise, we
actually need to take fibers over $(t^{-r}\fg[[t]]dt/t^s\fg[[t]]dt)_{dR}$: only
the de Rham version of the leading terms space
receives a map from $\sZ_r/\widehat{\cK}_{r+s}$.} 
the appropriate leading
terms space, and then use Cousin filtrations to pass from the fibers
to the total space of the fibration.

The first two steps work fine. Shrinking the congruence subgroup
is no problem (see the proof of Proposition \ref{p:zr-main} below).
We will analyze the fiberwise geometry in \S \ref{s:z-geom},
but the moral is that fiberwise everything is as nice as the
geometry in the case of $\bG_m$ (so everything is finite type, etc.) 
and therefore can be treated by standard methods. In particular,
we can see that there is a fiberwise version of the comonadicity statements.

The trouble occurs in the last step. We want to deduce
comonadicity from fiberwise comonadicity using the Cousin resolution.
By Barr-Beck, comonadicity
is about commutation with certain totalizations, while 
Cousin resolutions involve infinite direct sums over the fibers.
The problem is that
it is not a priori clear how to commute the relevant limits and colimits
here. 

\subsection{Structure of this section}

Our principal goal in this section is therefore to justify 
commuting the relevant totalizations and direct sums.
This problem is completely impossible without new
ideas beyond the bare Barr-Beck formalism; fortunately,
such ideas are given in \cite{akhil-thesis}.

Namely, in \S \ref{ss:nilpcomonads-start}-\ref{ss:nilp-comonad-finish},
we will develop a formalism of \emph{effective monads}
following \emph{loc. cit}., which gives a strong
version of the (comonadic) Barr-Beck conditions, and which
is of quantitative nature.
In the remainder of the section, we will show how these ideas
can be applied to Proposition \ref{p:zr-main}. 

We have structured this section so that all the geometric
fiberwise analysis is postponed to \S \ref{s:z-geom}, in order
to highlight the role of effectiveness. Therefore,
we formulate two results, whose proofs are postponed, and
deduce Proposition \ref{p:zr-main} from these.

The first, and more interesting, of these two statements 
is Proposition \ref{p:ourmonad-eff}, which says
that some monads are effective. 
At the end of this section,
in \S \ref{ss:ourmonad-redn}, we will show how to reduce 
this proposition to Lemma \ref{l:pi-n-bdded}, which says that
these monads are cohomologically bounded.
This latter result bears a 
strong similarity to Proposition \ref{p:tame-amplitude};
and indeed, in \S \ref{s:z-geom} we will see that it
is amenable to a similar fiberwise analysis.

The second of these statements, Proposition \ref{p:zr-eta-1aff}, is 
more mild: it states that the geometric 
fibers of $\sZ_r/\widehat{\cK}_{r+s}$ over the leading
terms space are 1-affine.

In \S \ref{ss:eff-application}, we will show that these
two propositions imply Proposition \ref{p:zr-main}.

\subsection{Nilpotence and (co)monads}\label{ss:nilpcomonads-start}

As was describe above, we will now give some conditions
on an adjunction that are stronger than the comonadic Barr-Beck
conditions: the idea is to quantitatively carefully analyze
the decay of the totalization tower (as a pro-object). 

This material was heavily 
influenced by Akhil Mathew's senior thesis \cite{akhil-thesis} and 
by \cite{akhil-equivariantdescent}, 
though the ideas date back at least to Bousfield \cite{bousfield}.

\begin{rem}

We suggest that the reader approach the material that follows somewhat 
non-linearly. 

It is essential for reading the remainder of this section to
learn the basic definitions and consequences
from what follows: this is why we are opening this section with 
it.\footnote{Fortunately, the ideas from \cite{akhil-thesis}
are quite natural, and hopefully this should not be too difficult.}
However, we suggest
to skip any arguments that seem technical
or uninteresting and return back to them later (we have tried
to indicate where these points likely are). 

Note that Lemma \ref{l:effmonad-tstr} plays a key role (it is the source
of our examples), but is somewhat abstract (and probably
abstruse). A somewhat casual sense for the
thing is appropriate at first pass (c.f. Remark \ref{r:effmonad-heur}),
e.g. to the point that the reader is not surprised that we are studying
$t$-structures in what follows.

The finer points can then be filled in as the reader likes while
strolling through the remainder of the section.

\end{rem}

\subsection{}

Fix a cocomplete\footnote{This is mostly irrelevant in what follows, but it is convenient to record it once so
we do not need to keep track of it.} 
DG category $\sC$ for the foreseeable future (i.e., through \S \ref{ss:nilp-comonad-finish}). 

Let $\ldots \to \sF^2 \to \sF^1 \to \sF^0$ be a $\bZ^{\geq 0}$-indexed inverse system in $\sC$. 

\begin{defin}

The system $(\sF^i)_{i \geq 0}$ is  
\emph{$n$-nilpotent} for some $n \in \bZ^{\geq 0}$ 
if for every $i$, the morphism $\sF^{i+n} \to \sF^i$ is nullhomotopic.\footnote{
The nullhomotopy is \emph{not} part of the data; i.e., 
$n$-nilpotence is a property, not a structure.}
It is \emph{nilpotent} if it is $n$-nilpotent for some $n$.

\end{defin}

\begin{lem}

If $(\sF^i)$ is nilpotent, then the resulting
object of $\Pro(\sC)$ is zero. 

\end{lem}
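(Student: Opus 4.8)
The claim is that if an inverse system $(\sF^i)_{i \geq 0}$ in $\sC$ is nilpotent, then its image in $\Pro(\sC)$ is the zero object. Recall that $\Pro(\sC)$ is the category of (small, cofiltered) pro-objects, and that a pro-object $(\sF^i)$ is zero precisely when the identity endomorphism is nullhomotopic, equivalently when $\colim_i \Hom_{\sC}(\sF^i, \sG) = 0$ for every $\sG \in \sC$ (using that $\Hom_{\Pro(\sC)}((\sF^i), (\sG^j)) = \lim_j \colim_i \Hom_{\sC}(\sF^i, \sG^j)$). So the plan is to reduce to a statement about the mapping-space filtered colimits.

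\textbf{Key steps.} First I would fix $n$ with the property that $\sF^{i+n} \to \sF^i$ is nullhomotopic for every $i$ — this is the hypothesis of $n$-nilpotence. Second, I would note that to show $(\sF^i) = 0$ in $\Pro(\sC)$ it suffices (by the previous paragraph, or just by checking that the identity map of the pro-object is nullhomotopic) to show that for each $\sG \in \sC$ the map $\colim_i \Hom_{\sC}(\sF^i, \sG) \to \colim_i \Hom_{\sC}(\sF^i, \sG)$ induced by the identity pro-morphism is null — but this map factors, on the level of the indexing, through the transition maps $\Hom_{\sC}(\sF^i,\sG) \to \Hom_{\sC}(\sF^{i+n},\sG)$, each of which is null since $\sF^{i+n} \to \sF^i$ is null. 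More cleanly: the filtered colimit $\colim_i \Hom_{\sC}(\sF^i, \sG)$ is computed by a cofinal subsystem consisting of the maps $\Hom_{\sC}(\sF^i,\sG) \to \Hom_{\sC}(\sF^{i+n},\sG)$, all of which are zero, so the colimit vanishes. Alternatively, and perhaps most transparently: the identity pro-morphism $(\sF^i) \to (\sF^i)$ is, by definition of pro-morphisms, the class of the system of transition maps; since each composite $\sF^{i+n} \to \sF^i$ is nullhomotopic, the identity pro-morphism is represented by a system of nullhomotopic maps, hence is itself nullhomotopic, hence $(\sF^i) \simeq 0$.

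\textbf{Main obstacle.} There is no real obstacle here — the statement is essentially a formal unwinding of the definition of morphisms in a pro-category together with the observation that a pro-object is zero iff its identity is nullhomotopic. The only point requiring a little care is being precise about what "the identity morphism of the pro-object is represented by the transition maps" means in the $\infty$-categorical setting: one should invoke the formula $\Hom_{\Pro(\sC)}((\sF^i),(\sF^j)) = \lim_j \colim_i \Hom_{\sC}(\sF^i,\sF^j)$ and observe that the identity lies in the component where, for the outer index $j$, one picks the inner index $i = j$ and the structure map $\sF^j \to \sF^j$; composing with the transition $\sF^{j+n} \to \sF^j$ (which is $n$-nilpotent, hence null) cofinally replaces this by a null element, so the image of $\id$ in each $\colim_i \Hom_{\sC}(\sF^i, \sF^j)$ is null, whence $\id \simeq 0$ in $\Pro(\sC)$ and therefore $(\sF^i) \simeq 0$. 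I would write this up in two or three sentences, citing the standard description of pro-categories (e.g. as in the conventions of the paper or \cite{dennis-dag}).
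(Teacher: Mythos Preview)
Your argument is correct and essentially identical to the paper's: both fix $n$ witnessing nilpotence and observe that any map from the pro-object to an object $\sG \in \sC$ factors through some $\sF^i$, hence through $\sF^{i+n} \to \sF^i$, which is null. The paper phrases this as ``every map to $\sG \in \sC$ is nullhomotopic,'' while you unwind the same fact via the formula $\Hom_{\Pro(\sC)}((\sF^i),\sG) = \colim_i \Hom_{\sC}(\sF^i,\sG)$; this is just a difference in presentation.
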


\begin{proof}

Say $(\sF^i)$ is $n$-nilpotent.
An object of $\Pro(\sC)$
is zero if and only if every map to an object
$\sG \in \sC \subset \Pro(\sC)$ is nullhomotopic.
A map from the limit (in $\Pro(\sC)$) of our diagram to $\sG$ factors through some $\sF^i$,
but then the map from the limit is nullhomotopic, since it factors 
through $\sF^{i+n}$.

\end{proof}

\begin{cor}

For any DG functor $F:\sC \to \sD$, we have $\lim_{i \geq 0} F(\sF^i) = 0$.

\end{cor}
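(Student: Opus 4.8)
The plan is to bootstrap from the preceding Lemma, reducing first to the case $F = \id$ and then invoking a Mittag--Leffler argument. Since $F$ is a DG functor it carries nullhomotopic morphisms to nullhomotopic morphisms, so if $(\sF^i)$ is $n$-nilpotent then the tower $\big(F(\sF^i)\big)_{i\geq 0}$ is again $n$-nilpotent in $\sD$. Hence it suffices to show that any nilpotent $\bZ^{\geq 0}$-indexed inverse system $(\sF^i)$ in a DG category admitting countable limits satisfies $\lim_{i\geq 0}\sF^i = 0$; applying this to $\big(F(\sF^i)\big)$ then gives the statement.

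For the reduced claim I would test against mapping complexes. For every object $\sG$, naturality of the limit gives $\ul{\Hom}(\sG,\lim_i \sF^i)\simeq \lim_i \ul{\Hom}(\sG,\sF^i)$, and in the tower of complexes on the right the composite of any $n$ successive transition maps is nullhomotopic, since it is induced by the nullhomotopic map $\sF^{i+n}\to\sF^i$. Passing to cohomology (equivalently, homotopy groups of the associated spectra), the subtower indexed by multiples of $n$ has identically zero transition maps and is cofinal, so the pro-system of cohomology groups has vanishing $\lim$ and $\lim^1$ over $i$; the Milnor exact sequence then forces $H^*\big(\lim_i \ul{\Hom}(\sG,\sF^i)\big) = 0$, i.e.\ $\lim_i \ul{\Hom}(\sG,\sF^i) = 0$. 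As $\sG$ is arbitrary, this gives $\lim_i \sF^i = 0$.

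I expect no real obstacle here; the statement is essentially immediate from the Lemma, and the only point I would be careful about is to route the argument through cohomology groups — where the transition maps on the multiples-of-$n$ subtower are literally zero — rather than attempting to invert "$\id - \text{shift}$" on $\prod_i \sF^i$ directly, which would demand a coherent choice of nullhomotopies that the bare nilpotence hypothesis does not supply. As an alternative phrasing, one may simply observe that the Lemma identifies $(\sF^i)$ with the zero object of $\Pro(\sC)$, that a DG functor induces an exact functor on pro-categories preserving the zero object, and that the limit functor $\Pro(\sD)\to\sD$ then returns $\lim_i F(\sF^i) = 0$.
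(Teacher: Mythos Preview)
Your proposal is correct. The paper gives no explicit proof of this corollary; it is meant to be immediate from the preceding Lemma. Your alternative phrasing at the end --- the pro-object is zero, a DG functor induces an exact functor on pro-categories preserving zero, and then $0 \in \sD$ satisfies the universal property of the limit --- is exactly the intended one-line deduction.

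Your main argument via Milnor sequences is a more explicit route to the same conclusion and is perfectly fine. Two small remarks. First, you phrase the reduced claim as being about ``a DG category admitting countable limits,'' but the corollary is stated for arbitrary $\sD$; in fact your own argument shows more, namely that $0$ satisfies the universal property of $\lim_i F(\sF^i)$ (since $\ul{\Hom}(\sG,0) \simeq 0 \simeq \lim_i \ul{\Hom}(\sG,F(\sF^i))$ for every $\sG$), so the limit exists and equals $0$ without any completeness hypothesis on $\sD$. Second, the pro-category phrasing avoids the Milnor sequence entirely: once $(F(\sF^i))$ is zero in $\Pro(\sD)$, one has $\lim_i \Hom_{\sD}(\sG,F(\sF^i)) = \Hom_{\Pro(\sD)}(\sG,0) = 0$ directly, which is a bit cleaner than passing through cohomology and $\lim^1$.
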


\subsection{}

We also record the following basic stability for nilpotency,
c.f. \cite{akhil-thicksubcats} Proposition 3.5.

\begin{lem}\label{l:nilp-cones}

Nilpotent diagrams as above are closed under finite colimits.
That is, if $i \mapsto \sF^i$ and $i \mapsto \sG^i$ are 
$\bZ^{\geq 0}$-indexed and nilpotent
diagrams equipped with compatible maps $\sF^i\to \sG^i$, then
the diagram $i \mapsto \Coker(\sF^i \to \sG^i)$ is also nilpotent.

\end{lem}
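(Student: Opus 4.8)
The plan is to reduce the statement to a single stability assertion for exact triangles of towers, and then to prove that assertion by a short index chase; I would in fact prove the quantitative refinement that an $m$-nilpotent diagram together with an $n$-nilpotent diagram produces an $(m+n)$-nilpotent cofiber, since the quantitative form is what is actually used later in the section.

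First I would set up the reduction. The functor $\Coker\colon \on{Fun}(\Delta^1,\sC)\to\sC$ sending an arrow to its cofiber preserves finite colimits and shifts, hence carries exact triangles of arrows to exact triangles; applied to the $\bZ^{\geq 0}$-indexed diagram $i\mapsto(\sF^i\to\sG^i)$ it yields, functorially in $i$, an exact triangle $\sF^i\to\sG^i\to\sH^i\xar{+1}\sF^i[1]$ with $\sH^i\coloneqq\Coker(\sF^i\to\sG^i)$. Rotating this triangle of $\bZ^{\geq 0}$-indexed diagrams, and using that $i\mapsto\sA^i[1]$ is $k$-nilpotent whenever $i\mapsto\sA^i$ is, everything is reduced to the following claim: \emph{if $\sX^\bullet\to\sY^\bullet\to\sZ^\bullet\xar{+1}$ is an exact triangle of $\bZ^{\geq 0}$-indexed diagrams in $\sC$ with $\sX^\bullet$ being $m$-nilpotent and $\sZ^\bullet$ being $n$-nilpotent, then $\sY^\bullet$ is $(m+n)$-nilpotent.} One applies this with $\sX^\bullet=\sG^\bullet$, $\sZ^\bullet=\sF^\bullet[1]$, $\sY^\bullet=\sH^\bullet$.

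The core of the proof is then the index chase establishing the claim. Fix $i$ and factor the structure map $\sY^{i+m+n}\to\sY^i$ through $\sY^{i+m}$. By $n$-nilpotence of $\sZ^\bullet$ the map $\sZ^{i+m+n}\to\sZ^{i+m}$ is nullhomotopic, so in the map of triangles from index $i+m+n$ to index $i+m$ the composite $\sY^{i+m+n}\to\sY^{i+m}\to\sZ^{i+m}$ is nullhomotopic; hence, by the universal property of the fiber $\sX^{i+m}=\on{fib}(\sY^{i+m}\to\sZ^{i+m})$, the map $\sY^{i+m+n}\to\sY^{i+m}$ lifts along $\sX^{i+m}\to\sY^{i+m}$. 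Composing with $\sY^{i+m}\to\sY^i$ and using naturality of $\sX^\bullet\to\sY^\bullet$, the resulting map $\sY^{i+m+n}\to\sY^i$ factors through $\sX^{i+m}\to\sX^i\to\sY^i$, and $\sX^{i+m}\to\sX^i$ is nullhomotopic by $m$-nilpotence of $\sX^\bullet$. Therefore $\sY^{i+m+n}\to\sY^i$ is nullhomotopic for every $i$, which is exactly $(m+n)$-nilpotence of $\sY^\bullet$.

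I do not anticipate a serious obstacle; this is the argument underlying \cite{akhil-thicksubcats} Proposition 3.5, which one may alternatively cite directly. The only step meriting care is the lift through the fiber, which should be carried out at the level of the functor category $\on{Fun}(\bZ^{\geq 0},\sC)$ (equivalently, of the associated pro-object), where it is the standard fact that a map into the base of a fiber sequence whose postcomposition with the projection to the cofiber is nullhomotopic lifts along the fiber.
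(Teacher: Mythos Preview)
Your proof is correct and is precisely the argument behind \cite{akhil-thicksubcats} Proposition 3.5, which the paper simply cites without spelling out a proof. Your quantitative refinement (that $m$-nilpotent and $n$-nilpotent combine to $(m+n)$-nilpotent) is exactly what the paper alludes to in the sentence following the lemma, noting that $n$-nilpotent diagrams with fixed $n$ are \emph{not} closed under cones.
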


Note, however, that 
$n$-nilpotent objects (with $n$ fixed) 
are \emph{not} closed under cones.

\subsection{}

The following lemma will play a key role in what follows, and it is our reason
for keeping track of the rate of nilpotence in the above. 
However, the reader may safely skip it for now
and refer back to it later.

\begin{lem}\label{l:nilp-sums}

Suppose that we are given $\bZ^{\geq 0}$-inverse systems $\sF_j^i$ indexed
by some $j$ in some set $J$, and suppose that each of these systems 
is $n$-nilpotent.
Then $(\oplus_j \sF_j^i)$ is $n$-nilpotent as well.

\end{lem}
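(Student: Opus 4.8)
\textbf{Proof proposal for Lemma \ref{l:nilp-sums}.}

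The plan is to unwind the definition of $n$-nilpotence and observe that it is closed under arbitrary coproducts, using only the universal property of the coproduct in $\sC$. Fix $i \geq 0$. By hypothesis, for each $j \in J$ the structure map $\sF_j^{i+n} \to \sF_j^i$ is nullhomotopic, i.e.\ it is equivalent (as a morphism in $\sC$) to the zero morphism. Then I would form the coproduct of these maps over $j$: the induced map
\[
\bigoplus_j \sF_j^{i+n} \to \bigoplus_j \sF_j^i
\]
is, by functoriality of $\bigoplus_j$ on morphisms in $\sC$, the coproduct of a family of zero morphisms, hence is itself the zero morphism. (More carefully: coproduct is a functor from the product category $\sC^J$ to $\sC$, and a morphism in $\sC^J$ is zero iff each component is zero; a functor sends zero morphisms to zero morphisms when the target has a zero object, which $\sC$ does, being a DG category.) Since the structure map of the inverse system $\left(\bigoplus_j \sF_j^i\right)_i$ in degrees $i+n \to i$ is precisely this map, it is nullhomotopic; as $i$ was arbitrary, the system $\left(\bigoplus_j \sF_j^i\right)_i$ is $n$-nilpotent.

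The one point requiring mild care is the higher-categorical bookkeeping: ``nullhomotopic'' means that the morphism lies in the connected component of $0$ in the mapping space $\on{Map}_{\sC}(\sF_j^{i+n}, \sF_j^i)$, and this is not extra structure but a property. The coproduct functor induces a map of mapping spaces
\[
\prod_j \on{Map}_{\sC}(\sF_j^{i+n}, \sF_j^i) \to \on{Map}_{\sC}\Bigl(\bigoplus_j \sF_j^{i+n}, \bigoplus_j \sF_j^i\Bigr),
\]
and it sends the tuple of (points in the component of the) zero morphisms to (a point in the component of) the zero morphism, because the zero object is preserved and the construction is natural. So no choice of nullhomotopies needs to be coherently assembled — the statement is purely about which component of a mapping space a morphism lands in. I expect this is the only potential obstacle, and it is a minor one; the lemma is essentially formal once one takes seriously that $n$-nilpotence is a condition on the structure maps rather than a piece of data.

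I would write the proof in two or three sentences along these lines, perhaps recording explicitly that $\bigoplus_j$ preserves the zero object and hence zero morphisms, and noting that the statement would fail for a general cone-type construction precisely because $n$-nilpotence with $n$ fixed is not closed under cones (as already remarked after Lemma \ref{l:nilp-cones}), whereas it manifestly is closed under coproducts.
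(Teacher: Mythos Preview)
Your proposal is correct and takes essentially the same approach as the paper: fix $i$, observe that each component map $\sF_j^{i+n}\to\sF_j^i$ is nullhomotopic, and conclude that their coproduct is nullhomotopic. The paper phrases this as ``choose nullhomotopies of each of the maps \ldots\ this induces a nullhomotopy of the resulting map,'' while you phrase it via the coproduct functor preserving zero morphisms; these are the same argument, and your extra paragraph on mapping spaces is a reasonable (if slightly overcautious) elaboration of why no coherence issue arises.
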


\begin{proof}

Fix $i$. Choose nullhomotopies of each of the maps 
$\sF_j^{i+n} \to \sF_j$.
This induces a nullhomotopy of the resulting map 
$\oplus_j \sF_j^{i+n} \to \oplus_j \sF_j^i$
as well, giving the claim.

\end{proof}

\begin{rem}

This lemma does not work for mere 
nilpotence (i.e., if we do not choose the degree of nilpotence uniformly
in $j$).

\end{rem}

\subsection{} 

Let $T:\sC \to \sC$ be a monad. Recall that for each $\sF \in \sC$, 
the bar construction gives an augmented cosimplicial object:

\[
\sF \to T(\sF) \rightrightarrows T^2(\sF) \ldots
\]

We denote this cosimplicial object by $T^{\dot+1}(\sF)$ where convenient.
For each integer $i \geq 0$, let $\Tot^i T^{\dot+1}(\sF)$ denote the
limit of this diagram over $\bDelta_{\leq i} \subset \bDelta$ (the subcategory
of simplices of size $\leq i$).

\begin{defin}

$T$ is \emph{$n$-effective} if for every $\sF \in \sC$,
the pro-system:

\[
i \mapsto \Coker(\sF \to \Tot^i T^{\dot +1}(\sF))
\] 

\noindent is $n$-nilpotent functorially in $\sF$.

More precisely, $T$ is $n$-effective if the system:

\[
i\mapsto \id_{\sC} \to \Coker(\id_{\sC} \to \Tot^i T^{\dot+1}(-)) \in \End(\sC)
\]

\noindent is $n$-nilpotent.

We say that $T$ is \emph{effective} if it is $n$-effective for some $n$.

\end{defin}

\begin{rem}

If $T$ is defined by an adjunction 
$F:\sC \rightleftarrows \sD : G$ (so $T = GF$)
and is effective, then the functor $F$ is comonadic.
Indeed, for any $\sF \in \sC$, the map:

\[
\sF \to \Tot (GF)^{\dot+1}(\sF) = \underset{i}{\lim} \Tot^i (GF)^{\dot +1}(\sF)
\]

\noindent is then an isomorphism, 
and formation of this limit is preserved by any DG functor
(since each $\Tot^i$ is a finite limit, and then the isomorphism in $\Pro(\sC)$ implies preservation
of the limit in $i$), so in particular by $F$. 

\end{rem}

\begin{example}

\cite{akhil-thesis} provides many examples of $n$-effective monads, to great effect.
We will provide some additional examples in what follows. 

\end{example}

\subsection{}

Effectivity of a monads has the following nice consequence for computing 
tensor products.

Let $\sA \in \Alg(\DGCat_{cont})$ act on $\sC$. 
Let $\sM \in \DGCat_{cont}$ be a right module category for $\sA$.
Suppose that we are given $\sA$-linear continuous functors 
$F:\sC \rightleftarrows \sD : G$.

\begin{prop}\label{p:tens-eff}

If $T \coloneqq GF$ is effective, then 
$\sC \otimes_{\sA} \sM \to \sD \otimes_{\sA} \sM$ is comonadic.
More precisely, if $T$ is $n$-effective, then the induced monad on 
the tensor product is also $n$-effective.

\end{prop}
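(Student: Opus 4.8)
\textbf{Proof plan for Proposition \ref{p:tens-eff}.}

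The plan is to show that the monad induced on the tensor product is $n$-effective, since effectivity of a monad formally implies comonadicity of the corresponding functor (as recorded in the remark preceding the proposition). So the entire content is to track what happens to the nilpotence data under the functor $- \otimes_{\sA} \sM$.

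First I would set up the induced adjunction. Since $F : \sC \rightleftarrows \sD : G$ are $\sA$-linear continuous functors, applying $- \underset{\sA}{\otimes} \sM$ gives continuous functors $F_{\sM} : \sC \underset{\sA}{\otimes} \sM \rightleftarrows \sD \underset{\sA}{\otimes} \sM : G_{\sM}$, still adjoint (tensoring a continuous adjunction over $\sA$ preserves the adjunction, since the unit and counit tensor through). The induced monad is $T_{\sM} = G_{\sM} F_{\sM} = (GF) \underset{\sA}{\otimes} \id_{\sM} = T \underset{\sA}{\otimes} \id_{\sM}$, and more to the point the bar construction is compatible: $T_{\sM}^{\dot + 1}(\sN) $ for $\sN \in \sC \underset{\sA}{\otimes} \sM$ is computed by applying $T^{\dot+1} \underset{\sA}{\otimes} \id_{\sM}$ to $\sN$ when $\sN$ is of the form $\sF \otimes \sm$, and in general by the fact that everything in sight commutes with colimits and $\sC \underset{\sA}{\otimes} \sM$ is generated under colimits by such objects. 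In particular, for each $i$ the object $\Tot^i T_{\sM}^{\dot+1}(-)$ is a \emph{finite} limit and hence is computed as $\big(\Tot^i T^{\dot+1}(-)\big) \underset{\sA}{\otimes} \id_{\sM}$ as an endofunctor of $\sC \underset{\sA}{\otimes} \sM$ — this uses that finite limits of continuous functors are computed "objectwise" after tensoring, which is where I would be slightly careful.

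Next I would push the nilpotence through. By hypothesis, the $\bZ^{\geq 0}$-system of endofunctors
\[
i \mapsto \Coker\big(\id_{\sC} \to \Tot^i T^{\dot+1}(-)\big) \in \End(\sC)
\]
is $n$-nilpotent, i.e.\ each composite $\sF^{i+n} \to \sF^i$ in this system is nullhomotopic, functorially. Tensoring the natural transformations realizing these nullhomotopies over $\sA$ with $\id_{\sM}$ produces nullhomotopies of the corresponding maps in the system
\[
i \mapsto \Coker\big(\id_{\sC \underset{\sA}{\otimes} \sM} \to \Tot^i T_{\sM}^{\dot+1}(-)\big),
\]
using the identification of the previous paragraph together with the fact that $-\underset{\sA}{\otimes}\id_{\sM}$ is a functor $\End_{\sA}(\sC) \to \End(\sC \underset{\sA}{\otimes} \sM)$ and so carries nullhomotopic maps to nullhomotopic maps (cokernels and the augmentation map are preserved because tensoring is exact and continuous). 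Hence $T_{\sM}$ is $n$-effective.

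The main obstacle, and the only point requiring genuine care, is the compatibility claim $\Tot^i T_{\sM}^{\dot+1}(\sN) \simeq \big(\Tot^i T^{\dot+1}\big)\underset{\sA}{\otimes}\id_{\sM}(\sN)$ — i.e.\ that the (finite) totalization is compatible with tensoring over $\sA$. I would handle this by first checking it on objects of $\sC \underset{\sA}{\otimes} \sM$ in the image of $\sC \otimes \sM$ (where it is immediate from the definition of the bar construction and the fact that $- \underset{\sA}{\otimes} \sM$ is exact, so commutes with the finite limit $\Tot^i$), and then invoking that every object of $\sC\underset{\sA}{\otimes}\sM$ is a colimit of such and that all functors involved ($T_{\sM}$, $\Tot^i$ which is a finite limit, and the cokernel) commute with colimits — here continuity of $T$ and $F$, $G$ is used. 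Once this identification is in hand, the nilpotence transfer is purely formal. I do not expect any further difficulties; this is exactly the kind of bookkeeping argument that the effectivity formalism was designed to make routine, which is presumably why the proposition is stated as a clean consequence.
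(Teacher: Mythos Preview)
Your proposal is correct and follows essentially the same route as the paper: both observe that the induced monad is $T \otimes_{\sA} \id_{\sM}$, that the partial totalizations $\Tot^i$ (being finite limits) and the cokernels are preserved under the exact functor $-\otimes_{\sA}\id_{\sM} : \End_{\sA}(\sC) \to \End(\sC \otimes_{\sA} \sM)$, and that the nullhomotopies witnessing $n$-effectivity therefore transfer directly. Your ``main obstacle'' paragraph is slightly more elaborate than necessary --- once you recognize that $-\otimes_{\sA}\id_{\sM}$ is an exact functor on endofunctor categories, the compatibility with $\Tot^i$ is automatic and there is no need to argue via generators and colimits --- but this is a matter of economy, not correctness.
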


\begin{proof}

Let $T^{\prime}$ be the induced monad on $\sC \otimes_{\sA} \sM$, corresponding to the adjunction
$(F\otimes_{\sA} \id_{\sM}, G\otimes_{\sA} \id_{\sM})$.

We have a factorization:

\[
\sC \times \sM \rar{T\times \id_{\sM}} \sC \times \sM \to \sC \otimes_{\sA} \sM
\]

\noindent with an obvious $\sA$-bilinear structure on the composition, inducing the functor 
$T^{\prime}: \sC \otimes_{\sA} \sM  \to \sC \otimes_{\sA} \sM$.
Moreover, 
the same holds for $\Coker(\id_{\sC} \to T^i)\times \id_{\sM}$ versus 
$\Coker(\id_{\sC \otimes_{\sA} \sM} \to (T^{\prime})^i)$. Passing to partial totalizations
and using the $n$-effective structure on $T$ 
(i.e., choosing the relevant nullhomotopies) then induces the desired
$n$-effective structure on $T^{\prime}$.

\end{proof}

\subsection{A pointwise criterion for comonadicity}\label{ss:nilp-comonad-finish}

Let $S$ be a given finite type $k$-scheme, and recall that
$D(S) \coloneqq \QCoh(S_{dR}) \in \DGCat_{cont}$ 
denotes its category of $D$-modules.\footnote{This next result also admits
a quasi-coherent variant, 
but we will apply it in the present form.}

We record the following consequence of Lemma \ref{l:nilp-sums},
which the reader may safely skip for now and refer back to when
it is applied below.

\begin{prop}\label{p:ptwise-eff/equiv}

Suppose we are given a diagram:

\[
\xymatrix{
\sC_1 \ar[rr]^{\Psi} \ar[dr]^{F_1} && \sC_2 \ar[dl]_{F_2} \\
& \sD
}
\]

\noindent in $D(S)\mod \coloneqq D(S)\mod(\DGCat_{cont})$
such that:

\begin{itemize}

\item For every $\eta \in S_{dR}$ a geometric point,
the induced functor on fibers:

\[
\Psi_{\eta}: \sC_{1,\eta} \to \sC_{2,\eta}
\]

\noindent is an equivalence.

\item  $F_1$ and $F_2$ admit $D(S)$-linear (continuous) right adjoints
$G_1$ and $G_2$.

\item $\Psi$ induces an equivalence of 

\[
F_1 G_1 \to F_2 G_2
\]

\noindent of $D(S)$-linear comonads on $\sD$.

\item The monad $G_1F_1$ is effective.

\end{itemize}

Then $\Psi$ is an equivalence.

\end{prop}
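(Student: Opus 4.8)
The plan is to show that $\Psi$ is both conservative and comonadic, whence an equivalence follows formally; in fact it is cleaner to argue that $\Psi$ fits into a comonadic adjunction against $\sD$ on both sides and that the comparison of comonads is an equivalence. First I would observe that $G_1 F_1$ being effective gives, via the remark following the definition of effectivity, that $F_1$ is comonadic; that is, $\sC_1 \isom \sD\mod(F_1 G_1)$ (comodules for the comonad $F_1 G_1$). So the plan is to deduce the corresponding statement for $F_2$ and then compare.

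The key point is to transport effectivity of $G_1 F_1$ to effectivity of $G_2 F_2$. Here I would use the pointwise hypothesis together with Lemma \ref{l:nilp-sums}. Since $S$ is a finite type $k$-scheme, $D(S)$ is generated under colimits by the $D$-module pushforwards from geometric points $\eta \in S_{dR}$ (indeed $D(S)$ is compactly generated, and one may even reduce to skyscrapers after passing to $S_{dR}$). Because all functors in sight are $D(S)$-linear and continuous, the pro-system $i \mapsto \Coker(\id \to \Tot^i (G_2 F_2)^{\dot+1}(-))$ is computed $D(S)$-linearly, and hence is controlled by its values on the generators $i_{\eta,*}(-)$. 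On each such fiber the functor $\Psi_\eta$ is an equivalence, so the monad $(G_1 F_1)_\eta$ is carried to $(G_2 F_2)_\eta$; thus $(G_2 F_2)_\eta$ is effective with the \emph{same} degree of effectivity $n$ as $G_1 F_1$. Now the crucial step: to conclude $n$-effectivity of $G_2 F_2$ globally, I would write a general object as a colimit (in fact an iterated extension plus filtered colimit) of objects pushed forward from geometric points, and use that $n$-nilpotence of a family of inverse systems with \emph{uniform} $n$ is stable under direct sums (Lemma \ref{l:nilp-sums}) and under finite colimits / cones (Lemma \ref{l:nilp-cones}), and trivially under filtered colimits since the structure maps are literally nullhomotopic at stage $n$. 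This is exactly where keeping track of the rate of nilpotence — rather than just "effective" — is essential, and I expect this bookkeeping (making the uniformity in $\eta$ genuinely yield uniformity globally) to be the main obstacle.

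Granting effectivity of $G_2 F_2$, the remark after the definition of effectivity shows $F_2$ is comonadic, so $\sC_2 \isom \sD\mod(F_2 G_2)$. The hypothesis that $\Psi$ induces an equivalence of comonads $F_1 G_1 \isom F_2 G_2$ on $\sD$ then gives an equivalence $\sD\mod(F_1 G_1) \isom \sD\mod(F_2 G_2)$ of comodule categories, and chasing the definitions one checks that under the identifications $\sC_1 \isom \sD\mod(F_1 G_1)$ and $\sC_2 \isom \sD\mod(F_2 G_2)$ this equivalence is precisely $\Psi$ (both are the functor sending $c$ to $F_1(c) = F_2\Psi(c)$ with its coaction). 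Hence $\Psi$ is an equivalence.

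\textbf{Remark on the reduction.} A slightly slicker route, avoiding writing arbitrary objects as colimits of point-pushforwards, is: $D(S)$ has a conservative collection of $t$-exact-up-to-shift restriction functors to the geometric points (as in the $\QCoh$ case used in \S\ref{s:z-geom}), all the functors are $D(S)$-linear, and $n$-effectivity can be tested after applying these restrictions because the relevant cokernels commute with $D(S)$-linear base change; since the degree $n$ is the same for every point, one reads off global $n$-effectivity directly. Either way, the heart of the matter is the uniformity of the nilpotence degree, which is supplied by the pointwise equivalence $\Psi_\eta$ transporting the effectivity witness of $G_1 F_1$ with no loss.
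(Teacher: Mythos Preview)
Your overall strategy is correct and matches the paper's: transport effectivity from $G_1F_1$ to $G_2F_2$ via the fiberwise equivalences $\Psi_\eta$ (using Proposition \ref{p:tens-eff} to get uniform $n$-effectivity of $(G_1F_1)_\eta$, hence of $(G_2F_2)_\eta$), then conclude by comonadicity of $F_1$ and $F_2$ together with the identification of comonads on $\sD$. The paper isolates the transport step as Lemma \ref{l:ptwise-eff}.

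There is, however, a genuine gap in your mechanism for globalizing effectivity. You propose to write a general object as an iterated extension plus filtered colimit of point-pushforwards, claiming that $n$-nilpotence passes through filtered colimits ``trivially since the structure maps are literally nullhomotopic at stage $n$.'' But a filtered colimit of nullhomotopic maps need not be nullhomotopic: the nullhomotopies on the individual terms have no reason to be compatible along the transition maps. (Direct sums are special---one chooses nullhomotopies independently and sums them, which is exactly Lemma \ref{l:nilp-sums}---but general filtered colimits are not.) The paper avoids this via the Cousin resolution: for any $\sF$ in a $D(S)$-module category there is a \emph{functorial finite} filtration of length $\leq \dim S + 1$, whose associated graded pieces are direct sums $\oplus_{\dim \ol{\eta}=\ell}\, i_{\eta,*,dR} i_\eta^!(\sF)$. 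The direct sums are handled by Lemma \ref{l:nilp-sums} (uniform $n$), the finite filtration by Lemma \ref{l:nilp-cones}; no filtered colimits enter. Note that Lemma \ref{l:nilp-cones} does not preserve the degree, so the conclusion is only that $G_2F_2$ is effective, not $n$-effective---but that suffices.

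Your ``slicker route'' in the Remark does not work either: a conservative family of functors detects isomorphisms, not zero maps. Knowing that each restriction of a natural transformation is nullhomotopic does not make the transformation itself nullhomotopic, so one cannot read off global $n$-effectivity directly from fiberwise $n$-effectivity without a finite-assembly device like Cousin.
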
 

We will use the following lemma.

\begin{lem}\label{l:ptwise-eff}

Suppose $T:\sC \to \sC$ is a $D(S)$-linear monad.

Suppose that for every $\eta \in S_{dR}$ a geometric point,
the monad $T_{\eta}: \sC_{\eta} \to \sC_{\eta}$
an $n$-effective monad, where $n$ can be chosen
independent of $\eta$.

Then $T$ is effective.

\end{lem}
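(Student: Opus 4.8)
The statement to prove, Lemma \ref{l:ptwise-eff}, is a ``spreading out'' of effectivity from geometric fibers over $S_{dR}$ to the total category. The plan is to reduce to Lemma \ref{l:nilp-sums}: effectivity is a statement about the pro-system $i\mapsto \Coker(\id_{\sC}\to\Tot^iT^{\dot+1}(-))$ being $n$-nilpotent in $\End(\sC)$, and $\End(\sC)$ for $\sC\in D(S)\mod$ ought to be built, in a controlled way, out of the fiberwise endofunctor categories $\End(\sC_\eta)$ over points $\eta$ of $S_{dR}$. Concretely, since $S$ is a finite type $k$-scheme, $D(S)$ is compactly generated and $S_{dR}$ is 1-affine (by the results of \cite{shvcat} recalled in \S\ref{ss:1aff-exs}), so $\sC$ is recovered as $\Gamma(S_{dR}, \Loc(\sC))$ and, more usefully, $\sC$ is generated under colimits by the images of the fiberwise generators, i.e.\ by objects of the form $i_{\eta,*}(\sF_\eta)$ for $\eta$ ranging over (a set of) geometric points of $S_{dR}$ and $\sF_\eta$ in a set of compact generators of $\sC_\eta$.

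First I would make precise the statement that ``$n$-nilpotence of the $\Tot$-tower can be checked on fibers.'' The tower $i\mapsto \Coker(\id\to\Tot^iT^{\dot+1}(-))$ is a $\bZ^{\geq0}$-indexed inverse system of $D(S)$-linear endofunctors of $\sC$; to show it is $n$-nilpotent we must produce, for each $i$, a nullhomotopy of the transition map $\sF^{i+n}\to\sF^i$ evaluated on every object of $\sC$, functorially. Because $T$ is $D(S)$-linear, applying it commutes with $i_{\eta,*}$ up to the base-change identity, so for an object of the form $i_{\eta,*}(\sF_\eta)$ the tower evaluated there is $i_{\eta,*}$ applied to the corresponding fiberwise tower $i\mapsto\Coker(\id\to\Tot^iT_\eta^{\dot+1}(\sF_\eta))$, which is $n$-nilpotent by hypothesis (with $n$ independent of $\eta$). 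Thus the transition maps are nullhomotopic on every generator, with a \emph{uniform} degree $n$.

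The second step is to propagate this from the generators to all of $\sC$. Here the uniformity of $n$ is exactly what Lemma \ref{l:nilp-sums} needs: an arbitrary object of $\sC$ is a colimit — hence, up to finite colimits and filtered colimits, obtained from direct sums — of the $i_{\eta,*}(\sF_\eta)$'s, and Lemma \ref{l:nilp-sums} says a direct sum of $n$-nilpotent towers (with common $n$) is $n$-nilpotent, while Lemma \ref{l:nilp-cones} handles the finite colimits. Filtered colimits require a small additional argument: since each $\Tot^i$ is a \emph{finite} limit and $T$ (being $D(S)$-linear, hence continuous) commutes with filtered colimits, the tower $i\mapsto\Coker(\id\to\Tot^iT^{\dot+1}(-))$ commutes with filtered colimits in the variable, so an $n$-nilpotent structure on a filtered system of towers passes to the colimit tower — again crucially using that the degree $n$ is bounded independently of the index. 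Concluding, the tower is $n$-nilpotent on all of $\sC$, i.e.\ $T$ is $n$-effective, hence effective.

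\textbf{Main obstacle.} The delicate point is the passage through colimits while \emph{keeping the nilpotence degree fixed}: mere nilpotence does not survive infinite direct sums (Remark after Lemma \ref{l:nilp-sums}), so the argument genuinely relies on extracting a \emph{uniform} $n$ from the fiberwise hypothesis and then invoking Lemma \ref{l:nilp-sums} and the finiteness of the $\Tot^i$ truncations. A secondary technical point I would need to nail down is the identification of the tower on $i_{\eta,*}(\sF_\eta)$ with $i_{\eta,*}$ of the fiberwise tower; this is base-change for the $D(S)$-module structure, i.e.\ the compatibility of $T$ with $i_{\eta}^{*}$ and its adjoint $i_{\eta,*}$ — standard, but worth stating carefully since it is the hinge connecting ``fiberwise'' to ``global.'' Everything else is bookkeeping with the definitions of \S\ref{ss:nilpcomonads-start}.
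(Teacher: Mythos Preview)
Your instinct to reduce to Lemma \ref{l:nilp-sums} via the fiberwise hypothesis is right and matches the paper in spirit, but the passage through colimits has a genuine gap.

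The problem is twofold. First, the filtered-colimit claim---that ``an $n$-nilpotent structure on a filtered system of towers passes to the colimit tower''---is unjustified: a filtered colimit of nullhomotopic maps need not be nullhomotopic, because the individually chosen nullhomotopies have no reason to be compatible across the system. (Lemma \ref{l:nilp-sums} works for direct sums precisely because summands do not interact; filtered colimits are not like this.) Second, and more structurally, what you must produce is a nullhomotopy of the transition map \emph{as a morphism in $\End(\sC)$}, and objectwise nullhomotopies---even on a generating set, even with uniform degree---do not assemble into one. Your ``propagate via colimits'' scheme is effectively trying to verify that a natural transformation is null by checking it on objects, and that is not how nullhomotopy of a natural transformation is detected.

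The paper fixes this with the Cousin resolution, whose two crucial features are \emph{functoriality} and \emph{finiteness}. Functoriality means the Cousin filtration is a filtration of $\id_\sC$ in $\End(\sC)$, hence induces a filtration of the tower $i\mapsto\Coker(\id_\sC\to\Tot^i T^{\dot+1})$ inside $\End(\sC)$; each graded piece is $\bigoplus_{\dim(\ol{\eta})=\ell}\, i_{\eta,*,dR}\circ(\text{fiberwise tower at }\eta)\circ i_\eta^!$, so Lemma \ref{l:nilp-sums} (applied in $\End(\sC)$) gives $n$-nilpotence of each graded piece as a tower of endofunctors. Finiteness (length $\leq\dim S+1$) then lets Lemma \ref{l:nilp-cones} be invoked a bounded number of times, giving nilpotence of the full tower with controlled degree. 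Your approach implicitly relies on the same generation phenomenon the Cousin filtration encodes, but by routing through arbitrary colimits you lose exactly the finiteness that prevents the degree blowup inherent in Lemma \ref{l:nilp-cones}.
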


Note that the converse of Lemma \ref{l:ptwise-eff}
holds by Proposition \ref{p:tens-eff}.

\begin{proof}[Proof of Lemma \ref{l:ptwise-eff}]

We will deduce this from Lemma \ref{l:nilp-sums} using 
the Cousin resolution.

Recall that for any $\sF \in \sC$, the Cousin resolution gives a functorial
finite filtration of $\sF$ 
where the subquotients are of the form:

\[
\underset{\dim(\ol{\eta}) = \ell} {\oplus} \, i_{\eta,*,dR} i_{\eta}^!(\sF)
\]

\noindent where $\eta \in S$ is a schematic point, and with
$i_{\eta}$ denoting the embedding of this point.

Since formation of this Cousin resolution commutes with
$D(S)$-linear functors, it induces a similar filtration
on the $\Tot$ tower of $\sF$ (associated with the
monad $T$): this time the subquotients are:

\[
\underset{\dim(\ol{\eta}) = \ell}{\oplus} \, i_{\eta,*,dR} 
\Tot^i (T_{\eta})^{\dot + 1}(i_{\eta}^!\sF)
\]

\noindent since these partial totalizations are finite limits,
so commute with everything in sight.

Applying the fiberwise hypothesis on 
$T$ and Lemma \ref{l:nilp-sums}, 
we see that the tower:

\[
\begin{gathered}
i\mapsto \Coker\Big(
\underset{\dim(\ol{\eta}) = \ell}{\oplus} \, i_{\eta,*,dR} i_{\eta}^!(\sF)
\to
\underset{\dim(\ol{\eta}) = \ell}{\oplus} \, i_{\eta,*,dR} 
\Tot^i T_{\eta}^{\dot + 1}(i_{\eta}^!\sF)
\Big) = \\
\underset{\dim(\ol{\eta}) = \ell}{\oplus} \, i_{\eta,*,dR}
\Coker\big(
i_{\eta}^!(\sF)
\to
\Tot^i T_{\eta}^{\dot + 1}(i_{\eta}^!\sF)
\big)
\end{gathered}
\]

\noindent is $n$-nilpotent (functorially in $\sF$).
But since the Cousin filtration is finite, Lemma \ref{l:nilp-cones}
gives the desired conclusion.

\end{proof}

\begin{proof}[Proof of Proposition \ref{p:ptwise-eff/equiv}]

We claim that the monad $G_2F_2$ is also effective.

Indeed, since $G_1F_1$ is effective, by Proposition \ref{p:tens-eff},
the resulting fiberwise functors are uniformly (in $\eta$) $n$-effective
for some $n$. Since $\Psi$ equates the two pointwise situations,
we see that $G_2F_2$ is also uniformly pointwise $n$-effective.
Then by Lemma \ref{l:ptwise-eff}, we deduce that $G_2F_2$
is effective.

Therefore, the functor $F_2$ is comonadic. Since $F_1$ is
also comonadic (also by effectiveness), we obtain 
the result, since the two comonads on $\sD$ are identified.

\end{proof}

\subsection{Back to local systems}\label{ss:backtolocsys}

We now return to proving Proposition \ref{p:zr-main}.

Before proceeding, we need to introduce some notation relevant to
our need to shrink the congruence subgroup $G(O)$.

Let $\widetilde{\sY}_r$ denote:

\[
\widehat{G(O)} \overset{G(O)}{\times} \sY_r
\] 

\noindent where this notation indicates that we take
the quotient with respect to the diagonal action
of $G(O)$ via its gauge action on $\sY_r$ and
its right action on $\widehat{G(O)}$.

We have a canonical $\widehat{G(O)}$-equivariant map
$\widetilde{\sY}_r \to \sZ_r$ with:

\[
\widetilde{\sY}_r/\widehat{G(O)} =
\sY_r/G(O) \rar{\pi} \sZ_r/\widehat{G(O)}.
\]

For $N \geq 0$, let $\widehat{\cK}_N$ denote the formal
completion of the congruence subgroup $\cK_N$ inside
of $G(K)$.

Finally, let $\pi_N$ (resp. $\widetilde{\pi}_N$) denote the induced maps:

\[
\begin{gathered} 
\pi_N:\sY_r/\cK_N \to
\sZ_r/\widehat{\cK}_N 
\\
\widetilde{\pi}_N:\widetilde{\sY}_r/\widehat{\cK}_N \to
\sZ_r/\widehat{\cK}_N 
\end{gathered}
\]

\noindent Note that this morphism is equivariant for 
the obvious action of 
$(G(O)/K_N)_{dR}$, and reducing modulo this group prestack
recovers the morphism $\pi$.

\subsection{Effectiveness for $\sZ_r$}

We delay the proof of the following result. As was said earlier,
we will make some progress towards its proof at the end
of this section, but a complete proof will require results
from \S \ref{s:z-geom}.

\begin{prop}\label{p:ourmonad-eff}

For every\footnote{Actually, the methods we use can readily
be extended to show this result for all $N \geq 0$. But imposing
the hypothesis makes the exposition a little less clunky.}
$N \geq r+s$, the monads
$\pi_N^*\pi_{N,?}$ on $\QCoh(\sY_r/ \cK_N)$ 
and $\widetilde{\pi}_N^*\widetilde{\pi}_{N,?}$ on
$\QCoh(\widetilde{\sY}_r/\widehat{\cK}_N)$
are effective. 

\end{prop}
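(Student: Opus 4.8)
The plan is to prove effectiveness of the monads $\pi_N^*\pi_{N,?}$ and $\widetilde{\pi}_N^*\widetilde{\pi}_{N,?}$ by reducing to a cohomological boundedness statement, exactly paralleling the role that Proposition~\ref{p:tame-amplitude} played in the proof of Theorem~\ref{t:tame}, and then invoking the $t$-structure machinery (Lemma~\ref{l:effmonad-tstr}, whose statement we are permitted to use here). First I would observe that the map $\pi_N$ (and likewise $\widetilde{\pi}_N$) factors as an ind-regular embedding $\sY_r/\cK_N \into \sZ_r/\cK_N$ followed by a $\widehat{\cK}_N/\cK_N$-fibration $\sZ_r/\cK_N \to \sZ_r/\widehat{\cK}_N$, so $\pi_N$ is ind-(proper and eventually coconnective) and $\pi_{N,?}$ exists, as recorded in \S\ref{ss:?-pushforward}. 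The comonad $\pi_{N,?}\pi_N^*$ on $\QCoh(\sZ_r/\widehat{\cK}_N)$ is computed by base change as a $*$-pushforward along the action map, so it is $t$-exact up to shift; the content is to control the monad $\pi_N^*\pi_{N,?}$, equivalently to show that $\pi_{N,?}$ itself has bounded cohomological amplitude on the appropriate subcategory.

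The key geometric input I would isolate is the following, stated as Lemma~\ref{l:pi-n-bdded}: the functor $\pi_{N,?}$ (or the monad $\pi_N^*\pi_{N,?}$) has finite cohomological amplitude. To prove this I would shrink $N$ so that $N \geq r+s$ with $s$ chosen (via Theorems~\ref{t:artin} and~\ref{t:jumps}) so that the geometric fibers of the leading-terms map $\sZ_r/\widehat{\cK}_{r+s} \to (t^{-r}\fg[[t]]dt/t^s\fg[[t]]dt)_{dR}$ are as nice as in the $\bG_m$ case — in particular finite type de Rham prestacks quotiented by finite-dimensional formal groups. Then, just as in \S\ref{ss:gamma-bdd-below}-\ref{ss:tame-amp-finish}, I would: (i) reduce via comonadicity and $t$-exactness to bounded-below sheaves and then, using the Cousin resolution over the leading-terms space, to sheaves set-theoretically supported on a single schematic point; (ii) on each such fiber, use that the fiber is 1-affine (this is Proposition~\ref{p:zr-eta-1aff}, whose proof is deferred to \S\ref{s:z-geom}) together with the finite-dimensionality of de Rham cohomology (Lemma~\ref{l:bdd-dimns-lin}, Proposition~\ref{p:coda-2}) to get a bound on the amplitude that is \emph{uniform} in the fiber; (iii) assemble using that $\Gamma$ and $\pi_{*}$-type functors commute with uniformly-bounded-below colimits (Lemma~\ref{l:postnikov}). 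The uniformity of the bound across fibers — which ultimately rests on Proposition~\ref{p:bdd-dimns} and the Babbitt-Varadarajan effectivity of the leading-terms stratification — is the crux, and it is what forces us to route through the concrete reduction theory of \S\ref{s:locsys} rather than argue formally.

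Granting Lemma~\ref{l:pi-n-bdded}, the passage from cohomological boundedness to $n$-effectiveness is formal given the framework already set up: since $\pi_N^*$ is $t$-exact (affineness) and $\pi_{N,?}$ has bounded amplitude, the iterated functors $(\pi_N^*\pi_{N,?})^m$ have amplitude bounded independently of $m$ (the argument is the same as in the last step of the proof of Theorem~\ref{t:tame}, using \eqref{eq:?-basechange}-style base change to see $t$-exactness of $\pi_{N,?}\pi_N^*$), and then Lemma~\ref{l:effmonad-tstr} converts a uniform amplitude bound on the iterates into $n$-effectiveness with $n$ controlled by that bound. The same argument applies verbatim to $\widetilde{\pi}_N$ since $\widetilde{\sY}_r/\widehat{\cK}_N = \sY_r/\cK_N$ is just the same object with a residual $(G(O)/\cK_N)_{dR}$-action, and $\widetilde{\pi}_N$ differs from $\pi_N$ only by this equivariance, which does not affect the relevant $t$-structures or the fiberwise analysis.

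The main obstacle I expect is step (ii) above: proving the amplitude bound on the geometric fibers and, crucially, its uniformity. The fibers of $\sZ_r/\widehat{\cK}_{r+s}$ over the de Rham leading-terms space are not literally Artin stacks (they involve $\bB$ of a formal group, cf. the $\bG_m$ computation in \S\ref{ss:gm}), so one cannot quote the Artin-stack amplitude bound directly; instead one needs that the de Rham cohomology two-step complex $t^{N+i}\fg[[t]] \to t^{-r}\fg[[t]]dt$ computing the relevant tangent/obstruction data has Euler characteristic and Betti numbers bounded in terms of $\fg$ and $r$ alone, which is precisely the package of Lemma~\ref{l:bdd-dimns-lin} and Proposition~\ref{p:coda-2}. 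Marrying this linear-algebraic bound to a clean statement about the cohomological dimension of $\Gamma$ on the fiber — in the de Rham rather than quasi-coherent setting — is the technical heart, and is what \S\ref{s:z-geom} is devoted to; the present proof will reduce Proposition~\ref{p:ourmonad-eff} to Lemma~\ref{l:pi-n-bdded} and leave that lemma's proof to \S\ref{s:z-geom}.
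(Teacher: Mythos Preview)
Your overall architecture matches the paper's: reduce to Lemma~\ref{l:pi-n-bdded} and then feed its conclusion into Lemma~\ref{l:effmonad-tstr}. But two of your bridging steps contain genuine gaps.

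First, your passage from Lemma~\ref{l:pi-n-bdded} to effectiveness conflates Lemma~\ref{l:effmonad-tstr} with the Corollary~\ref{c:monad-tstr-iterates}-style argument about uniform amplitude of iterates. Lemma~\ref{l:effmonad-tstr} is not a statement about iterates at all: it asks for a monoidal DG category $\sA$ with a $t$-structure of \emph{finite homological dimension}, and an algebra object $A \in \sA$ that is flat, coconnective, and bounded above, with $\Coker(\e_{\sA} \to A)$ having the same properties. The paper supplies this by taking $\sA = \QCoh(\sY_r/\cK_N \times \sY_r/\cK_N)$ with its convolution product, and $A$ the kernel representing $\pi_N^*\pi_{N,?}$. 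The finite homological dimension of $\sA$ is a separate, nontrivial input coming from Proposition~\ref{p:tame-amplitude} (applied to $G \times G$), and the translation between properties of $A$ as a kernel and properties of the functor $\pi_N^*\pi_{N,?}$ is handled by Proposition~\ref{p:kernel-pshfwd}. Your sentence ``$\pi_N^*$ is $t$-exact (affineness)'' presupposes a $t$-structure on $\QCoh(\sZ_r/\widehat{\cK}_N)$ that is never set up, and the \eqref{eq:?-basechange} analogy from \S\ref{ss:tame-pf} does not carry over, since the adjoints there ($\pi^?, \pi_*$) are on the opposite side from the ones here ($\pi_{N,?}, \pi_N^*$).

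Second, your treatment of $\widetilde{\pi}_N$ rests on the false identification $\widetilde{\sY}_r/\widehat{\cK}_N = \sY_r/\cK_N$. In fact $\widetilde{\sY}_r/\widehat{\cK}_N$ is the quotient of $\sY_r/\cK_N$ by the formal group $(G(O)/\cK_N)_e^{\wedge}$, so the two are genuinely different prestacks and the $t$-structure argument does not transfer verbatim. The paper instead argues formally (\S\ref{ss:effmonad-finish}): passing to $(G(O)/\cK_N)_e^{\wedge}$-invariants on the $(\pi_{N,?}, \pi_N^*)$ adjunction produces an adjunction whose associated monad is effective by Proposition~\ref{p:tens-eff}, and then one observes that $\widetilde{\pi}_N^*\widetilde{\pi}_{N,?}$ is a direct \emph{summand} of that monad via the augmentation of $U(\Lie(G(O)/\cK_N))$. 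This retract trick is what actually closes the argument, and it is absent from your proposal.
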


\subsection{}

Observe that $\sZ_r$ has a canonical map:

\[
\sZ_r \to (t^{-r}\fg[[t]]dt/t^s\fg[[t]]dt)_{dR}
\]

\noindent that is equivariant for the action of $\widehat{\cK}_{r+s}$
(where the target is equipped with the trivial action).

For $\eta \in (t^{-r}\fg[[t]]dt/t^s\fg[[t]]dt)_{dR}$ a field-valued point,
we let $\sZ_{r,\eta}$ denote the fiber of $\sZ_r$ at $\eta$,
which is equipped with an induced action of $\widehat{\cK}_{r+s}$.

The next result will be proved in \S \ref{s:z-geom}.

\begin{prop}\label{p:zr-eta-1aff}

$\sZ_{r,\eta}/\widehat{\cK}_{r+s}$ is 1-affine.

\end{prop}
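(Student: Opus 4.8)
The goal is to show that $\sZ_{r,\eta}/\widehat{\cK}_{r+s}$ is 1-affine, where $\eta \in (t^{-r}\fg[[t]]dt/t^s\fg[[t]]dt)_{dR}$ is field-valued and $\sZ_{r,\eta}$ is the fiber of $\sZ_r$ (the formal completion of $t^{-r}\fg[[t]]dt$ in $\fg((t))dt$) at $\eta$. The plan is to identify this quotient, up to a 1-affine base, with a stack of the type already known to be 1-affine by Gaitsgory's list in \S\ref{ss:1aff-exs}. First I would choose $s \gg r$ so that Theorems \ref{t:artin} and \ref{t:jumps} apply: the fiber $S_\eta$ of $t^{-r}\fg[[t]]dt$ over the underlying point of $\eta$ carries a Fredholm universal connection, the quotient $S_\eta/\cK_{r+s}$ is a smooth Artin stack of finite type, and by Corollary \ref{c:locsys-pol-strat} it is Noetherian and regular. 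Then I would analyze the structure of $\sZ_{r,\eta}$: since $\sZ_r$ is the formal completion of $t^{-r}\fg[[t]]dt$ inside $\fg((t))dt$, its fiber $\sZ_{r,\eta}$ over the de Rham point $\eta$ should be the formal completion of $S_\eta$ inside the fiber of $\fg((t))dt$, i.e., $\sZ_{r,\eta} = (S_\eta)^{\wedge}_{S_\eta}$ taken inside an ind-scheme, but with the "extra'' infinitesimal directions coming entirely from the de Rham structure on the leading-terms data. The key point is that passing to the de Rham prestack along $\eta$ kills exactly the pro-infinite-dimensional affine-space directions that would otherwise obstruct 1-affineness.

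The technical heart is therefore to present $\sZ_{r,\eta}/\widehat{\cK}_{r+s}$ as built from the following ingredients, each 1-affine: the finite-type smooth Artin stack $S_\eta/\cK_{r+s}$ (1-affine by the second bullet of the theorem in \S\ref{ss:1aff-exs}); a formal completion of a qcqs DG scheme along a closed subscheme cut out by a locally finitely generated ideal (1-affine by the fourth bullet); and the classifying prestack $\bB(\mathcal{H})_e^{\wedge}$ of the formal group of the residual group $\mathcal{H} = \widehat{\cK}_{r+s}/\cK_N$ for a suitable deeper congruence subgroup $\cK_N$ (1-affine by the fifth bullet), after which one quotients further by the finite-type group $\cK_{r+s}/\cK_N$. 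Concretely, I would pick $N$ large (as in the proof of Theorem \ref{t:artin}) so that $S_\eta/\cK_N$ is a finite-type affine scheme and $S_\eta \to S_\eta/\cK_N$ is a trivial $\cK_N$-torsor; then $\sZ_{r,\eta}/\widehat{\cK}_N$ is a formal completion of a finite-type scheme along a finitely-presented closed subscheme — hence 1-affine — and $\sZ_{r,\eta}/\widehat{\cK}_{r+s}$ is the quotient of this by the finite-type unipotent group $\cK_{r+s}/\cK_N$, which preserves 1-affineness since the classifying stack of a finite-type algebraic group is 1-affine and 1-affine morphisms compose (as recalled in the discussion following Proposition \ref{p:tame-1aff}). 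The essential geometric input that makes $\sZ_{r,\eta}$ a formal completion with \emph{finitely generated} defining ideal — rather than an ind-infinite-dimensional formal disc — is precisely the finiteness of de Rham cohomology encoded in the Fredholm/Artin results of \S\ref{s:locsys}; without those this would fail, exactly as it fails for $\bB(\bA^{\infty})_0^{\wedge}$.

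I expect the main obstacle to be the careful bookkeeping in the first sentence of the previous paragraph: namely making precise the claim that the fiber $\sZ_{r,\eta}$ of the formal completion $\sZ_r$ at a de Rham point $\eta$ is itself a formal completion of a finite-type scheme along a finitely presented closed subscheme, rather than along an ideal requiring infinitely many generators. One must track which "tail'' coordinates of $\fg((t))dt$ survive after (i) restricting to the fiber over the non-de-Rham point $\ol\eta$ — which fixes the first $r+s$ coefficients — and (ii) passing to the de Rham direction, and then check that the remaining formal-neighborhood directions in $\sZ_{r,\eta}$ are controlled by the cokernel of $\nabla$, which is finite-dimensional by the Fredholm hypothesis (Theorem \ref{t:jumps}) and generated by at most $n$ elements (Proposition \ref{p:coda-2}). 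Once this identification is in hand — i.e., once $\sZ_{r,\eta}/\widehat{\cK}_N$ is exhibited as $W^{\wedge}_{S_\eta/\cK_N}$ for $W$ a qcqs DG scheme (or finite-type Artin stack) and $S_\eta/\cK_N \hookrightarrow W$ a finitely presented closed embedding — the conclusion follows mechanically from Gaitsgory's examples together with the stability of 1-affineness under quotients by finite-type groups. A secondary point to handle is the derived structure: the fiber products defining $\sZ_{r,\eta}$ are derived, and one should confirm eventual coconnectivity (or reduce to the classical truncation, which is harmless for 1-affineness questions here since the relevant schemes turn out to be classical by the regularity in Corollary \ref{c:locsys-pol-strat}).
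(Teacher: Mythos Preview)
Your plan has a genuine gap at its central step. You want to exhibit $\sZ_{r,\eta}/\widehat{\cK}_N$ as $W^{\wedge}_{S_\eta/\cK_N}$ for some qcqs (or finite-type Artin) $W$, but you never produce a candidate $W$, and there is no obvious one. The object $\sZ_{r,\eta}$ still carries ind-infinite-dimensional formal directions (toward higher-order poles), and $\widehat{\cK}_N$ carries ind-infinite-dimensional formal directions as well; the statement that these ``cancel up to the finite cokernel of $\nabla$'' is correct at the level of tangent complexes, but promoting that to an identification of the quotient prestack with a formal completion inside a finite-type ambient is not bookkeeping --- it is the entire content of the result. Having finite-dimensional tangent spaces does not by itself place a prestack on Gaitsgory's list, and you offer no mechanism beyond that heuristic.

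The paper takes a different route that sidesteps this. Rather than describing $\sZ_{r,\eta}/\widehat{\cK}_{r+s}$ intrinsically, it analyzes the map $\pi_{N,\eta}:\sY_{r,\eta}/\cK_{r+s}\to\sZ_{r,\eta}/\widehat{\cK}_{r+s}$. The geometric input (Proposition~\ref{p:z-eta-coarse}) is a factorization $\widetilde{\sY}_{r,\eta}\hookrightarrow\widetilde{\sY}_{r,\eta}'\to\sZ_{r,\eta}$ with the second arrow formally smooth of finite rank; one then shows $\widetilde{\sY}_{r,\eta}'$ is a finite-rank formal disc bundle over $\sZ_{r,\eta}$, so the composite is 1-affine (Corollary~\ref{c:fiber-actmap-1aff}). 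This makes $\pi_{N,\eta}$ a 1-affine, ind-(proper, eventually coconnective) covering with 1-affine source, and the paper concludes via the descent criterion of Proposition~\ref{p:proper-ec-loc-2}, verifying the tensor-product condition there using tameness (Proposition~\ref{p:zr-tame}). Note that the formal-smoothness factorization is exactly the precise form of your ``the formal directions are controlled by $\Coker(\nabla)$'' heuristic --- but it is a statement about the \emph{map}, not a direct description of the target, and that reformulation is what makes the argument go through.
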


\subsection{}\label{ss:eff-application}

We now show that the above results imply 
Proposition \ref{p:zr-main}.

\begin{proof}[Proof that Propositions \ref{p:ourmonad-eff} and 
\ref{p:zr-eta-1aff} imply Proposition \ref{p:zr-main}]

\step 

First, a reduction: we claim that it is enough to
show that for some $N $
and for every sheaf of categories $\widetilde{\mathsf{C}}$
on $\sZ_r/\widehat{\cK}_N$, the functor:

\begin{equation}\label{eq:zr-main-redn}
\Gamma(\sZ_r/\widehat{\cK}_N,\widetilde{\mathsf{C}}) 
\underset{\QCoh(\sZ_r/\widehat{\cK}_N)} {\otimes}
\QCoh(\widetilde{\sY}_r/\widehat{\cK}_N) \to
\Gamma(\widetilde{\sY}_r/\widehat{\cK}_N,
\widetilde{\pi}_N^{*,\ShvCat}(\widetilde{\mathsf{C}})) 
\end{equation}

\noindent is an equivalence.

Indeed, suppose we know this result for some $N$.
Let $\mathsf{C} \in \ShvCat_{/\sZ_r/\widehat{G(O)}}$ be given,
and let $\widetilde{\mathsf{C}} \in \ShvCat_{/\sZ_r/\widehat{\cK}_N}$ be its
pullback.
Then both sides of \eqref{eq:zr-main-redn} are acted
on by $D(G(O)/\cK_N) = \QCoh(\widehat{G(O)}/\widehat{\cK}_N)$,
and passing to invariants = coinvariants (c.f. \cite{dario-*/!}),
we obtain the result for $N = 0$.

\step

So we need to check \eqref{eq:zr-main-redn} for $N = r+s$,
for $s$ as in Theorem \ref{t:jumps}.
We consider the geometry over the de Rham leading
term space $(t^{-r}\fg[[t]]dt/t^s\fg[[t]]dt)_{dR}$,
and will apply Proposition \ref{p:ptwise-eff/equiv}. 

Recall that we have a diagram:

\begin{equation}\label{eq:zr-main-comonads}
\vcenter{
\xymatrix{
\Gamma(\sZ_r/\widehat{\cK}_N,\widetilde{\mathsf{C}}) 
\underset{\QCoh(\sZ_r/\widehat{\cK}_N)} {\otimes}
\QCoh(\widetilde{\sY}_r/\widehat{\cK}_N) \ar[rr] \ar@<-.4ex>[dr] & & 
\Gamma(\widetilde{\sY}_r/\widehat{\cK}_N,
\widetilde{\pi}_N^{*,\ShvCat}(\widetilde{\mathsf{C}}))  \ar@<-.4ex>[dl] \\
& 
\Gamma(\sZ_r/\widehat{\cK}_N,\widetilde{\mathsf{C}}) 
\ar@<-.4ex>[ul] \ar@<-.4ex>[ur]
}
}
\end{equation}

\noindent in which each triangle is commutative, and inducing
the same comonads on the bottom term.

First, note that the monad on the top left corner
of \eqref{eq:zr-main-comonads} is effective. Indeed,
by Proposition \ref{p:tens-eff}, it suffices to verify this
for the monad 
$\widetilde{\pi}_N^*\widetilde{\pi}_{N,?}$ on 
$\QCoh(\widetilde{\sY}_r/\widehat{\cK}_N)$,
which is given by Proposition \ref{p:ourmonad-eff}.

Next, note that over geometric
points $\eta$ of $(t^{-r}\fg[[t]]dt/t^s\fg[[t]]dt)_{dR}$,
the horizontal comparison functor of \eqref{eq:zr-main-comonads}
is an equivalence. Indeed, this follows Proposition \ref{p:zr-eta-1aff},
i.e., from 1-affineness of $\sZ_{r,\eta}/\widehat{\cK}_N$
(and 1-affineness of 
$\widetilde{\sY}_{r,\eta}/\widehat{\cK}_N$, though we
even know that $\widetilde{\sY}_r/\widehat{\cK}_N$ is 1-affine by
Theorem \ref{t:tame} plus 1-affineness of $(G(O)/\cK_N)_{dR}$).

Therefore, Proposition \ref{p:ptwise-eff/equiv} applies,
and we obtain the desired result.

\end{proof}

\subsection{$t$-structures}\label{ss:ourmonad-redn}

Next, we will 
show how to reduce Proposition \ref{p:ourmonad-eff} 
to the following more plausible lemma.

Let $N \geq 0$ be an integer.
Recall from \S \ref{s:tame} that $\QCoh(\sY_r/\cK_N)$ has a canonical
$t$-structure, which is characterized by the fact that the pullback to
$\QCoh(\sY_r)$ is $t$-exact.

\begin{lem}\label{l:pi-n-bdded}

For all $N \geq r+s$,
the monad $\pi_N^*\pi_{N,?}: \QCoh(\sY_r/\cK_N) \to \QCoh(\sY_r/\cK_N)$ 
is left $t$-exact, conservative, and of bounded cohomological amplitude.

\end{lem}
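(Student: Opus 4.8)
The plan is to unwind the definitions of $\pi_N^*$ and its left adjoint $\pi_{N,?}$ and reduce all three properties to statements about $\QCoh$ of actual (infinite-type) schemes, where the geometry of \S\ref{s:locsys} can be invoked, exactly as in the proof of Theorem \ref{t:tame}. First I would recall the factorization of $\pi_N : \sY_r/\cK_N \to \sZ_r/\widehat{\cK}_N$ as the composition of the ind-regular closed embedding $\iota:\sY_r/\cK_N \into \sZ_r/\cK_N$ followed by the $(\widehat{\cK}_N/\cK_N)$-gerbe $p:\sZ_r/\cK_N \to \sZ_r/\widehat{\cK}_N$. Both $\iota$ and $p$ are ind-(proper and eventually coconnective), so $\pi_{N,?} = \iota_? \, p_?$ exists and commutes with base change; moreover $\iota_?$ is the pushforward along a regular embedding (so is $t$-exact up to a bounded shift when restricted appropriately) and $p^*$ is $t$-exact with $t$-exact $p_?$ since $\widehat{\cK}_N/\cK_N$ is a formal group. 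Conservativity of $\pi_N^*\pi_{N,?}$ follows from conservativity of $\pi_N^*$ together with the fact that $\pi_{N,?}$, being a left adjoint to a conservative continuous functor, has image generating under colimits; and conservativity of $\pi_N^* = p^* \iota^*$ follows because $p^*$ is conservative (descent along the gerbe, or Corollary \ref{c:ec-desc-shvcat}) and $\iota^*$ is conservative on the formal completion since $\sY_r/\cK_N \into \sZ_r/\cK_N$ is a nilisomorphism-type embedding with $\sZ_r$ the formal completion of $\sY_r$ in $\fg((t))dt$ — every object of $\QCoh(\sZ_r/\cK_N)$ is $(\iota$-torsion$)$ hence detected on $\sY_r/\cK_N$.

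For left $t$-exactness: since $\pi_N^*$ is left $t$-exact (it is $\iota^*$ composed with the $t$-exact $p^*$, and $\iota^*$ is left $t$-exact as pullback along a closed embedding), and since left $t$-exact functors are right adjoints of right $t$-exact functors, it is equivalent to show $\pi_{N,?}$ is right $t$-exact. Again by the factorization, $p_?$ is $t$-exact and $\iota_?$ for the regular embedding $\sY_r/\cK_N \into \sZ_r/\cK_N$ is right $t$-exact (it is, up to the finite shift coming from the Koszul resolution, the pushforward $\iota_*$ which is $t$-exact by affineness of the embedding at each finite stage, and the twist by the relative conormal bundle only shifts down). This gives the left $t$-exactness of the monad. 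The only genuinely substantive point is the bounded cohomological amplitude, and here I would mimic \S\ref{ss:tame-pf} closely: one computes $\pi_N^*\pi_{N,?}$ as a base-changed $\on{act}_* \circ p_2^?$-type composition along the groupoid presenting the quotient, reducing to the claim that $p_2^?$ — equivalently, $\ul{\Hom}$ out of the relevant free sheaf, which after passing to the formal completion becomes $\ul{\Hom}$ out of a countable colimit of finite-rank bundles — has bounded amplitude. This in turn follows from Proposition \ref{p:tame-amplitude} (the bounded amplitude of $\Gamma$ on $t^{-r}\fg[[t]]dt/G(O)$, applied to the congruence subgroup $\cK_N$) together with the observation that passing from $\sY_r$ to its formal completion $\sZ_r$, and the $R\lim$ over the countable colimit, each cost at most a bounded amount of amplitude (the countable $R\lim$ costs at most $1$, exactly as in the last step of the proof of Theorem \ref{t:tame}).

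The main obstacle I expect is precisely the amplitude bound, because $\sZ_r$ is a formal object and a priori the passage from $\sY_r/\cK_N$ to $\sZ_r/\widehat{\cK}_N$ could destroy the finiteness one has from Proposition \ref{p:tame-amplitude}; the key input that saves the day is that $\sZ_r$ is the formal completion of $t^{-r}\fg[[t]]dt$ in $\fg((t))dt$ along a locally finitely-presented closed immersion, so that $\QCoh(\sZ_r/\widehat{\cK}_N)$ is built as an ind-pro-bounded gadget from $\QCoh(\sY_r/\cK_N)$, and the geometric-fiber Artin-stack structure from Theorems \ref{t:artin} and \ref{t:jumps} (which holds over the \emph{de Rham} leading-term space, exactly the one $\sZ_r$ maps to) keeps the relevant dimensions uniformly bounded via Proposition \ref{p:bdd-dimns}. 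Once the amplitude bound is in hand, conservativity and left $t$-exactness are formal consequences of the factorization and of the descent/base-change facts collected in \S\ref{ss:?-pushforward}--\S\ref{ss:tensprod-implies-thm}, so the proof reduces almost entirely to re-running the argument of \S\ref{ss:tame-pf} one congruence-subgroup level at a time, with the formal completion tracked carefully.
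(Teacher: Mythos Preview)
Your argument for left $t$-exactness contains a logical error. You write that since $\pi_N^*$ is left $t$-exact, ``it is equivalent to show $\pi_{N,?}$ is right $t$-exact.'' But the composite of a right $t$-exact functor followed by a left $t$-exact functor need not be left $t$-exact: if $\sF\in\QCoh^{\geq 0}$, right $t$-exactness of $\pi_{N,?}$ tells you nothing about where $\pi_{N,?}(\sF)$ lands. The paper instead constructs a $t$-structure on $\QCoh(\sZ_r)$ (generated by $\iota_?$ of connective objects) and on $\QCoh(\widetilde{\sY}_{r,\cK_N})$, and proves that the relevant $?$-pushforwards are fully $t$-\emph{exact}, not merely right $t$-exact; this is what makes the base-change computation \eqref{eq:pi-n-basechange} go through. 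Your remark that ``the twist by the relative conormal bundle only shifts down'' is also problematic: the conormal to $\sY_r$ in $\sZ_r$ is infinite-rank, so there is no finite shift to speak of.

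Your conservativity argument is incomplete: the fact that $\pi_{N,?}$ has generating image (which follows from $\pi_N^*$ being conservative) does not imply $\pi_{N,?}$ is conservative, which is what you actually need. The paper establishes this via the base-change identity and the fiberwise geometry of Proposition \ref{p:z-eta-coarse}, which exhibits $\widetilde{\sY}_{r,\eta}\to\sZ_{r,\eta}$ as (regular embedding) $\circ$ (formally smooth of finite relative tangent rank); this is what makes $\varphi_?$ conservative fiberwise.

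For bounded amplitude, re-running \S\ref{ss:tame-pf} is not enough. There the key identity $\pi^?\pi_*=\act_* p_2^?$ reduced to $p_2^?$ being $t$-exact because $G(O)$ is an honest affine scheme with free $\sO$. Here the relevant fiber involves $\widehat{\cK}_N/\cK_N$ and the formal completion $\sZ_r$, and there is no such free-module trick. The paper instead reduces via Cousin over $(t^{-r}\fg[[t]]dt/t^s\fg[[t]]dt)_{dR}$ to the fiberwise Lemma \ref{l:pi-n-amp/fibers}, whose proof uses Lie algebroid theory and the bound from Proposition \ref{p:coda-2} that $H^1_{dR}$ is generated by $\dim(G)$ elements. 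That last input is the genuine geometric content, and your sketch does not supply a substitute for it.
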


We postpone the proof of this result to \S \ref{s:z-geom}.
In the remainder of this section, we will show how to deduce
Proposition \ref{p:ourmonad-eff} from it.

\subsection{Nilpotence and $t$-structures}\label{ss:nilp-tstr}

We begin with the following general result, which connects
$t$-structures with effectiveness. 

\begin{lem}\label{l:effmonad-tstr}

Suppose that $(\sA,\ast)$ is a monoidal DG category\footnote{The reader should
think of the convolution category $\QCoh(\sY \times \sY)$ for 
$\sY$ a smooth Artin stack.}
with a $t$-structure of
finite homological dimension.

Let $A \in \sA$ be an associative algebra. Suppose that $A$
lies in cohomological degrees $\geq 0$,
and moreover is \emph{(right) flat}, i.e., $A \ast -: \sA \to \sA$ is
left $t$-exact.

Suppose moreover that\footnote{Let us spell out this condition more
explicitly. Since $A$ lies in degrees $\geq 0$, the fact that
the cone of the unit map is in degrees $\geq 0$ means that
$\e_{\sA}$ lies in cohomological degrees $\geq 0$ and
$H^0(\e_{\sA}) \to H^0(A) \in \sA^{\heart}$ is a monomorphism.
Similarly, asking that this cone be flat (with $A$ already flat) 
is equivalent to asking that for every
$\sF \in \sA^{\geq 0}$, $H^0(\sF) \to H^0(A \ast \sF) \in \sA^{\heart}$
is a monomorphism.}
 $\Coker(\e_{\sA} \to A) \in \sA^{\geq 0}$
and is also (right) flat (in the above sense). Finally, suppose that
$\Coker(\e_{\sA} \to A)$ is bounded cohomologically from 
above.\footnote{In the application, $\e_{\sA}$ and $A$ are each
bounded from above.}

Then the monad $A$ defines on $\sA$ is effective. More generally,
for any $\sC$ an $\sA$-module category, the monad on $\sC$
defined by $A$ is effective.

\end{lem}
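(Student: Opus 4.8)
The plan is to reduce effectiveness to a single nilpotence statement in $\Pro(\sA)$ and then to prove that nilpotence by a connectivity computation in which the finite homological dimension hypothesis is the essential ingredient.

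First I would note that the monad $A\ast(-)$ and its cobar tower can be computed uniformly across all module categories. For an $\sA$-module category $\sC$ and $\sM\in\sC$ one has $T^{\dot+1}(\sM)\simeq A^{\ast\dot+1}\ast\sM$, and since $(-)\ast\sM\colon\sA\to\sC$ is a DG functor it preserves the finite limits computing the partial totalizations; hence
\[
\Coker\!\big(\sM\to\Tot^i T^{\dot+1}(\sM)\big)\simeq\sK^i\ast\sM,\qquad
\sK^i\coloneqq\Coker\!\big(\e_{\sA}\to\Tot^i(A^{\ast\dot+1})\big)\in\sA .
\]
Thus it is enough to show that the pro-object $\{\sK^i\}_i$ of $\sA$ is $n$-nilpotent for some $n$: a nullhomotopy of $\sK^{i+n}\to\sK^i$ induces, functorially in $\sM$, a nullhomotopy of $\sK^{i+n}\ast\sM\to\sK^i\ast\sM$, which is exactly $n$-effectiveness of the monad on $\sC$ (the case $\sC=\sA$ included).

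Next I would record the shape of the tower. A standard normalized-cobar computation gives $\sK^0\simeq I\coloneqq\Coker(\e_{\sA}\to A)$ and $\Ker(\sK^i\to\sK^{i-1})\simeq I^{\ast i}\ast A[-i]$. Left $t$-exactness of $A\ast(-)$ and of $I\ast(-)$ — the two flatness hypotheses — forces $I^{\ast i}\ast A\in\sA^{\geq 0}$, so $\Ker(\sK^i\to\sK^{i-1})\in\sA^{\geq i}$, while boundedness above of $A$ and $I$ keeps each $\sK^i$ (equivalently each $\Tot^i$) cohomologically bounded, which is what makes the pro-nilpotence machinery of the preceding subsections applicable. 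Setting $J\coloneqq I[-1]\simeq\Ker(\e_{\sA}\to A)$, one then sees that each $\sK^i$ is a finite iterated extension of shifts (by $0$ or $1$) of the objects $J^{\ast n}$ for $1\le n\le i+1$, compatibly with the tower maps.

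The heart of the matter is that the ``power tower'' $\{J^{\ast n}\}_n$, with transition $J^{\ast n}=J\ast J^{\ast(n-1)}\xar{\epsilon\ast\id}\e_{\sA}\ast J^{\ast(n-1)}=J^{\ast(n-1)}$ (here $\epsilon\colon J\to\e_{\sA}$ is the canonical map), is $N_0$-nilpotent for an $N_0$ depending only on $\sA$ and on an integer $c$ with $\e_{\sA}\in\sA^{\leq c}$. Its $N_0$-fold transition is $\epsilon^{\ast N_0}\ast\id\colon J^{\ast(N_0+n)}\to J^{\ast n}$, so it suffices that $\epsilon^{\ast N_0}\colon J^{\ast N_0}\to\e_{\sA}$ be nullhomotopic; and by flatness of $I$ we have $J^{\ast N_0}\simeq I^{\ast N_0}[-N_0]\in\sA^{\geq N_0}$, so, since the $t$-structure on $\sA$ has finite homological dimension $h$, the mapping complex in $\sA$ from an object of $\sA^{\geq N_0}$ to $\e_{\sA}$ lies in cohomological degrees $\geq N_0-c-h$, whence $H^0$ vanishes once $N_0>c+h$ and $\epsilon^{\ast N_0}$ is null. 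Feeding this back through the previous paragraph — using that nilpotent pro-systems are stable under finite colimits (Lemma \ref{l:nilp-cones}) and, at a fixed rate, under direct sums (Lemma \ref{l:nilp-sums}) — yields that $\{\sK^i\}_i$ is nilpotent with rate $O(N_0)$, and the lemma follows for every $\sA$-module $\sC$. The step I expect to be the main obstacle is precisely this last deduction: organizing the passage from uniform nilpotence of the crude tower $\{J^{\ast n}\}$ to uniform nilpotence of the genuine cobar-error tower $\{\sK^i\}$ — the ``finite-stage descent'' bookkeeping in the style of \cite{akhil-thesis} — while keeping the rate uniform and the construction functorial; all of the input peculiar to this lemma is contained in the elementary $\Hom$-vanishing just described, which is where finite homological dimension of the $t$-structure is used.
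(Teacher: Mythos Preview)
Your core idea is exactly the paper's: reduce to the pro-system $\{\sK^i\}$ in $\sA$, define $J=\Ker(\e_{\sA}\to A)=I[-1]$, observe that flatness of $I$ forces $J^{\ast n}\in\sA^{\geq n}$, and then use finite homological dimension to kill the relevant map. The difference is that the paper bypasses the step you flag as the main obstacle. Rather than describing $\sK^i$ only through a filtration whose length grows with $i$, the paper invokes the identification $\Tot^i(A^{\ast\dot+1})\simeq\Coker(J^{\ast i+1}\to\e_{\sA})$ (citing \cite{akhil-equivariantdescent}, Proposition~2.8), so that $\sK^i\simeq J^{\ast i+1}[1]$ on the nose and the tower $\{\sK^i\}$ \emph{is} the power tower $\{J^{\ast i+1}[1]\}$. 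The nullhomotopy is then produced by showing $J^{\ast\delta_1+\delta_2+2}\to J$ is null (source in degrees $\geq\delta_1+\delta_2+2$, target in degrees $\leq\delta_2+1$, homological dimension $\delta_1$) and tensoring by $J^{\ast i}$; this yields $(\delta_1+\delta_2+1)$-nilpotence of $\{\sK^i\}$ directly, with an explicit constant.

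Your route would work, but the ``finite-stage descent bookkeeping'' you anticipate is real: Lemmas~\ref{l:nilp-cones} and~\ref{l:nilp-sums} concern a \emph{fixed} finite diagram of towers, whereas your filtration of $\sK^i$ has length growing with $i$, so those lemmas do not apply as stated. One can push the argument through, but the clean identification above makes it unnecessary. I would recommend replacing the filtration paragraph with the direct isomorphism $\sK^i\simeq J^{\ast i+1}[1]$; everything else you wrote then goes through immediately.
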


\begin{rem}\label{r:effmonad-heur}

Heuristically, the idea is that the discrepancy between
$\e_{\sA}$ and $\Tot^i A^{\dot+1}$ is going cohomologically
off to infinity (on the right), so ``of course" the monad is
effective. 

\end{rem}

\begin{example}

This lemma gives a $t$-structure based argument 
that $\Rep(G) \to \Vect$ is comonadic and remains so
after tensoring with objects of $\DGCat_{cont}$. Recall that
the standard proof uses the Beck-Chevalley method, and is
of a substantially different nature. 

E.g., here is one advantage of the
$t$-structure method, which is relevant to our applications. 
The easiest way to see that 
the $?$-pushforward functor $\QCoh((\bA^1)_0^{\wedge})\to \Vect$
is comonadic is to use Cartier duality to identify this functor with
the forgetful functor $\Rep(\bG_a) \to \Vect$ and then use
the Beck-Chevalley method. Translating the Beck-Chevalley method
directly here is a bit unnatural: it requires us to use the (formal)
group structure on $(\bA^1)_0^{\wedge}$, so cannot be adapted
to treat problems involving formal completions without a group
structure. But of course,
this $t$-structure argument \emph{does} readily adapt.\footnote{But
we also see the disadvantage of the $t$-structure method:
it is not rich enough to see that $\Oblv:\QCoh(\bB G(O)) \to \Vect$
is comonadic, which the Beck-Chevalley method is.}

\end{example}

\begin{proof}[Proof of Lemma \ref{l:effmonad-tstr}]

Suppose that $\sA$ has cohomological dimension $\delta_1$
and that $\Coker(\e_{\sA} \to A)$ lies in cohomological
degrees $\leq \delta_2$.
We will show that the $\bZ^{\geq 0}$-indexed pro-system:

\[
i \mapsto \Coker (\e_{\sA} \to \Tot^i A^{\ast \dot + 1}) 
\]

\noindent is $(\delta_1+\delta_2+1)$-nilpotent (which
obviously implies the conclusion of the lemma).

Define $J \coloneqq \Ker(\e_{\sA} \to A) = \Coker(\e_{\sA} \to A)[-1]$.
Recall (see e.g. \cite{akhil-equivariantdescent} Proposition 2.8)
that $\Tot^i A^{\ast \dot + 1}$ is canonically isomorphic
$\Coker(J^{\ast i+1} \to \e_{\sA})$ in a way 
compatible with varying $i$.

Therefore, $\Coker (\e_{\sA} \to \Tot^i A^{\ast \dot + 1}) \simeq 
J^{\ast i+1}[1]$, so we need to show that for every $i$, the canonical
morphism:

\[
J^{\ast i+ \delta_1+\delta_2+2} \to J^{\ast i+1}
\]

\noindent is nullhomotopic.

We claim that $J^{\ast i}$ lies in cohomological degrees $\geq i$
for each $i$. Indeed, this is clear from the identity
$J = \Coker(\e_{\sA} \to A)[-1]$ and the assumptions on the latter.

Now recall that $J$ 
lies in cohomological degrees $[1,\delta_2+1]$ by assumption.
Since $J^{\ast \delta_1+\delta_2+2}$ lies in degrees $\geq \delta_1+\delta_2+2$,
and since $J$ lies in degrees $\leq \delta_2+1$, the canonical
morphism $J^{\ast \delta_1+\delta_2+2} \to J$ must be nullhomotopic,
since $\sA$ has cohomological dimension $\delta_1$.
Tensoring with $J^{\ast i}$, we obtain the claim in the general case.

\end{proof}

\subsection{Proof that Lemma \ref{l:pi-n-bdded} implies Proposition \ref{p:ourmonad-eff}}\label{ss:effmonad-start}

We will show this implication in the remainder of this section.

Here is a sketch of the argument. For
$\pi_{N,?}\pi_N^*$, we will see that the result is
essentially formal from Lemmas \ref{l:pi-n-bdded} and 
\ref{l:effmonad-tstr}:
the only missing ingredient is a translation
between kernels and functors. The proof will 
occupy \S \ref{ss:ourmonad-tstr-start}-\ref{ss:ourmonad-tstr-finish}

Then in \S \ref{ss:effmonad-finish}, we will deduce the effectiveness
of $\widetilde{\pi}_{N,?}\widetilde{\pi}_N^*$ from
a simple formal argument.

\subsection{}\label{ss:ourmonad-tstr-start}

Let: 

\[
\sA = \sA_N \coloneqq \QCoh(\sY_r/\cK_N \times \sY_r/\cK_N) = 
\TwoHom_{\DGCat_{cont}}\big(\QCoh(\sY_r/\cK_N),\QCoh(\sY_r/\cK_N)\big)
\]

\noindent 
(the latter identification being justified by Proposition \ref{p:tame-cpt-gen}).

Observe that $\sA$ has a canonical $t$-structure, as in \S \ref{s:tame}.
Moreover, this $t$-structure has bounded homological dimension: this is immediate
from Proposition \ref{p:tame-amplitude} (for the group $G \times G$)
and the explicit description of our compact generators as pullbacks
from $\bB \cK_N$ of bounded complexes of finite-dimensional representations
(noting also that these compact objects are evidently closed under truncations).

Note that the unit object (for the monoidal product) is
contained in the heart of the $t$-structure: it is 
$\Delta_*(\sO_{\sY_r/\cK_N})$,
so the claim follows from affineness of the diagonal of $\sY_r/\cK_N$.

\subsection{}

Let $A = A_N \in \QCoh(\sY_r/\cK_N \times \sY_r/\cK_N)$ correspond to the 
monad $\pi_N^*\pi_{N,?}$. So we have an identification:

\[
\pi_N^*\pi_{N,?}(\sF) = 
p_{2,*}(p_1^*(\sF) \underset{\sO_{\sY_r/\cK_N \times \sY_r/\cK_N}}{\otimes} A)
\]

\noindent where the $p_i$ are the projections. (We recall that
$p_{2,*}$ is continuous by our analysis of $\QCoh$ of the quotients
$\sY_r/\cK_N$.)
 
It remains to check that $A$ lies in degrees
$\geq 0$, is bounded above,\footnote{Since the unit of
our monoidal category lies in the heart of the $t$-structure,
this is equivalent to the condition that $\Coker(\e_{\sA} \to A)$ be
cohomologically bounded above from the statement
of Lemma \ref{l:effmonad-tstr}.}
and then all the same for $\Coker(\Delta_*(\sO_{\sY_r/\cK_N}) \to A)$.

\subsection{}\label{ss:ourmonad-tstr-finish}

We will use Lemma \ref{l:pi-n-bdded} to analyze $A$.
The following result provides the necessary
transition between kernels and functors.

\begin{prop}\label{p:kernel-pshfwd}

\begin{enumerate}

\item

The pushforward 
$p_{2,*}: \QCoh(\sY_r/\cK_N \times \sY_r/\cK_N) \to 
\QCoh(\sY_r/\cK_N)$ is left $t$-exact, conservative,
and cohomologically bounded.

\item 

An object $\sF \in \QCoh(\sY_r/\cK_N \times \sY_r/\cK_N)$
lies in degrees $\geq 0$ if and only if 
$p_{2,*}(\sF) \in \QCoh(\sY_r/\cK_N)^{\geq 0}$.

\end{enumerate}

\end{prop}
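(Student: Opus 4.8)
\textbf{Proof proposal for Proposition \ref{p:kernel-pshfwd}.}

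The plan is to reduce everything to the geometry of a single quotient stack and then to the case of $\bB \cK_N$, where matters are classical. First I would set $\cX \coloneqq \sY_r/\cK_N$ and recall from \S \ref{s:tame} (Corollary \ref{c:cpt} and Proposition \ref{p:tame-amplitude}, applied to $G$ and to $G \times G$) that $\Gamma$ on $\cX$ and on $\cX \times \cX$ commutes with colimits and has finite cohomological amplitude, and that the $t$-structures involved are compatible with filtered colimits and are both left and right complete. The key structural observation is that the diagonal $\Delta: \cX \to \cX \times \cX$ is affine (affineness of the diagonal of $\sY_r/\cK_N$, as used in \S \ref{ss:ourmonad-tstr-start}), so $p_2$ factors as a projection with affine-scheme-like fibers. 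More precisely, $p_2: \cX \times \cX \to \cX$ is the pullback along $\cX \to \Spec(k)$ of the structure map $\cX \to \Spec(k)$, i.e.\ it is ``$\Gamma$ in the first variable, relative to the second.'' Concretely, for $\sF \in \QCoh(\cX \times \cX)$ one has $p_{2,*}(\sF) = (\Gamma_{\cX} \otimes \id)(\sF)$ under the identification $\QCoh(\cX \times \cX) = \QCoh(\cX) \otimes \QCoh(\cX)$, valid since $\QCoh(\cX)$ is dualizable (compactly generated, by Proposition \ref{p:tame-cpt-gen}).

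For part (1): left $t$-exactness and cohomological boundedness of $p_{2,*}$ follow from the corresponding properties of $\Gamma: \QCoh(\cX) \to \Vect$ established in Proposition \ref{p:tame-amplitude}, tensored (in the second variable) with $\QCoh(\cX)$ — here one uses that tensoring a $t$-exact-up-to-shift continuous functor with $\DGCat_{cont}$ preserves the amplitude bound, together with right-completeness to reduce to bounded-below objects and then compatibility with filtered colimits to reduce to compact generators. For conservativeness: if $p_{2,*}(\sF) = 0$, then since $p_2$ is (after restricting to any affine $S \to \cX$ in the second variable) the structure map of an affine-diagonal quotient stack, base change gives $\Gamma$ of the restriction of $\sF$ along $S \to \cX$ vanishing; but $\Gamma$ on $\sY_r/\cK_N$ is conservative — indeed the pullback to $\QCoh(\sY_r)$ is comonadic (Beck--Chevalley, as in \S \ref{ss:tame-pf}) and $\Gamma$ on the affine scheme $\sY_r$ is conservative — so varying $S$ over a cover of $\cX$ forces $\sF = 0$. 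Alternatively, and more cleanly, one restricts along $\Delta$: since $\Delta$ is affine, $\Delta^*$ is conservative, and $p_{2,*}\Delta_* = \id$ shows $p_{2,*}$ is conservative already on the essential image of $\Delta_*$; to get it everywhere one uses that the functors $\Delta_*(\sG) \otimes (-)$ generate $\QCoh(\cX\times\cX)$ under colimits together with the projection formula $p_{2,*}(\Delta_*(\sG) \otimes p_2^*(\sH)) = \sG \otimes \sH$.

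For part (2), the ``only if'' direction is immediate from left $t$-exactness in (1). For the ``if'' direction I would argue by contradiction using conservativeness and boundedness: suppose $\sF \notin \QCoh(\cX\times\cX)^{\geq 0}$, so $H^j(\sF) \neq 0$ for some $j < 0$; let $j_0$ be the smallest such $j$, so $\tau^{\leq j_0}\sF = H^{j_0}(\sF)[-j_0]$ is a nonzero object of the heart sitting in degree $j_0 < 0$. Applying $p_{2,*}$ to the triangle $\tau^{\leq j_0}\sF \to \sF \to \tau^{> j_0}\sF$ and using that $p_{2,*}$ is left $t$-exact (so $p_{2,*}\tau^{>j_0}\sF \in \QCoh(\cX)^{\geq j_0+1}$) and that $p_{2,*}$ is $t$-exact on the heart — which is the crucial point and follows because $p_2$ is, locally in the second variable, the structure map of a quotient of an affine scheme by the prounipotent $\cK_N$ (the quotient being an Artin stack as in Theorem \ref{t:artin}, hence $\Gamma$ restricted to the heart is exact by the Leray argument: pull back to the affine scheme and take invariant sections, which on the heart is exact since $\cK_N$-torsors on affines are trivial) — one gets $H^{j_0}(p_{2,*}\sF) = p_{2,*}(H^{j_0}(\sF)[-j_0])[j_0]$... wait, rather $H^{j_0}(p_{2,*}\sF) \cong p_{2,*}(\tau^{\leq j_0}\sF)$ in the relevant degree, which is nonzero by conservativeness of $p_{2,*}$ on the heart. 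Hence $p_{2,*}(\sF)$ has nonzero cohomology in degree $j_0 < 0$, contradicting $p_{2,*}(\sF) \in \QCoh(\cX)^{\geq 0}$. The main obstacle I anticipate is making the ``$p_2$ is locally the structure map of a nice quotient stack, hence $t$-exact on hearts'' step fully rigorous: one must handle the non-finite-type nature of $\sY_r$ and the prounipotent $\cK_N$, which is exactly the kind of subtlety addressed by Noetherian descent in \S \ref{s:tame}, so I would invoke Theorem \ref{t:artin} to pass to a finite-type model after shrinking the congruence subgroup and then use flat base change for the residual finite-type prounipotent-by-finite quotient.
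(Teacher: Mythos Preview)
Your treatment of part (1) is essentially the paper's argument, though your ``alternative'' conservativeness argument is wrong: for an affine morphism $\Delta$ it is $\Delta_*$, not $\Delta^*$, that is conservative, and the objects $\Delta_*(\sG)\otimes(-)$ are supported on the diagonal and do not generate $\QCoh(\cX\times\cX)$. Your first argument (reducing to conservativeness of $\Gamma$ on $\cX$, which holds because $\cK_N$ is prounipotent so $\sO_{\cX}$ generates) is the right one and matches the paper.

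Part (2) has a genuine gap. The claim that $p_{2,*}$ is $t$-exact on the heart is false: already for $\bB\bG_a$ one has $H^1(\bG_a,k)=\Ext^1_{\bG_a}(k,k)\neq 0$ (the two-dimensional indecomposable representation), so taking $\cK_N$-invariants is not exact and the derived $\Gamma$ is not concentrated in degree $0$ on heart objects. Triviality of $\cK_N$-torsors on affines is a statement about nonabelian $H^1$ of the structure sheaf, not about vanishing of group cohomology with module coefficients. There is also a boundedness-below issue: ``let $j_0$ be the smallest such $j$'' need not make sense for unbounded $\sF$.

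The paper's argument bypasses all of this. Since $\cK_N$ is prounipotent and $\sY_r$ is affine, the structure sheaf $\sO_{\cX\times\cX}$ generates $\QCoh(\cX\times\cX)^{\leq 0}$ under colimits; equivalently, $\sO_{\cX\times\cX}[1]$ generates $\QCoh(\cX\times\cX)^{<0}$. Hence $\sF\in\QCoh^{\geq 0}$ if and only if $\Hom(\sO[1],\sF)=\Omega^{\infty}\Gamma(\sF)[-1]$ is trivial, i.e.\ $\Gamma(\sF)\in\Vect^{\geq 0}$. But $\Gamma(\sF)=\Gamma(\cX,p_{2,*}\sF)$, and $\Gamma$ on $\cX$ is left $t$-exact, so $p_{2,*}\sF\in\QCoh(\cX)^{\geq 0}$ forces $\Gamma(\sF)\in\Vect^{\geq 0}$. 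Your approach can be salvaged along these lines---what you actually need is that $H^0(p_{2,*}\sG)\neq 0$ for nonzero $\sG$ in the heart, and this follows from $H^0(\Gamma(\sG))\neq 0$ (nonzero invariant global sections, again by prounipotence) together with left $t$-exactness of $\Gamma_{\cX}$---but once you unwind that, you have reproduced the paper's one-line argument.
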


\begin{proof}

The left $t$-exactness is immediate, and 
the fact that it is cohomologically bounded
follows from Proposition \ref{p:tame-amplitude}.
Since $N \geq r+s > 0$, $\cK_N$ is pro-unipotent, giving
conservativeness.

For the second part, 
let $ \sF \in \QCoh(\sY_r/\cK_N \times \sY_r/\cK_N)$ with 
$p_{2,*}(\sF) \in \QCoh(\sY/\cK_N)^{\geq 0}$. 
Then $\Gamma(\sF)$ is in degrees $\geq 0$, meaning
that:

\[
\{0\} = \Hom(\sO_{\sY_r/\cK_N \times \sY_r/\cK_N}[1],\sF) = 
\Omega^{\infty}(\Gamma(\sF)[-1]) \in \Gpd.
\]

\noindent Since $\sO_{\sY_r/\cK_N \times \sY_r/\cK_N}[1]$
generates (under colimits) the subcategory of objects in degrees $<0$,
this gives the claim.

\end{proof}

Now observe that we tautologically have
$p_{2,*}(A) = \pi_N^* \pi_{N,?}(\sO_{\sY_r/\cK_N}) 
\in \QCoh(\sY_r/\cK_N)$. By Lemma \ref{l:pi-n-bdded},
this complex lies in degrees $\geq 0$,
so $A$ does as well. 

Moreover, we claim that $A$ is (right) flat. Recall that this means that for
every $\sK \in \QCoh(\sY_r/\cK_N \times \sY_r/\cK_N)^{\geq 0}$,
the convolution $A \ast \sK$ also lies in degrees $\geq 0$.
We use the same method as above: 

\[
p_{2,*}(A \ast \sK) =  \pi_N^* \pi_{N,?} p_{2,*}(\sK)
\]

\noindent and $p_{2,*}(\sK)$ lies in degrees $\geq 0$ by assumption,
so Lemma \ref{l:pi-n-bdded} gives the claim.

Next, we claim that $A$ is bounded cohomologically from above.
We should check that if we pullback $A$ to $\sY_r \times \sY_r$
and take global sections, the resulting complex of
vector spaces is bounded above.
By the usual functoriality for kernels, we see that this complex computes:

\[
\Gamma(\sY_r,\alpha^*\pi_{N,?}\pi_N^*\alpha_*(\sO_{\sY_r})) 
\]

\noindent where $\alpha$ denotes the structure
map $\sY_r \to \sY_r/\cK_N$, and this complex is
bounded above since $\alpha$ is flat and affine and since
$\pi_{N,?}\pi_N^*$ has bounded amplitude
by Lemma \ref{l:pi-n-bdded}.

We now need to see that the above holds for
  
\[
\Coker(\Delta_*(\sO_{\sY_r/\cK_N}) \to A)
\]

\noindent as well. The cohomological boundedness from above
is clear from the case of $A$, so it remains to show that
it is bounded below by $0$ and (right) flat. 

As in the case of $A$,
it suffices to show that the endofunctor of
$\QCoh(\sY_r/\cK_N)$ defined by this kernel is left $t$-exact.
Note that this endofunctor is:

\[
\Coker(\id \to \pi_N^*\pi_{N,?}).
\]

\noindent Therefore, it suffices to see that for every
$\sF \in \QCoh(\sY_r/\cK_N)^{\geq 0}$:

\[
\Coker(\sF \to \pi_N^*\pi_{N,?}(\sF)) \in \QCoh(\sY_r/\cK_N)^{\geq 0}.
\]

This is standard from the fact that $\pi_{N,?}$ is $t$-exact
and conservative. Indeed, it therefore suffices to see that this cone
is in degrees $\geq 0$ after applying $\pi_{N,?}$. The
map:

\[
\pi_{N,?}(\sF) \to \pi_{N,?}\pi_N^*\pi_{N,?}(\sF)
\]

\noindent admits an obvious splitting (from the counit of the adjunction),
so its cone is a summand of 
$ \pi_{N,?}\pi_N^*\pi_{N,?}(\sF)$, and this latter complex is in degrees
$\geq 0$ by Lemma \ref{l:pi-n-bdded}.

This concludes the proof of Proposition \ref{p:ourmonad-eff}
for the monad $\pi_N^*\pi_{N,?}$.

\subsection{}\label{ss:effmonad-finish}

We now will formally deduce the effectiveness of
$\widetilde{\pi}_N^* \widetilde{\pi}_{N,?}$ from the effectiveness
of $\pi_N^*\pi_{N,?}$. 
We will do so using Proposition \ref{p:tens-eff}. 

Note that when we quotient
$\sY_r/\cK_N$ by the formal group $(G(O)/\cK_N)_e^{\wedge}$
of $G(O)/\cK_N$, 
we tautologically obtain $\widetilde{\sY}_r/\widehat{\cK}_N$.
Moreover, the map $\pi_N$ is equivariant for the trivial action of:

\[
(G(O)/\cK_N)_e^{\wedge} = 
\Ker(G(O)/\cK_N \to
\widehat{G(O)}/\widehat{\cK}_N)
\]

\noindent on $\sZ_r/\widehat{\cK}_N$. 

Therefore, if we start with the adjunction:

\[
\pi_{N,?}: \QCoh(\sY_r/\cK_N) \rightleftarrows 
\QCoh(\sZ_r/\widehat{\cK}_N): \pi_N^*
\]

\noindent and pass to invariants (= coinvariants) for this
formal group,
we obtain the effectiveness of the monad coming from the adjunction:

\[
\xymatrix{
\QCoh(\widetilde{\sY}_r/\widehat{\cK}_N) 
\ar@<.4ex>[rr]^(.35){\widetilde{\pi}_{N,?} \otimes \ind} & & 
\QCoh(\sZ_r/\widehat{\cK}_N) \otimes \Rep\big(\Lie(G(O)/\cK_N)\big)
\ar@<.4ex>[ll]^(.65){\widetilde{\pi}_N^* \otimes \Oblv}
} 
\]

\noindent (for $\ind$ the functor of tensoring a vector space with the enveloping
algebra of this Lie algebra). 
But observe that our monad of interest, 
$\widetilde{\pi}_N^*\widetilde{\pi}_{N,?}$, is a direct
summand of the above monad. Indeed, the augmentation
of the enveloping algebra gives the desired construction.

This completes the proof that Lemma \ref{l:pi-n-bdded} implies
Proposition \ref{p:ourmonad-eff}.

%%%%%%%%%%%%%%%%%%%%
%%%%%%%%%%%%%%%%%%%%
%%%%%%%%%%%%%%%%%%%%

\section{Infinitesimal analysis: fiberwise geometry}\label{s:z-geom}

%%%%%%%%%%%%%%%%%%%%
%%%%%%%%%%%%%%%%%%%%
%%%%%%%%%%%%%%%%%%%%

\subsection{}

The purpose of this section is to prove 
Lemma \ref{l:pi-n-bdded} and Proposition \ref{p:zr-eta-1aff}
from \S \ref{s:z-tens}. 

These results are proved by analyzing the geometry of the
map $\pi$ (and its relatives) over the leading terms space,
using the results of \S \ref{s:locsys}.

We maintain the notation of the previous sections throughout.
We refer especially to \S \ref{ss:backtolocsys} for some
notation that might otherwise be hard to find.

\subsection{Fiberwise geometry}\label{ss:fibstart}

Fix $s$ as in Theorem \ref{t:jumps}.

Throughout, for a field-valued point 
$\eta \in (t^{-r}\fg[[t]]dt/t^s\fg[[t]]dt)_{dR}$, we use the subscript
$\eta$ to indicate that we are taking the fiber at $\eta$.

\begin{warning}\label{w:eta-fiber}

Note the difference between e.g. $\sY_{r,\eta}$ and 
the fiber of $t^{-r}\fg[[t]]dt$ over $\eta \in t^{-r}\fg[[t]]dt/t^s\fg[[t]]dt$:
the later is a scheme (which appeared in \S \ref{s:tame}), 
and the former is a formal thickening of it
(the embedding of the latter in the former is obtained by
base-change from the embedding of $\eta$ into its formal
completion in $t^{-r}\fg[[t]]dt/t^s\fg[[t]]dt$. 

\end{warning}

\subsection{}

Our fiberwise geometry is given by the following:\footnote{A useful, 
general heuristic
to keep in mind when looking at these results: the moral of
\S \ref{s:locsys} is that everything is as nice as for $G = \bG_m$
if we restrict to a field-valued point of the leading terms space.}  

\begin{prop}\label{p:z-eta-coarse}

The morphism:

\[
\widetilde{\sY}_{r,\eta} \to \sZ_{r,\eta}
\]

\noindent factors as:

\[
\widetilde{\sY}_{r,\eta} \into \widetilde{\sY}_{r,\eta}^{\prime}
\to \sZ_{r,\eta}
\]

\noindent where:

\begin{enumerate}

\item Each of the above morphisms is a
nilisomorphism (i.e., an isomorphism at the reduced level). 

\item 

$\widetilde{\sY}_{r,\eta}^{\prime}$ is an indscheme
of ind-finite type relative to $\sZ_{r,\eta}$.

\item $\widetilde{\sY}_{r,\eta}^{\prime} \to \sZ_{r,\eta}$
is formally smooth.

\item 

$\widetilde{\sY}_{r,\eta} \into \widetilde{\sY}_{r,\eta}^{\prime}$ is a regular
embedding.

\end{enumerate}

\end{prop}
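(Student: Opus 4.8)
The plan is to produce the factorization by hand from two explicit structural identifications and then to read off the four clauses; once the right models are in place each of (1)--(4) is a one-liner, and --- unlike the finer results of this section --- nothing here uses the reduction theory of \S\ref{s:locsys}. The two inputs I would establish first are: (a) a $G(O)$-equivariant (for right translation on the $G(O)$-factor) isomorphism of schemes $\widehat{G(O)}\cong (t^{-1}\fg[t^{-1}])^{\wedge}_e\times G(O)$, obtained by writing $g = \exp(\nu)\cdot h$ with $\nu$ a nilpotent-valued section of $t^{-1}\fg[t^{-1}]$ (so $\exp(\nu)$ is literally a finite sum) and $h\in G(O)$ --- a formal Gauss elimination in $G(K)$ using only how multiplication and conjugation interact with the pole-order filtration --- where $(-)^{\wedge}_e$ denotes the formal completion of the indscheme $t^{-1}\fg[t^{-1}]$ at the origin; and (b), after fixing a graded complement $\fg((t))dt = t^{-r}\fg[[t]]dt\oplus W$ (so $W$ is the ind-finite-dimensional space of Laurent tails of pole order $>r$), an identification of the formal completion $\sZ_r$ of $\sY_r$ in $\fg((t))dt$ with $\sY_r\times W^{\wedge}_0$, under which the inclusion of the center $\iota\colon\sY_r\into\sZ_r$ is the zero section.

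Granting (a), $\widetilde{\sY}_r = \widehat{G(O)}\overset{G(O)}{\times}\sY_r\cong (t^{-1}\fg[t^{-1}])^{\wedge}_e\times\sY_r$, with the structure map to $\sZ_r$ becoming $(\nu,\omega)\mapsto\Gauge_{\exp(\nu)}(\omega)$, and likewise $\widehat{G(O)}\overset{G(O)}{\times}\sZ_r\cong (t^{-1}\fg[t^{-1}])^{\wedge}_e\times\sZ_r$. I would then \emph{set} $\widetilde{\sY}'_r := \widehat{G(O)}\overset{G(O)}{\times}\sZ_r$ and factor the structure map as
\[
\widetilde{\sY}_r \;\into\; \widetilde{\sY}'_r \;\longrightarrow\; \sZ_r ,
\]
the first arrow being $\widehat{G(O)}\overset{G(O)}{\times}(\iota)$ and the second the gauge map $[g,\zeta]\mapsto\Gauge_g(\zeta)$; this is the un-quotiented, fiberwise lift of the factorization $\sY_r/G(O)\into\sZ_r/G(O)\to\sZ_r/\widehat{G(O)}$ recorded in \S\ref{ss:?-pushforward}. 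Every space here maps to $(t^{-r}\fg[[t]]dt/t^s\fg[[t]]dt)_{dR}$ through $\sZ_r$, so base-changing along $\eta$ is harmless and produces $\widetilde{\sY}'_{r,\eta}$ together with the asserted factorization of $\widetilde{\sY}_{r,\eta}\to\sZ_{r,\eta}$. It then suffices to verify (1)--(4) for the two maps before base change, since all the properties in question are stable under arbitrary base change.

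The verifications I would give: for (2) and (3), the automorphism $(\nu,\zeta)\mapsto(\nu,\Gauge_{\exp(\nu)}(\zeta))$ of $(t^{-1}\fg[t^{-1}])^{\wedge}_e\times\sZ_r$ carries $\widetilde{\sY}'_r\to\sZ_r$ to the projection onto $\sZ_r$; since $(t^{-1}\fg[t^{-1}])^{\wedge}_e$ is a filtered colimit of finite-dimensional formal discs and $V^{\wedge}_0\to\mathrm{pt}$ is formally smooth for finite-dimensional $V$, this projection --- hence $\widetilde{\sY}'_r\to\sZ_r$ and its base change --- is formally smooth and of ind-finite type. For (1): $\widetilde{\sY}_r\into\widetilde{\sY}'_r$ is $\mathrm{id}\times\iota$ with $\iota$ a nilpotent closed immersion ($\sY_r$ is the reduced subscheme of the formal completion $\sZ_r$), hence a nilisomorphism; and $\widetilde{\sY}'_r\to\sZ_r$ is, after the above automorphism, a projection with reduced fiber a point, hence an isomorphism on reduced subschemes. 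For (4): under the models $\widetilde{\sY}_r\into\widetilde{\sY}'_r$ is $\mathrm{id}$ times the zero section $\sY_r\into\sY_r\times W^{\wedge}_0$, and writing $W^{\wedge}_0 = \colim_m W_m^{\wedge}$ with $W_m$ finite-dimensional exhibits it as an ind-regular closed immersion (a colimit of regular closed immersions of finite codimension $\dim W_m$). Base change along $\eta$ preserves all four conclusions.

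The single load-bearing step is the pair of structural identifications (a), (b): both are folklore about formal neighborhoods and loop groups, but the bookkeeping --- that $\exp$ is well defined on nilpotent-valued sections of $t^{-1}\fg[t^{-1}]$, that the decomposition $g = \exp(\nu)h$ is unique, and that (a) and (b) are compatible with the gauge map as asserted --- needs care, precisely because this pair of splittings is what cleanly separates the formally smooth part of $\pi$ (the gauge fibration $\widetilde{\sY}'_r\to\sZ_r$, which absorbs the infinite-dimensional negative-loop directions) from its ind-regular closed-immersion part ($\widetilde{\sY}_r\into\widetilde{\sY}'_r$, which absorbs the infinite-dimensional higher-pole directions of $\sZ_r$). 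I would also flag that the argument places no restriction on $G$: it is valid for any affine algebraic group.
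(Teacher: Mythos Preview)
Your factorization sets $\widetilde{\sY}'_r = \widehat{G(O)} \overset{G(O)}{\times} \sZ_r$; under identification (a) the second map is then the projection $(t^{-1}\fg[t^{-1}])^{\wedge}_e \times \sZ_r \to \sZ_r$, whose fiber is an \emph{infinite}-dimensional formal disc, and $\widetilde{\sY}_r \hookrightarrow \widetilde{\sY}'_r$ is the zero section into $\sY_r \times W^{\wedge}_0$ with $W$ ind-infinite-dimensional. You already acknowledge this by downgrading (4) to ``ind-regular,'' and that is exactly the gap. The paper means (3) and (4) in the finite sense: its proof verifies (3) by checking that the cotangent complex is a vector bundle in degree $0$, and Corollary~\ref{c:fiber-actmap-1aff} then uses that $\widetilde{\sY}'_{r,\eta} \to \sZ_{r,\eta}$ is the formal completion of a \emph{finite-rank} vector bundle to invoke the 1-affineness results of \cite{shvcat} \S4. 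Since the infinite formal disc $(\bA^{\infty})^{\wedge}_0$ is not 1-affine (\S\ref{ss:1aff-exs}), your $\widetilde{\sY}'_{r,\eta}$ would break both that corollary and the conservativeness step in the proof of Lemma~\ref{l:pi-n-bdded}.

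The missing ingredient is precisely what your parenthetical disavows: the Fredholm property from \S\ref{s:locsys}. The paper instead fixes a \emph{finite} $r'$ and takes $\widetilde{\sY}'_{r,\eta} = \widehat{G(O)} \overset{G(O)}{\times} (\sY_r^{r'})_{\eta}$, where $\sY_r^{r'}$ is the formal completion of $\sY_r$ inside $\sY_{r'} = t^{-r'}\fg[[t]]dt$. Fredholmness of the universal connection over the fiber lets one choose $r'$ so that $t^{-r'}\fg[[t]]dt$ already surjects onto $H^1_{dR}$; the relative tangent complex $\fg((t))/\fg[[t]] \xrightarrow{-\nabla} \fg((t))dt/t^{-r'}\fg[[t]]dt$ is then surjective with finite-dimensional kernel, and (4) becomes a genuine regular embedding of codimension $(r'-r)\dim\fg$. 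This $r'$ genuinely depends on $\eta$ (cf.\ \S\ref{ss:gm-ga}), which is why the proposition is stated only fiberwise and why no $\eta$-independent construction such as yours can produce the required finiteness.
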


\begin{proof}

Let $\nabla$ denote the universal connection 
on the trivial bundle with fiber $\fg((t))dt$ on $\sY_{r,\eta}$.

Because $\nabla$ is Fredholm, for $r^{\prime}$ large enough,
the map:

\[
t^{-r^{\prime}}\fg[[t]]dt \otimes \sO_{\sY_{r,\eta}} \to
H_{dR}^1(\nabla) 
\]

\noindent gives an epimorphism when restricted
to $\sY_r^{red} = t^{-r}\fg[[t]]dt \times_{t^{-r}\fg[[t]]dt/t^s\fg[[t]]dt} \eta \subset \sY_{r,\eta}$
(c.f. Warning \ref{w:eta-fiber}).

Let $\sY_r^{r^{\prime}}$ denote the formal completion 
of $\sY_r$ in $\sY_r^{r^{\prime}}$. Note that $G(O)$ acts on
$\sY_r^{r^{\prime}}$ by gauge transformations, and we can
form: 

\[
\widetilde{\sY}_r^{r^{\prime}} \coloneqq
\sY_r^{r^{\prime}} \overset{G(O)}{\times} \widehat{G(O)}.
\]

Note that $\widetilde{\sY}_r^{r^{\prime}}$ maps to $\sZ_r$.
We claim that the induced map $\widetilde{\sY}_{r,\eta}^{r^{\prime}} \to \sZ_{r,\eta}$
is formally smooth.

Indeed, it suffices to see that the cotangent complex restricted
to $\sY_{r,\eta}^{red}$ is a vector bundle in degree $0$.
It is easy to see that at a point $\Gamma dt \in \sY_{r,\eta}^{red}$,
the tangent complex the map is computed by the two-step complex:

\[
\fg((t))/\fg[[t]] \rar{-\nabla} \fg((t))dt/t^{-r^{\prime}}\fg[[t]]dt, \hspace{.5cm}
\nabla \coloneqq d - [\Gamma,-] dt.
\]

\noindent and this is an epimorphism by choice of $r^{\prime}$.

Clearly $\widetilde{\sY}_{r,\eta} \into \widetilde{\sY}_{r,\eta}^{r^{\prime}}$
is a regular embedding, so we obtain the claim.

\end{proof}

\subsection{Proof of Lemma \ref{l:pi-n-bdded}}

We now prove Lemma \ref{l:pi-n-bdded}. The proof occupies
\S \ref{ss:pi-n-pf-start}-\ref{ss:pi-n-pf-finish}.

Suppose that $N \geq r+s$ throughout.

\begin{rem}

Proposition \ref{p:z-eta-coarse} plays only a minor role in this
argument: we only use it for the conservativeness, not for
the cohomological boundedness.

\end{rem}

\subsection{}\label{ss:pi-n-pf-start}

First, we give a fiberwise version of the cohomological boundedness.

Recall that $\pi_N$ denotes the map 
$\sY_r/\cK_N \to \sZ_r/\widehat{\cK}_{N}$. 
By our lower bound on $N$, this is a morphism of prestacks over
$(t^{-r}\fg[[t]]dt/t^s\fg[[t]]dt)_{dR}$, so we can take fibers
at a field-valued point $\eta$. 

Note that $\QCoh(\sY_{r,\eta}^{red}/\cK_N)$ has a canonical $t$-structure,
and that $\sY_{r,\eta}^{red}/\cK_N$ is even an Artin stack smooth over
$\eta$.\footnote{
The reader wondering why we have ``reduced" in the notation should
refer to Warning \ref{w:eta-fiber}.}

Let $\pi_{N,\eta}^{red}$ denote the restriction of
$\pi_{N,\eta}: \sY_{r,\eta}/\cK_N \to \sZ_{r,\eta}/\widehat{\cK}_N$
to $\sY_{r,\eta}^{red}/\cK_N$. 

\begin{lem}\label{l:pi-n-amp/fibers}

The endofunctor $\pi_{N,\eta}^{red,*}\pi_{N,\eta,?}^{red}$ has 
cohomological amplitude $[0,\dim(G)]$. 

\end{lem}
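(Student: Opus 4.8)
The statement is a fiberwise, effective (bounded-amplitude) version of the tameness argument from \S\ref{s:tame}, localized at a field-valued point $\eta$ of the de Rham leading-terms space. The key geometric input is that $\sY_{r,\eta}^{red}/\cK_N$ is a smooth Artin stack over $\eta$ (by Theorems \ref{t:artin} and \ref{t:jumps}, since $N \geq r+s$), while $\sZ_{r,\eta}/\widehat{\cK}_N$ has the same reduced part by Proposition \ref{p:z-eta-coarse}. So the plan is to compute $\pi_{N,\eta}^{red,*}\pi_{N,\eta,?}^{red}$ explicitly enough to read off its amplitude, exactly as one would for $\bG_m$. First I would factor $\pi_{N,\eta}^{red}$ through the intermediate object $\widetilde{\sY}_{r,\eta}^{\prime}$ of Proposition \ref{p:z-eta-coarse} (after quotienting by the relevant formal group), or more directly work with the base-changed square: on reduced parts, $\pi_{N,\eta}^{red}$ becomes a map $S/\cK_N \to S/\widehat{\cK}_N$ where $S = \sY_{r,\eta}^{red}$ is an honest affine scheme and the quotient on the right is by a formal thickening of $\cK_N$ inside $\cK_N^{\wedge}$-type data; by the same Noetherian-descent reduction used in \S\ref{ss:tame-pf}, after shrinking $\cK_N$ to a small enough congruence subgroup $\cK_M$ the map $S \to S/\cK_M$ is a trivial $\cK_M$-torsor over a finite-type affine scheme.

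\textbf{Key steps.} (1) Use base change along the $\cK_N$-torsor $\alpha: S \to S/\cK_N$ (which is flat, affine, and $t$-exact with conservative pullback, since $\cK_N$ is pro-unipotent) to reduce the statement to bounding the amplitude of the corresponding endofunctor after pulling back to $S$. (2) There, $\pi_{N,\eta,?}^{red}$ becomes computed by the $?$-pushforward along a regular embedding composed with a pushforward along a formal-group fibration; concretely $\pi^?\pi_* = \act_* p_2^?$ as in \eqref{eq:?-basechange}, or its $?$-analogue. The $?$-pushforward along the regular embedding $\widetilde{\sY}_{r,\eta}\into\widetilde{\sY}_{r,\eta}^{\prime}$ has amplitude bounded by the codimension, which is the rank of $H_{dR}^1$ of the universal connection; by Proposition \ref{p:coda-2} (or Lemma \ref{l:bdd-dimns-lin}), this is bounded — and in fact the precise amplitude $[0,\dim(G)]$ should come from the two-step tangent complex $\fg((t))/\fg[[t]] \xrightarrow{-\nabla} \fg((t))dt/t^{-r'}\fg[[t]]dt$ appearing in the proof of Proposition \ref{p:z-eta-coarse}: the kernel contributes $H_{dR}^0$, whose dimension is at most $\dim(\fg)=\dim(G)$ (the Lie algebra of the stabilizer embeds into it). (3) Combine: $\pi_{N,\eta}^{red,*}\pi_{N,\eta,?}^{red}$ is left $t$-exact because $\pi_{N,?}$-type functors for formal-group quotients are $t$-exact (as $\Oblv:\Rep(\mathfrak{l}) \to \Vect$ for a finite-dimensional Lie algebra $\mathfrak{l}$ is $t$-exact and $?$-pushforward along a regular embedding is left $t$-exact), and the upper bound $\dim(G)$ is governed by the $H_{dR}^0$-defect. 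Conservativeness is immediate from Proposition \ref{p:z-eta-coarse}: $\pi_{N,\eta}$ is a nilisomorphism after quotienting, so $\pi_{N,\eta}^{red,*}$ detects whether objects vanish; alternatively, $\pi_{N,\eta,?}^{red}$ generates under colimits because the reduced parts agree and the fibers are unipotent quotients.

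\textbf{Main obstacle.} The genuinely delicate point is getting the \emph{sharp} amplitude $[0,\dim(G)]$ rather than just \emph{some} uniform bound. A uniform bound follows softly from the above (codimension of the regular embedding plus $t$-exactness of the formal-group part), but pinning it to $\dim(G)$ requires identifying the upper amplitude with the dimension of $H_{dR}^0$ of the universal connection, uniformly over $\eta$ — i.e. showing the $?$-pushforward's failure of $t$-exactness is concentrated in the single cohomological degree controlled by $\Ker(\nabla)$, which is at most $\dim(\fg)$-dimensional. This is where I would spend the real effort: set up the two-term complex computing the relative (co)tangent complex of $\widetilde{\sY}_{r,\eta} \into \widetilde{\sY}_{r,\eta}^{\prime}$ over $\sZ_{r,\eta}$, observe that its $H^1$ vanishes (formal smoothness, Proposition \ref{p:z-eta-coarse}) and its $H^0$ is the obstruction space of dimension $\leq \dim(G)$, and translate this into the claim that $\pi_{N,\eta}^{red,*}\pi_{N,\eta,?}^{red}$ (which is convolution with the structure sheaf of the self-intersection, a Koszul-type object) has Tor-amplitude exactly $[0,\dim(G)]$. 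The left $t$-exactness then says $H^{<0}$ vanishes, and the finite-dimensionality of the obstruction space bounds $H^{>\dim(G)}$.
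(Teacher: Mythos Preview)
Your general shape is right — the bound comes from representing the relative tangent complex of $\pi_{N,\eta}^{red}$ by a short two-term complex of vector bundles and then reading off Tor-amplitude — but you have misidentified the controlling invariant and are missing the mechanism that converts this into an amplitude bound on the monad.

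\textbf{Wrong cohomology group.} You repeatedly attribute the sharp bound $\dim(G)$ to $\dim H_{dR}^0 \leq \dim(\fg)$. The paper's argument uses the \emph{other} end: by Proposition~\ref{p:coda-2}, $H_{dR}^1$ of the universal adjoint connection on $\sY_{r,\eta}^{red}$ is generated by at most $\dim(G)$ elements as an $\sO$-module. Lifting these generators gives a map $\sE^1 \coloneqq \sO^{\oplus \dim(G)}[-1] \to H_{dR}^*$, and the cone $\sE^0$ is then shown to be a vector bundle in degree $0$. So the tangent complex of $\pi_{N,\eta}^{red}$ (computed as $\Ker\big(\fg((t))/t^N\fg[[t]] \xrightarrow{-\nabla} \fg((t))dt/t^s\fg[[t]]dt\big)$) is globally represented by $[\sE^0 \to \sE^1]$ with $\rank \sE^1 = \dim(G)$. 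The point is not the fiber dimension of $H_{dR}^0$ but the \emph{number of generators} of $H_{dR}^1$ over the family — this is what allows a global two-term model with bounded rank in degree $1$, which is exactly what governs the upper amplitude of symmetric powers (since $\Lambda^{>\dim(G)}\sE^1 = 0$).

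\textbf{Missing mechanism.} Your factorization via $\widetilde{\sY}_{r,\eta}^{\prime}$ does not give the sharp bound: the regular embedding $\widetilde{\sY}_{r,\eta} \hookrightarrow \widetilde{\sY}_{r,\eta}^{\prime}$ has codimension $(r'-r)\dim(G)$, not $\dim(G)$, and you have no argument for how the formally-smooth piece cancels this down. The paper instead passes to $\IndCoh$ (which equals $\QCoh$ here by formal smoothness), identifies $\pi_{N,\eta}^{red,*}\pi_{N,\eta,?}^{red}$ with $\pi_{N,\eta}^{red,!}\pi_{N,\eta,*}^{red,\IndCoh}$, and invokes the Lie algebroid PBW filtration from \cite{grbook}: the monad acquires a $\bZ^{\geq 0}$-filtration whose $n$th graded piece is tensoring with $\Sym^n(T_{\pi_{N,\eta}^{red}})$. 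Each symmetric power of $[\sE^0 \to \sE^1]$ then has Tor-amplitude $[0,\dim(G)]$, and the claim follows. Without this filtration (or an equivalent Koszul/deformation-to-the-normal-cone argument), there is no step in your proposal that actually produces the amplitude estimate from the shape of the tangent complex.
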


\begin{proof}

We proceed in steps.

\step 

First, we claim that the tangent complex of the map
$\pi_{N,\eta}^{red}$ is a perfect complex on the algebraic
$\sY_{r,\eta}^{red}/\cK_N$. More precisely, we will
show that its pullback $\sY_{r,\eta}^{red}$, and hence
to any affine scheme, 
can be represented by a two-step complex
$\sE^0 \to \sE^1$ of vector bundles with $\sE^1$ having
rank $\dim(G) $.

Indeed, the tangent complex of this map at a point 
$\Gamma dt \in \sY_{r,\eta}^{red}$ is:\footnote{Of course,
the appropriate version of this 
formula holds with $\Gamma dt$ a test point with values in an affine 
scheme: we are just simplifying the notation here.}

\[
\begin{gathered}
\Ker\Big(
\Coker\big(
t^N\fg[[t]] \xar{-\nabla}
t^s\fg[[t]]dt\big) 
\to
\Coker\big(
\fg((t)) \xar{-\nabla}
\fg((t))dt\big)
\Big) = \\
\Ker\big(
\fg((t))/t^N\fg[[t]] \xar{-\nabla} 
\fg((t))dt/t^s\fg[[t]]dt
\big)
\end{gathered}
\]

\noindent for $\nabla \coloneqq d+\Gamma dt$. 

According to Proposition \ref{p:coda-2}, the universal
connection on $\sY_{r,\eta}^{red}$ has $H_{dR}^1$ which 
is a quotient of $\sE^1 \coloneqq 
\sO_{\sY_{r,\eta}^{red}}^{\oplus \dim(G)}$.
By freeness, we can obviously lift the map:

\[
\vcenter{\xymatrix{
& \sE^1 \ar[d] \ar@{..>}[dl] \\
\fg((t))dt \otimes \sO_{\sY_{r,\eta}} \ar@{->>}[r] & 
H_{dR}^1(\o{\cD}_{\sY_{r,\eta}^{red}},\nabla)
}} \hspace{10pt} \in \Pro(\QCoh(\sY_{r,\eta})^{\heart})
\]

\noindent and therefore we obtain a map 
from $\sE^1[-1]$ to the complex (not the total cohomology!)
of de Rham cohomology 
$H_{dR}^*(\o{\cD}_{\sY_{r,\eta}^{red}},\nabla)$, and so that
this map induces an epimorphism on applying $H^1$.

Define:

\[
\sE^0 \coloneqq 
\Coker\big(\sE^1[-1] \to H_{dR}^*(\o{\cD}_{\sY_{r,\eta}^{red}},\nabla)\big)
\in
\QCoh(\sY_{r,\eta}^{red}).
\]

We claim that $\sE^0$ is a finite 
vector bundle concentrated in cohomological
degree zero, which would clearly suffice since
the (homotopy) kernel of the boundary map $\sE^0 \to \sE^1$
is obviously (quasi)isomorphic to de Rham cohomology.

Indeed, $\sE^1$ is concentrated in cohomological degree
zero from the long exact sequence:

\[
\ldots \to 0 \to 
H_{dR}^0(\o{\cD}_{\sY_{r,\eta}^{red}},\nabla) \to 
H^0(\sE^0) \to 
\sE^1 \onto  H_{dR}^1(\o{\cD}_{\sY_{r,\eta}^{red}},\nabla) \to
H^1(\sE^0) \to 0 \to \ldotsplus
\]

\noindent Clearly $\sE^1$ is perfect when regarded as a complex
(by Fredholmness of the de Rham differential here).
Finally, we see that it has $\Tor$-amplitude $0$, since 
for $\sF \in \QCoh(\sY_{r,\eta}^{red})^{\geq 0}$, we have:

\[
\sE^0 \otimes \sF = 
\Coker(\sE^1[-1] \otimes \sF \to 
H_{dR}^*(\o{\cD}_{\sY_{r,\eta}^{red}},\nabla) \otimes \sF)
\]

\noindent and this is the cone of a map from a complex in 
degrees $\geq 1$ to a complex in degrees $\geq 0$, so itself
lies in degrees $\geq 0$ as desired.

\step 

We will compute $\pi_{N,\eta}^{red,*}\pi_{N,\eta,?}^{red}$
using the theory of Lie algebroids from \cite{grbook}.\footnote{
At the current moment, \cite{grbook} is not
available yet. However, the relevant 
notes on Lie algebroids \emph{are} available
on Gaitsgory's website.}

Note that $\sY_{r,\eta}^{red}/\cK_N$ and
$\sZ_{r,\eta}/\widehat{\cK}_N$ are of ind-finite type
(by Theorems \ref{t:artin} and \ref{t:jumps}), so
$\IndCoh$ makes sense on them. Moreover, 
by formal smoothness, $\IndCoh = \QCoh$ on these
prestacks (the equivalence is given by
tensoring with the dualizing sheaf).
Under this equivalence, the above functor corresponds
to $\pi_{N,\eta}^{red,!}\pi_{N,\eta,*}^{red,\IndCoh}$,
in the usual $\IndCoh$-notation.

Then the upshot of the \cite{grbook} theory is that
the endofunctor $\pi_{N,\eta}^{red,*}\pi_{N,\eta,?}^{red}$
admits a $\bZ^{\geq}$-filtration with
$n$th associated graded term the endofunctor
sending $\sF \in \QCoh(\sY_{r,\eta}^{red}/\cK_N)$ 
to $\sF$ tensored with the $n$th symmetric power of the
tangent complex of $\pi_{N,\eta}^{red}$.

By the above calculations, the $n$th symmetric power of
this tangent complex is a perfect complex
of $\Tor$-amplitude $[0,\dim(G)]$ for all $n$, giving
the lemma.

\end{proof}

\subsection{}\label{ss:pi-n-pf-finish}

We now prove the lemma.

\begin{proof}[Proof of Lemma \ref{l:pi-n-bdded}] \setcounter{steps}{0}

We proceed by steps.

\step First, we show that $\pi_N^*\pi_{N,?}$ is left $t$-exact.
Note that argument is very coarse (and standard), and does not
really use the specifics of the geometry.

Let $\iota$ denote the embedding $\sY_r \into \sZ_r$.
Define a $t$-structure on $\QCoh(\sZ_r)$,
where we let $\QCoh(\sZ_r)^{\leq 0}$ be generated
under colimits by $\iota_?(\QCoh(\sY_r)^{\leq 0})$.

It is well-known that $\iota_?$ is $t$-exact in this case.
Indeed, it is obviously right $t$-exact.
Moreover, the composite $\iota^*\iota_?$ is left 
$t$-exact.\footnote{Since $?$-pushforward functors
can be confusing, a toy model: suppose $i: X \into Y$
is a closed embedding of smooth schemes. 
Then it is easy to see $i_?(\sF) = 
i_*^{\IndCoh}(\sF \otimes \omega_X) \otimes \omega_Y^{-1}$.
So if $\sF \in \QCoh(X)^{\geq 0}$, $i_?(\sF)$ lies
in degrees $\geq \dim(Y) - \dim(X)$. But then $i^*i_?(\sF)$
lies in degrees $\geq 0$ as desired.} 
But we can tautologically test whether an object
of $\QCoh(\sZ_r)$ is coconnective by checking after applying 
$\iota^*$, so we obtain the claim. 

Note that $\iota^*$ is conservative.
Since $\iota_?$ is $t$-exact and the
$t$-structure on $\QCoh(\sY_r)$ is \emph{compactly generated}
(i.e., $\QCoh(\sY_r)$ is compactly generated  with compacts
closed under truncations),
the $t$-structure on $\QCoh(\sZ_r)$ is compactly generated
as well. 

We have a similar $t$-structure on 
$\QCoh$ of: 

\[
\widetilde{\sY}_{r,\cK_N} \coloneqq 
\widehat{\cK}_N \overset{\cK_N}{\times} \sY_r
\]

\noindent (i.e., the $\cK_N$-counterpart of $\widetilde{\sY}_r$)
with all the same formal properties.

In particular, it follows that the $?$-pushforward
$\widetilde{\sY}_{r,\cK_N} \to \sZ_r$ is $t$-exact.
Indeed, the $?$-pushforward is tautologically right $t$-exact.
We need to see that this $?$-pushforward maps
$\QCoh(\widetilde{\sY}_{r,\cK_N})^{\geq 0}$ to 
$\QCoh(\sZ_r)^{\geq 0}$. But the latter category is 
compactly generated 
by objects $?$-pushed forward from compact objects of
$\QCoh(\sY_r)^{\geq 0}$, for which the claim is clear.\footnote{Recall
that compact generation works the same for non-DG categories
as for DG categories. I.e., we first obtain that
$\QCoh(\widetilde{\sY}_{r,\cK_N})^{\geq 0}$ is freely generated under
filtered colimits by its subcategory of compact objects, and
this subcategory in turn is generated under retracts by objects
$?$-pushed forward from compacts in $\QCoh(\sY_r)^{\geq 0}$.}

We can now deduce that $\pi_N^*\pi_{N,?}$ is left $t$-exact.
Indeed, we form the commutative diagram:

\[
\xymatrix{
\sY_r \ar[r]^i & \widetilde{\sY}_{r,\cK_N} \ar[d]^{\alpha} 
\ar[r]^{\vph} & \sZ_r \ar[d]^{\beta} \\
& \sY_r/\cK_N \ar[r]^{\pi_N} & \sZ_r/\widehat{\cK}_N
}
\]

\noindent with the square being Cartesian. 
Recall that the pullback along $\sY_r \to \sY_r/\cK_N$ is $t$-exact.
Then base-change in the above diagram and our
earlier observations immediately gives the result.

Indeed, we compute:

\begin{equation}\label{eq:pi-n-basechange}
i^*\alpha^*\pi_N^*\pi_{N,?} = 
i^* \vph^* \beta^* \pi_{N,?} =
i^* \vph^* \vph_? \alpha^*.
\end{equation}

\noindent Then observe that $\alpha^*$ is left $t$-exact:
indeed, to see that an object of $\QCoh(\widetilde{\sY}_{r,\cK_N})$
is coconnective it suffices to apply $i^*$, so we need to 
see that $i^*\alpha^*$ is left $t$-exact, but this functor
is exact. Furthermore, we have seen that
$\vph_?$ is $t$-exact. Finally, $i^*\vph^* = \iota^*$
is left $t$-exact.

\step 

We now show that $\pi_N^*\pi_{N,?}$ is cohomologically
bounded (for $N \geq r+s$).

Recall that we have the map recording the leading terms of a connection:

\[
\sY_r/\cK_N \to \sZ_r/\widehat{\cK}_N \to (t^{-r}\fg[[t]]dt/t^s\fg[[t]]dt)_{dR}.
\]

\noindent By continuity of these functors and base-change, 
the Cousin resolution means that we need to check that
$\pi_N^*\pi_{N,?}$ has uniformly (in $\eta$) bounded amplitude on objects
supported at a point $\eta \in (t^{-r}\fg[[t]]dt/t^s\fg[[t]]dt)_{dR}$.
Since such objects are canonically filtered (by increasing
infinitesimal neighborhoods) by 
objects $?$-pushed forward from $\QCoh(\sY_{r,\eta}^{red}/\cK_N)$,
we reduce to treating such objects. 

But this case follows from Lemma \ref{l:pi-n-amp/fibers}.

\step

Next, we show that $\pi_N^*\pi_{N,?}$ is conservative.

Using the calculation \eqref{eq:pi-n-basechange}
(and its notation),
we need to see
that $i^* \vph^* \vph_? \alpha^*$ is conservative. 
We will show that each constituent functor is
conservative.
For the pullbacks this follows because the maps are eventually coconnective
nilisomorphisms between eventually coconnective spaces.

For $\vph_?$, note that the Cousin
resolution allows us to test conservativity on geometric points
of $(t^{-r}\fg[[t]]dt/t^s\fg[[t]]dt)_{dR}$ again. Then the
result again readily follows from Proposition \ref{p:z-eta-coarse}
and standard facts about formal completions,
but we outline the argument for the sake of completeness:

In the notation of \emph{loc. cit}, it obviously suffices
to see that $?$-pushforward along the formally smooth
map $\widetilde{\sY}_{r,\cK_N,\eta}^{\prime} \to \sZ_{r,\eta}$
is conservative. 

We can check this after 
(conservatively) restricting to 
$\sY_{r,\eta} \subset \sZ_{r,\eta}$, in which case,
the notation of \emph{loc. cit}., the map becomes
a formally smooth map over $\sY_{r,\eta}$ with finite rank
tangent complex. As in the proof of Corollary \ref{c:fiber-actmap-1aff}
below,
this fiber product is classical and is isomorphic
to the formal completion of a finite rank vector bundle
on $\sY_{r,\eta}$.

So we have $r:\sN \to \sY_{r,\eta}$ a finite rank vector
bundle (the normal bundle appearing above), and
we are $?$-pushing forward from the
formal completion of its zero section. Up to a twist by
a (homologically graded) line bundle, this is the same as
$?$-pushing forward to $\sN$ and then $*$-pushing
forward to $\sY_{r,\eta}$. The former operation is fully-faithful
and the latter operation is obviously conservative, so we obtain the 
claim.

\end{proof}

\subsection{Proof of Proposition \ref{p:zr-eta-1aff}}

We have the following first application of Proposition \ref{p:z-eta-coarse}
to 1-affineness.

\begin{cor}\label{c:fiber-actmap-1aff}

\begin{enumerate}

\item 

The morphism 
$\widetilde{\sY}_{r,\eta} \to \sZ_{r,\eta}$ is 1-affine.\footnote{Though
the indschemes in question are not, since they are infinite-dimensional
in the ind-directions.}

\item\label{i:fiberprod-eta-tens}

For every prestack $S$ with $S \to \sZ_{r,\eta}$, the canonical
morphism:

\[
\QCoh(S) \underset{\QCoh(\sZ_{r,\eta})}{\otimes} 
\QCoh(\widetilde{\sY}_{r,\eta}) \to
\QCoh(S \underset{\sZ_{r,\eta}}{\times} \widetilde{\sY}_{r,\eta}) 
\]

\noindent is an equivalence.

\end{enumerate}

\end{cor}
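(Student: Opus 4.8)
The plan is to extract everything from the structural result Proposition~\ref{p:z-eta-coarse}, combined with Gaitsgory's 1-affineness criteria recalled in \S\ref{ss:1aff-exs} and with the finiteness input of Proposition~\ref{p:coda-2}. Recall that 1-affineness of a morphism is tested after base change along affine DG schemes and that 1-affine morphisms are stable under composition; using the factorization $\widetilde{\sY}_{r,\eta}\into\widetilde{\sY}_{r,\eta}^{\prime}\to\sZ_{r,\eta}$ of \emph{loc. cit.}, it therefore suffices to prove that each of these two morphisms is 1-affine. So fix an affine DG scheme $S$ with a map $g\colon S\to\sZ_{r,\eta}$, and set $\widehat S^{\prime}\coloneqq S\underset{\sZ_{r,\eta}}{\times}\widetilde{\sY}_{r,\eta}^{\prime}$ and $\widehat S\coloneqq S\underset{\sZ_{r,\eta}}{\times}\widetilde{\sY}_{r,\eta}$. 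Since nilisomorphisms, ind-finite-type morphisms, formally smooth morphisms and regular embeddings are all stable under base change, $\widehat S^{\prime}\to S$ is an ind-finite-type formally smooth nilisomorphism and $\widehat S\into\widehat S^{\prime}$ is a regular closed embedding.

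The first, and main, step is to show $\widehat S^{\prime}$ is 1-affine. Here I would argue that $\widehat S^{\prime}$ is classical (it is cut out inside a smooth ind-scheme by lattice conditions; this is where the ``classical'' clause of Proposition~\ref{p:z-eta-coarse} and its proof are used), and that, Zariski-locally on $S$, it is isomorphic to the formal completion of the zero section in a vector bundle $\mathbb V\to S$ of \emph{finite} rank: formal smoothness forces $L_{\widehat S^{\prime}/S}$ to be a vector bundle in degree $0$, and its restriction to $\widehat S^{\prime,red}=S^{red}$ is the bundle whose fibre at $\Gamma\,dt$ is $H^0_{dR}$ of the corresponding connection, hence of rank $\le\dim\fg$ by Proposition~\ref{p:coda-2} and Fredholmness; a formally smooth ind-finite-type formal thickening of $S^{red}$ over $S$ with finite-rank cotangent complex is, locally on $S$, the formal completion of $S^{red}$ inside the total space of $\mathbb V\coloneqq\mathbb V(L^\vee)$. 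Since $\mathbb V$ is a quasi-compact quasi-separated (DG) scheme and $S^{red}\into\mathbb V$ is cut out by a finitely generated ideal, Gaitsgory's theorem (\S\ref{ss:1aff-exs}, \cite{shvcat}) gives that $\widehat S^{\prime}$ is 1-affine; 1-affineness being Zariski-local, this suffices. \textbf{This identification of $\widehat S^{\prime}$ as a formal completion of a finite-rank vector bundle is the crux of the argument}, and the finiteness of the rank is exactly where Fredholmness (via Proposition~\ref{p:coda-2}) enters — without it one would land on $\colim_n\bA^n$-type pathologies, which are not 1-affine.

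For the remaining factor $\widehat S\into\widehat S^{\prime}$: it is a regular closed embedding into the finite-rank formal completion $\widehat S^{\prime}$, and $\widehat S\to S$ is again a nilisomorphism; thus $\widehat S$ is the formal completion along $S^{red}$ of the closed subscheme $Z\subset\mathbb V$ cut out by the (finitely generated) ideal of the embedding, and $Z$ is quasi-compact quasi-separated, so Gaitsgory's criterion again yields that $\widehat S$ is 1-affine. Composing, the morphism $\pi_{r,\eta}\colon\widetilde{\sY}_{r,\eta}\to\sZ_{r,\eta}$ is 1-affine, proving~(1). (The ambient indschemes themselves are of course not 1-affine, their ind-infinite-dimensionality being absorbed into $S$ upon base change.)

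Finally, part~(2) follows formally from part~(1). Since $\pi_{r,\eta}$ is 1-affine, its $\ShvCat$-pushforward commutes with colimits, is $\DGCat_{cont}$-linear, and satisfies the projection formula (\S\ref{ss:1aff-exs}). Given $S\to\sZ_{r,\eta}$, base change for sheaves of categories together with the projection formula applied to $\QCoh_{/\widetilde{\sY}_{r,\eta}}$ and to the pushforward of $\QCoh_{/S}$ identify $\QCoh(S\underset{\sZ_{r,\eta}}{\times}\widetilde{\sY}_{r,\eta})=\Gamma\bigl(S,\, g^{*,\ShvCat}(\pi_{r,\eta,*}^{\ShvCat}\QCoh_{/\widetilde{\sY}_{r,\eta}})\bigr)$ with $\QCoh(S)\underset{\QCoh(\sZ_{r,\eta})}{\otimes}\QCoh(\widetilde{\sY}_{r,\eta})$, compatibly with the canonical comparison functor; equivalently, one checks on $S$ affine that both sides compute $\QCoh$ of the 1-affine prestack $S\underset{\sZ_{r,\eta}}{\times}\widetilde{\sY}_{r,\eta}$. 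This step is routine once~(1) is in hand.
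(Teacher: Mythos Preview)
Your approach to part~(1) is close in spirit to the paper's but takes a detour. Rather than base-changing to affine $S$ first, the paper works directly on $\widetilde{\sY}_{r,\eta}^{\prime} \to \sZ_{r,\eta}$: since both are formal schemes and the map is a formally smooth nilisomorphism, formal smoothness produces a section $i:\sZ_{r,\eta}\to\widetilde{\sY}_{r,\eta}^{\prime}$, and a standard deformation argument then identifies $\widetilde{\sY}_{r,\eta}^{\prime}$ globally with the formal completion of the zero section in the finite-rank normal bundle $\sN\to\sZ_{r,\eta}$. Once this is established, 1-affineness of the morphism follows from \cite{shvcat}~\S4, and the regular embedding piece is trivially 1-affine. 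Two small corrections to your version: after base-change the fiber product is the formal completion of $S$ (not $S^{red}$) in the pulled-back bundle; and your appeal to Proposition~\ref{p:coda-2} is misplaced, since that result bounds generators of $H^1_{dR}$, whereas the finite rank here comes directly from Fredholmness via the tangent computation already in the proof of Proposition~\ref{p:z-eta-coarse}.

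For part~(2) there is a genuine gap. You claim it follows formally from~(1) via the projection formula for 1-affine morphisms, but this is not enough: the base $\sZ_{r,\eta}$ is \emph{not} 1-affine, so for a general sheaf of categories $\mathsf{C}$ on $\sZ_{r,\eta}$ one cannot identify $\Gamma(S,g^{*,\ShvCat}\mathsf{C})$ with $\QCoh(S)\otimes_{\QCoh(\sZ_{r,\eta})}\Gamma(\sZ_{r,\eta},\mathsf{C})$. What you would need is that $\pi_{r,\eta,*}^{\ShvCat}(\QCoh_{/\widetilde{\sY}_{r,\eta}})$ lies in the essential image of $\Loc_{\sZ_{r,\eta}}$, and this does not follow from 1-affineness of $\pi_{r,\eta}$ alone. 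The paper instead reuses the disc-bundle identification from~(1) and factors the map as
\[
\widetilde{\sY}_{r,\eta}\ \into\ \widetilde{\sY}_{r,\eta}^{\prime}=\sN^{\wedge}_{\sZ_{r,\eta}}\ \into\ \sN_{\sZ_{r,\eta}}\ \to\ \sZ_{r,\eta},
\]
checking the tensor identity on each piece: the first and third are affine morphisms, for which the identity is \cite{shvcat} Appendix~B, while for the middle inclusion of the formal completion the key point is that pullback admits a fully-faithful $\QCoh(\sZ_{r,\eta})$-linear left adjoint (\cite{indschemes}~\S7), from which the tensor identity follows directly. This extra factorization through the \emph{uncompleted} vector bundle is precisely what your argument is missing.
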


\begin{proof}

We proceed in steps.

\step 

First, we will show that for a formally smooth nilisomorphism
$p:\cY \to \cZ$ of formal schemes\footnote{I.e.,
indschemes whose reduced part is a scheme.}
(think: $\widetilde{\sY}_{r,\eta}^{\prime} \to \sZ$) is a 
disc bundle in an appropriate sense. The argument is 
well-known, but we include it for completeness.

First, we show that there is a map $i: \cZ \to \cY$ of which
$p$ is a splitting. Indeed, formal smoothness gives this map:

\[
\xymatrix{
\cY^{red} = \cZ^{red} \ar[r] \ar[d] & \cY \ar[d]^p \\
\cZ \ar@{=}[r] \ar@{..>}[ur]^i & \cZ.
}
\]

Now let $\sN \to \cZ$ denote the total space of the normal
bundle to $i: \cZ \to \cY$. We claim that $\cY$ is isomorphic over $\cZ$
to the formal completion of the zero section of $\sN$.

Indeed, let $\cZ \into \cZ^+ \into \cY$ 
denote the first infinitesimal neighborhood
of $\cZ$ in $\cY$ (embedded via $i$).
There is a tautological map $\cZ^+ \to \sN$, 
identifying $\cZ^+$ with the first infinitesimal neighborhood of
the zero section. Then by formal smoothness, we obtain 
an extension of this map:

\[
\xymatrix{
\cZ^+ \ar[r] \ar[d] & \sN \ar[d] \\
\cY \ar[r]^p \ar@{..>}[ur] & \cZ
}
\]

\noindent Obviously this map factors through the formal
completion of the zero section. Finally, filtering functions
on both sides by order of vanishing along $\cZ$, 
we see that the induced map $\cY \to \sN_{\cZ}^{\wedge}$
is an isomorphism.

\step

We now readily see the first claim: since 
$\sY_{r,\eta}^{\prime}$ is now the formal completion of a 
(finite rank) vector bundle over $\cZ_{r,\eta}$, the morphism
$\sY_{r,\eta}^{\prime} \to \cZ_{r,\eta}$ is 1-affine by
\cite{shvcat} \S 4. Obviously 
$\sY_{r,\eta} \into \sY_{r,\eta}^{\prime}$ is 1-affine,
so we obtain the result, since 1-affine morphisms are closed
under compositions.

\step 

The second part follows similarly:

We factor $\widetilde{\sY}_{r,\eta} \to \sZ_{r,\eta}$ as:

\[
\widetilde{\sY}_{r,\eta} \into \widetilde{\sY}_{r,\eta}^{\prime} =
\sN_{\sZ_{r,\eta}}^{\wedge} \into \sN_{\sZ_{r,\eta}} \to \sZ_{r,\eta}
\]

\noindent where $ \sN_{\sZ_{r,\eta}} \to \sZ_{r,\eta}$ is
a finite rank vector bundle (as above).

The desired compatibility between formation
of tensor products and fiber products holds for
any affine morphism (see e.g. \cite{shvcat} Appendix B).
The only non-affine morphism above is 
$\sN_{\sZ_{r,\eta}}^{\wedge} \into \sN_{\sZ_{r,\eta}}$, so it suffices to
treat this map. But this follows since the pullback admits
a fully-faithful $\QCoh(\sZ_{r,\eta})$-left adjoint 
as in \cite{indschemes} \S 7 (c.f. the argument of \cite{shvcat} \S 4).

\end{proof}

\begin{variant}

For $N \geq 0$, let $\widetilde{\sY}_{r,\cK_N}$ denote 
$\widehat{\cK}_N \overset{\cK_N}{\times} \sY_r$,
i.e., the analogue of $\widetilde{\sY}_r$ with the congruence
subgroup  $\cK_N$ replacing $G(O)$.
The above analysis works just as well
for $\widetilde{\sY}_{r,\cK_N}$ in place of 
$\widetilde{\sY}_r$.

%%%% make sure later stuff about this refs here

\end{variant}

\subsection{} 

We complete our obligations from \S \ref{s:z-tens} with the following.

\begin{proof}[Proof of Proposition \ref{p:zr-eta-1aff}]

Note that $\sY_{r,\eta}/\cK_{r+s} \to \sZ_{r,\eta}/\widehat{\cK}_{r+s}$
is an ind-(eventually coconnective and proper covering),
and that $\sZ_{r,\eta}/\widehat{\cK}_{r+s}$ is a classical prestack.
Moreover, we have seen that $\sY_{r,\eta}/\cK_{r+s}$ is 1-affine.
Therefore, we are in the above setting.

We claim that the morphism 
$\sY_{r,\eta}/\cK_{r+s} \to \sZ_{r,\eta}/\widehat{\cK}_{r+s}$
is 1-affine. It suffices to see that the morphism is 1-affine
after we pullback to $\sZ_{r,\eta}$. Then we obtain
the morphism $\widetilde{\sY}_{r,\cK_{r+s},\eta} \to \sZ_{r,\eta}$,
which we have seen in 1-affine.

Then the only thing remaining to check is the 
criterion \eqref{i:proper-ec-4} from Proposition \ref{p:proper-ec-loc-2},
i.e., that the canonical
map:

\[
\QCoh(\sY_{r,\eta}/\cK_{r+s}) 
\underset{\QCoh(\sZ_{r,\eta}/\widehat{\cK}_{r+s})}{\otimes} 
\QCoh(\sY_{r,\eta}/\cK_{r+s}) \to
\QCoh(\sY_{r,\eta}/\cK_{r+s} 
\underset{\sZ_{r,\eta}/\widehat{\cK}_{r+s}}{\times}  
\sY_{r,\eta}/\cK_{r+s} )
\]

\noindent is an equivalence.

First, note that Corollary \ref{c:fiber-actmap-1aff} \eqref{i:fiberprod-eta-tens}
gives:

\[
\QCoh(\widetilde{\sY}_{r,\cK_{r+s},\eta}) 
\underset{\QCoh(\sZ_{r,\eta})}{\otimes} 
\QCoh(\widetilde{\sY}_{r,\cK_{r+s},\eta}) \isom 
\QCoh(\widetilde{\sY}_{r,\cK_{r+s},\eta} 
\underset{\sZ_{r,\eta}}{\times}  \widetilde{\sY}_{r,\cK_{r+s},\eta} ).
\]

\noindent If we pass to $\widehat{\cK}_{r+s}$-invariants
on the right hand side, we obtain
${\QCoh(\sY_{r,\eta}/\cK_{r+s} 
\times_{\sZ_{r,\eta}/\widehat{\cK}_{r+s}}
\sY_{r,\eta}/\cK_{r+s})}$. Similarly for each of the individual
terms of the tensor product. Therefore, we need to show that
formation of the tensor product commutes with
taking $\widehat{\cK}_{r+s}$-invariants. 

We will show this using ideas from \S \ref{s:finale}.\footnote{This
argument is overkill: ultimately, it relies on the tameness of the 
action of $G(O)$ on gauge forms. However, we are currently
analyzing a given fiber, for which such a claim is easier. The reader
is invited to try to find a simpler argument, but this is the most
efficient one that I could find.}
It suffices to show that $\widehat{\cK}_{r+s}$-invariants functorially
coincide with
$\widehat{\cK}_{r+s}$-coinvariants for everything
in sight, i.e., for the tensor product itself, and for each
term in the tensor product. This would 
imply the desired claim, since coinvariants are tautologically
compatible with tensor products.

Note that each of the four categories in question
is acted on compatibly by $\QCoh(\sZ_r)$ and
$\QCoh(\widehat{\cK}_{r+s})$.
Therefore, the claim follows from\footnote{The reader who glances at the
proof of the cited proposition will see that it 
is a straightforward consequence of
Theorem \ref{t:tame}, i.e., it is not using anything deep about
the action of $\widehat{G(O)}$ on $\sZ_r$, in spite of being
cited out of order.}
Proposition \ref{p:zr-tame}, which exactly says that invariants
and coinvariants naturally coincide for any
category acted on compatibly by $\QCoh(\sZ_r)$ and
$\QCoh(\widehat{\cK}_{r+s})$ (or any other congruence
subgroup in place of $\cK_{r+s}$).

\end{proof}

%%%%%%%%%%%%%%%%%%%%
%%%%%%%%%%%%%%%%%%%%
%%%%%%%%%%%%%%%%%%%%

\section{Conclusion of the proof of the main theorem}\label{s:finale}

%%%%%%%%%%%%%%%%%%%%
%%%%%%%%%%%%%%%%%%%%
%%%%%%%%%%%%%%%%%%%%

\subsection{}

In this section, we prove that for $G$ \emph{reductive},
$\LocSys_G(\o{\cD})$ is 1-affine.

The argument is fairly straightforward, 
given the work we have done already at this point.

\subsection{1-affineness and calculation of tensor products}\label{ss:finale-tens-prod}

Note that the morphism
$\fg((t))dt/\widehat{G(O)} \to \LocSys_G(\o{\cD})$
is a $\Gr_{G,dR}$-fibration.\footnote{Up to sheafification, anyway: 
it is rather a $(G(K)/G(O))_{dR}$-fibration, but we ignore this
distinction since it is irrelevant.}
Therefore, the morphism is 1-affine by \cite{shvcat} Theorem 2.6.3.
Moreover, the pullback for this morphism admits a left adjoint
satisfying the projection formula, and the same holds after any
base-change (since $\Gr_G$ is ind-proper for $G$ reductive).

Therefore, although the hypotheses of 
Proposition \ref{p:proper-ec-loc-2} do not quite hold (since
that result assumes the map to be ind-schematic), its
logic holds as is and we can still apply the conclusion.
I.e., we obtain that $\Loc$ is fully-faithful for $\LocSys_G(\o{\cD})$,
and 1-affineness for $\LocSys_G(\o{\cD})$ reduces to checking
that the morphism:

\begin{equation}\label{eq:finale-1}
\begin{gathered}
\QCoh(\fg((t))dt/\widehat{G(O)})
\underset{\QCoh(\LocSys_G(\o{\cD}))}{\otimes}
\QCoh(\fg((t))dt/\widehat{G(O)}) \to \\
\QCoh\Big(\fg((t))dt/\widehat{G(O)} \underset{\LocSys_G(\o{\cD})}{\times} 
\fg((t))dt/\widehat{G(O)}\Big)
\end{gathered}
\end{equation}

\noindent is an isomorphism.

We will check this in what follows. First, we will give
some generalities in \S \ref{ss:tame-redux}-\ref{ss:tame-redux-finish}
about tameness for $G(K)$, which will justify commuting some
invariants with some tensor products. After this, the argument
will be quick.

\subsection{Tameness redux}\label{ss:tame-redux}

In what follows, let $\cG_1$ be an affine group scheme, and let 
$\cG_1 \subset \cG_2$ with $\cG_2$ a 
group indscheme with $\cG_2/\cG_1$ a formally smooth ind-proper
$\aleph_0$-indscheme.

\begin{example}

The basic examples are $G(O) \subset \widehat{G(O)}$ for
any affine algebraic group,
and for $G$ reductive, $G(O) \subset G(K)$.

\end{example}

\subsection{}

First, we construct a self-duality for $\QCoh(\cG_2)$.
This construction will be of ``semi-infinite" nature,
so e.g. depends on the choice of embedding $\cG_1 \subset \cG_2$.

Let $q$ denote the quotient map $\cG_2 \to \cG_2/\cG_1$.
This map is affine, so 
$\QCoh(\cG_2) = q_*(\sO_{\cG_2})\mod(\QCoh(\cG_2/\cG_1))$.

Note that 
$\QCoh(\cG_2/\cG_1) \xar{-\otimes \omega}\IndCoh(\cG_2/\cG_1)$
is an isomorphism by \cite{indschemes},
and note that $\IndCoh(\cG_2/\cG_1)$ is self-dual via Serre duality.

It is easy to see that the monad $q_*q^*$ is self-dual 
(under Serre duality). Indeed, the self-duality pairing:

\[
\langle-,-\rangle:
\QCoh(\cG_2/\cG_1) \otimes \QCoh(\cG_2/\cG_1) \to \Vect
\]

\noindent is given by the formula:

\[
\langle \sF,\sG\rangle = 
\Gamma^{\IndCoh}(\cG_2/\cG_1,(\sF \otimes \omega_{\cG_2/\cG_1}) 
\overset{!}{\otimes} (\sG \otimes \omega_{\cG_2/\cG_1})) =
\Gamma^{\IndCoh}(\cG_2/\cG_1,(\sF \otimes \sG) \otimes \omega_{\cG_2/\cG_1})
\]

\noindent and we have:

\[
\langle q_*q^*(\sF) , \sG \rangle =
\langle \sF \otimes q_*(\sO_{\cG_2}), \sG \rangle =
\langle \sF, \sG \otimes q_*(\sO_{\cG_2}) \rangle =
\langle \sF, q_*q^*(\sG) \rangle
\]

\noindent where the second equality follows from the explicit
form of the Serre duality pairing, and the others are just the projection
formula for the affine morphism $q$.

This self-duality of the monad readily gives the desired
self-duality for $\QCoh(\cG_2)$.

\begin{example}\label{e:strange-globalsections}

The functor dual to the $*$-restriction
$\QCoh(\cG_2) \to \QCoh(\cG_1)$ is the \emph{left} adjoint
to this restriction. The functor dual to the pullback
$\Vect \to \QCoh(\cG_2)$ is given by $*$-pushing forward
to $\cG_2/\cG_1$ and then taking $\IndCoh$ global
sections (which is a left adjoint here, by assumption).

\end{example}

\subsection{}

Note that the group structure on $\cG_2$
canonically makes $\QCoh(\cG_2)$ into a coalgebra object
of $\DGCat_{cont}$. If e.g. $\cG_2$ acts a prestack $\cY$,
the $\QCoh(\cY)$ is a comodule category for 
$\QCoh(\cG_2)$.\footnote{We need the formula 
$\QCoh(\cG_2) \otimes \QCoh(\cY) \isom \QCoh(\cG_2 \times \cY)$
for this, but this formula holds because $\QCoh(\cG_2)$ is
dualizable.} The restriction functor $\QCoh(\cG_2) \to \QCoh(\cG_1)$
is a morphism of coalgebras.

By self-duality, $\QCoh(\cG_2)$ inherits
a monoidal structure as well, so that $\QCoh(\cG_2)$-module
categories (in $\DGCat_{cont}$) are the same as comodule categories for
the above structure. Moreover, $\QCoh(\cG_2)$
receives a monoidal functor from $\QCoh(\cG_1)$.

Note that $\Vect$ has a tautological $\QCoh(\cG_2)$-module
structure corresponding to the trivial action of $\cG_2$ on a point.
As usual, this allows us to speak about invariants
and coinvariants for $\QCoh(\cG_2)$-module categories.

\subsection{Semi-infinite norm functor}

Suppose now that $\sC$ is acted on by $\QCoh(\cG_2)$.
We will construct a norm functor:

\[
\Nm^{\sinf}: \sC_{\cG_2} \to \sC^{\cG_2}
\]

\noindent that is functorial in $\sC$ and an equivalence for
$\QCoh(\cG_2)$.

Here is an abstract description, though we will give a more
concrete (perhaps) description in what follows.

Regard $\QCoh(\cG_2)$ as a bi-comodule category
over itself. 
Then we have 
$\Vect \isom \QCoh(\cG_2)^{\cG_2,w}$ as $\QCoh(\cG_2)$-comodule
categories, where we are using the residual action on the right
hand side. Indeed, since we have carefully used the comodule
language everywhere, there is nothing non-standard in this
claim, i.e., we are not using the self-duality of $\QCoh(\cG_2)$ anywhere
(which is non-standard, since e.g. it has $\cG_1$ built into it).

But by equating comodule structures and module structures
by self-duality, the situation is more interesting.
In particular, for $\sC\in \QCoh(\cG_2)$,
we can tensor the above map to obtain:

\[
\sC_{\cG_2,w} \coloneqq 
\sC \underset{\QCoh(\cG_2)}{\otimes} \Vect =
\sC \underset{\QCoh(\cG_2)}{\otimes}  \QCoh(\cG_2)^{\cG_2,w} \to
\big(\sC \underset{\QCoh(\cG_2)}{\otimes}  \QCoh(\cG_2)\big)^{\cG_2,w}
\]

\noindent This is our norm map. It obviously satisfies
the desired functoriality, and is obviously an equivalence
for $\sC = \QCoh(\cG_2)$.

\subsection{}

We now give a slightly more concrete description of the norm
functor above.

Suppose that $\sC$ is a $\QCoh(\cG_2)$-module category.
The restriction functor $\Oblv:\sC^{\cG_2} \to \sC^{\cG_1}$ 
is conservative and admits a left adjoint $\Av_!^w$
by ind-properness of $\cG_2/\cG_1$. This functor is functorial
in $\sC$.

We claim that the composite functor:

\[
\sC \to \sC_{\cG_2,w} \xar{\Nm^{\sinf}} \sC^{\cG_2,w}
\]

\noindent is computed by:

\[
\sC \xar{\Av_*^w} \sC^{\cG_1,w} \xar{\Av_!^w} \sC^{\cG_2,w}.
\]

Indeed, this follows from the commutative diagram:

\[
\xymatrix{
\sC =
\sC \underset{\QCoh(\cG_2)}{\otimes} \QCoh(\cG_2) 
\ar[r]^{\id \otimes \Av_*^w} & 
\sC \underset{\QCoh(\cG_2)}{\otimes} \QCoh(\cG_2)^{\cG_1,w} 
\ar[r] \ar[d]^{\Av_!^w} &
\big(\sC \underset{\QCoh(\cG_2)}{\otimes} \QCoh(\cG_2)\big)^{\cG_1,w} =
\sC^{\cG_1,w} 
\ar[d]^{\Av_!^w} \\
& 
\sC_{\cG_2,w} = 
\sC \underset{\QCoh(\cG_2)}{\otimes} \QCoh(\cG_2)^{\cG_2,w}
\ar[r] &
\big(\sC \underset{\QCoh(\cG_2)}{\otimes} \QCoh(\cG_2)\big)^{\cG_2,w} =
\sC^{\cG_2,w}
}
\]

\noindent and the calculation that 
$\QCoh(\cG_2) \xar{\Av_!^w\Av_*^w} \QCoh(\cG_2)^{\cG_2,w} = \Vect$
is the functor dual to the pullback (as follows from Example 
\ref{e:strange-globalsections}).

\subsection{}

We can now proceed as before with tameness.

\begin{defin}

$\sC$ is \emph{tame} with respect to $\cG_2$ if the above
norm map is an equivalence.

\end{defin}

In fact, tameness for $\cG_2$ is the same as tameness
for $\cG_1$:

\begin{prop}\label{p:tame-indproper}

$\sC$ is tame with respect to $\cG_2$ if it is tame as a 
$\QCoh(\cG_1)$-module category.

\end{prop}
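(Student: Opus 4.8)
The strategy is to reduce tameness for $\cG_2$ to tameness for $\cG_1$ by exploiting the factorization of the semi-infinite norm functor through the $\cG_1$-equivariant category, which we established just above. The key point is that $\cG_2/\cG_1$ is ind-proper and formally smooth, so the averaging functor $\Av_!^w: \sC^{\cG_1,w} \to \sC^{\cG_2,w}$ is well-behaved (it is the left adjoint to the conservative restriction $\Oblv: \sC^{\cG_2,w} \to \sC^{\cG_1,w}$), and moreover the comparison of invariants and coinvariants for $\cG_2/\cG_1$ itself is controlled by finite-type-style arguments (the relevant quotient is 1-affine by \cite{shvcat}).

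First I would make precise the claim that the semi-infinite norm map $\Nm^{\sinf}: \sC_{\cG_2,w} \to \sC^{\cG_2,w}$ fits into a commutative square relating it to the ordinary ($\cG_1$-level) norm map $\Nm: \sC_{\cG_1,w} \to \sC^{\cG_1,w}$ and to the ``relative norm'' for the pair $\cG_1 \subset \cG_2$. Concretely, one has a factorization of the composite $\sC \to \sC_{\cG_2,w} \xar{\Nm^{\sinf}} \sC^{\cG_2,w}$ as $\sC \xar{\Av_*^w} \sC^{\cG_1,w} \xar{\Av_!^w} \sC^{\cG_2,w}$, which was verified in the commutative diagram preceding this proposition. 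Dually (or rather, by the same token), $\sC_{\cG_2,w}$ is the coinvariants of $\sC_{\cG_1,w}$ for the residual $\cG_2/\cG_1$-action, and $\sC^{\cG_2,w}$ is the invariants of $\sC^{\cG_1,w}$; since $\cG_2/\cG_1$ is ind-proper (so $\Av_!^w = \Av_*^w$ up to a shift/twist in the relevant finite-type pieces, and in any case invariants and coinvariants agree by 1-affineness of $\bB$ of the finite-type quotients appearing), the relative norm for $\cG_2/\cG_1$ is automatically an equivalence on any $\QCoh(\cG_2)$-module category. Then $\Nm^{\sinf}$ is, up to canonical identifications, the composite of the relative norm (always an equivalence) with the functor induced by $\Nm$ on passing to $\cG_2/\cG_1$-invariants; hence $\Nm^{\sinf}$ is an equivalence precisely when $\Nm$ is, which is the assertion that tameness for $\cG_2$ is equivalent to tameness for $\cG_1$. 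I would spell this out using the tensor-product descriptions $\sC_{\cG_2,w} = \sC \otimes_{\QCoh(\cG_2)} \Vect$ and $\sC^{\cG_1,w} = \sC \otimes_{\QCoh(\cG_2)} \QCoh(\cG_2)^{\cG_1,w}$ together with the abstract identification $\Vect \simeq \QCoh(\cG_2)^{\cG_2,w}$ as $\QCoh(\cG_2)$-comodule categories, exactly mirroring the argument for Lemma \ref{l:tame-cptopen} in the finite-type-quotient case.

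The main obstacle, and the step that requires genuine care, is verifying that invariants and coinvariants for the residual $\cG_2/\cG_1$-action genuinely coincide functorially, i.e. that the relative norm is an equivalence without any tameness hypothesis. Here one uses that $\cG_2/\cG_1$ is formally smooth and ind-proper: writing $\cG_2/\cG_1 = \colim_i Z_i$ as a colimit of proper (finite-type-ish) schemes/stacks, the averaging functors are built from proper pushforwards, which admit both left and right adjoints compatibly, and the comparison is checked on each $Z_i$ where it follows from the 1-affineness results of \cite{shvcat} (properness makes $\Gamma$ have finite cohomological amplitude and makes invariants commute with colimits). One must be slightly careful that the colimit/limit interchange needed to pass from the $Z_i$ to $\cG_2/\cG_1$ behaves, but this is exactly the kind of ind-proper bookkeeping already used in \S \ref{s:cpt-gen} and \S \ref{s:z-red} (e.g. the $?$-pushforward formalism of \S \ref{ss:?-pushforward}), so I would invoke those techniques rather than redo them. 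Once the relative norm is known to be an equivalence, the rest is the formal two-out-of-three argument sketched above, and the proposition follows.
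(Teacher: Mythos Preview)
Your approach has a genuine gap: you repeatedly invoke a ``residual $\cG_2/\cG_1$-action'' on $\sC^{\cG_1,w}$ and $\sC_{\cG_1,w}$, and speak of invariants, coinvariants, and a relative norm for this action, as well as ``$\bB$ of the finite-type quotients.'' But $\cG_1$ is \emph{not} assumed normal in $\cG_2$, so $\cG_2/\cG_1$ is not a group and there is no such residual action. In the main application $G(O) \subset G(K)$ (for $G$ reductive), $G(O)$ is certainly not normal. The paper explicitly notes, in the Example immediately preceding the proof, that your two-step argument works and is immediate \emph{when $\cG_1$ is normal}, precisely because then $\cG_2/\cG_1$ is a group and one can quote \cite{shvcat} \S 11; the content of the proposition is the non-normal case. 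Your reference to Lemma~\ref{l:tame-cptopen} is therefore misleading: that lemma concerns a surjection of groups with kernel $\cK$, i.e.\ the normal situation.

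The paper's proof replaces the nonexistent relative-norm equivalence with a monadicity argument. One still has functors $\sC_{\cG_1,w} \to \sC_{\cG_2,w}$ and $\sC^{\cG_2,w} \xar{\Oblv} \sC^{\cG_1,w}$; the point is that the former admits a continuous, conservative right adjoint (coming from the $\IndCoh$-global-sections/pullback adjunction on the ind-proper $\cG_2/\cG_1$), hence is monadic, and $\Oblv$ is likewise monadic since its left adjoint $\Av_!^w$ exists by ind-properness. After identifying $\sC_{\cG_1,w} \simeq \sC^{\cG_1,w}$ via the $\cG_1$-norm (this is where tameness for $\cG_1$ enters), one checks that the two resulting monads on $\sC^{\cG_1,w}$ agree, and concludes that $\Nm^{\sinf}$ is an equivalence. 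This monadicity comparison is what stands in for your ``relative norm is always an equivalence,'' and it does not require any group structure on $\cG_2/\cG_1$.
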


\begin{example}

If $\cG_1$ is \emph{normal} in $\cG_2$, this result is
immediate: then it is well-known (c.f. \cite{shvcat} \S 11) that
invariants and coinvariants coincide for $\cG_2/\cG_1$,
so the problem reduces to a construction chase
(i.e., checking that the induced equivalence of invariants
and coinvariants for $\cG_2$ is given by our construction above).

\end{example}

\begin{proof}[Proof of Proposition \ref{p:tame-indproper}]

Suppose $\sC$ is a general $\QCoh(\cG_2)$-module category,
i.e., forget about tameness for a moment.

We have a tautologically commutative diagram:

\[
\xymatrix{
\sC \ar[r] & \sC_{\cG_1,w} \ar[d]_{\Nm_{\cG_1}} \ar[r] & 
\sC_{\cG_2,w} \ar[d]^{\Nm^{\sinf}_{\cG_2}} \\
& \sC^{\cG_1,w} \ar[r]^{\Av_!^w} & \sC^{\cG_2,w}.
}
\]

Note that the top arrow in this square is given
by:

\[
 \sC_{\cG_1,w} = 
\sC \underset{\QCoh(\sG_2)}{\otimes} \QCoh(\cG_2)_{\cG_1,w} =
\sC \underset{\QCoh(\sG_2)}{\otimes} \QCoh(\cG_2/\cG_1) \to
\sC \underset{\QCoh(\sG_2)}{\otimes} \Vect = \sC_{\cG_2,w}.
\]

\noindent corresponding to the $\IndCoh$-global sections
functor on $\cG_2/\cG_1$. In particular, we see that
the functor $\sC_{\cG_1,w} \to \sC_{\cG_2,w}$ admits
a continuous right adjoint, since $\cG_2/\cG_1$ is
ind-proper. Moreover, this right adjoint
is obviously monadic, since it is conservative and continuous.

This description of the right adjoint
immediately gives the commutation of the diagram:

\[
\xymatrix{
\sC_{\cG_1,w} \ar[d]_{\Nm_{\cG_1}} & 
\sC_{\cG_2,w} \ar[d]^{\Nm^{\sinf}_{\cG_2}} \ar[l] \\
\sC^{\cG_1,w}  & \sC^{\cG_2,w} \ar[l]_{\Oblv}
}
\]

Now suppose that $\sC$ is tame with respect to the
$\cG_1$-action. The we have a morphism of monads
corresponding to the commutative diagram:

\[
\xymatrix{
\sC_{\cG_2,w} \ar[rr]^{\Nm^{\sinf}} \ar[dr] & & \sC^{\cG_2,w} \ar[dl] \\
& \sC_{\cG_1,w} = \sC^{\cG_1,w} &
}
\]

\noindent that is an isomorphism of monads
(by the explicit descriptions of these functors). 
Moreover, each of these diagonal functors is monadic
(being continuous and conservative), so we obtain the claim.

\end{proof}

\subsection{}\label{ss:tame-redux-finish}

We now have the following application to the actions on gauge forms.

\begin{prop}\label{p:zr-tame}

\begin{enumerate}

\item 

For any affine algebraic group $G$:

\begin{enumerate}

\item $\QCoh(\sZ_r)$ is tame with respect to the 
$\widehat{G(O)}$-action.

\item More generally, any $\sC \in \QCoh(\sZ_r)\mod(\DGCat_{cont})$
equipped with a compatible action of $\QCoh(\widehat{G(O)})$
(i.e., with an action of the appropriate semidirect product,
c.f. the proof of Proposition \ref{p:tame-1aff}) is
tame with respect to $\widehat{G(O)}$.

\end{enumerate}

\item 

For $G$ reductive, the same conclusions hold with
$G(K)$ in place of $\widehat{G(O)}$ and 
$\fg((t))dt$ in place of $\sZ_r$.

\end{enumerate}

\end{prop}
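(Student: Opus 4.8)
The plan is to deduce both parts formally from Theorem \ref{t:tame} and Corollary \ref{c:gauge/cpt-1aff}, via two reductions. One would first observe that tameness of a category $\sC$ with respect to a group $\cG$ depends only on the underlying $\QCoh(\cG)$-module structure of $\sC$ — the functors $\sC_{\cG,w}$, $\sC^{\cG,w}$ and the norm between them are manufactured from this structure alone — so that if $\sC$ is a module category over $\QCoh(\cG) \ltimes \sB$ for some $\cG$-equivariant monoidal category $\sB$, then restriction of scalars along any $\cG$-equivariant monoidal functor $\sB' \to \sB$ alters neither the underlying $\QCoh(\cG)$-module nor, therefore, the tameness of $\sC$ with respect to $\cG$. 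Second, Proposition \ref{p:tame-indproper}, applied to the pair $G(O) \subset \widehat{G(O)}$ and — for $G$ reductive, so that $\Gr_G$ is ind-proper and the hypotheses of \S\ref{ss:tame-redux} hold — to the pair $G(O) \subset G(K)$, reduces all the assertions to tameness \emph{with respect to $G(O)$}. Finally, statement (a) in each part is the special case of statement (b) where $\sC$ is the structure category itself, so it suffices to treat the module-category versions.

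The core of the argument is then the claim: for any affine algebraic group $G$ and any $r$, every module category over $\QCoh(G(O)) \ltimes \QCoh(\sY_r)$, with $\sY_r := t^{-r}\fg[[t]]dt$, is tame with respect to $G(O)$. The proof I envisage uses that $\sY_r/G(O)$ is 1-affine (Corollary \ref{c:gauge/cpt-1aff}) and that $\sY_r \to \sY_r/G(O)$ is a 1-affine morphism (its fibers over affines being affine schemes): thus $\QCoh(G(O)) \ltimes \QCoh(\sY_r)$-module categories are identified with sheaves of categories on $\sY_r/G(O)$ and hence, by 1-affineness, with $\QCoh(\sY_r/G(O))$-module categories via $\sC \mapsto \sM := \sC^{G(O),w}$, the inverse being $\sM \mapsto \sM \otimes_{\QCoh(\sY_r/G(O))} \QCoh(\sY_r)$. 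So any such $\sC$ equals $\sM \otimes_{\QCoh(\sY_r/G(O))} \QCoh(\sY_r)$ with $\sM = \sC^{G(O),w}$, and since coinvariants commute with colimits, hence with this relative tensor product,
\[
\sC_{G(O),w} \;\simeq\; \sM \underset{\QCoh(\sY_r/G(O))}{\otimes} \QCoh(\sY_r)_{G(O),w} \;\simeq\; \sM \underset{\QCoh(\sY_r/G(O))}{\otimes} \QCoh(\sY_r)^{G(O),w} \;=\; \sM ,
\]
the middle step being precisely the tameness of $\QCoh(\sY_r)$ supplied by Theorem \ref{t:tame}, together with $\QCoh(\sY_r)^{G(O),w} = \QCoh(\sY_r/G(O))$. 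It remains to check that this chain of identifications is the norm functor, which gives the claim.

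With the claim in hand, part (2) follows by writing $\QCoh(\fg((t))dt) = \colim_r \QCoh(\sY_r)$ along the left adjoints to the restriction functors (the embeddings $\sY_r \hookrightarrow \sY_{r+1}$ being regular, as in the proof of Corollary \ref{c:cptgen-int}): a module over the limit is the colimit of its base-changes to the $\sY_r$, and tameness passes through this colimit — which is simultaneously a limit along the right adjoints — because coinvariants commute with the colimit, invariants with the limit, and the norm functors assemble; this handles $G(O)$, and Proposition \ref{p:tame-indproper} for $G(O) \subset G(K)$ finishes part (2). For part (1), restriction along the $G(O)$-equivariant closed embedding $\sZ_r \hookrightarrow \fg((t))dt$ is a $G(O)$-equivariant monoidal functor $\QCoh(\fg((t))dt) \to \QCoh(\sZ_r)$, so by the first observation above and the $\fg((t))dt$ case (valid for any affine $G$) every $\QCoh(G(O)) \ltimes \QCoh(\sZ_r)$-module is tame with respect to $G(O)$; Proposition \ref{p:tame-indproper} for $G(O) \subset \widehat{G(O)}$ then upgrades this to $\widehat{G(O)}$, proving part (1).

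The only substantive inputs are Theorem \ref{t:tame} and Corollary \ref{c:gauge/cpt-1aff}; everything else is bookkeeping. The step I expect to require the most care is verifying that the displayed chain of isomorphisms literally computes the norm functor — i.e., that the (semi-infinite) norm is natural and $\QCoh(\sY_r/G(O))$-linear in the appropriate sense, so that for $\sC = \sM \otimes_{\QCoh(\sY_r/G(O))} \QCoh(\sY_r)$ one obtains $\Nm_{\sC} = \id_{\sM} \otimes \Nm_{\QCoh(\sY_r)}$ — and, secondarily, the commutation of invariants and coinvariants with the $\colim$/$\lim$ presentations of $\QCoh(\fg((t))dt)$ used above. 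Both become routine once one unwinds the constructions of \S\ref{ss:tame-redux}.
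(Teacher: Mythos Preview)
Your proposal is correct and follows the same strategy as the paper: reduce to $G(O)$ via Proposition \ref{p:tame-indproper}, present $\sC$ as a $\lim = \colim$ over the $\sY_r$, and invoke tameness termwise. You spell out the termwise step more carefully than the paper (which just cites Theorem \ref{t:tame}) by using 1-affineness of $\sY_r/G(O)$ to write every $\QCoh(G(O))\ltimes\QCoh(\sY_r)$-module as $\sM \otimes_{\QCoh(\sY_r/G(O))} \QCoh(\sY_r)$; this is the argument the paper leaves implicit. Your treatment of $\sZ_r$ by restriction of scalars along $\QCoh(\fg((t))dt) \to \QCoh(\sZ_r)$ is a mild streamlining of the paper's ``same argument with formal completions.'' One cosmetic point: the map $\sZ_r \to \fg((t))dt$ is not a closed embedding (it is the inclusion of a formal thickening in an ind-scheme), but the monoidal pullback you need exists all the same, so the argument stands.
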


\begin{proof}

We begin with the second result.

By Proposition \ref{p:tame-indproper}, it suffices
to prove these results with $G(O)$ replacing $\widehat{G(O)}$
everywhere.

Then note that in forming the limit 
$\QCoh(\fg((t))dt) = \lim \QCoh(t^{-r}\fg[[t]]dt)$, 
each of the structural functors admits a left adjoint.
Therefore, this limit is also a colimit, and formation of this
limit commutes with all tensor products.
Finally, note that each of the structural functors (whether left
or right adjoint) is a morphism of $\QCoh(G(O))$-module categories.

Now for $\sC \in \QCoh(\fg((t))dt)\mod$, we obtain:

\[
\sC = \underset{r}{\lim} \, 
\sC \underset{\QCoh(\fg((t))dt)}{\otimes} 
\QCoh(t^{-r}\fg[[t]]dt) =
\underset{r}{\colim} \, 
\sC \underset{\QCoh(\fg((t))dt)}{\otimes} 
\QCoh(t^{-r}\fg[[t]]dt) \in \QCoh(G(O))\mod.
\]

\noindent Note that formation of invariants commutes with
formation of the limit, and formation of coinvariants
commutes with formation of the colimit. Therefore, by
functoriality and by Theorem \ref{t:tame}, we obtain:

\[
\begin{gathered}
\sC_{G(O),w} = 
\underset{r}{\colim} \, 
\Big(\sC \underset{\QCoh(\sZ_r)}{\otimes} 
\QCoh(t^{-r}\fg[[t]]dt)\Big)_{G(O),w} \isom 
\underset{r}{\colim} \, 
\Big(\sC \underset{\QCoh(\sZ_r)}{\otimes} 
\QCoh(t^{-r}\fg[[t]]dt)\Big)^{G(O),w} = \\
\underset{ r}{\lim} \, 
\Big(\sC \underset{\QCoh(\sZ_r)}{\otimes} 
\QCoh(t^{-r}\fg[[t]]dt)\Big)^{G(O),w} =
\sC^{G(O),w}
\end{gathered}
\]

\noindent as desired.

The corresponding result for $\sZ_r$ is proved
in the same way, but with the appropriate formal completions
taken throughout: we leave the details to the reader.

\end{proof}

\subsection{Calculation of tensor products}

First, we claim the following:

\begin{lem}\label{l:finale-gp}

The morphism:

\begin{equation}\label{eq:finale-2}
\QCoh(\fg((t))dt) 
\underset{\QCoh(\LocSys_G(\o{\cD}))}{\otimes}
\QCoh(\fg((t))dt) \to
\QCoh\big(\fg((t))dt \underset{\LocSys_G(\o{\cD})}{\times}
\fg((t))dt\big)
\end{equation}

\noindent is an equivalence.

\end{lem}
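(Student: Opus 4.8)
The plan is to pass to $G(K)$-invariants and reduce everything to the tameness result, Proposition~\ref{p:zr-tame}(2). First I would identify both sides. After sheafification (harmless, since $\QCoh$ and $\ShvCat_{/-}$ are insensitive to it), the morphism $\fg((t))dt\to \LocSys_G(\o{\cD})$ is a $G(K)$-torsor, so the action groupoid gives $\fg((t))dt\times_{\LocSys_G(\o{\cD})}\fg((t))dt\cong G(K)\times \fg((t))dt$ via $(g,\Gamma dt)\mapsto (\Gauge_g(\Gamma dt),\Gamma dt)$; since $\QCoh(G(K))$ is dualizable in $\DGCat_{cont}$ — the self-duality of \S\ref{ss:tame-redux} applies to $G(O)\subset G(K)$ — the right-hand side of \eqref{eq:finale-2} becomes $\QCoh(G(K))\otimes\QCoh(\fg((t))dt)$. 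So the content is that the canonical functor
\[
\alpha\colon \QCoh(\fg((t))dt)\underset{\QCoh(\LocSys_G(\o{\cD}))}{\otimes}\QCoh(\fg((t))dt)\longrightarrow \QCoh(G(K))\otimes \QCoh(\fg((t))dt),
\]
induced by $\Gauge^*$ on the first factor and by pullback along the projection on the second, is an equivalence.

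The key point is that $\alpha$ is a morphism of module categories over $\QCoh(G(K))\otimes\QCoh(\fg((t))dt)$: the factor $\QCoh(G(K))$ acts through the loop group action on the first tensor factor of the source and through left convolution on the $G(K)$-factor of the target — the compatibility being exactly the relation $\Gauge_{hg}=\Gauge_h\circ\Gauge_g$ — while $\QCoh(\fg((t))dt)$ acts on the second tensor factor, respectively the $\fg((t))dt$-factor; here one uses that the $\QCoh(\LocSys_G(\o{\cD}))$-action and the $\QCoh(G(K))$-action on $\QCoh(\fg((t))dt)$ commute, since the former is pulled back from the quotient. Now apply weak $G(K)$-invariants $(-)^{G(K),w}$ to both sides of $\alpha$. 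On the target, $\QCoh(G(K))^{G(K),w}=\Vect$ and $(-)^{G(K),w}$ commutes with tensoring by the fixed dualizable category $\QCoh(\fg((t))dt)$, so the target becomes $\QCoh(\fg((t))dt)$. On the source, Proposition~\ref{p:zr-tame}(2) guarantees that the $G(K)$-action is tame both on $\QCoh(\fg((t))dt)$ and on the relative tensor product (which carries a compatible $\QCoh(\fg((t))dt)$-action); hence $(-)^{G(K),w}$ may be replaced by coinvariants $(-)_{G(K),w}$, which is a colimit and so commutes with $-\otimes_{\QCoh(\LocSys_G(\o{\cD}))}\QCoh(\fg((t))dt)$. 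Using tameness once more to rewrite $\QCoh(\fg((t))dt)_{G(K),w}=\QCoh(\fg((t))dt)^{G(K),w}=\QCoh(\LocSys_G(\o{\cD}))$, the source becomes $\QCoh(\LocSys_G(\o{\cD}))\otimes_{\QCoh(\LocSys_G(\o{\cD}))}\QCoh(\fg((t))dt)=\QCoh(\fg((t))dt)$, and a short diagram chase identifies $\alpha^{G(K),w}$ with the identity functor.

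It remains to conclude that $\alpha$ itself is an equivalence from the fact that it is $\QCoh(G(K))$-linear and an equivalence on $G(K)$-invariants. For this I would invoke the $G(K)$-analogue of Proposition~\ref{p:inv-ff}: invariants from $\QCoh(G(K))$-module categories to $\QCoh(\bB G(K))$-module categories is fully faithful, with $\sC\simeq \sC^{G(K),w}\otimes_{\QCoh(\bB G(K))}\Vect$. For an honest affine group scheme this is Proposition~\ref{p:inv-ff}, proved there via monadicity of averaging over invariants; the same argument runs here because the self-dual formalism of \S\ref{ss:tame-redux} supplies the requisite monadicity — alternatively, tameness (Proposition~\ref{p:zr-tame}) reduces the statement to the subgroup $G(O)$, where Proposition~\ref{p:inv-ff} applies verbatim. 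I expect this last step — pinning down the $G(K)$-invariants formalism cleanly enough to run the ``a linear functor that is an equivalence on invariants is an equivalence'' principle — to be the main technical obstacle; everything preceding it is either the definitional identification of the two sides or a formal consequence of tameness.
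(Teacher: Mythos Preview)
Your approach and the paper's both hinge on Proposition~\ref{p:zr-tame} (tameness for the $G(K)$-action on $\fg((t))dt$), but they run in opposite directions. The paper lifts to the $G(K)$-torsor: it replaces $\fg((t))dt \to \LocSys_G(\o{\cD})$ by $p_2\colon G(K)\times\fg((t))dt \to \fg((t))dt$ (which becomes the former upon quotienting by the diagonal $G(K)$-action), observes that the comparison
\[
\QCoh(G(K)\times\fg((t))dt)\underset{\QCoh(\fg((t))dt)}{\otimes}\QCoh(G(K)\times\fg((t))dt)\to\QCoh\Big((G(K)\times\fg((t))dt)\underset{\fg((t))dt}{\times}(G(K)\times\fg((t))dt)\Big)
\]
is trivially an equivalence (dualizability of $\QCoh(G(K))$), and then passes to diagonal $G(K)$-invariants. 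The content is that invariants commute with the tensor product, which follows once each of the three terms is tame; $\QCoh(G(K)\times\fg((t))dt)$ and the fiber product are tautologically tame (free $G(K)$-action), and $\QCoh(\fg((t))dt)$ is tame by Proposition~\ref{p:zr-tame}.

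Your route instead takes $G(K)$-invariants of the comparison map $\alpha$ and shows $\alpha^{G(K),w}$ is the identity --- that much is fine. But the obstacle you flag is a genuine gap: recovering $\alpha$ from $\alpha^{G(K),w}$ requires that $(-)^{G(K),w}$ reflect isomorphisms, i.e., a $G(K)$-analogue of Proposition~\ref{p:inv-ff}. That proposition is proved for affine group schemes via monadicity of $\Av_*^w$, and the semi-infinite averaging $\Av_!^w\Av_*^w$ for $G(K)$ is neither a left nor a right adjoint, so the argument does not transfer. Your fallback --- reduce to $G(O)$ via tameness --- does not close the gap either: tameness identifies invariants with coinvariants for each of $\sC_1,\sC_2$ separately, but to invoke Proposition~\ref{p:inv-ff} for $G(O)$ you would need $\alpha^{G(O),w}$ to be an equivalence, and that is not a formal consequence of $\alpha^{G(K),w}$ being one (the functor $\Oblv\colon\sC^{G(K),w}\to\sC^{G(O),w}$ goes the wrong way). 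The paper's descent-from-above strategy sidesteps exactly this issue, since it writes $\alpha$ itself as $G(K)$-invariants of an obvious upstairs equivalence rather than trying to reconstruct a module category from its invariants.
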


\begin{proof}

Consider the projection $p_2: G(K) \times \fg((t))dt \to \fg((t))dt$
as $G(K)$-equivariant for gauge action on the target,
and for the diagonal action on the source induced by the right
action of $G(K)$ on itself and the gauge action on $\fg((t))dt$.
Note that when
we quotient by $G(K)$, we obtain the projection 
$\fg((t))dt \to \LocSys_G(\o{\cD})$.

Clearly the morphism:

\[
\begin{gathered}
\QCoh(G(K) \times \fg((t))dt) 
\underset{\QCoh(\fg((t))dt)}{\otimes}
\QCoh(G(K) \times \fg((t))dt) \to \\
\QCoh\Big((G(K) \times \fg((t)) dt )
\underset{\fg((t))dt}{\times}
(G(K) \times \fg((t))dt)\Big)
\end{gathered}
\]

\noindent is an isomorphism, since
$\QCoh(G(K))\otimes \QCoh(\fg((t))dt =
\QCoh(G(K) \times \fg((t))dt)$.
Moreover, this isomorphism is equivariant
for the diagonal $G(K)$-action (given as above).
We claim that passing to weak $G(K)$-invariants
gives the desired statement of the lemma.

Indeed, we have:

\[
\begin{gathered}
\QCoh(\fg((t))dt) 
\underset{\QCoh(\LocSys_G(\o{\cD}))}{\otimes}
\QCoh(\fg((t))dt) = \\
\QCoh(G(K) \times \fg((t))dt)^{G(K),w}
\underset{\QCoh(\fg((t))dt)^{G(K),w}}{\otimes}
\QCoh(G(K) \times \fg((t))dt)^{G(K),w} \to \\
\bigg(
\QCoh(G(K) \times \fg((t))dt) 
\underset{\QCoh(\fg((t))dt)}{\otimes}
\QCoh(G(K) \times \fg((t))dt)
\bigg)^{G(K),w} = \\
\QCoh\Big((G(K) \times \fg((t)) dt )
\underset{\fg((t))dt}{\times}
(G(K) \times \fg((t))dt)\Big)^{G(K),w} = \\
\QCoh(\fg((t))dt \underset{\LocSys_G(\o{\cD})}{\times}
\fg((t))dt)
\end{gathered}
\]

\noindent so it suffices to see that taking weak $G(K)$-invariants
commutes with the tensor product here, i.e., that the displayed
arrow is an isomorphism.

We will do this using tameness, i.e., we claim that all of the
invariants above can also be computed as coinvariants.
Indeed, this is deduced as follows:

\begin{itemize}

\item 

For 
$\QCoh(G(K) \times \fg((t))dt) = \QCoh(G(K)) \otimes \QCoh(\fg((t))dt)$,
this is tautological.

\item 

For $\QCoh(\fg((t))dt)$, this follows from
Proposition \ref{p:zr-tame}.

\item 

For $\QCoh$ of the fiber product 
$(G(K) \times \fg((t)) dt )
\underset{\fg((t))dt}{\times}
(G(K) \times \fg((t))dt)$, note that we can rewrite this fiber
product as $G(K) \times G(K) \times \fg((t))dt$ so that the $G(K)$-action
is free (in the obvious sense), and then tameness is again tautological.

\end{itemize}

\end{proof}

\subsection{}

Now observe that both the left and right hand sides of 
\eqref{eq:finale-2} have $G(K) \times G(K)$-actions
(i.e., $\QCoh(G(K)\times G(K))$-actions) coming from the gauge
action of $G(K)$ on $\fg((t))dt$. 
We claim that
passing to $\widehat{G(O)} \times \widehat{G(O)}$-invariants
on both sides, we obtain the morphism \eqref{eq:finale-1}.
This would obviously show that \eqref{eq:finale-1} is an isomorphism,
completing the proof of the 1-affineness of
$\LocSys_G(\o{\cD})$.

Indeed, passing to these invariants on the right hand side, 
we tautologically obtain:

\[
\QCoh\Big(\fg((t))dt/\widehat{G(O)} 
\underset{\LocSys_G(\o{\cD})}{\times}
\fg((t))dt/\widehat{G(O)}\Big)
\]

\noindent i.e., the right hand side of \eqref{eq:finale-1}.

On the left hand side, the result follows from tameness. Indeed,
we have:

\[
\begin{gathered}
\QCoh(\fg((t))dt/\widehat{G(O)})
\underset{\QCoh(\LocSys_G(\o{\cD}))}{\otimes}
\QCoh(\fg((t))dt/\widehat{G(O)}) = \\
\QCoh(\fg((t))dt)^{\widehat{G(O)},w} 
\underset{\QCoh(\LocSys_G(\o{\cD}))}{\otimes}
\QCoh(\fg((t))dt)^{\widehat{G(O)},w} \to \\
\Big(\QCoh(\fg((t))dt)
\underset{\QCoh(\LocSys_G(\o{\cD}))}{\otimes}
\QCoh(\fg((t))dt)\Big)^{\widehat{G(O)}\times \widehat{G(O)},w}
\end{gathered}
\]

\noindent and we are trying to see that this arrow is an isomorphism.
If we can show that invariants everywhere (functorially, i.e.,
via the norm map) coincide
with coinvariants, then the result is clear.

For $\QCoh(\fg((t))dt)$, the result follows from Proposition 
\ref{p:zr-tame}.

By Proposition \ref{p:tame-indproper}, 
it remains to check $G(O)\times G(O)$-tameness 
for the tensor product appearing
on the right there. Note that by Lemma \ref{l:finale-gp},
the tensor product coincides with:

\[
\QCoh(\fg((t))dt \underset{\LocSys_G(\o{\cD})}{\times} \fg((t))dt) =
\QCoh(G(K) \times \fg((t))dt).
\]

\noindent Here one $G(O)$-action 
is on just acting on the left on the first factor, and the second
$G(O)$-action is diagonally induced by the right action on the
first factor and the gauge action on the second factor.
Obviously this category is tame with respect to just the
first $G(O)$-action, so it suffices to pass to the invariants and 
check that it is tame with respect to the residual $G(O)$-action.
I.e., we want to see that $\QCoh(G(O)\backslash G(K) \times \fg((t))dt)$
is tame for the diagonal $G(O)$-action. 

It suffices to prove this tameness
with $t^{-r}\fg[[t]]dt$ replacing $\fg((t))dt$, since passing to the
limit = colimit in $r$ we obtain the desired result. 
Then observe that 
$p_2: G(O)\backslash G(K) \times t^{-r}\fg[[t]]dt \to t^{-r}\fg[[t]]dt$ is
$G(O)$-equivariant for the diagonal $G(O)$-action, so that
$\QCoh(G(O) \backslash G(K) \times t^{-r}\fg[[t]]dt)$ is acted
on compatibly by $\QCoh(G(O))$ and by $\QCoh(t^{-r}\fg[[t]]dt)$.
Therefore, the result follows from Theorem \ref{t:tame}, i.e.,
tameness of $t^{-r}\fg[[t]]dt$.

\bibliography{bibtex}{}
\bibliographystyle{alphanum}

\end{document}